\documentclass[a4paper,11pt]{amsart}
\usepackage{amsfonts,amssymb,amsthm,cite,amsmath,amstext}
\usepackage{color}
\usepackage[colorlinks,linkcolor=black,citecolor=black]{hyperref}
\usepackage{comment}
\usepackage{framed}
\usepackage{calligra} 
\usepackage{tikz-cd}
\usepackage{enumitem}
\usepackage{makecell}

\definecolor{shadecolor}{gray}{0.875}

\newtheorem{thrm}{Theorem}[section]

\newtheorem{lem}[thrm]{Lemma}
\newtheorem{cor}[thrm]{Corollary}
\newtheorem{prop}[thrm]{Proposition}

\theoremstyle{definition}
\newtheorem{defn}[thrm]{Definition}

\newtheorem{example}[thrm]{Example}
\newtheorem{rmk}[thrm]{Remark}

\DeclareMathOperator{\Gr}{Gr}

\DeclareMathOperator{\Image}{Image}

\DeclareMathOperator{\nd}{nd}

\DeclareMathOperator{\End}{End}
\DeclareMathOperator{\Span}{span}

\DeclareMathOperator{\codim}{codim}

\DeclareMathOperator{\prim}{Prime}
\DeclareMathOperator{\Irred}{Irred}

\DeclareMathOperator{\IC}{IC}
\DeclareMathOperator{\codimension}{codim}

\title{On the kernels of Lefschetz operators}
\author{Jiajun Hu}
\date{}

\begin{document}


\begin{abstract}
We study the kernels of Lefschetz operators arising from complete intersections of nef classes. 
When the classes are semiample, we provide a complete characterization of mixed
Lefschetz properties below a fixed degree -- all obstructions are detected by irreducible subvarieties of relevant codimensions.
We further show that the kernels in the next degree range vanish off the diagonal, 
while on the diagonal they admit bases consisting of fundamental classes of irreducible subvarieties annihilated by the Lefschetz operator.
We also establish a kernel decoupling theorem for nef classes
on complex Lefschetz modules.
As an application, we obtain a kernel characterization for supercritical collections of nef classes, thereby proving a conjecture of Shenfeld-van Handel and Hu-Xiao.

\end{abstract}

\maketitle

\tableofcontents

\section{Introduction}

Let $X$ be an $n$-dimensional smooth projective variety over $\mathbb{C}$, and let
$L\in H^{1,1}(X,\mathbb{R})$ be a real $(1,1)$-class. The class $L$ naturally acts on the cohomology group $H^*(X,\mathbb{C})$ by cup product. We will not distinguish between $L$ and the corresponding endomorphism.
If $L\in\mathcal{K}_X$ is a K\"ahler class, classical Hodge theory provides two fundamental properties: the hard Lefschetz property and the Hodge-Riemann bilinear relations. 

\noindent \textbf{(HL)} 
For all integers $p,q\geq 0$ with $p+q\leq n$, the linear map
\begin{equation*}
    L^{n-p-q}:H^{p,q}(X)\to H^{n-q,n-p}(X)
\end{equation*}
is an isomorphism. 

\noindent \textbf{(HR)} The Hermitian form
\begin{equation*}
    H^{p,q}(X)\times H^{p,q}(X)\to \mathbb{C}\ \ \ (x,y)\mapsto (\sqrt{-1})^{p-q}(-1)^{\frac{(p+q)(p+q-1)}{2}} \int_X x\cdot \overline{y}\cdot L^{n-p-q}
\end{equation*}
is positive definite on the primitive space
\begin{equation*}
    P^{p,q}(X)=\ker(L^{n-p-q+1}:H^{p,q}(X)\to H^{n-q+1,n-p+1}(X)).
\end{equation*}

When $L$ degenerates to the boundary of the K\"ahler cone, these properties need no longer hold. For instance, if $L\in\overline{\mathcal{K}_X}$ is nef, then the map
$L^{n}:H^{0,0}(X)\to H^{n,n}(X)$
is an isomorphism if and only if $L$ is big. 
However, a complete characterization appears to be considerably more difficult in other bidegrees.
A remarkable result in this direction is due to de Cataldo-Migliorini \cite{decataldoLefsemismall}, who completely characterized when the first Chern class of a semiample line bundle satisfies the hard Lefschetz property in all degrees. Their result was subsequently extended in \cite{xiaomixedHRR,hushangxiao2023hardlefArxiv}.

Throughout this manuscript, by a slight abuse of notation, we use the same symbol $L$ for a line bundle and its first Chern class $c_1(L)$.
We also use additive notation for line bundles; thus $L+M$ denotes $L\otimes M$.
We recall the above characterization in a form suited to our purposes.

\begin{thrm}[de Cataldo-Migliorini, Xiao, Hu-Shang-Xiao]
    Let $0\leq d\leq n$ be an integer, and 
    let $L$ be a semiample line bundle. Then the following statements are equivalent.
    \begin{enumerate}
        \item For every subvariety $V\subset X$ with $\codimension_X V\le           \frac{d}{2}$, we have
                \begin{equation*}
                    L^{d-2\codimension_X V}\cdot [V]\neq 0.
                \end{equation*}
        \item For every integer $k$ with $0\leq k\leq \frac{d}{2}$, the map
        \begin{equation*}
            L^{d-2k}:H^{k,k}(X)\to H^{d-k,d-k}(X)
        \end{equation*}
        is injective.
        \item For all integers $p,q\geq 0$ with $ p+q\leq d$, the map 
        \begin{equation*}
            L^{d-p-q}:H^{p,q}(X)\to H^{d-q,d-p}(X)
        \end{equation*}
        is injective.
    \end{enumerate}
\end{thrm}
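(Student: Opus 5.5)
The plan is to prove $(3)\Rightarrow(2)\Rightarrow(1)\Rightarrow(3)$. The first two implications are formal. $(2)$ is the special case $p=q=k$ of $(3)$. For $(2)\Rightarrow(1)$, take $V$ irreducible of codimension $k\le d/2$. Its fundamental class $[V]\in H^{k,k}(X)$ is nonzero, since $\int_X [V]\cdot A^{n-k}>0$ for an ample class $A$. Injectivity of $L^{d-2k}$ on $H^{k,k}(X)$ then gives $L^{d-2k}\cdot[V]\neq 0$.

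The substance is $(1)\Rightarrow(3)$, and I would follow the de Cataldo--Migliorini strategy. Since $L$ is semiample, some multiple gives a morphism $f\colon X\to Y\subset\mathbb{P}^N$ with $L=f^*A$ for $A$ ample on $Y$.

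\textbf{Step 1: reformulate condition (1) as a dimension bound.} Let $Y_i=\{y\in Y:\dim f^{-1}(y)\ge i\}$. I claim condition (1) is equivalent to
\[
\dim Y_i + 2i \le \dim Y + (n-d) \quad \text{for all } i\geq 0,
\]
that is, a ``$(n-d)$-semismall'' condition. To see this, take an irreducible component $W$ of $f^{-1}(Y_i)$ dominating a component $Z$ of $Y_i$. Then $L^{j}\cdot[W]=0$ as soon as $j>\dim Z$, because $L^{j}$ is pulled back from $Y$. Testing (1) on $V=W$, and also on $V=X$ (which gives $\dim Y\ge d$ in the form $L^d\neq 0$), forces $d-2\codim W\le \dim Z$. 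This computation is only valid when $\codim W\le d/2$. When $\codim W> d/2$, I would cut $W$ with general hyperplanes from $Y$ and with ample divisors of $X$ to produce a subvariety of codimension $\lfloor d/2\rfloor$ or so that still violates (1). This reduction, and keeping track of the exact offsets, needs care.

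\textbf{Step 2: use the decomposition theorem.} Decompose
\[
Rf_*\mathbb{Q}_X[n]\cong\bigoplus_{i,\alpha}\IC_{Z_\alpha}(\mathcal{L}_{i,\alpha})[-i],
\]
and use the relative hard Lefschetz theorem together with the hard Lefschetz theorem for $A$ on the intersection cohomology of $Y$. The map $L^{d-p-q}$ becomes the action of powers of $A$ on the hypercohomology of these summands. Each summand supported on $Z_\alpha$ with perversity shift $i$ satisfies hard Lefschetz for $A$, centered at its own middle degree. The estimate from Step 1 bounds the shifts $|i|$ against $\dim Z_\alpha$. It should guarantee that, in total degree $p+q\le d$, each summand's contribution sits on the injective side of its own Lefschetz symmetry.

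This is exactly de Cataldo--Migliorini's argument, with $n$ replaced by $d$; equivalently, it is the statement that $L$ is ``lef in degrees $\le d$''. Since all the maps are morphisms of mixed Hodge structures, compatibility with the Hodge decomposition gives the $(p,q)$-refinement. This is the approach of Xiao and Hu--Shang--Xiao.

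An alternative for Step 2 avoids the decomposition theorem. One proves injectivity by induction on $d$, restricting to a general hypersurface section $H\in|mL'|$, where $L'$ is ample on $X$, and applying the weak Lefschetz theorem. I would try the decomposition-theorem route first, though.

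I expect the main obstacle to be Step 1 together with its matching to the support and shift bounds in Step 2. One must show that a failure of the dimension inequality forces a violating subvariety of codimension at most $d/2$, and conversely that the inequality controls precisely the degree range $p+q\le d$ and not more.
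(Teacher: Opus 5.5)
Your route is the paper's, which is itself de Cataldo--Migliorini's. You pass to the Kodaira map $f$ and turn (1) into a fiber-dimension bound; this is Proposition \ref{equiv}, which for $l=\lceil d/2\rceil$ says exactly $r(f)\le n-d$. You then bound the perverse shifts of the summands of $Rf_*\mathbb{R}_X[n]$ by the support condition and duality (Lemmas \ref{dual} and \ref{pervrange}). Finally you conclude with hard Lefschetz for $A$ on each perverse piece, which the paper packages as $\ker L^{d-k}\cap H^k(X)\subset H^k_{\le d-n-1}(X)$ (Lemma \ref{kerandperv}).

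There is one concrete error. The inequality you state in Step 1 should be $\dim Y_i+2i\le 2n-d=\dim X+(n-d)$, not $\dim Y+(n-d)$. As written, the claimed equivalence fails as soon as $L$ is not big. Take $X=Y\times\mathbb{P}^1$ with $Y$ of dimension $n-1$, $L$ pulled back from an ample class on $Y$, and $d=n-1$. Then (1) and (3) hold (check (3) by K\"unneth), yet $Y_1=Y$ and $\dim Y_1+2=n+1>n=\dim Y+1$. So your bound does not follow from (1) and cannot be the input to Step 2.

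Your own computation already gives the correct bound. Let $W$ be a component of $f^{-1}(Z)$ dominating a component $Z$ of $Y_i$, with $\dim W\ge\dim Z+i$. If $\codim W\le d/2$, condition (1) gives $\dim Z\ge d-2(n-\dim W)\ge d-2n+2\dim Z+2i$, that is, $\dim Z+2i\le 2n-d$. If $\codim W>d/2$, no cutting by hyperplanes is needed: then $\dim Z+2i\le 2\dim W<2n-d$ automatically.

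With this bound, consider a nonzero summand $\IC(Z_\alpha,\mathcal{L})[-i]$ and its stalk at a general $z\in Z_\alpha$. This forces $i\le \dim Z_\alpha+2\dim f^{-1}(z)-n\le n-d$, and duality (or relative hard Lefschetz) gives $i\ge d-n$. Hence $P_i={}^p\mathcal{H}^i(Rf_*\mathbb{R}_X[n])=0$ for $|i|>n-d$, uniformly in $\dim Z_\alpha$. The contribution of $P_i$ to $H^k(X)$ lies in $\mathbb{H}^{k-n-i}(Y,P_i)$. There $A^{d-k}$ is injective because $(k-n-i)+(d-k)=d-n-i\le 0$. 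This holds for every $k\le d$, so no further degree bookkeeping is needed, and the $(p,q)$-refinement follows as you say.
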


Roughly speaking, this theorem says that the obstructions to the Lefschetz property of a semiample class are detected by algebraic cycles. When $d=n$, the result is due to de Cataldo-Migliorini \cite{decataldoLefsemismall}, while the general case is due to Xiao \cite{xiaomixedHRR} and Hu-Shang-Xiao \cite{hushangxiao2023hardlefArxiv}. 
A semiample line bundle $L$ is called $d$-lef if it satisfies the first of the equivalent conditions above. The terminology reflects the fact that $L$ satisfies the Lefschetz property in the corresponding range of degrees.

The hard Lefschetz property and the Hodge-Riemann relations
also admit mixed versions. 
Establishing these properties for products of K\"ahler classes is already nontrivial.
The case of bidegree $(1,1)$  goes back to
Gromov \cite{gromov1990convex}, while the general case was
established by Dinh-Nguy\^en \cite{DN06}.
Cattani later gave another proof based on the descent
lemma \cite{cattanimixedHRR}. 
Lefschetz properties have also been established for
characteristic classes of ample vector bundles, including
top Chern classes and Schur classes in suitable bidegrees.
These classes need not be complete intersections.
For these results and related developments, we refer
to \cite{blochgieseker,ross2019hodge,rossHRkahler,zhengHRpoly}
and the references therein.

In these results, the Lefschetz property is obtained from
K\"ahler classes or ample vector bundles.
For complete intersections, a complementary problem is to characterize
the Lefschetz property when the factors are only nef.
On compact complex tori, such a characterization was obtained in \cite{hxhardlef}
in terms of the numerical dimensions.
This numerical criterion motivated subsequent
work
\cite{hushangxiao2023hardlefArxiv,HuXiaoLefChar,HuXiao25LefCharFree}.

Another motivation comes from the equality characterization of the Khovanskii-Teissier inequality in algebraic geometry. Related problems are studied in 
For example, let $A,B,L_1,\ldots,L_{n-2}$ be nef classes on $X$.
Write $\mathcal{L}=L_1\cdots L_{n-2}$.
The Khovanskii-Teissier inequality states that
\begin{equation*}
        (\int_X A\cdot B\cdot\mathcal{L})^2 \geq  \int_X A^2\cdot\mathcal{L} \int_X B^2\cdot\mathcal{L}.
\end{equation*}
If $\int_X A\cdot B\cdot\mathcal{L}>0$, then equality holds if and only if there exists
a real constant $c>0$ such that
\begin{equation*}
    A-cB\in\ker\mathcal{L}\cap H^{1,1}(X,\mathbb{R}).
\end{equation*}
Thus a geometric description of this kernel yields a
characterization of the equality cases.

A related question is Teissier's proportionality problem, which asserts that 
two big and nef classes are proportional if and only if equality holds in the Brunn-Minkowski inequality or equivalently simultaneously in all the log-concavity inequalities. This was proved by Boucksom-Favre-Jonsson in the projective setting and Fu-Xiao in the K\"ahler setting, and further generalized to strictly Lorentzian classes by Hu-Xiao \cite{BFJ09,fx19,huxiao25hodgeindex}.

On smooth projective toric varieties, intersection numbers of
nef line bundles are identified with mixed
volumes of lattice polytopes up to normalization. In this way,
the Khovanskii-Teissier inequality corresponds to the Alexandrov-Fenchel inequality.
Shenfeld-van Handel \cite{SvH23Acta} characterized the equality
cases of the Alexandrov-Fenchel inequality for convex polytopes.
Their algebraic formulation of the result led them to ask for
analogous descriptions in algebraic geometry; see \cite[Section~16.2]{SvH23Acta}.
These questions were subsequently studied
in \cite{HuXiaoLefChar,HuXiao25LefCharFree}.

In this paper, we extend the study of these kernels to higher degrees. The positivity conditions introduced below can be viewed as higher-degree analogues of supercriticality.

\subsection{Pure Lefschetz properties}
Our first result refines the preceding characterization by restricting the range of degrees under consideration. More precisely, we show that the obstructions to the Lefschetz property below a fixed degree are detected precisely by subvarieties whose codimensions lie in the corresponding range.
For each integer $k\geq 0$, denote by $\Irred^k(X)$ the set of all irreducible subvarieties of $X$ of codimension $k$.

\begin{thrm}\label{purelefcharintro}
    Let $d,l$ be integers satisfying $0\le 2l-1\leq d\leq n$ and let $L$ be a semiample line bundle.
    Then the following statements are equivalent.
    \begin{enumerate}
        \item For every integer $k$ with $0\leq k\leq l-1$ and every $V\in \Irred^k(X)$, we have
        \begin{equation*}
            L^{d-2k}\cdot [V]\neq 0.
        \end{equation*}
        \item For every integer $k$ with $0\leq k\leq l-1$, the linear map
        \begin{equation*}
            L^{d-2k}:H^{k,k}(X)\to H^{d-k,d-k}(X)
        \end{equation*}
        is injective.
        \item For all integers $p,q\geq 0$ with $ p+q\leq 2l-2$, the linear map 
            \begin{equation*}
                L^{d-p-q}: H^{p,q}(X)\to H^{d-q,d-p}(X)
            \end{equation*}
            is injective.
    \end{enumerate}
\end{thrm}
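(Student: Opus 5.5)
The implications (3)$\Rightarrow$(2)$\Rightarrow$(1) are formal. Taking $p=q=k$ in (3) covers every $k\le l-1$, since $2k\le 2l-2$; this gives (2). For (1), note that $[V]\neq 0$ in $H^{k,k}(X)$ for any $V\in\Irred^k(X)$, because its pairing with an ample power $A^{n-k}$ is positive. The substance of the theorem is therefore (1)$\Rightarrow$(3). My plan is to proceed as in de Cataldo--Migliorini, by reducing to a semismall-type statement for the morphism defined by $L$, while keeping track of the degree truncation.

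\textbf{Step 1: translate (1) into a fibre-dimension condition.} Since $L$ is semiample, some multiple defines a morphism $f\colon X\to Y$ with $L=f^*A$ for an ample $A$ on $Y$. For irreducible $V$ of codimension $k$, the projection formula shows that $L^{m}\cdot[V]\neq 0$ if and only if $\dim f(V)\ge m$. Condition (1) then says that every irreducible $V$ of codimension $k\le l-1$ satisfies
\[
\dim f(V)\ge d-2k .
\]
Apply this to irreducible components $V$ of $f^{-1}(Z)$, where $Z\subset Y$ is the locus over which fibres have dimension $\ge r$. One gets $\dim f^{-1}(Z)\ge \dim Z + r$, and, whenever $\codim V\le l-1$, also $n-\codim V\ge \dim Z+r$. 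This yields the inequality
\[
2r+\dim Z\le n-(n-d)+\ldots ,
\]
that is, a ``$d$-lef up to codimension $l-1$'' condition. Its effect is to control the strata of $Y$ whose preimages have small codimension.

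\textbf{Step 2: the decomposition theorem.} Apply the decomposition theorem to $Rf_*\mathbb{Q}_X[n]$, writing it as a sum of shifted $\IC$ complexes on strata $Y_\alpha$. The hard Lefschetz theorem for $A$ acting on intersection cohomology of $Y_\alpha$, together with the relative hard Lefschetz theorem, identifies $\ker L^{d-p-q}$ on $H^{p+q}(X)$. This kernel is built from summands whose perverse shift is too large relative to $d$. The dimension inequality from Step 1 must show that no such summand contributes to degrees $p+q\le 2l-2$. Summands supported on strata whose preimage has codimension $\ge l$ only contribute classes in degree $\ge 2l$, via support and cosupport bounds in the spirit of Xiao and Hu--Shang--Xiao. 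Finally, the Hodge decomposition is compatible, since everything is a morphism of Hodge structures, so injectivity holds on each $H^{p,q}$.

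\textbf{Alternative route avoiding perverse sheaves: induction by hyperplane sections.} As a fallback, I would argue by induction on $d$ (and on $n$), using Bertini. For a general member $D$ of $|mL|$ or of a very ample system, restriction $H^{j}(X)\to H^{j}(D)$ is injective for $j<\dim X$. The condition (1) restricts to $D$ with $d$ replaced by $d-1$, since irreducible subvarieties of $D$ map to those of $X$ with controlled codimension. One then applies the theorem for $d$ proven in \cite{xiaomixedHRR,hushangxiao2023hardlefArxiv} on a suitable auxiliary variety. The catch is that the truncated condition is not literally the $d$-lef condition, so one would first need to pass to an auxiliary $d'$-lef setting.

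\textbf{Main obstacle.} I expect the hardest step to be showing that the cohomological obstructions in degrees $\le 2l-2$ come only from cycles of codimension $\le l-1$. Said differently, a defect of a summand supported on a bad stratum $Z$ must not leak into low degrees through the shifts. Here I would first try the support-condition estimate: a summand $\IC_{Z}(\mathcal{L})[-i]$ contributes to $H^j(X)$ only for $j\ge n-\dim Z - i$ (roughly). I would combine this with the relative hard Lefschetz symmetry, which pairs the summand with a cycle class, namely the fundamental class of a component of $f^{-1}(Z)$. That class lives in codimension $(j\text{-bound})/2$, and it violates (1) precisely when the summand causes non-injectivity.
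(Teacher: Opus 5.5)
Your main route is essentially the paper's proof. You translate (1) into a per-stratum fibre-dimension bound. Hard Lefschetz for $A$ on each perverse summand then places $\ker L^{d-k}\cap H^k(X)$ inside the part of perverse degree $\le d-n-1$ (Lemma \ref{kerandperv}). Finally, the base-change bound $|t|\le 2r+\dim S-n$ for summands on a stratum $S$ with fibre dimension $r$ (duality gives the case $t<0$), combined with $IH^{<0}=0$, shows two things. Strata whose preimage has codimension $\le l-1$, where (1) forces $2r+\dim S\le 2n-d$, contribute nothing there. The remaining strata contribute only in degrees $\ge 2\codim\ge 2l$. This is what Lemmas \ref{pervrange}--\ref{dimcount} do, packaged via the function $b(s)$, followed by restriction to $H^{p,q}$.

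Two small fixes:
\begin{enumerate}
\item Your Step 1 inequality should read $2r+\dim Z\le 2n-d$.
\item A summand $\IC_Z(\mathcal{L})[-i]$ contributes to $H^j(X)$ only for $j\ge n+i-\dim Z$. Your sign slip there is harmless because the admissible range of $i$ is symmetric.
\end{enumerate}
The hyperplane-section fallback is not needed.
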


When the first equivalent condition holds, we say that $L$ is $d$-lef below degree $l$. Since $\Irred^0(X)=\{X\}$, we have
\begin{equation*}
    L \text{ is } d\text{-lef below degree } 1 \iff \nd_X(L)\geq d.
\end{equation*}
In fact, one can say more. Under the same lefness assumption, the Lefschetz maps remain injective in the next degree range except possibly in bidegree $(l,l)$, and in this exceptional bidegree their kernel admits a precise geometric description.

\begin{thrm}\label{purekerintro}
    Let $d,l$ be integers satisfying $l\ge 1$ and  $2l+1\leq d\leq n$, and
    let $L$ be a semiample line bundle that is $d$-lef below degree $l$. Then for all integers $p,q\geq 0$ with $2l-1\leq p+q\leq 2l$, the following statements hold.
    \begin{enumerate}
        \item If $(p,q)\neq (l,l)$, then the linear map 
            \begin{equation*}
                L^{d-p-q}: H^{p,q}(X)\to H^{d-q,d-p}(X)
            \end{equation*}
            is injective.
        \item If $(p,q)=(l,l)$, then
        \begin{equation*}
            \ker (L^{d-2l}:H^{l,l}(X)\to H^{d-l,d-l}(X))=\Span_{\mathbb{C}}\{[V]: V\in \Irred^l(X), L^{d-2l}\cdot [V]=0\}.
        \end{equation*}
    \end{enumerate}
     Moreover, the classes $\{[V]: V\in \Irred^l(X), L^{d-2l}\cdot [V]=0\}$
     are linearly independent.
\end{thrm}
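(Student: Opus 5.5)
The approach is the decomposition theorem / semismall-map technique of de Cataldo--Migliorini, applied to the morphism defined by $L$. Since $L$ is semiample, some multiple of $L$ is base point free. It therefore defines a morphism $f:X\to Y$ with $L=f^*A$ for some ample class $A$ on the normal projective variety $Y$. For each $j$ let $Y_j=\{y\in Y:\dim f^{-1}(y)=j\}$. The lefness condition translates into dimension bounds on these strata. For an irreducible $V$ of codimension $k$, $L^{d-2k}\cdot[V]\neq0$ holds if and only if $\dim f(V)\ge d-2k$. Taking $V$ to be an irreducible component of $f^{-1}(\overline{Y_j})$, the hypothesis that $L$ is $d$-lef below degree $l$ gives $\dim Y_j+2j\ge \dim Y_j+ 2\codim$-type inequalities, that is, a ``$d$-semismallness'' of $f$ for all strata whose preimages have codimension $\le l-1$. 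Only strata where the preimage has codimension $\geq l$ can violate the defect bound.

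The plan has four steps.

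\textbf{Step 1: reduce to $d=n$.} I would cut $X$ by $n-d$ general members of a very ample class and pass to the resulting complete intersection $X'$. Injectivity of $L^{d-p-q}$ on $H^{p,q}(X)$ in degrees $p+q\le 2l<d$ should follow from the corresponding statement on $X'$ by Lefschetz hyperplane theorems. This needs $p+q<d=\dim X'$, which is exactly where the hypothesis $2l+1\le d$ enters. Alternatively, one can carry along the mixed class $L^{d-p-q}H^{n-d}$, as in Xiao and Hu--Shang--Xiao.

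\textbf{Step 2: apply the decomposition theorem.} Write
$$Rf_*\mathbb{Q}_X[n]\cong\bigoplus_{i}{}^p\mathcal{H}^i[-i],$$
where each summand is a sum of $\IC$ sheaves supported on strata closures. By relative hard Lefschetz, the hard Lefschetz theorem for $A$ on each $\IC$ piece, and the primitive decomposition, $L^{d-p-q}$ fails to be injective only through summands with nonzero perversity shift $i$. The perverse cohomology degree of a class in $H^{r}(X)$ then controls where the kernel can sit. The dimension estimates from the lefness assumption force every such summand with support $S$ to have $\codim f^{-1}(S)\ge l$. Consequently such summands contribute to $H^r(X)$ only for $r\ge 2l$.

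\textbf{Step 3: identify the contribution in degree exactly $2l$.} I expect this to be the main obstacle. The contribution of the relevant summands to $H^{2l}$ should be the top-degree local system part: it is spanned by fundamental classes of the $l$-codimensional irreducible components of $f^{-1}(\overline{S})$ over strata $S$ violating the bound. This is the analogue of de Cataldo--Migliorini's description of $H^{BM}_{top}$ of fibres for semismall maps. These classes are pure of type $(l,l)$, which proves part (1). For $(p,q)=(l,l)$ they give exactly the span in part (2): $V\in\Irred^l(X)$ satisfies $L^{d-2l}[V]=0$ exactly when $\dim f(V)<d-2l$, i.e. when $V$ is such a component. For the reverse containment, any class in the kernel must project to zero in all other summands by Step 2, and hence lies in the span.

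\textbf{Step 4: linear independence.} This follows from the fact that the intersection form on the top Borel--Moore homology of the relevant fibres is nondegenerate, indeed definite, by de Cataldo--Migliorini's Hodge-index-type result on fibres. Alternatively, one can pair the classes $[V]$ with classes $[W]\cdot L^{d-2l}\cdot H^{n-d}$ of complementary dimension, which should give a definite Gram matrix by the Hodge--Riemann relations on the exceptional local systems. Making this precise, together with the identification in Step 3 of the degree-$2l$ piece with cycle classes, is the delicate part. To handle it I would try first to localize to a general point of each bad stratum and use the fibre-class description there.
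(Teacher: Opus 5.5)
You have the first half of the paper's argument right. The kernel lies in the perverse piece $H^{k}_{\le d-n-1}(X)$; when $d=n$ this means negative perversity, not merely nonzero perversity. The fibre-dimension bounds kill this piece for $k\le 2l-1$. In degree $2l$ only the terms $IH^0(\overline{S_\alpha},\mathcal{L}_{\alpha,-b})\cong H^{\mathrm{BM}}_{2n-2l}(\overline{f^{-1}(S_\alpha)})$ over strata of dimension $\le d-2l-1$ survive. Hence $\dim H^{2l}_{\le d-n-1}(X)$ equals the number of $V\in\Irred^l(X)$ with $L^{d-2l}\cdot[V]=0$. This count also needs the rigidity statement that each such $V$ is a top-dimensional component of $\overline{f^{-1}(S_\alpha)}$ with $f(V)=\overline{S_\alpha}$, which you assert but do not prove.

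The genuine gap is what comes next. The summands of the decomposition are not canonical subspaces of $H^{2l}(X)$, so a dimension count cannot give your Step 3 claim that ``the contribution is spanned by fundamental classes.'' What closes the sandwich $\Span\{[V]\}\subset\ker L^{d-2l}\cap H^{2l}(X)\subset H^{2l}_{\le d-n-1}(X)$ is precisely the linear independence of the $[V]$. Part (2), and also the case $p+q=2l$ of part (1), depend on it, and you leave it open.

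Your second suggestion for Step 4 cannot work. Every class in $\ker L^{d-2l}$ pairs to zero against every class $[W]\cdot L^{d-2l}\cdot H^{n-d}$, so that Gram matrix is identically zero. Your first suggestion is not available as stated. The relevant $V$ lie above the middle dimension of a normal slice, so there is no scalar intersection form on their Borel--Moore classes. You would have to insert a power of an ample class and prove the resulting form is definite, and even then it only treats point strata.

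The paper supplies the missing step in two stages. First take $d=2l+1$, where only point strata matter. The fibre bounds give
$\tau_{\ge n-2l}Rf_*\mathbb{R}_X[n]\cong\bigoplus_{\dim S_\alpha=0}(R^{2n-2l}f_*\mathbb{R}_X)_{|S_\alpha}[2l-n]$, a sum of shifted skyscrapers, and $\tau_{\le n-2l-1}Rf_*\mathbb{R}_X[n]\in{}^pD^{\le n-2l-1}$. After Verdier duality the standard truncation triangle therefore becomes the perverse one at level $2l-n$. So the image of the cycle map is exactly $H^{2l}_{\le 2l-n}(X)$, whose dimension equals the number of cycles, and this forces independence.

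For general $d$, the paper cuts by $d-2l-1$ general members of $|L|$ itself, that is, pullbacks of hyperplanes from $Y$ rather than general very ample divisors. The slice stays $(2l+1)$-lef below degree $l$. It misses every $V$ with $\dim f(V)<d-2l-1$ and cuts those with $\dim f(V)=d-2l-1$ into pairwise distinct cycles over points, so their coefficients vanish. One then descends on $\dim f(V)$. Your idea of localizing at a general point of each stratum is the germ of this, but the real content is the descending induction and the preservation of lefness under slicing by $|L|$.

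Your Step 1 is not needed: the paper works with $H_{\le d-n-1}$ directly. As written it is also incomplete. Even the injectivity transfer presupposes that $L_{|X'}$ is $d$-lef below degree $l$ on $X'$. Transferring the kernel description further requires every $V'\in\Irred^l(X')$ with $L^{d-2l}\cdot[V']=0$ to be a component of $V\cap X'$ for some such $V\in\Irred^l(X)$. Both need a counting-constants argument you have not given.
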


The constraints on the integers $d,l$ in Theorems \ref{purelefcharintro} and \ref{purekerintro} are imposed to ensure that the relevant degree ranges are nonempty and that the powers of $L$ appearing in the two
theorems are positive.
Since both the operator $L^{d-2l}$ and the cycle classes in
Theorem \ref{purekerintro}(2) are defined over $\mathbb{R}$,
the same kernel characterization holds with real coefficients.
We use complex coefficients here to treat all bidegrees
uniformly.

The proofs of Theorems \ref{purelefcharintro} and \ref{purekerintro} are inspired by the work of de Cataldo-Migliorini \cite{decataldoLefsemismall,deCMdecomp}. The main problem is to determine the kernel
$\ker L^{d-p-q}\cap H^{p,q}(X).$
Since this space is unchanged after replacing $L$ by a positive tensor power, we may assume that $L$ is basepoint-free. The lefness condition can then be reformulated in terms of the Kodaira map $\phi_{|L|}$ associated to $L$. Using the geometric description of the decomposition theorem for this morphism developed by de Cataldo and Migliorini, we identify the relevant part of the perverse filtration on $H^{p,q}(X)$. In particular, we determine the subspace $H^{p,q}_{\leq d-n-1}(X),$
which leads to the kernel characterization via the inclusion
$\ker L^{d-p-q}\cap H^{p,q}(X)\subset H^{p,q}_{\leq d-n-1}(X).$

\subsection{Mixed Lefschetz properties}
We next extend these results to mixed products. 
Let $\mathcal{L}=(L_1,...,L_m)$ be a collection of semiample line bundles and let $\alpha \in \mathbb{Z}_{>0}^m$. We write $\mathcal{L}^{\alpha}=\prod_{i=1}^m L_i^{\alpha_i}$ for the complete intersection class associated with $\alpha$. By a slight abuse of notation, we also write $\mathcal{L}=L_1\cdots L_m$ for the complete intersection class.

The single-class characterization above admits a natural mixed generalization: powers of a single line bundle are replaced by all complete intersection classes arising from the collection $\mathcal{L}$, while the obstructions continue to be detected by algebraic cycles.

\begin{thrm}\label{mixedlefcharintro}
    Let $d,l,m$ be integers satisfying $0\leq 2l-1\leq d\leq n$ and $1\leq m\leq d-2l+2$, 
    and let $\mathcal{L}=(L_1,...,L_m)$ be a collection of semiample line bundles. Then the following statements are equivalent.
    \begin{enumerate}
        \item For every integer $k$ with $0\leq k\leq l-1$, every $V\in \Irred^k(X)$, and every $\alpha\in \mathbb{Z}_{>0}^{m}$ with $|\alpha|=d-2k$, we have 
        \begin{equation*}
            \mathcal{L}^{\alpha} \cdot [V]\neq 0.
        \end{equation*}
        
        \item For every integer $k$ with $0\leq k\leq l-1$ and every $\alpha\in \mathbb{Z}_{>0}^{m}$ with $|\alpha|=d-2k$, the linear map 
        \begin{equation*}
            \mathcal{L}^{\alpha}:H^{k,k}(X)\to H^{d-k,d-k}(X)
        \end{equation*}
        is injective.
        
        \item For all integers $p,q\geq 0$ with $p+q\leq 2l-2$ and every $\alpha\in \mathbb{Z}_{>0}^{m}$ with $|\alpha|=d-p-q$, the linear map 
        \begin{equation*}
            \mathcal{L}^{\alpha}:H^{p,q}(X)\to H^{d-q,d-p}(X)
        \end{equation*}
        is injective.
        
    \end{enumerate}
\end{thrm}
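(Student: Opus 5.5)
The implications (3)$\Rightarrow$(2) and (2)$\Rightarrow$(1) are formal. For (3)$\Rightarrow$(2), take $p=q=k$: then $p+q=2k\le 2l-2$ and $|\alpha|=d-2k$. For (2)$\Rightarrow$(1), note that $[V]\in H^{k,k}(X)$ is a nonzero class for every $V\in\Irred^k(X)$. The substance is therefore (1)$\Rightarrow$(3). I would deduce it from the pure case, Theorem \ref{purelefcharintro}, and Theorem \ref{purekerintro}, by reducing to a single semiample line bundle.

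The plan is an induction on $m$, with $m=1$ being Theorem \ref{purelefcharintro}. The base is consistent with the hypothesis: $m\le d-2l+2$ gives $d-2k\ge m$ for all $k\le l-1$, so every relevant $\alpha\in\mathbb{Z}_{>0}^m$ exists. For the inductive step I would not use a single auxiliary class. Instead I would peel off one factor at a time.

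Fix $\alpha$ with $|\alpha|=d-p-q$ and write $\mathcal{L}^\alpha=L_m^{\alpha_m}\cdot\mathcal{L}'^{\alpha'}$. Replacing $L_m$ by a high power, we may take $L_m$ basepoint-free. By Bertini, a general complete intersection $Y$ of $\alpha_m$ members of $|L_m|$ is then smooth of dimension $n-\alpha_m$, or empty, and $L_m^{\alpha_m}\cdot x=\iota_*\iota^*x$ for the inclusion $\iota:Y\to X$. So, up to push-forward, the question becomes one about $\mathcal{L}'^{\alpha'}$ on $Y$, in degree $d'=d-\alpha_m$ and with $m-1$ classes.

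Since $d'-2l+2=d-\alpha_m-2l+2\ge m-1$ once $\alpha_m\le d-2l+3-m$, the inductive hypothesis on $Y$ would apply. There are two things to check.

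First, condition (1) must descend to $Y$. Let $W\in\Irred^k(Y)$ be a subvariety of codimension $k\le l-1$ in $Y$. Its image in $X$ is a subvariety $V$ of codimension $k+\alpha_m$ in $X$, which is too large for hypothesis (1) on $X$. So direct descent fails. This is where I expect the main obstacle: controlling the obstructions on $Y$ requires a codimension-count argument, presumably in the spirit of the semismall-type estimates. The fibers of $\phi_{|L_j|}$ meet a general $Y$ transversally, and the lefness of $\mathcal{L}$ is equivalent to dimension bounds on loci where joint Kodaira maps have large fibers. What I would try first is to recast (1) as such a fiber-dimension inequality for the product morphism $\prod_j\phi_{|L_j|}$ restricted to strata. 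These inequalities are stable under general hyperplane sections, so they descend to $Y$.

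Second, injectivity on $Y$ must lift back to $X$. Suppose $\iota_*(\mathcal{L}'^{\alpha'}\iota^*x)=0$. I would use the decomposition theorem and perverse filtration of Theorem \ref{purekerintro}'s proof to show that $x$ lies in the part where $\iota^*$ is injective in the range $p+q\le 2l-2$. I would then use the Gysin/Lefschetz hyperplane comparison to conclude $x=0$.

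If this peeling proves unwieldy, I would fall back on a polarization trick. Consider $L_t=\sum t_iL_i$ with $t_i>0$ integers. It is semiample, and if $L_t$ satisfies (1) of Theorem \ref{purelefcharintro}, its Lefschetz maps are injective. Expanding $L_t^{d-p-q}$ gives only $\sum_\alpha c_\alpha t^\alpha\mathcal{L}^\alpha$, which does not isolate individual $\alpha$ with all $\alpha_i>0$. That is why the inductive restriction route is preferred.
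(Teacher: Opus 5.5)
Your implications (3)$\Rightarrow$(2)$\Rightarrow$(1) are fine. The lifting step of (1)$\Rightarrow$(3), however, has a genuine gap.

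From $\mathcal{L}^\alpha x=0$ you only get $\iota_*(\mathcal{L}'^{\alpha'}|_Y\cdot\iota^*x)=0$ in $H^*(X)$. The inductive hypothesis on $Y$ needs $\mathcal{L}'^{\alpha'}|_Y\cdot\iota^*x=0$ in $H^*(Y)$. Injectivity of $\iota_*$ on $H^k(Y)$ is Poincar\'e dual to surjectivity of $\iota^*$ onto $H^{2\dim Y-k}(Y)$, a weak Lefschetz statement that fails for semiample bundles. Concretely, take $X=C\times\mathbb{P}^3$ with $C$ a curve, $L_1$ ample, $L_2=\mathrm{pr}_2^*\mathcal{O}(4)$, $n=d=4$, $l=2$, $m=2$, $\alpha=(1,1)$, $p+q=2$. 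Then:
\begin{itemize}
\item condition (1) holds;
\item a general $Y\in|L_2|$ is $C\times S$ with $S$ a quartic K3 surface;
\item $\iota^*\colon H^2(X)\cong\mathbb{R}^2\to H^2(Y)\cong\mathbb{R}^{23}$ is far from surjective, so $\iota_*\colon H^4(Y)\to H^6(X)$ has kernel of dimension at least $21$.
\end{itemize}
So vanishing of the push-forward gives you nothing by itself, and your plan offers no way to control $\ker\iota_*$ in that degree. The half you do address, that $\iota^*x=0$ forces $x=0$, is the easy one: it follows from the pure theorem, because $\iota_*\iota^*=L_m^{\alpha_m}$.

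The missing idea is positivity. The paper cuts by a single general $Y\in|L_r|$ and \emph{restricts} rather than pushes forward. Then $\iota^*(\mathcal{L}x)=0$ is an honest identity on $Y$, placing $x|_Y$ in the primitive space $\ker(\mathcal{L}_{\backslash r}|_Y\cdot L_r|_Y)$. At the same time the Hodge--Riemann form satisfies $Q(x|_Y,x|_Y)=\pm\int_X x\cdot\overline{x}\cdot\mathcal{L}=0$. Lemma \ref{lefdecomp} then forces $x|_Y=0$; its isomorphism hypothesis in bidegree $(p-1,q-1)$ comes from the induction applied to $(\mathcal{L}_{\backslash r},L_r,L_r)|_Y$. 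In the diagonal case of Theorem \ref{mixedker(p,q)}, the same argument instead puts $x|_Y$ in a span of cycle classes, which are then lifted to $X$. Finally $L_r\cdot x=\iota_*\iota^*x=0$, and single-class injectivity of $L_r$ on $H^{p,q}(X)$ ($L_r$ is $(p+q+1)$-lef) gives $x=0$. The paper's second proof replaces this mechanism by the descent lemma together with the Cattani--Kaplan--Schmid purity theorem (Theorem \ref{decoupling}).

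Your descent step is only sketched, and the mechanism you propose is not accurate. By Proposition \ref{hallrado}, condition (1) amounts to lefness of every partial sum $L_I$. That means fiber-dimension bounds for every $\phi_{|L_I|}$, not for a single product morphism. For $m\notin I$ these bounds are not automatically stable under a general member of $|L_m|$. Such a member contains a codimension-one subfamily of the fibres of $\phi_{|L_I|}$ that are contracted by $\phi_{|L_m|}$, so the large-fiber loci need not drop as required. Proposition \ref{lefpres} proves descent one hypersurface at a time. It genuinely needs the extra hypothesis that $L_I+L_m$ is $(|I|+d-m+1)$-lef, combined with Hall--Rado and Lemma \ref{homfiber}.

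A smaller point: your condition $\alpha_m\le d-2l+3-m$ fails in general when $p+q<2l-2$. You must shrink the lef degree to one adapted to $p+q$, as the paper does.
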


As before, a collection $\mathcal{L}=(L_1,...,L_m)$ of semiample line bundles is called $d$-lef below degree $l$ if $1\leq m\leq d-2l+2$ and
the first equivalent condition in Theorem \ref{mixedlefcharintro} holds. 
By the positivity criterion for complete intersections of semiample divisors (see \cite{KK16,hxhardlef}, or Proposition \ref{hallrado}), 
for any $0\leq k\leq l-1$ and any $V\in\Irred^k(X)$, the following conditions are equivalent:
\begin{itemize}
    \item For every $\alpha \in \mathbb{Z}_{>0}^m$ with $|\alpha|=d-2k$, we have  $\mathcal{L}^{\alpha}\cdot [V]\neq 0.$
    \item For every nonempty subset $I\subset [m]$, we have  
    $$\nd_V(L_I)\geq |I|+d-m-2k$$
    where $L_I=\sum_{i\in I}L_i$ and $\nd_V(L_I)=\max\{r\in \mathbb{N}:L_I^r \cdot [V]\neq 0\}$.
\end{itemize}
It follows that 
\begin{equation*}
    \mathcal{L} \text{ is } d\text{-lef below degree } l \iff L_I \text{ is } (d-m+|I|)\text{-lef below degree } l, \forall \emptyset \neq I\subset [m].
\end{equation*}
In particular, when $l=1$, we obtain
\begin{equation*}
    \mathcal{L} \text{ is } d\text{-lef below degree } 1 \iff \nd_X(L_I)\geq |I|+d-m, \forall \emptyset \neq I\subset [m].
\end{equation*}

As in the single-class case, the lefness assumption yields a stronger statement in the next degree range.

\begin{thrm}\label{mixedkerintro}
    Let $d,l$ be integers satisfying $l\geq 1$ and $ 2l+1\leq d\leq n$, 
    and let $\mathcal{L}=(L_1,...,L_m)$ be a collection of semiample line bundles. Suppose that $\mathcal{L}$ is $d$-lef below degree $l$. 
    Then for all integers $p,q\geq 0$ with $2l-1\leq p+q\leq 2l$ and  every $\alpha\in \mathbb{Z}^m_{>0}$ with $|\alpha|=d-p-q$, the following statements hold.
    \begin{enumerate}
        \item If $(p,q)\neq (l,l)$, then the map
        \begin{equation*}
            \mathcal{L}^{\alpha}:H^{p,q}(X)\to H^{d-q,d-p}(X)
        \end{equation*}
        is injective.
        \item If $(p,q)= (l,l)$, then
        \begin{equation*}
            \ker(\mathcal{L}^{\alpha}:H^{l,l}(X)\to H^{d-l,d-l}(X))=\Span_{\mathbb{C}}\{[V]:V\in \Irred^l(X), \mathcal{L}^{\alpha }\cdot [V]=0\}.
        \end{equation*}
    \end{enumerate}
    Moreover, the classes $\{[V]:V\in \Irred^l(X), \mathcal{L}^{\alpha }\cdot [V]=0\}$ are linearly independent in this case.
\end{thrm}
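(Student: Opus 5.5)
We plan to reduce Theorem \ref{mixedkerintro} to the single-class statement, Theorem \ref{purekerintro}, by passing through a generic combination of the $L_i$ and then descending to the individual monomials $\mathcal{L}^\alpha$. First replace each $L_i$ by a positive tensor power; this changes neither kernels nor vanishing of $\mathcal{L}^\alpha\cdot[V]$, so we may assume every $L_i$ is basepoint-free. Fix $\alpha$ with $|\alpha|=d-p-q$. We argue by induction on $|\alpha|-m$, i.e.\ on the number of repeated factors; all $\alpha_i\ge1$, so $m\le |\alpha|$.

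\textbf{Base step ($\alpha=(1,\dots,1)$, so $m=d-p-q$).} Since the $L_i$ are basepoint-free, Bertini gives a smooth complete intersection $Y=Y_1\cap\dots\cap Y_{m-1}$ with $Y_i\in|L_i|$, $i<m$, of dimension $n-m+1$. Then $\mathcal{L}^\alpha=L_m\cdot[Y]$ factors as restriction to $Y$, followed by the single-class operator $L_m|_Y$, followed by Gysin pushforward. We apply Theorem \ref{purekerintro} on $Y$ to $L_m|_Y$ with $d'=d-m+1$ and the same $l$, noting $d'-p-q=1$. For this we must check two things. First, $L_m|_Y$ is $d'$-lef below degree $l$; this should follow from the Hall--Rado type criterion (Proposition \ref{hallrado}) applied to $V\cap Y$ for general $Y$, using the $\nd_V(L_I)$ reformulation of the lef hypothesis. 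Second, restriction $H^{p,q}(X)\to H^{p,q}(Y)$ and Gysin $H^{\ast}(Y)\to H^{\ast+m-1}(X)$ are injective in the relevant range. This is a Lefschetz-hyperplane statement for semiample (not ample) sections, and we expect it to follow from the perverse-filtration description $H_{\le d-n-1}$ used in the proof of Theorem \ref{purekerintro}, together with the lef condition in codimensions $<l$.

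\textbf{Inductive step.} If some $\alpha_i\ge2$, we cannot cut by two general members of $|L_i|$ and keep the same $\mathcal{L}$. Instead, replace the collection by $(L_1,\dots,L_m,L_i)$, i.e.\ duplicate $L_i$. Duplication preserves the lef condition, since the subset criterion of Proposition \ref{hallrado} with $|I|$ increased by one is exactly compensated by $m$ increasing by one; it also decreases $|\alpha|-m$. Iterating reduces every case to the base step.

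\textbf{Description of the kernel at $(l,l)$.} On $Y$, Theorem \ref{purekerintro} describes $\ker(L_m|_Y)$ on $H^{l,l}(Y)$ as spanned by classes $[W]$ with $W\in\Irred^l(Y)$ killed by $L_m$. We then have to pull these back to $X$. Because $\ker\mathcal{L}^\alpha$ on $H^{l,l}(X)$ injects via restriction into this span, each kernel element is a combination of such $[W]$ that extends to a class on $X$. We expect to show, by varying $Y$ in its family, that the $W$'s sweep out $V\cap Y$ for $V\in\Irred^l(X)$ with $\mathcal{L}^\alpha\cdot[V]=0$. This uses that, for the contracted subvarieties in the decomposition-theorem picture, the relevant $W$ are components of fibres-type loci, which are defined over $X$. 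Linear independence then transfers from $Y$ to $X$, since distinct $V$ yield distinct (and generically irreducible) $V\cap Y$.

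The main obstacle is this last descent step, together with the injectivity of restriction for non-ample $Y$. If the descent resists, a more robust alternative is to work directly on $X$ with the product morphism $\phi=\prod\phi_{|L_i|}$. We would rerun the de Cataldo--Migliorini perverse-filtration argument of Theorem \ref{purekerintro} for the multi-filtration, where the lef hypothesis for $\mathcal{L}$ bounds the defects of $\phi$ on strata. The aim is to prove $\ker\mathcal{L}^\alpha\cap H^{p,q}\subset H^{p,q}_{\le d-n-1}$ and to identify that piece, for $p+q\le 2l$, with the span of fundamental classes of $l$-codimensional $V$ on which some $L_I$ has too small numerical dimension.
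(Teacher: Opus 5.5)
Your reduction to $\alpha=(1,\dots,1)$ by duplicating classes is correct and matches the paper (Lemma \ref{addgeom}(2)). The base step, however, has a genuine gap. Cutting by all of $Y_1,\dots,Y_{m-1}$ at once requires both restriction to $Y$ and Gysin pushforward from $Y$ to be injective, and neither is available.

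\emph{Restriction fails on the kernel you want to describe.} Lefness of $\mathcal{L}$ below degree $l$ imposes nothing in codimension $l$, so there can be $V\in\Irred^l(X)$ with $L_1\cdot[V]=0$. A general $Y_1\in|L_1|$ then misses $V$, so $[V]_{|Y}=0$, although $0\neq[V]\in\ker\mathcal{L}^{\alpha}\cap H^{l,l}(X)$. Hence $\ker\mathcal{L}^\alpha\cap H^{l,l}(X)$ does not inject into $\ker(L_{m|Y})\cap H^{l,l}(Y)$. Your descent step, and your transfer of linear independence, cannot see such classes.

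\emph{The Gysin step is unproved.} The implication $\iota_*(L_{m|Y}\cdot x_{|Y})=0\Rightarrow L_{m|Y}\cdot x_{|Y}=0$ is Poincar\'e dual to a surjectivity of $H^*(X)\to H^*(Y)$. That is a Lefschetz-hyperplane-type statement for complete intersections of semiample, non-ample divisors. It is not what Lemma \ref{kerandperv} and Proposition \ref{deterPerv} control (they concern one morphism and powers of one class), and you give no argument for it.

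\emph{A smaller point.} Lefness of $L_{m|Y}$ must be checked on all subvarieties of $Y$, not only those of the form $V\cap Y$. This needs the argument of Proposition \ref{lefpres}: a bad $W\subset Y$ is a component of $Z\cap Y$ for a locus $Z\subset X$ of large fiber dimension, whose dimension is bounded by Proposition \ref{equiv}.

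The paper's geometric proof cuts by a single hypersurface $Y\in|L_{d-2p}|$ and inducts on $\dim X$, after reducing to $n=d$ by adjoining ample classes. It uses two ideas you are missing.
\begin{enumerate}
\item Gysin injectivity is replaced by the Hodge--Riemann argument of Lemma \ref{lefdecomp}. The class $x_{|Y}$ is primitive for $\mathcal{L}_{\backslash d-2p|Y}\cdot L_{d-2p|Y}$, being the restriction of $\mathcal{L}x=0$. It is also $Q$-isotropic, since $\int_Y x_{|Y}\overline{x_{|Y}}\,\mathcal{L}_{\backslash d-2p|Y}=\int_X x\bar{x}\,\mathcal{L}=0$. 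Hence $x_{|Y}\in\ker\mathcal{L}_{\backslash d-2p|Y}$. The required Lefschetz decomposition comes from the induction hypothesis applied to $(\mathcal{L}_{\backslash d-2p|Y},L_{d-2p|Y},L_{d-2p|Y})$.
\item The non-injectivity of restriction is absorbed by the single-class theorem. The cycles $V_i\subset Y$ given by induction are lifted, via the loci $Z_I$ and Lemma \ref{homfiber}, to $W_i\in\Irred^p(X)$ with $L_{d-2p}\cdot[W_i]=c_i[V_i]$. This gives $L_{d-2p}\cdot(x-\sum_i \frac{a_i}{c_i}[W_i])=0$. Since $L_{d-2p}$ is $(2p+1)$-lef below degree $p$, Corollary \ref{pureker(p,q)} shows $\ker L_{d-2p}\cap H^{p,p}(X)$ is spanned by cycles. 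Exactly the classes invisible on $Y$ are recovered at this point.
\end{enumerate}
Linear independence (Lemma \ref{mixedlinindep}) likewise treats separately the cycles killed by the cutting class, via Lemma \ref{linindeppartial}.

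The paper's second proof avoids hypersurfaces altogether. It proves the decoupling $\ker\mathcal{L}^{\alpha}\cap H^{l,l}(X)=\sum_I\ker L_I^{\alpha_I}\cap H^{l,l}(X)$ from descent and Cattani--Kaplan--Schmid purity, then applies Theorem \ref{purekerintro} to each $L_I$ and Proposition \ref{hallrado}. Your fallback via the product morphism has no analogue of Lemma \ref{kerandperv} for mixed products, so it is not yet an argument.
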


The case $(p,q)=(1,1)$ is worth mentioning directly since it is related to the equality characterization for the Khovanskii-Teissier inequality.
For further discussions and applications, we refer to \cite{HuXiaoLefChar, HuXiao25LefCharFree} and references therein. We write $\prim(X)=\Irred^1(X)$ for the set of all prime divisors.

\begin{cor}
    Let $\mathcal{L}=(L_1,...,L_{n-2})$ be a supercritical collection of semiample line bundles, that is, $\nd_X(L_I)\geq |I|+2$ for every nonempty subset $I\subset [n-2]$. Then
    \begin{equation*}
        \ker\mathcal{L}\cap H^{1,1}(X)=\Span_{\mathbb{C}}\{[D]:D\in \prim(X), \mathcal{L}\cdot [D]=0\}.
    \end{equation*}
\end{cor}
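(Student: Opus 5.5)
The corollary should follow by specializing Theorem \ref{mixedkerintro} to $d=n$, $l=1$, $m=n-2$, $(p,q)=(1,1)$, and $\alpha=(1,\dots,1)$. With these choices $\mathcal{L}^{\alpha}=L_1\cdots L_{n-2}$ and $|\alpha|=n-2=d-p-q$. The numerical constraints are easy to check. We have $l=1\geq 1$, and $2l+1=3\leq n=d$; the latter holds once $n\ge 3$, which is implicit since a collection of $n-2\geq 1$ bundles is needed. We also have $m=n-2\le d-2l+2=n$. So the theorem applies as soon as we verify that $\mathcal{L}$ is $n$-lef below degree $1$.

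By the reformulation stated after Theorem \ref{mixedlefcharintro} (a consequence of the positivity criterion, Proposition \ref{hallrado}), $\mathcal{L}$ is $d$-lef below degree $1$ if and only if
\[
\nd_X(L_I)\geq |I|+d-m
\]
for every nonempty $I\subset[m]$. With $d=n$ and $m=n-2$ this reads $\nd_X(L_I)\ge |I|+2$, which is exactly the supercriticality hypothesis. Alternatively, one can check condition (1) of Theorem \ref{mixedlefcharintro} directly. Here only $k=0$ and $V=X$ occur, and $\alpha$ must be the all-ones vector since $|\alpha|=n-2=m$ with positive entries. So the condition is $L_1\cdots L_{n-2}\neq 0$ on $X$, which the Hall--Rado type criterion converts into the stated numerical-dimension inequalities.

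Theorem \ref{mixedkerintro}(2) then gives
\[
\ker\bigl(\mathcal{L}:H^{1,1}(X)\to H^{n-1,n-1}(X)\bigr)=\Span_{\mathbb{C}}\{[V]:V\in\Irred^1(X),\ \mathcal{L}\cdot[V]=0\}.
\]
Since $\Irred^1(X)=\prim(X)$, this is the claim. The only step needing any care is matching supercriticality with lefness via Proposition \ref{hallrado}; everything else is bookkeeping of indices. As a bonus, Theorem \ref{mixedkerintro} also shows that these divisor classes are linearly independent.
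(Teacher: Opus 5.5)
Your main argument is correct and is essentially the paper's own: specialize Theorem \ref{mixedkerintro} to $d=n$, $l=1$, $m=n-2$, $(p,q)=(1,1)$, $\alpha=(1,\dots,1)$, and identify supercriticality with $n$-lefness below degree $1$ via the reformulation $\nd_X(L_I)\ge |I|+d-m$ coming from Proposition \ref{hallrado}. Your ``alternative'' direct check is wrong, though: in condition (1) of Theorem \ref{mixedlefcharintro} the exponent satisfies $|\alpha|=d-2k=n$, not $n-2$, so the condition is $\mathcal{L}^{\alpha}\neq 0$ for all $\alpha\in\mathbb{Z}_{>0}^{n-2}$ with $|\alpha|=n$, which Proposition \ref{hallrado} converts into $\nd_X(L_I)\ge |I|+2$; the condition $L_1\cdots L_{n-2}\neq 0$ only gives $\nd_X(L_I)\ge |I|$ and is strictly weaker than supercriticality.
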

This result also explains the notion of supercriticality for a collection $\mathcal{L}=(L_1,...,L_{n-2})$. 
It is exactly equivalent to $\mathcal{L}$ being $n$-lef below degree $1$.
This equivalence also serves as one of the main motivations for our lefness notion.
We will later generalize this result to nef classes using the kernel characterization theorem for Lefschetz modules.



We provide two approaches to Theorems \ref{mixedlefcharintro} and \ref{mixedkerintro}.
The first approach is geometric. Related ideas already appear in \cite{hushangxiao2023hardlefArxiv,HuXiaoLefChar,HuXiao25LefCharFree}. The argument proceeds by induction.
We first choose a general hypersurface on which the required
lefness conditions are preserved, and apply the induction
hypothesis there.
The single-class kernel characterization then allow us to lift the resulting information back to $X$.
The second approach is inspired by the decomposition theorem for Lefschetz modules recently established by Amini-Huh-Larson \cite{aminihuhlarson25lefschetzmodule}, together with the descent and purity theorems of Cattani-Kaplan-Schmid \cite{CKS87}. This approach is more abstract and relies on the theory of degenerations of Hodge structures. 
It applies to arbitrary nef classes and more generally to Lefschetz modules.

\subsection{The Lefschetz-module approach}
Lefschetz modules provide an abstract framework for intersection cohomology, involving a variety of nontrivial examples, including the intersection cohomology of projective varieties \cite{saito88,saito90}, the combinatorial intersection cohomology of projective fans \cite{karu04}, and the intersection cohomology of matroids \cite{BHMPW26singularHodge}.
For further details and many beautiful applications, we refer to \cite{aminihuhlarson25lefschetzmodule} and the references therein.

Let us briefly recall the setup.
 Let $A$ be a finite-dimensional graded commutative $\mathbb{R}$-algebra and let
$\mathcal{K}_A\subset A^1$ be a fixed open convex cone, called the K\"ahler cone. Its closure $\overline{\mathcal{K}_A}$ is called the nef cone.
Let $M$ be a finite-dimensional graded $A_{\mathbb{C}}=A\otimes_{\mathbb{R}}\mathbb{C}$-module and let $Q_M: M\times M\to \mathbb{C}$
be an $A_{\mathbb{C}}$-invariant Hermitian pairing on $M$. 
Roughly speaking, the pair $(M,Q_M)$ is a complex Lefschetz module over $(A,\mathcal{K}_A)$ if it satisfies Poincar\'e duality, the hard Lefschetz property and the Hodge-Riemann relations.
We refer to Section \ref{decoupLefMod} for the precise definition.

We now fix a complex Lefschetz module $(M,Q_M)$ of degree $n$ over $(A,\mathcal{K}_A)$. The reader may keep the following example in mind.
Let $X$ be a compact K\"ahler manifold of dimension $n$.
Take $A=\bigoplus_{k=0}^n H^{k,k}(X,\mathbb{R})$ and let $\mathcal{K}_A=\mathcal{K}_X$ be the K\"ahler cone.
Set $M=\bigoplus_{k=0}^n H^{k,k}(X)$ with the Hermitian pairing  $Q_M$ defined by
$$Q_M(x,y)= \int_X x\cdot \overline{y}$$
for $x\in M^k,y\in M^{n-k}$ and $Q_{M}(M^i,M^j)=0$ whenever $i+j\neq n$. 
Then the classical Hodge theory implies that $(M,Q_M)$ is a complex Lefschetz module of degree $n$ over $(A,\mathcal{K_A})$.

Motivated by the equivalent characterizations in Theorem \ref{purelefcharintro}, we define lefness on $M$.

\begin{defn}
    Let $d,l$ be integers satisfying $2l-1\leq d\leq n$. A nef class $L\in \overline{\mathcal{K}_A}$ is called $d$-lef below degree $l$ on $M$ if the map
    \begin{equation*}
        L^{d-2k}:M^k\to M^{d-k}
    \end{equation*}
    is injective for every integer $k\leq l-1$.
\end{defn}

The following theorem gives a criterion for mixed Lefschetz properties in this abstract setting.
A result essentially equivalent to the following criterion was first observed by Enhan Li and Weizhe Zheng \cite{lizhengHRpurity}.
In our framework, it follows naturally from the kernel characterization below.

\begin{thrm}\label{alglefcharintro}
    Let $d,l,m$ be integers satisfying $2l-1\leq d\leq n$ and $1\leq m\leq d-2l+2$, and let $(L_1,...,L_{m})$ be a collection of nef classes. Then the following statements are equivalent.
    \begin{enumerate}
        \item For every $k\leq l-1$ and any $\alpha\in \mathbb{Z}_{>0}^m$ with $|\alpha|=d-2k$, the map
        \begin{equation*}
            \mathcal{L}^{\alpha} :M^k\to M^{d-k}
        \end{equation*}
        is injective.
        \item For every nonempty subset $I\subset [m]$, the class $L_I$ is $(|I|+d-m)$-lef below degree $l$.
    \end{enumerate}
\end{thrm}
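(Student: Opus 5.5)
We argue by induction on $m$, deducing the equivalence from a kernel-decoupling statement for nef classes on $M$. The direction (1)$\Rightarrow$(2) should be formal. Fix a nonempty $I\subset[m]$ and $k\le l-1$, and set $d'=|I|+d-m$. We must show that $L_I^{d'-2k}:M^k\to M^{d'-k}$ is injective. Write $\mathcal{L}_{I^c}=\prod_{j\notin I}L_j$, which has degree $m-|I|$. Expanding multinomially,
\begin{equation*}
\mathcal{L}_{I^c}\cdot L_I^{d'-2k}=\sum_{\beta}c_\beta\,\mathcal{L}^{\alpha(\beta)},
\end{equation*}
where each $\alpha(\beta)\in\mathbb{Z}^m_{\ge0}$ has $|\alpha(\beta)|=d-2k$. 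Only the terms with $\alpha_i>0$ for all $i\in I$ survive this naive count, and the coefficients $c_\beta$ are positive.

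Injectivity of a positive combination does not follow from injectivity of each term. To handle this, we use the Hodge--Riemann relations on $M$ together with the limit argument. Since each $\mathcal{L}^{\alpha}$ is injective on $M^k$, the pairing $Q_M(x,\mathcal{L}^\alpha x)$ should be definite on the relevant primitive pieces, with signs independent of $\alpha$. This is the mixed HR inherited from openness and connectedness of the cone of injective exponents. Definiteness with a fixed sign is preserved under positive sums, so the sum is injective. Hence $L_I^{d'-2k}$ is injective after composing with $\mathcal{L}_{I^c}$, and therefore injective itself. This requires $m\le d-2l+2$ so that $d'-2k\ge|I|$, which places the multi-indices in the positive range.

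For (2)$\Rightarrow$(1) we induct on $m$. The case $m=1$ is the definition of lefness. For general $m$, fix $k\le l-1$ and $\alpha$ with $|\alpha|=d-2k$, and suppose $x\in M^k$ satisfies $\mathcal{L}^\alpha x=0$. The key tool is a kernel decoupling lemma, which I would first establish in the Lefschetz-module setting. Suppose $N=L_1^{a_1}\cdots L_{m}^{a_m}$ and $x$ lies in a degree where the relevant mixed HR holds for the products obtained by lowering one exponent. Then
\begin{equation*}
\ker\mathcal{L}^\alpha\cap M^k=\ker\bigl((L_1+\cdots+L_m)^{|\alpha|}\bigr)\cap M^k\cap\bigcap_i\ker(\cdots).
\end{equation*}
In other words, the kernel of the mixed product coincides with the kernel of any product having the same support.

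Concretely, I would prove that if $\mathcal{L}^{\alpha}x=0$ with $\alpha_1\ge2$, then $\mathcal{L}^{\alpha-e_1+e_j}x=0$ for every $j$. To do so, pair against $x$ and use Hodge--Riemann for $\mathcal{L}^{\alpha-e_1-e_j}$ on the image module. This works because, by the inductive hypothesis applied to collections with fewer distinct classes, those operators satisfy hard Lefschetz and HR in the required degree. The argument is the Cauchy--Schwarz and Khovanskii--Teissier style step: $Q(\mathcal{L}^{\alpha-e_1-e_j}L_1x,L_1x)$ and $Q(\mathcal{L}^{\alpha-e_1-e_j}L_jx,L_jx)$ together with the vanishing of the mixed term force $L_jx$ to lie in the kernel.

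Iterating this exchange moves all exponents onto a single balanced or summed class. Replacing $L_1,\dots,L_m$ by the sum $L_{[m]}$ in the symmetric sense then yields $x\in\ker L_{[m]}^{d-2k}$, which is injective by (2) with $I=[m]$. Hence $x=0$.

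The main obstacle is the exchange step. It needs HR for a degenerate nef product at degree $k$ with the correct sign, and the HR for the nef product on its kernel-quotient must be obtained from injectivity through the Cattani--Kaplan--Schmid descent and purity. I would first try to derive it by perturbing the nef classes to Kähler classes and using that the limiting mixed form remains semidefinite, combined with the already-known injectivity, to upgrade semidefiniteness to definiteness.
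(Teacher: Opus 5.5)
There is a genuine gap in both directions.

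For (1)$\Rightarrow$(2), your positivity argument only works in the bottom degree. First, $Q_M(M^k,M^{d-k})=0$ unless $d=n$, so you must insert K\"ahler classes $\eta^{n-d}$; this is harmless by Proposition~\ref{keraddKah}. More seriously, above the bottom degree the form $(-1)^kQ_M(x,\mathcal{L}^{\alpha}\eta^{n-d}x)$ is in general not definite on $M^k$. Mixed Hodge--Riemann and its nef limits give a sign only on the primitive subspace $\ker(\mathcal{L}^{\alpha}\eta^{n-d}\omega)\cap M^k$, which depends on $\alpha$. On $\omega M^{k-1}$ the form has the opposite sign. Positive combinations of such indefinite forms can degenerate, so nothing forces $\mathcal{L}_{I^c}L_I^{d'-2k}$ to be injective. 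In addition, the terms of your expansion with $\alpha_i=0$ for some $i\in I$ do not vanish, and (1) says nothing about them.

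This direction can be done without positivity, by induction on $k$. Suppose $L_I$ is already $d'$-lef below degree $k$. Pick $\beta\in\mathbb{Z}_{>0}^{I}$ with $|\beta|=d'-2k$ and set $N=\sum_{i\in I}\beta_iL_i\sim L_I$. Corollary~\ref{kerinclusion} and Lemma~\ref{singlekerinmixed} give $\ker L_I^{d'-2k}\cap M^k=\ker N^{d'-2k}\cap M^k\subset\ker\bigl(\prod_{i\in I}L_i^{\beta_i}\bigr)\cap M^k$. This lies in $\ker\mathcal{L}^{\alpha}\cap M^k$, where $\alpha=\beta$ on $I$ and $\alpha=1$ off $I$, and that kernel is $0$ by (1).

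For (2)$\Rightarrow$(1), which carries the real content, the proposed mechanism does not work.

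\begin{itemize}
\item The identity you call kernel decoupling (the kernel of a mixed product equals that of any product with the same support) is false in general. Let $X$ be the blow-up of a smooth projective fivefold along a smooth curve, $L_1$ the pullback of an ample class, and $L_2$ ample. The exceptional divisor $D$ satisfies $L_1^2L_2\cdot[D]=0\neq L_1L_2^2\cdot[D]$. In the lef range, where you want to apply the identity, it merely restates the claim.
\item The exchange step is unjustified. Cauchy--Schwarz needs the form $u\mapsto Q_M(u,\mathcal{L}^{\alpha-e_1-e_j}\eta^{n-d}u)$ on $M^{k+1}$ to have Hodge-index type signature on a subspace containing $L_1x$ and $L_jx$. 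But $\mathcal{L}^{\alpha-e_1-e_j}\colon M^{k+1}\to M^{d-k-1}$ is a degree-$(k+1)$ Lefschetz operator. For $k=l-1$ it lies outside the lef range and genuinely has a kernel (Theorem~\ref{algkerintro}). Even inside the range, Hodge--Riemann only controls its primitive part, which need not contain $L_1x$ and $L_jx$.
\item Induction on $m$ does not provide these operators either, since they typically involve all $m$ classes.
\item The endpoint fails. Moves $\alpha\mapsto\alpha-e_1+e_j$ with $\alpha_1\ge2$ keep all exponents positive, so at best you get $x\in\ker\mathcal{L}^{\beta}$ for $\beta\in\mathbb{Z}_{>0}^m$. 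But $L_{[m]}^{d-2k}$ also contains the terms $\mathcal{L}^{\beta}$ with zero entries, such as $L_1^{d-2k}$. So $x\in\ker L_{[m]}^{d-2k}$ does not follow.
\end{itemize}

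The missing idea is decoupling as a \emph{sum}: if $\mathcal{L}$ is $d$-lef below degree $k$ and $|\alpha|=d-2k$, then $\ker\mathcal{L}^{\alpha}\cap M^k=\sum_{\emptyset\neq I\subset[m]}\ker L_I^{\alpha_I}\cap M^k$ (Corollary~\ref{algker}). The paper proves this by induction on the number of factors. One passes to $M/\ker L_r$, which is again a Lefschetz module, and lefness of $\mathcal{L}_{\backslash r}$ descends to it (Lemma~\ref{descentlem}, Corollary~\ref{collectiondescent}). One then lifts back using the Cattani--Kaplan--Schmid purity inclusion $\ker(L_1^{s_1}L_2^{s_2})\subset\ker L_1^{s_1}+\ker L_2^{s_2}+W_{s_1+s_2-1}(L_1+L_2,M)$. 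This is extended to nef classes via the Amini--Huh--Larson decomposition (Proposition~\ref{algpurity}), and Lemma~\ref{alglefequiv} turns the weight term into a single-class kernel. Both directions of the theorem then follow by induction on $k$, since $\alpha_I\le|I|+d-m-2k$, with equality when $\alpha_j=1$ for $j\notin I$.
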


As before, a collection $(L_1,...,L_{m})$ of nef classes is called $d$-lef below degree $l$ on $M$ if $1\leq m\leq d-2l+2$ and one of the above equivalent conditions holds.
As in the geometric setting, the kernel admits a description in the next degree.
For $\alpha\in \mathbb{Z}^m$ and every nonempty subset $I\subset [m]$, we write $\alpha_I=\sum_{i\in I}\alpha_i.$

\begin{thrm}\label{algkerintro}
    Let $d,l$ be integers satisfying $2l+1\leq d\leq n$, and let $(L_1,...,L_{m})$ be a collection of nef classes that is $d$-lef below degree $l$. Then for every $\alpha\in \mathbb{Z}_{>0}^m$ with $|\alpha|=d-2l$, we have
    \begin{equation*}
        \ker \mathcal{L}^{\alpha}\cap M^l=\sum_{\emptyset\neq I\subset [m]}\ker L_{I}^{\alpha_I}\cap M^l.
    \end{equation*}
\end{thrm}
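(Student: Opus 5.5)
Both sides of the asserted equality are subspaces of $M^l$, and one inclusion is cheap. For $\emptyset\neq I\subset[m]$ and $|\alpha|=d-2l$, positivity of all entries gives $\alpha_i\ge1$ for $i\in I$. Hence $\mathcal{L}^\alpha$ is a nonzero multiple of $\prod_{i\in I}L_i^{\alpha_i}$ times a monomial in the remaining classes, plus further nef monomials. The cheap inclusion should follow from the fact that $L_I^{\alpha_I}$ expands as a positive combination of $\prod_{i\in I}L_i^{\beta_i}$ with $|\beta|=\alpha_I$. This is not literally $\supset$, so I would instead prove $\supset$ by a positivity/Hodge--Riemann argument. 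Take $x\in\ker L_I^{\alpha_I}\cap M^l$. Set $J=[m]\setminus I$ and $N=\prod_{j\in J}L_j^{\alpha_j}$, a product of $|\alpha|-\alpha_I$ nef classes. I would show $\mathcal{L}^\alpha x=0$ by perturbing each $L_i$ to a Kähler class $L_i+\epsilon\omega$ and passing to a limit of the mixed Hodge--Riemann relations. The key input is that $(\prod_{i\in I}L_i^{\alpha_i})\,x$ is killed by all relevant test classes. This comes from the mixed HR inequality (the Khovanskii--Teissier-type Cauchy--Schwarz) for the form $Q(\,\cdot\,,\,\cdot\,N L_I^{\alpha_I-\dots})$, combined with the lefness of $L_I$ below degree $l$, which keeps the form semidefinite on the relevant subspace.

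The real content is $\subset$, which I would prove by induction on $m$; the case $m=1$ is trivial. Let $x\in\ker\mathcal{L}^\alpha\cap M^l$. I would use the Amini--Huh--Larson decomposition for $M$ with respect to the nef class $L_m$. This presents $M$ as $\ker$-part of $L_m$ plus a piece on which $L_m$ acts with the Lefschetz property of the appropriate depth. More precisely, the descent lemma of Cattani--Kaplan--Schmid gives an $L_m$-filtration whose graded pieces are again complex Lefschetz modules (of smaller degree) over the algebra generated by the other classes. The purity theorem provides the weight bounds. The lefness of $L_{I\cup\{m\}}$ below degree $l$ translates into vanishing of the low-weight pieces. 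I would decompose $x=x_0+x_1$, with $x_0\in\ker L_m^{\alpha_m}\cap M^l$ after the degree count. The component $x_1$ then lives on a graded Lefschetz module of degree $n'$ (on which $L_m$ is effectively "Kähler-like"). On that module, $(L_1,\dots,L_{m-1})$ is $d'$-lef below degree $l$ with $d'=d-\alpha_m$, and the kernel condition for $\mathcal{L}^\alpha$ becomes one for $(L_1,\dots,L_{m-1})$.

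Applying the induction hypothesis on the graded module writes $x_1$ as a sum of elements of $\ker L_{I'}^{\alpha_{I'}}$ for $I'\subset[m-1]$. Lifting back to $M$, and absorbing the $L_m$ contribution via the mixing identity $L_{I'\cup\{m\}}$, gives terms in $\ker L_{I'\cup\{m\}}^{\alpha_{I'}+\alpha_m}$ or $\ker L_{I'}^{\alpha_{I'}}$. Throughout, I would use Theorem \ref{alglefcharintro} to transfer lefness between single classes $L_I$ and the collection in both directions.

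The main obstacle is the lifting step. Elements of a graded quotient of the $L_m$-filtration only determine $x$ modulo deeper filtration pieces. One must show that the correction terms lie in $\sum_I\ker L_I^{\alpha_I}$, which requires the precise weight/degree bookkeeping from purity together with the bound $m\le d-2l+2$. I would first try to control the correction by a downward induction on the filtration index, using injectivity below degree $l$ at each step.
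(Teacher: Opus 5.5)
\textbf{The inclusion $\supseteq$ is a gap.} Your perturbation/Hodge--Riemann sketch never says which form is semidefinite on which subspace, nor why $x$ would be primitive or isotropic for it. Moreover, semidefiniteness of limiting mixed forms (Corollary~\ref{weakHRalg}) rests on the mixed injectivity of Theorem~\ref{alglefcharintro}, and the paper derives that theorem from the statement you are proving.

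The missing idea is the Cattani--Kaplan invariance of the weight filtration on relative interiors of faces (Lemma~\ref{invwtfil}). Fix $\emptyset\neq J\subset[m]$, and put $N=L_J$ and $s=\alpha_J$.
\begin{itemize}
\item For $j\in J$ we have $N\sim N+L_j$, so $W_\bullet(N)=W_\bullet(N+L_j)$. Subtracting, $L_jW_k(N)\subset W_{k-2}(N)$.
\item Since $s\le |J|+d-m-2l$, the class $N$ is $(s+2l)$-lef below degree $l$. Lemma~\ref{alglefequiv} then gives $\ker N^s\cap M^l=W_{s-1}(N)\cap M^l$ and $W_{s+1}(N)\cap M^{l-1}=0$.
\item Hence for $x\in\ker L_J^{\alpha_J}\cap M^l$, the class $\prod_{j\in J}L_j^{\alpha_j}x$ lies in $W_{-s-1}(N)\cap M^{l+s}=N^{s+1}(W_{s+1}(N)\cap M^{l-1})=0$. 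So $\mathcal{L}^\alpha x=0$.
\end{itemize}
This is Lemma~\ref{singlekerinmixed}.

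\textbf{The inclusion $\subseteq$: right architecture, but the key step is left open.} Inducting on the number of classes, descending along $L_m$, and lifting back is the paper's strategy. However, the step you flag as the main obstacle is where the proof lives, and the tools are misdescribed.
\begin{itemize}
\item The Amini--Huh--Larson decomposition over a subalgebra produces summands that are modules only over that subalgebra. It cannot give a piece on which $L_1,\dots,L_{m-1}$ still act while $L_m$ is K\"ahler-like.
\item The right object is the quotient $M/\ker L_m^{\alpha_m}$. By iterating Lemma~\ref{descentlem}, it is a Lefschetz module of degree $n-\alpha_m$ over all of $(A,\mathcal{K}_A)$.
\item In this quotient the ambiguity of a lift is $\ker L_m^{\alpha_m}$, which is already a summand of the right-hand side. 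So there are no correction terms and no downward induction.
\end{itemize}
What you actually must show is the following: if $x_{I'}\in M^l$ satisfies $L_m^{\alpha_m}L_{I'}^{\alpha_{I'}}x_{I'}=0$, then $x_{I'}\in\ker L_m^{\alpha_m}+\ker L_{I'}^{\alpha_{I'}}+\ker L_{I'\cup\{m\}}^{\alpha_{I'}+\alpha_m}$. Two ingredients give this:
\begin{itemize}
\item The purity inclusion $\ker(L_1^{s_1}L_2^{s_2})\subset\ker L_1^{s_1}+\ker L_2^{s_2}+W_{s_1+s_2-1}(L_1+L_2,M)$ for arbitrary nef $L_1,L_2$ (Proposition~\ref{algpurity}). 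This is Cattani--Kaplan--Schmid when $L_1+L_2$ is K\"ahler, extended by decomposing $M$ over the subalgebra generated by $L_1,L_2$. That extension is the real role of AHL here.
\item Lemma~\ref{alglefequiv}, which uses the lefness of $L_{I'\cup\{m\}}$ to identify the weight term in $M^l$ with $\ker(L_{I'}+L_m)^{\alpha_{I'}+\alpha_m}$.
\end{itemize}

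You also assert without proof that $(L_1,\dots,L_{m-1})$ is lef on the quotient. This needs Lemma~\ref{alglefpres} (again purity plus Lemma~\ref{alglefequiv}), applied repeatedly and using $L_{I'}+cL_m\sim L_{I'}+L_m$. It cannot be outsourced to Theorem~\ref{alglefcharintro}, which the paper deduces from the present result.

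The paper avoids this exponent bookkeeping in three steps. It passes to $(L_1[\alpha_1],\dots,L_m[\alpha_m])$ via Lemma~\ref{addalg}, descends one class at a time in the exponent-one case, and converts back with Corollary~\ref{kerinclusion}, since every partial sum of the repeated classes is $\sim$-equivalent to some $L_J$.
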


Applying these two results to the cohomology of smooth projective varieties, we recover the mixed Lefschetz theorems \ref{mixedlefcharintro} and \ref{mixedkerintro} from the pure Lefschetz theorems \ref{purelefcharintro} and \ref{purekerintro}.
We refer to Section \ref{geomapp} for details.
As another application, we characterize the kernel associated with a supercritical collection of nef classes, thereby solving a conjecture of Shenfeld-van Handel and Hu-Xiao \cite{SvH23Acta,HuXiaoLefChar}.

\begin{thrm}\label{nefker(1,1)intro}
    Let $X$ be a smooth projective variety over $\mathbb{C}$ of dimension $n$, and let $\mathcal{L}=(L_1,...,L_{n-2})$ be a supercritical collection of nef classes. Then 
    \begin{equation*}
        \ker  \mathcal{L}\cap H^{1,1}(X,\mathbb{R})=\Span_{\mathbb{R}}\{[D]:D\in \prim (X), \mathcal{L}\cdot [D]=0\}.
    \end{equation*}
\end{thrm}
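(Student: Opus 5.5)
The plan is to apply the abstract kernel decoupling theorem (Theorem \ref{algkerintro}) to the complex Lefschetz module $M=\bigoplus_k H^{k,k}(X)$ of the example above, with $d=n$, $l=1$, $m=n-2$ and $\alpha=(1,\dots,1)$, so that $|\alpha|=n-2=d-2l$. This reduces the mixed kernel to single-class kernels. Each single-class kernel is then handled by degenerating the nef class to semiample ones and using Theorem \ref{purekerintro}.

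\emph{Hypotheses of Theorem \ref{algkerintro}.} We need $(L_1,\dots,L_{n-2})$ to be $n$-lef below degree $1$ on $M$, i.e. $m\le d-2l+2=n$ (clear) and condition (1) of Theorem \ref{alglefcharintro} for $k=0$ and $|\alpha|=n$. By Theorem \ref{alglefcharintro}, it suffices that each $L_I$ be $(|I|+2)$-lef below degree $1$. That means $L_I^{|I|+2}\colon M^0\to M^{|I|+2}$ is injective, i.e. $L_I^{|I|+2}\neq 0$, which is exactly $\nd_X(L_I)\ge |I|+2$, the supercriticality assumption. (The case $n\le 2$ is vacuous or trivial. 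Note also that $2l+1\le d$ requires $n\ge3$.) Theorem \ref{algkerintro} then gives
\[
\ker\mathcal{L}\cap H^{1,1}(X)=\sum_{\emptyset\neq I\subset[n-2]}\ker L_I^{|I|}\cap H^{1,1}(X).
\]
Since everything is defined over $\mathbb{R}$, the same holds with real coefficients. Each $[D]$ with $L_I^{|I|}\cdot[D]=0$ for some $I$ satisfies $\mathcal{L}\cdot[D]=0$, because $\mathcal{L}$ is a positive combination-expansion dominating $L_I^{|I|}$ times nef classes. More simply, the inclusion $\supseteq$ in the final statement is trivial. So it remains to show, for a single nef class $N=L_I$ with $\nd_X(N)\ge r+2$ where $r=|I|$, that
\[
\ker N^{r}\cap H^{1,1}(X,\mathbb{R})\subset\Span_{\mathbb{R}}\{[D]: N^r\cdot[D]=0\}.
\]
Such divisors then also satisfy $\mathcal{L}\cdot[D]=0$. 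To see this, expand $N^r=L_I^{r}$, a sum with positive coefficients of nef products. Vanishing of a pseudoeffective sum forces each term $\prod_{i\in I}L_i\cdot[D]$, say, to vanish, and multiplying by the remaining nef $L_j$ gives $\mathcal{L}\cdot[D]=0$.

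\emph{Single nef class (main obstacle).} For semiample $N$ this is Theorem \ref{purekerintro} with $d=r+2$ and $l=1$, since $N$ is $(r+2)$-lef below degree $1$. For a general nef $N$ we must pass to the limit. First I would try to show that the kernel $\ker N^r\cap H^{1,1}$ is governed by the Hodge index structure. Under $\nd(N)\ge r+2$, the form $(x,y)\mapsto\int x\,y\,N^r\,\omega^{n-r-2}$ has Lorentzian-type signature, and its kernel is small. A dimension bound then comes from the decoupling at lower levels. A more robust route, which I would pursue, is to use the Lefschetz-module decomposition (as in Amini--Huh--Larson) for $N$ acting on $M$. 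There $\ker N^{r}\cap M^1$ is identified with the lowest piece of a weight filtration, which is the image of a Gysin-type map from divisors. To make it cycle-theoretic, one shows that any $x\in\ker N^r\cap H^{1,1}$ satisfies $x\cdot N^{r}\cdot\omega^{n-r-1}=0$ and has nonpositive square against $N^r\omega^{n-r-2}$. A Zariski-decomposition/divisorial argument (as in Hu--Xiao for the $l=1$ case) then expresses $x$ through the divisors on which $N$ has small numerical dimension. Concretely, one considers the negative part of the divisorial Zariski decomposition of $N-\epsilon x$ and shows its components $D$ satisfy $N^r\cdot[D]=0$. The finiteness and linear independence of such $D$ are provided by the $l=1$ independence statement, via a limiting argument on $H^{1,1}$ with rational approximations. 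This limiting/divisorial step is where I expect the real difficulty to lie.
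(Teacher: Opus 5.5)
Your reduction step is the same as the paper's. Supercriticality is exactly $n$-lefness below degree $1$ on $M_0=\bigoplus_i H^{i,i}(X)$, and Theorem \ref{decoupling} (or Theorem \ref{algkerintro} with $\alpha=(1,\dots,1)$) gives $\ker\mathcal{L}\cap H^{1,1}(X)=\sum_{I}\ker L_I^{|I|}\cap H^{1,1}(X)$. Your treatment of the inclusion $\supseteq$, and of the passage from $L_I^{|I|}\cdot[D]=0$ to $\mathcal{L}\cdot[D]=0$, is fine.

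\textbf{The gap.} What is missing is the single-class step for an arbitrary nef class $N=L_I$ with $\nd(N)\ge r+2$, $r=|I|$. This is the whole content of the nef case, and you leave it unproved. Theorem \ref{purekerintro} covers only semiample $N$, and none of your proposed extensions works as stated:
\begin{itemize}
\item \emph{Limiting.} The dimension of the kernel can only jump up in the limit. The natural approximants $N+tA$ ($t>0$, $A$ ample) have $r$-th powers injective on $H^{1,1}(X)$ by hard Lefschetz, so they say nothing about which divisors $N^r$ annihilates. Moreover, a nef class in $H^{1,1}(X,\mathbb{R})$ need not lie in $\mathrm{NS}(X)_{\mathbb{R}}$, and a nef $\mathbb{R}$-divisor need not be semiample. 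So in general there is no semiample class to which Theorem \ref{purekerintro} could be applied.
\item \emph{Lefschetz modules.} The abstract Lefschetz-module route contains no cycles. In Section 2 the identification of the relevant weight/perverse piece with cycle classes comes from the decomposition theorem for the Kodaira map, which a nef class does not have.
\item \emph{Zariski decomposition.} The divisorial Zariski decomposition sketch is not an argument. $N-\epsilon x$ need not be pseudoeffective, and nothing shows that $x$ lies in the span of the components of a negative part, or that these components satisfy $N^r\cdot[D]=0$.
\end{itemize}
Linear independence is not needed for the statement, and it is not available for nef classes here.

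\textbf{How the paper closes it.} The missing input is a genuine kernel theorem for nef classes, and the paper imports one. The Hu--Xiao theorem (Theorem \ref{hxlefchar}) says that for nef $(L_1,\dots,L_{n-2})$ with $\nd(L_i)\ge i+2$ for all $i$, the space $\ker\mathcal{L}\cap H^{1,1}(X)$ is spanned by the prime divisors it annihilates. It cannot be applied to your original collection, because supercriticality does not give such a staircase ordering; that is exactly why decoupling is done first.

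For a single class, apply it to the padded collection $(N[r],\omega[n-r-2])$ with $\omega$ K\"ahler. This collection satisfies the staircase condition because $\nd(N)\ge r+2$ and $\nd(\omega)=n$. The result is $\ker(N^r\omega^{n-r-2})\cap H^{1,1}(X)=\Span_{\mathbb{C}}\{[D]: N^r\omega^{n-r-2}\cdot[D]=0\}$. The condition $N^r\omega^{n-r-2}\cdot[D]=0$ is equivalent to $N^r\cdot[D]=0$, since both say $\nd_D(N)\le r-1$.

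The paper identifies $\ker N^r\cap H^{1,1}(X)$ with $\ker(N^r\omega^{n-r-2})\cap H^{1,1}(X)$ via Proposition \ref{keraddKah} (descent to $M/\ker N^r$ plus mixed hard Lefschetz there). For the inclusion you need, the trivial containment $\ker N^r\subset\ker(N^r\omega^{n-r-2})$ together with the divisor equivalence already suffices. This is Corollary \ref{nefsingleker}, and with it your outline closes exactly as in the paper.
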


As a consequence, one obtains the following characterization of the equality cases in the Khovanskii-Teissier inequality.

\begin{cor}\label{extremalKT}
        Let $X$ be a smooth projective variety over $\mathbb{C}$ of dimension $n$, let $A,B$ be nef classes and let $1\leq k\le n-1$ be an integer.
        Assume that $\int_X A^{k}\cdot B^{n-k}>0$.
        Then the equality 
        \begin{equation*}
            (\int_X A^{k}\cdot B^{n-k})^2=\int_X A^{k-1}\cdot B^{n-k+1}\int_X A^{k+1}\cdot B^{n-k-1}
        \end{equation*}
        holds if and only if there exists a real constant $c>0$ such that
        \begin{equation*}
            A-cB=\sum a_D [D]
        \end{equation*}
        where $a_D\in \mathbb{R}$ and the sum runs over prime divisors $D$ satisfying $A^{k-1}\cdot B^{n-k-1}\cdot[D]=0.$
\end{cor}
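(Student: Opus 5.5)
We derive Corollary \ref{extremalKT} from Theorem \ref{nefker(1,1)intro} together with the Khovanskii-Teissier equality criterion recalled in the introduction. Set $\mathcal{L}=(A,\dots,A,B,\dots,B)$, with $k-1$ copies of $A$ and $n-k-1$ copies of $B$; this is a collection of $n-2$ nef classes, and the complete intersection $\mathcal{L}=A^{k-1}B^{n-k-1}$ is exactly the operator in the statement. The asserted equality reads
\[
\Big(\int_X A\cdot B\cdot\mathcal{L}\Big)^2=\int_X A^2\cdot\mathcal{L}\int_X B^2\cdot\mathcal{L},
\]
and $\int_X A\cdot B\cdot\mathcal{L}=\int_X A^kB^{n-k}>0$. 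The recalled criterion then shows that equality holds if and only if $A-cB\in\ker\mathcal{L}\cap H^{1,1}(X,\mathbb{R})$ for some real $c>0$.

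For the direction ``$\Leftarrow$'', note that each prime divisor $D$ in the sum satisfies $\mathcal{L}\cdot[D]=0$. Hence $A-cB$ lies in the kernel, and equality follows from the criterion. No supercriticality is needed here.

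For ``$\Rightarrow$'', it suffices to identify $\ker\mathcal{L}\cap H^{1,1}(X,\mathbb{R})$ with the span of those $[D]$. Theorem \ref{nefker(1,1)intro} does this once $\mathcal{L}$ is supercritical, i.e.\ $\nd_X(L_I)\ge |I|+2$ for every nonempty $I\subset[n-2]$. Any such $L_I$ is a positive combination $aA+bB$, where $a\le k-1$ copies of $A$ and $b\le n-k-1$ copies of $B$ are chosen and $a+b=|I|$. Expanding $(aA+bB)^{r}$ into mixed terms $A^iB^{r-i}\cdot X$, all of which are nonnegative by nefness, it is enough to produce one nonzero term with $r=|I|+2$.

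\begin{itemize}
\item If $a,b\ge1$, use $A^kB^{n-k}\neq0$ directly when $|I|+2=n$. When $|I|+2<n$, the log-concavity of the sequence $s_i=\int A^iB^{n-i}$ (Khovanskii-Teissier), together with $s_k>0$, makes $\{i: s_i>0\}$ an interval containing $k$. One then needs to show that $A^iB^{r-i}\neq0$ for some $i$ with $0\le i\le r$ and $r-i\le n-k$; a suitable choice is $i=\min(k,r)$, since $A^kB^{n-k}\ne0$ implies that every lower-degree factor $A^{i}B^{j}$ with $i\le k$ and $j\le n-k$ is nonzero.
\item If $b=0$, then $L_I=aA$ with $|I|=a\le k-1$. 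We need $\nd(A)\ge a+2$, and indeed $\nd(A)\ge k\ge a+1$; however, the case $a=k-1$ needs $\nd(A)\ge k+1$, which may fail. This is the real obstacle: genuine supercriticality can fail, for example when $\nd(A)=k$ exactly.
\end{itemize}

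To handle this, I would perturb. Replace $\mathcal{L}$ by a collection such as $\mathcal{L}_\epsilon$, with $A$ replaced by $A+\epsilon B$ in some slots. An alternative is to first reduce via the degenerate case: if $\nd(A)=k$, then $A^{k+1}B^{n-k-1}=0$ and equality forces $s_{k-1}=0$ as well, or contradicts positivity. Indeed, equality $s_k^2=s_{k-1}s_{k+1}$ with $s_k>0$ forces $s_{k\pm1}>0$, so $A^{k+1}\ne0$ and hence $\nd(A)\ge k+1$. Symmetrically $\nd(B)\ge n-k+1$. This removes the obstruction in the pure cases.

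Granting these bounds, the mixed cases follow from the nonvanishing of $s_{k-1},s_k,s_{k+1}$, which gives $A^iB^{j}\ne0$ for all $i\le k+1$, $j\le n-k+1$ with $i+j\le n$ and at least one of the relevant extremal products available. The hypotheses of Theorem \ref{nefker(1,1)intro} then hold, and $A-cB$ lies in the span of $\{[D]:A^{k-1}B^{n-k-1}\cdot[D]=0\}$, as claimed.
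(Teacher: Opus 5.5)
Your proposal is correct and follows the paper's proof. Equality together with $s_k>0$ forces $s_{k\pm1}>0$, which gives $\nd(A)\ge k+1$, $\nd(B)\ge n-k+1$ and $\nd(A+B)=n$; this is exactly supercriticality of $(A[k-1],B[n-k-1])$, and Theorem~\ref{nefker(1,1)intro} combined with the Hodge-index equality criterion finishes the argument. The paper derives that criterion via Lemma~\ref{hypextremal} rather than quoting it from the introduction, and your perturbation and log-concavity detours are unnecessary and can be dropped.
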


This article is organized as follows. In Section \ref{singlelefchar}, 
we characterize the kernels of Lefschetz operators given by powers of a single semiample class, proving Theorems \ref{purelefcharintro} and \ref{purekerintro}.
To extend the single-class case to the mixed setting, we present two approaches. The first one is geometric and is developed in Section \ref{geometricdecoupling}.
The second is based on the theory of Lefschetz modules and is presented in Section \ref{decoupLefMod}.

\section{Kernel characterizations for a single class}\label{singlelefchar}
In this section, we prove the kernel characterizations for a single semiample class $L$. The proof is inspired by \cite{decataldoLefsemismall,deCMdecomp}. We first study the geometry of the Kodaira map $f$ associated with $L$. The lefness condition on $L$ will be explained as a generalized semismallness condition on $f$. We then analyze the decomposition theorem for $f$ and determine the relevant terms of the perverse filtration which characterize the kernels.

Throughout this section, we use the following setup and notation.
\begin{itemize}
    \item Let $X$ be a smooth projective variety over $\mathbb{C}$ of dimension $n$.
    \item Let $L$ be a line bundle on $X$. By an abuse of notation, we use the same symbol $L$ to denote its first Chern class $c_1(L)\in H^{1,1}(X,\mathbb{R})$. We use additive notation for line bundles; thus $L+M$ denotes $L\otimes M$.
    \item Let $H^*(X)$ denote the cohomology ring of $X$ with real coefficients. 
    For a complex $M$ of vector spaces or sheaves, we use the degree shift convention $(M[k])^i=M^{i+k}.$
    \item We denote by $\Irred^l(X)$ the set of all irreducible subvarieties of $X$ of codimension $l$.
    \item If $L$ is nef, we define its numerical dimension by
    $$\nd(L)=\max\{k\in \mathbb{N}:L^k\neq 0\}.$$
    More generally, 
    for every irreducible subvariety $V\subset X$, we define the numerical dimension of $L$ with respect to $V$ by
    $$\nd_V(L)=\max\{k\in \mathbb{N}:L^k\cdot [V]\neq 0\}.$$
\end{itemize}

\subsection{The decomposition theorem}

We briefly recall the notation and results from the theory of perverse sheaves that will be used in this section. Unless otherwise specified, all sheaves are taken with real coefficients. We refer to \cite{BBDG82DecompsitionThrm,deCMdecomp,deCataldoMiglioriniPerverse} for further details.

Let $Y$ be a complex irreducible algebraic variety. We denote by $\mathbb{R}_Y$ and $\omega_Y$ the constant sheaf and the dualizing complex on $Y$, respectively. If $Y$ is smooth, then there is a canonical isomorphism  $\omega_Y\simeq \mathbb{R}_Y[2\dim Y].$
We write $D_c^b(Y)$ for the bounded derived category of constructible sheaves of real vector spaces on $Y$, and denote by 
$\mathbb{D}_Y:D_c^b(Y)\to D_c^b(Y) $
the Verdier duality functor.

The category $D_c^b(Y)$ carries the standard $t$-structure, whose truncation functors are denoted by $\tau_{\leq k}$ and $\tau_{\geq k}$. 
Thus, for every $K\in D_c^b(Y)$ and every integer $k$, there is a canonical distinguished triangle 
\begin{equation*} 
    \tau_{\leq k}K \rightarrow K \rightarrow \tau_{\geq k+1}K \overset{+1}{\rightarrow}. 
\end{equation*} 
We also consider the perverse $t$-structure on $D_c^b(Y)$. Its truncation functors and cohomology functors are denoted by 
${}^p\tau_{\leq k},  {}^p\tau_{\geq k} $ and $  {}^p\mathcal{H}^k,$
respectively. In particular, there is a canonical distinguished triangle 
\begin{equation*}
    {}^p\tau_{\leq k}K \rightarrow K \rightarrow {}^p\tau_{\geq k+1}K \overset{+1}{\rightarrow}. 
\end{equation*} 
The objects in the heart ${}^pD_c^{\leq 0}(Y)\cap{}^pD_c^{\geq 0}(Y)$ are called perverse sheaves.

Let $Y=\bigsqcup_{\alpha \in \Lambda} S_{\alpha}$ be a Whitney stratification of $Y$ and let $\mathcal{L}$ be a local system on a stratum $S_{\alpha}$. 
\begin{defn}
    The intersection complex $\IC(\overline{S_{\alpha}},\mathcal{L})\in D_c^b(Y)$ is the unique complex supported on $\overline{S_{\alpha}}$ satisfying the following conditions.
\begin{itemize}
    \item (Normalization) $\IC(\overline{S_{\alpha}},\mathcal{L})_{|S_{\alpha}}\cong \mathcal{L}[\dim S_{\alpha}]$.
    \item (Support condition) $\mathcal{H}^i(\IC(\overline{S_{\alpha}},\mathcal{L}))_{|S_{\beta}}=0$ 
    for every $\beta <\alpha$ and every $i \geq -\dim S_{\beta}.$
    \item (Cosupport condition) $\mathbb{D}_Y\IC(\overline{S_{\alpha}},\mathcal{L})$ satisfies the support condition.
\end{itemize}
With this normalization, the intersection cohomology groups with coefficients $\mathcal{L}$ are defined by 
\begin{equation*} 
    IH^k(\overline{S_{\alpha}},\mathcal{L}) := \mathbb{H}^{k-\dim S_{\alpha}} \bigl( \overline{S_{\alpha}},\IC(\overline{S_{\alpha}},\mathcal{L}) \bigr)
\end{equation*}
where $\mathbb{H}$ denotes hypercohomology.
\end{defn}

We now turn to the relative setting.
Let $f:X\to Y$ be a projective morphism where $X$ is smooth of dimension $n$. 
\begin{defn}
    A stratification $Y=\bigsqcup_{\alpha\in \Lambda}S_{\alpha}$ of $Y$ is said to be adapted to $f$ if it is a Whitney stratification and for every $\alpha\in \Lambda$, the restriction of $f$ over $S_{\alpha}$ is a stratified submersion. More precisely, there exists a Whitney stratification $f^{-1}(S_\alpha)=\bigsqcup_\beta T_{\alpha,\beta}$  such that 
    \begin{equation*} 
        f_{|T_{\alpha,\beta}}: T_{\alpha,\beta}\rightarrow S_\alpha 
    \end{equation*}
    is a submersion for every $\beta$.
\end{defn}

Such an adapted stratification exists by \cite[Part I, \S1.7]{GoreskyMacphersonStraMorse}. Since $f$ is projective, Thom's first isotopy lemma \cite[Part I, \S1.5]{GoreskyMacphersonStraMorse} implies that the map $f$ is locally topologically trivial over each stratum $S_\alpha$. In particular, $Rf_* \mathbb{R}[n]$ is constructible with respect to an adapted stratification.

We recall the two parts of the decomposition package that will be used later.

\begin{itemize}
\item \textbf{Decomposition theorem.} There is a (noncanonical) isomorphism in $D^b_c(Y)$
\begin{equation*}
    Rf_*\mathbb{R}_X[n] \cong \bigoplus_{i\in \mathbb{Z}} {}^p\mathcal H^i(Rf_*\mathbb{R}[n])[-i].
\end{equation*}

\item \textbf{Semisimplicity theorem.} Each perverse cohomology sheaf
${}^p\mathcal H^i(Rf_*\mathbb{R}[n])$
is semisimple. 
After refining the adapted stratification if necessary, it decomposes as
\begin{equation*}
{}^p\mathcal H^i(Rf_*\mathbb{R}[n])\cong \bigoplus_{\alpha\in \Lambda} \operatorname{IC}(\overline{S_{\alpha}},\mathcal{L}_{\alpha,i})
\end{equation*}
where $\mathcal{L}_{\beta,i}$ is a semisimple local system on $S_{\alpha}$. 
Restricting to $S_{\alpha}$ and taking the $(-\dim S_{\alpha})$-th cohomology sheaf, we obtain 
$$\mathcal{L}_{\alpha,i}=\mathcal{H}^{-\dim S_{\alpha }}({}^p\mathcal H^i(Rf_*\mathbb{R}[n])_{|S_{\alpha}}).$$
\end{itemize}

The perverse truncation functors on $Rf_*\mathbb{R}_X[n]$ induce an increasing filtration on the cohomology of $X$. For every pair of integers $i,j$, define
\begin{equation*}
H^i_{\leq j}(X)
= 
\Image (
\mathbb{H}^{i-n}(Y,{}^p\tau_{\leq j}Rf_*\mathbb{R}_X[n])
\rightarrow
\mathbb{H}^{i-n}(Y,Rf_*\mathbb{R}_X[n])).
\end{equation*}
Using the canonical identification $\mathbb{H}^{i-n}\!\left(Y,Rf_*\mathbb{R}_X[n]\right)\cong H^{i}(X),$
we obtain the increasing perverse filtration
\begin{equation*}
\cdots \subset H^i_{\leq j-1}(X) \subset H^i_{\leq j}(X) \subset H^i_{\leq j+1}(X) \subseteq \cdots,
\end{equation*}

After choosing a decomposition in the decomposition theorem, this filtration admits a splitting
\begin{equation*}
    H^i_{\leq j}(X)=\bigoplus_{k\leq j} H^{i}_k(X)
\end{equation*}
where
\begin{equation*}
    H^i_k(X)\cong H^{i}_{\leq k}(X)/H^{i}_{\leq k-1}(X)\cong \mathbb{H}^{i-n-k}(Y,{}^p\mathcal{H}^k(Rf_*\mathbb{R}_X[n])).
\end{equation*}
Although such a splitting is noncanonical, the perverse filtration itself is canonical.

The following observation, which follows directly from the identification of the perverse filtration with the weight filtration associated with $L$ (see \cite[Proposition 5.2.4]{deCMdecomp}), provides the link between the perverse filtration and the kernels of the Lefschetz operators considered in this manuscript.

\begin{lem}\label{kerandperv}
    Let $A$ be an ample $\mathbb{Q}$-line bundle on $Y$ and let $L=f^*A$.
    Then for all integers $d,k$ with $0\leq k\leq d\leq n$, we have
    $$\ker(L^{d-k}:H^k(X)\rightarrow H^{2d-k}(X))\subset H^{k}_{\leq d-n-1}(X).$$
\end{lem}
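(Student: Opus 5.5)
The plan is to prove the inclusion by contradiction, using only two facts: cup product with $L=f^*A$ is compatible with the perverse filtration, and the hard Lefschetz theorem for perverse sheaves on $Y$ (BBD) holds with respect to the ample class $\eta=c_1(A)\in H^2(Y)$. Throughout, write $K=Rf_*\mathbb{R}_X[n]$ and $P_j={}^p\mathcal{H}^j(K)$.

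\emph{Step 1: $L$ preserves the filtration and acts on the graded pieces through $\eta$.} By the projection formula, cup product with $L=f^*A$ on $H^i(X)\cong\mathbb{H}^{i-n}(Y,K)$ is cup product with $\eta$ on hypercohomology. For any $F\in D^b_c(Y)$, the class $\eta$ acts functorially as a map $\mathbb{H}^m(Y,F)\to\mathbb{H}^{m+2}(Y,F)$, and this action commutes with maps induced by morphisms $F\to F'$. Applying this to ${}^p\tau_{\le j}K\to K$, we get
\begin{equation*}
L\cdot H^i_{\le j}(X)\subset H^{i+2}_{\le j}(X),
\end{equation*}
so the filtration index $j$ is preserved. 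Applying it to the triangle ${}^p\tau_{\le j-1}K\to{}^p\tau_{\le j}K\to P_j[-j]$, we see that the induced map on the graded piece
\begin{equation*}
\operatorname{Gr}_jH^i(X)\cong\mathbb{H}^{i-n-j}(Y,P_j)\longrightarrow \operatorname{Gr}_jH^{i+2}(X)\cong\mathbb{H}^{i+2-n-j}(Y,P_j)
\end{equation*}
is cup product with $\eta$. One could also argue via a chosen splitting from the decomposition theorem, but the functorial argument avoids checking how $\eta$ interacts with off-diagonal Ext-terms.

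\emph{Step 2: the contradiction.} Let $x\in H^k(X)$ satisfy $L^{d-k}x=0$, and suppose $x\notin H^k_{\le d-n-1}(X)$. Since the filtration is exhaustive, there is a minimal $j\ge d-n$ with $x\in H^k_{\le j}(X)$. The image $\bar x$ of $x$ in $\operatorname{Gr}_jH^k(X)\cong\mathbb{H}^{-r}(Y,P_j)$ is then nonzero, where $r=n+j-k$. Note that $r\ge d-k\ge 0$ because $j\ge d-n$. The hard Lefschetz theorem for the perverse sheaf $P_j$ and the ample class $\eta$ says that
\begin{equation*}
\eta^r:\mathbb{H}^{-r}(Y,P_j)\to\mathbb{H}^{r}(Y,P_j)
\end{equation*}
is an isomorphism. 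Hence $\eta^s$ is injective on $\mathbb{H}^{-r}(Y,P_j)$ for every $0\le s\le r$, and in particular for $s=d-k$. By Step 1, the image of $L^{d-k}x$ in $\operatorname{Gr}_jH^{2d-k}(X)$ equals $\eta^{d-k}\bar x$, which is nonzero. This contradicts $L^{d-k}x=0$, so $x\in H^k_{\le d-n-1}(X)$.

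\emph{Main obstacle.} The only delicate point is Step 1. One must make sure that the degree bookkeeping $H^i_j(X)\cong\mathbb{H}^{i-n-j}(Y,P_j)$ is the one fixed in the paper, and that $L$ preserves the index $j$ rather than shifting it by $2$. The latter would happen if one mistakenly viewed $L$ as a morphism $K\to K[2]$ and then applied ${}^p\tau$ to the shifted complex. To avoid this, I will phrase Step 1 purely in terms of the $H^*(Y)$-module structure on hypercohomology, which is natural in the coefficient complex. With that in place, the inequality $r=n+j-k\ge d-k$ is exactly the condition $j\ge d-n$, and this is where the bound $d-n-1$ in the statement comes from.
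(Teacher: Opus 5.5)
Your proof is correct, but it takes a different route from the paper's.

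\textbf{The paper's route.} It goes through the weight filtration $W^L_\bullet$ of the nilpotent operator $L$ on $H^*(X)$. The explicit formula $W_k=\sum_{c\ge 0}L^c\ker L^{k+2c+1}$ gives $\ker L^{d-k}\cap H^k(X)\subset W^L_{d-k-1}\cap H^k(X)$ immediately. The paper then cites de Cataldo--Migliorini (Proposition 5.2.4) for the identification $W^L_{d-k-1}\cap H^k(X)=H^k_{\le d-n-1}(X)$.

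\textbf{Your route.} You never mention the weight filtration. You use directly the ingredient that underlies that identification: the hard Lefschetz theorem for the perverse cohomology groups $\mathbb{H}^*(Y,P_j)$ with respect to the ample class $\eta$. This is part of the decomposition package of BBD and de Cataldo--Migliorini, and you should cite it as such. You combine it with two facts: $L$ preserves each step of the perverse filtration, and $L$ acts as $\eta$ on the graded pieces.

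\textbf{Comparison.} Your argument is self-contained. It proves exactly the inclusion that the paper later uses, and nothing more. The paper's version is a one-line citation, but it yields the stronger statement that the perverse filtration coincides with the weight filtration of $L$. That equality also matches the weight-filtration language of the Lefschetz-module section.

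\textbf{Your Step 1.} Treating cup product with $\eta$ as a natural transformation $F\to F[2]$ is exactly the right way to obtain $L\,H^i_{\le j}\subset H^{i+2}_{\le j}$ rather than the weaker $H^{i+2}_{\le j+2}$. The identification $\operatorname{Gr}_jH^i(X)\cong\mathbb{H}^{i-n-j}(Y,P_j)$ that you use relies on the decomposition theorem, which gives injectivity of $\mathbb{H}({}^p\tau_{\le j}K)\to\mathbb{H}(K)$. The paper records this fact, so your use of it is legitimate.
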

\begin{proof}
    By the defining properties of the weight filtration associated with $L$
    (see, for example, Section \ref{decoupLefMod}), we have
    \begin{equation*}
        \ker L^{d-k}\cap H^k(X) \subset W^L_{d-k-1}\cap H^{k}(X).
    \end{equation*}
    On the other hand, \cite[Proposition~5.2.4]{deCMdecomp} gives
    \begin{equation*}
        W^L_{d-k-1}\cap H^{k}(X)= H^{k}_{\leq d-n-1}(X).
    \end{equation*}
    The desired inclusion follows.
\end{proof}

\subsection{Lefness for semiample line bundles}
We now introduce the positivity condition that will govern the Lefschetz properties studied in this section.

\begin{defn}
    Let $d,l$ be integers satisfying $2l-1\leq d\leq n$ and let $L$ be a semiample line bundle.
    $L$ is called $d$-lef below degree $l$ if 
    $$L^{d-2\codimension V}\cdot [V]\neq 0$$ 
    for every subvariety $V\subset X$ with $\codimension_X V\leq l-1$. 
\end{defn}

When $l=1$, this condition is simply 
\begin{equation*} 
    L\text{ is }d\text{-lef below degree }1 \iff  \nd(L)\geq d. 
\end{equation*} 
More generally, when passing from $d$-lefness below degree $l-1$ to $d$-lefness below degree $l$, the additional condition is
\begin{equation*}
    L^{d-2(l-1)}\cdot [V]\neq 0, \ \ \ \forall V\in \Irred^{l-1}(X).
\end{equation*} 
This condition is nontrivial precisely when 
$2l-1\leq d. $
Thus the relevant range is 
\begin{equation*} 
    1\leq d\leq n,  \ \ \  1\leq l\leq \left\lceil\frac{d}{2}\right\rceil. 
\end{equation*} 
When $l=\left\lceil\frac{d}{2}\right\rceil, $
we simply say that $L$ is $d$-lef. 
As we will see in Proposition \ref{equiv}, $n$-lefness is equivalent to notion of lefness introduced in \cite{decataldoLefsemismall}. And $d$-lefness agrees with
the notion of the same name in \cite{hushangxiao2023hardlefArxiv}.
Here we additionally keep track of the degree truncation parameter $l$.

In the remainder of this subsection, we fix integers $d,l$ satisfying $0\leq 2l-1\leq d\leq n$ and a semiample line bundle $L$. We also fix a morphism $f:X\to Y$ such that $L=f^* A$ for some ample $\mathbb{Q}$-line bundle on $Y$. Such a morphism exists, for example, by taking the Kodaira map $\phi_{|mL|}$ associated with a sufficiently divisible multiple $mL$ of $L$. The key property that will be used repeatedly is $\nd_V(L)= \dim f(V)$ for every subvariety $V\subset X$.
We further fix a stratification $Y=\bigsqcup_{\alpha \in \Lambda}S_{\alpha}$ of $Y$ adapted to $f$.

We first show that lefness will imply the rigidity of certain "exceptional" subvarieties.

\begin{lem}\label{rigidity}
    If $L$ is $d$-lef below degree $l$,
    then for every $V\in \Irred^l(X)$ satisfying $\dim f(V)\leq d-2l$, 
    there exists a stratum $S_{\alpha}$ such that $f(V)=\overline{S_{\alpha}}$, and $V$ is a maximal-dimensional irreducible component of $\overline{f^{-1}(S_\alpha)}$.
\end{lem}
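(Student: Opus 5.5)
We argue by a dimension count on preimages of strata, using only the lefness hypothesis and the equality $\nd_W(L)=\dim f(W)$ for irreducible $W\subset X$. Fix $V\in\Irred^l(X)$ with $\dim f(V)\le d-2l$. Write $Z=f(V)$, which is irreducible and closed since $f$ is projective. The stratification is finite, so there is a unique stratum $S_\alpha$ with $S_\alpha\cap Z$ dense in $Z$. Then $\overline{S_\alpha}\supseteq Z$.

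Let $W$ be an irreducible component of $\overline{f^{-1}(S_\alpha)}$ containing $V$. Such a $W$ exists because $f^{-1}(S_\alpha)\cap V$ is dense in $V$ (it is the preimage of a dense open subset of $Z$), and we may choose $W$ to contain this dense set. Since $f$ is locally topologically trivial over $S_\alpha$, and hence has equidimensional fibres over $S_\alpha$ on each component, we may arrange that $f(W)=\overline{S_\alpha}$. Set $c=\codim_X W\le l$. If $c\le l-1$, lefness gives $\dim f(W)=\nd_W(L)\ge d-2c$.

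The key inequality compares fibre dimensions. Over a general point $y\in Z\cap S_\alpha$, the fibre of $V$ has dimension
\[
(n-l)-\dim Z\ \ge\ n-l-(d-2l)=n-d+l.
\]
By local triviality over $S_\alpha$, the fibres of $\overline{f^{-1}(S_\alpha)}\to\overline{S_\alpha}$ over $S_\alpha$ all have dimension $\le \max_W(n-c-\dim S_\alpha)$ over the relevant components. We now show that a component $W'$ of $f^{-1}(S_\alpha)$ containing the generic fibre of $V$ must have codimension at least $l$. Suppose instead $c'=\codim W'\le l-1$. Then lefness gives $\dim S_\alpha\ge d-2c'$, so the fibre dimension of $W'$ is $n-c'-\dim S_\alpha\le n-d+c'$. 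On the other hand, fibres of $W'$ contain fibres of $V$, so $n-d+c'\ge n-d+l$, which forces $c'\ge l$, a contradiction.

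Hence every component through the generic fibre of $V$ has codimension exactly $l$. We can also say more: $W'$ is the union of translates of the fibre. Choose a component $W'$ of $\overline{f^{-1}(S_\alpha)}$ whose fibres over $S_\alpha$ contain a component $F$ of $V_y$ of dimension $\ge n-d+l$. By local triviality, $F$ moves in a family over all of $S_\alpha$, and its closure is an irreducible subset $W''$ of dimension $\dim F+\dim S_\alpha$. This $W''$ is contained in an irreducible closed subset $W'$ of dimension $\ge \dim V+(\dim S_\alpha-\dim Z)$. Applying the previous paragraph to $W'$ (codimension $\le l-1$ is forbidden whenever $\dim S_\alpha>\dim Z$) gives $\dim S_\alpha=\dim Z$. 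Therefore $Z=\overline{S_\alpha}$ and $V=W'$ is an irreducible component. The same fibre-dimension bound shows that every component of $\overline{f^{-1}(S_\alpha)}$ has dimension $\le n-l$, since any of dimension $>n-l$ would have codimension $\le l-1$ and violate lefness exactly as above. So $V$ is of maximal dimension.

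The main obstacle is making the local-triviality step rigorous: we must know that the component of $V_y$ spreads out over all of $S_\alpha$ to an irreducible subvariety whose image is $\overline{S_\alpha}$. I would handle this using monodromy of the components of fibres over $S_\alpha$. Pass to a finite étale cover, or take the union of a monodromy orbit, which remains irreducible when closed in the irreducible $f^{-1}(S_\alpha)$-component. Since only dimensions matter, this suffices.
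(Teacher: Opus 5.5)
Your argument is correct in substance, but it rests on a finer fact than the paper needs, and the step you flag as the main obstacle is unnecessary.

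\textbf{The step that needs justification.} In your second paragraph you apply lefness to the component $W'\supseteq V$ of $\overline{f^{-1}(S_\alpha)}$ and compare fibres at points $y\in Z\cap S_\alpha$. This requires $\dim W'_y\le \dim W'-\dim S_\alpha$ precisely on the locus $Z\cap S_\alpha$. If $\dim Z<\dim S_\alpha$, that is exactly where the fibres of a single component could a priori jump. Constancy of $\dim f^{-1}(y)$ on $S_\alpha$ does not give this by itself.

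The bound is true, but you should state why. An irreducible component $C$ of $f^{-1}(S_\alpha)$ is the closure of a single connected stratum $T_{\alpha,\beta}$. By the frontier condition, $C$ is that stratum together with lower-dimensional strata, and $f$ is a submersion on each of them. Hence $f(C)$ is dense in $S_\alpha$, and $\dim C_y\le\dim C-\dim S_\alpha$ for every $y\in S_\alpha$.

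With this in hand, your second paragraph gives $W'=V$, so $f(V)=f(W')=\overline{S_\alpha}$ immediately. Maximality then follows from lefness: a component of codimension $\le l-1$ would have image of dimension $\le\dim Z\le d-2l$. Your third paragraph, which spreads $F$ by monodromy or an \'etale cover, can be deleted entirely. As written, it also conflates that paragraph's $W'$ with the component containing $V$.

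\textbf{How the paper proceeds.} The paper avoids per-component control by applying lefness to a \emph{maximal-dimensional} component $W$ of $\overline{f^{-1}(S_\alpha)}$. It uses only that the total fibre dimension $e$ is constant on $S_\alpha$. Since $e\ge\dim V_y=\dim V-\dim Z$, we get $\dim W=\dim S_\alpha+e\ge\dim V+r$, where $r=\dim S_\alpha-\dim Z$.
\begin{itemize}
\item If $r\ge 1$, then $\codim W\le l-r$, so lefness gives $\nd_W(L)\ge d-2l+2r$.
\item On the other hand, $\dim f(W)\le\dim Z+r\le d-2l+r$, which is a contradiction.
\item Hence $r=0$ and $Z=\overline{S_\alpha}$, and the same count rules out components of dimension $>\dim V$.
\end{itemize}
The inequality $\dim\overline{f^{-1}(S_\alpha)}\ge\dim S_\alpha+\dim V-\dim Z$, which you tried to obtain by moving $F$ in a family, comes for free this way. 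Your route works but needs the stratum-by-stratum structure of the adapted stratification. The paper's route needs only equidimensionality of $f$ itself over $S_\alpha$.
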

\begin{proof}
    Since $f(V)$ is irreducible, there is a unique stratum $S_\alpha$ such that $f(V)\cap S_\alpha$ is dense in $f(V)$.
    We first show that $S_{\alpha} \subset f(V)$. 
    Suppose otherwise. Since $f(V)\cap S_\alpha$ is dense in $f(V)$, we have
    \begin{equation*} \dim(f(V)\cap S_\alpha)=\dim f(V). \end{equation*}
    If $\dim S_\alpha=\dim f(V)$, then $f(V)\cap S_\alpha$ is also dense in $S_\alpha$. Hence  $\overline{S_\alpha} = \overline{f(V)\cap S_\alpha} = f(V),$ contradicting $S_\alpha\not\subset f(V)$. Therefore $r:=\dim S_\alpha-\dim f(V)\geq 1.$
    Let $W$ be an irreducible component of $\overline{f^{-1}(S_{\alpha})}$ of maximal dimension.
    Since $f$ is a stratified submersion over $S_{\alpha}$, the fibers over $S_{\alpha}$ all have the same dimension. Therefore,
    \begin{equation*}
        \dim W \geq \dim S_{\alpha}+\dim V-\dim f(V)=\dim V+r.
    \end{equation*}
    Equivalently, $\codimension W \leq l-r.$
    Since $L$ is $d$-lef below degree $l$, we obtain $$L^{d-2l+2r}\cdot [W]\neq 0.$$ 
    On the other hand,
    $$\dim f(W)=\dim S_{\alpha}=\dim f(V)+r\leq d-2l+r,$$
    and hence
    $$L^{d-2l+r+1}\cdot [W]=0,$$
    a contradiction.

    It remains to show that $V$ is a maximal-dimensional irreducible component of $\overline{f^{-1}(S_\alpha)}$.
    Suppose otherwise. Then there exists an irreducible component $W$ of $\overline{f^{-1}S_{\alpha}}$ satisfying $\dim W>\dim V.$ 
    Then $\codimension W \leq l-1.$ Since $L$ is $d$-lef below degree $l$, we have
    \begin{equation*}
        L^{d-2\codimension W}\cdot [W]\neq 0.
    \end{equation*}
    On the other hand, $$\dim f(W)=\dim S_{\alpha}=\dim f(V)\leq d-2l,$$
    and therefore
    $$L^{d-2l+1}\cdot [W]=0,$$
    a contradiction.
\end{proof}

\begin{cor}\label{finiteness}
    If $L$ is $d$-lef below degree $l$, then the set 
    \begin{equation*}
        \{V \in \Irred^l(X):L^{d-2l+1}\cdot [V]=0\}
    \end{equation*}
    is finite
\end{cor}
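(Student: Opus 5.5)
The plan is to reduce to Lemma \ref{rigidity} via the identity $\nd_V(L)=\dim f(V)$. For $V\in\Irred^l(X)$ we have $L^{d-2l+1}\cdot[V]=0$ if and only if $\nd_V(L)\le d-2l$, that is, $\dim f(V)\le d-2l$. So every $V$ in the set under consideration meets the hypothesis of Lemma \ref{rigidity}.

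Applying Lemma \ref{rigidity} to such a $V$ produces a stratum $S_\alpha$ of the fixed adapted stratification with two properties: $f(V)=\overline{S_\alpha}$, and $V$ is a maximal-dimensional irreducible component of $\overline{f^{-1}(S_\alpha)}$. We therefore obtain a map from our set to the set of pairs $(\alpha, W)$, where $\alpha\in\Lambda$ and $W$ is an irreducible component of $\overline{f^{-1}(S_\alpha)}$. This map sends $V$ to $(\alpha,V)$, so it is injective.

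It remains to see that the target set is finite. The stratification is Whitney, hence algebraic and locally finite, and since $Y$ is a quasi-projective variety (indeed projective) the index set $\Lambda$ is finite. For each $\alpha$, the set $\overline{f^{-1}(S_\alpha)}$ is a closed algebraic subvariety of $X$ (the closure of a constructible set). It therefore has only finitely many irreducible components. Hence the set of pairs is finite, and so is the original set.

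The only point needing care is the finiteness of $\Lambda$. This is standard for algebraic Whitney stratifications of a variety, and it is the one fact not already recorded earlier in the paper. Beyond that, the argument is immediate from Lemma \ref{rigidity}.
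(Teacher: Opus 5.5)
Your proposal is correct and follows the paper's proof. You translate $L^{d-2l+1}\cdot[V]=0$ into $\dim f(V)\le d-2l$, apply Lemma \ref{rigidity} to realize each such $V$ as an irreducible component of some $\overline{f^{-1}(S_\alpha)}$, and conclude from the finiteness of the strata and of the components. The only difference is that you explicitly justify the finiteness of $\Lambda$, which the paper takes for granted.
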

\begin{proof}
    By Lemma \ref{rigidity}, every irreducible subvariety $V$ of codimension $l$ annihilated by $L^{d-2l+1}$ is an irreducible component of $\overline{f^{-1}(S_{\alpha})}$ for some stratum $S_{\alpha}$. Since there are only finitely many strata and each
    $\overline{f^{-1}(S_{\alpha})}$ has only finitely many irreducible components, the desired finiteness follows immediately.
\end{proof}


As in \cite{decataldoLefsemismall,hushangxiao2023hardlefArxiv}, we characterize the lefness of $L$ in terms of the geometry of $f$.
Recall that the defect of semismallness of $f$ can be expressed as 
\begin{equation*}
    r(f)= \max_{\alpha\in \Lambda}\{\dim S_{\alpha}+2\dim f^{-1}(y_{\alpha})-n\}
\end{equation*}
where $y_{\alpha}\in S_{\alpha}$ is an arbitrary point. 
When $\dim S_{\alpha}\geq d$, we have
\begin{equation*}
    \dim S_{\alpha}+2\dim f^{-1}(y_{\alpha})-n\leq n-\dim S_{\alpha} \leq n-d.
\end{equation*}
Hence $r(f)\leq n-d$ if and only if
\begin{equation*}
    \dim f^{-1}y_{\alpha}\leq n-\frac{d+\dim S_{\alpha}}{2}
\end{equation*}
for every stratum $S_{\alpha}$ with $\dim S_{\alpha}\leq d$. 
Therefore the following equivalence $(i)\iff (iv)$ implies that $L$ is $d$-lef if and only if $r(f)\leq n-d$. This agrees with the definition of $d$-lefness in \cite{hushangxiao2023hardlefArxiv}.

\begin{prop}\label{equiv}
    The following statements are equivalent.
    \begin{enumerate}[label=(\roman*)]
        \item $L$ is $d$-lef below degree $l$.

        \item For every integer $n-d\leq k\leq n-d+l$, we have
        \begin{equation*}
            \dim\{x\in X:\dim (f^{-1}f(x),x) \geq k\}\leq 2n-d-k
        \end{equation*}
        where $(f^{-1}f(x),x)$ denotes the germ of the fiber $f^{-1}f(x)$ at $x$.
        
        \item For every integer $n-d\leq k\leq n$, we have 
        \begin{equation*}
            \dim\{y\in Y:\dim f^{-1}(y) = k\}\leq
            \begin{cases}
            2n-d-2k & \text{if } n-d \leq k \leq n-d+l \\
            n-l-k & \text{if }  n-d+l\leq k \leq n
            \end{cases}.
        \end{equation*}
    
        \item For every stratum $S_{\alpha}$ and every point $y_{\alpha}\in S_{\alpha}$, we have 
        \begin{equation*}
        \dim f^{-1} y_{\alpha} \leq
        \begin{cases}
        n - l - \dim S_{\alpha} & \text{if } 0 \leq \dim S_{\alpha} \leq d - 2l \\
        n - \frac{d}{2} - \frac{\dim S_{\alpha}}{2} & \text{if } d - 2l \leq \dim S_{\alpha} \leq d \\
        n - \dim S_{\alpha} & \text{if } d \leq \dim S_{\alpha} \leq n
        \end{cases}.
        \end{equation*}
        Here the bound $\dim f^{-1} y_{\alpha} \leq n - \dim S_{\alpha}$ is automatic for every stratum $S_{\alpha}$. We include the third case for completeness.
    \end{enumerate}
\end{prop}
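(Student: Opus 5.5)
We prove the cycle of implications (i)$\Rightarrow$(iv)$\Rightarrow$(iii)$\Rightarrow$(ii)$\Rightarrow$(i). The tool throughout is the identity $\nd_V(L)=\dim f(V)$, so $L^{j}\cdot[V]\neq 0$ if and only if $\dim f(V)\ge j$. Condition (i) therefore says: every irreducible $V$ with $\codim V=c\le l-1$ satisfies $\dim f(V)\ge d-2c$.

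\textbf{(i)$\Rightarrow$(iv).} Fix a stratum $S_\alpha$ with $s=\dim S_\alpha$ and fibre dimension $e=\dim f^{-1}(y_\alpha)$. Since $f$ is equidimensional over $S_\alpha$, a maximal component $W$ of $\overline{f^{-1}(S_\alpha)}$ has $\dim W=s+e$ and $f(W)=\overline{S_\alpha}$. Put $c=\codim W=n-s-e$.
\begin{itemize}
\item If $c\le l-1$, then (i) gives $s\ge d-2c=d-2(n-s-e)$, that is, $e\le n-\frac{d+s}{2}$.
\item If instead $c\ge l$, then $e\le n-l-s$.
\end{itemize}
Now compare with the piecewise bound in (iv). For $s\le d-2l$ we have $n-l-s\le n-\frac{d+s}{2}$, so either alternative gives $e\le n-\frac{d+s}{2}$. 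In the first case, however, $s\ge d-2c\ge d-2l+2$, which contradicts $s\le d-2l$. Hence only the second alternative occurs, giving $e\le n-l-s$. For $s\ge d-2l$ the inequality reverses, and both alternatives give $e\le n-\frac{d+s}{2}$. The case $s\ge d$ is trivial.

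\textbf{(iv)$\Rightarrow$(iii).} The set $\{y:\dim f^{-1}(y)=k\}$ is a union of strata. For each such stratum, solve the bound in (iv) for $s$ in terms of $k$:
\begin{itemize}
\item in the middle range, $s\le 2n-d-2k$;
\item in the low range, $s\le n-l-k$.
\end{itemize}
These are exactly the two regimes in (iii), with the threshold at $k=n-d+l$. Here one checks which branch of (iv) is binding for a given $k$, noting that the two bounds agree at the threshold, and takes the larger admissible value in each regime.

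\textbf{(iii)$\Rightarrow$(ii).} Let $Y_k=\{y:\dim f^{-1}(y)=k\}$. Then
\[
\{x:\dim(f^{-1}f(x),x)\ge k\}\subset \bigcup_{j\ge k} f^{-1}(Y_j),
\]
and each piece has dimension at most $\dim Y_j+j$. Using (iii), this is at most $2n-d-j\le 2n-d-k$ for $j\le n-d+l$, and at most $n-l\le 2n-d-k$ for $j\ge n-d+l$, the last step using $k\le n-d+l$.

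\textbf{(ii)$\Rightarrow$(i).} Let $V$ be irreducible of codimension $c\le l-1$, and suppose for contradiction that $\dim f(V)<d-2c$. The fibres of $V\to f(V)$ have dimension $k\ge \dim V-\dim f(V)>n-c-d+2c=n-d+c$. So every point of $V$ has local fibre dimension at least $k_0:=n-d+c+1\le n-d+l$, a value in the range of (ii). Then (ii) gives
\[
n-c=\dim V\le 2n-d-k_0=n-c-1,
\]
a contradiction.

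The main obstacle is the bookkeeping in (iv)$\Leftrightarrow$(iii): matching the piecewise thresholds, including the boundary cases $s=d-2l$ and $k=n-d+l$, and making sure that only strata are used rather than arbitrary loci. Everything else is dimension counting combined with $\nd_V(L)=\dim f(V)$.
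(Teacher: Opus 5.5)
Your proof is correct, but it runs the cycle in the opposite direction to the paper. The paper proves (i)$\Rightarrow$(ii)$\Rightarrow$(iii)$\Rightarrow$(iv)$\Rightarrow$(i). Its only non-mechanical step is (iv)$\Rightarrow$(i), which goes by induction on the codimension $k$. At each stage it invokes Lemma~\ref{rigidity}, which applies because $L$ is already $d$-lef below degree $k$. The lemma realizes an offending $V\in\Irred^k(X)$ as a component of $\overline{f^{-1}(S_\alpha)}$ with $f(V)=\overline{S_\alpha}$, and then (iv) bounds $\dim V$.

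Your reversed cycle needs neither the induction nor the rigidity lemma. Your (i)$\Rightarrow$(iv) simply applies (i) to a top-dimensional component $W$ of $\overline{f^{-1}(S_\alpha)}$. Your (ii)$\Rightarrow$(i) places an offending $V$ inside $X^{\ge k_0}$ using the lower bound on fibre dimensions of $V\to f(V)$. The remaining steps, (ii)$\Leftrightarrow$(iii) and (iii)$\Leftrightarrow$(iv), are the same dimension counting read in opposite directions. So your argument is self-contained and a bit shorter. The paper's route costs it nothing extra, since Lemma~\ref{rigidity} is needed anyway for Corollary~\ref{finiteness} and Lemma~\ref{dimcount}.

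One slip in (i)$\Rightarrow$(iv) needs fixing: your comparison of the two bounds is backwards. Since $(n-l-s)-(n-\frac{d+s}{2})=\frac{d-2l-s}{2}$, you have $n-l-s\ge n-\frac{d+s}{2}$ for $s\le d-2l$, and $n-l-s\le n-\frac{d+s}{2}$ for $s\ge d-2l$. With the direction you wrote (``the inequality reverses''), the middle-range claim that both alternatives give $e\le n-\frac{d+s}{2}$ would not follow.

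With the correct direction the step becomes simpler. Each alternative gives $e\le\max\{n-l-s,\,n-\frac{d+s}{2}\}$, and this maximum is exactly the bound in (iv) for $s\le d$. Your contradiction argument in the low range is therefore valid but unnecessary.

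Two small points are worth a line each:
\begin{enumerate}
\item Justify $f(W)=\overline{S_\alpha}$. The part $W\cap f^{-1}(S_\alpha)$ is dense in $W$ and has fibres of dimension at most $e$ over $S_\alpha$, so $\dim f(W)\ge s$.
\item In (iv)$\Rightarrow$(iii), handle strata with $s\ge d$. They can satisfy $k\ge n-d$ only when $s=d$ and $k=n-d$, and then the required bound $2n-d-2k=d$ holds.
\end{enumerate}
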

\begin{proof}
    \noindent
    $(i)\implies (ii)$
    For every integer $k$, write
    $$X^{\geq k}=\{x\in X:\dim (f^{-1}f(x),x) \geq k\}.$$
    This set is Zariski closed by the upper semicontinuity of the local dimension of the fibers.
    We prove by contradiction. Suppose that there exists  $n-d\leq k\leq n-d+l$ such that
    \begin{equation*}
         \dim X^{\geq k} \geq  2n-d-k+1.
    \end{equation*}
    Let $V$ be an irreducible component of $X^{\geq k}$ of maximal dimension. 
    Then $$\codimension_X V \leq n-(2n-d-k+1)=d-n-1+k \leq l-1.$$ Since $L$ is $d$-lef below degree $l$, we have
    $L^{d-2\codimension_X V}\cdot [V]\neq 0.$ Moreover
    $$d-2\codimension_X V=d-\codimension_X V -n+\dim V\geq \dim V-k+1,$$
    and hence
    $$L^{\dim V-k+1}\cdot [V]\neq 0.$$
    On the other hand, since $V$ is a maximal-dimensional irreducible component of $X^{\geq k}$, we have $\dim V=\dim X^{\geq k}.$ 
    Therefore 
    \begin{equation*} \dim f(V) \leq \dim f(X^{\geq k}) \leq \dim X^{\geq k}-k = \dim V-k. \end{equation*}
    It follows that
    \begin{equation*}
        L^{\dim V-k+1}\cdot [V]=0,
    \end{equation*}
    a contradiction. This proves $(ii)$.

    \medskip
    \noindent   
    $(ii)\implies (iii)$ 
    For every integer $k$, set 
    $$Y^k=\{y\in Y:\dim f^{-1}(y) = k\}.$$
    The top-dimensional part of every fiber over $Y^k$ is contained in $X^{\geq k}$. Therefore
    \begin{equation*} 
        \dim Y^k+k \leq \dim X^{\geq k}.
    \end{equation*}
    For $n-d\leq k\leq n-d+l$, by $(ii)$, we have
    \begin{equation*}
        \dim Y^k \leq \dim X^{\geq k}-k\leq 2n-d-2k.
    \end{equation*}
    For $n-d+l\leq k\leq n$, since $X^{\geq k}\subset X^{\geq n-d+l}$, we have
    \begin{equation*}
        \dim Y^k \leq \dim X^{\geq n-d+l}-k\leq n-l-k. 
    \end{equation*}
    This proves $(iii)$.
        
    \medskip
    \noindent    
    $(iii)\implies (iv)$
    If $d < \dim S_{\alpha} \leq n$, the statement is clear since 
    \begin{equation*}
        \dim S_{\alpha}+\dim f^{-1}(y_{\alpha})=\dim f^{-1}(S_{\alpha})\leq n.
    \end{equation*}
    Suppose next that $0\leq  \dim S_{\alpha}\leq d-2l$. We argue by contradiction. Suppose that 
    $$\dim f^{-1}(y_{\alpha})\geq n-l-\dim S_{\alpha}+1.$$ 
    Then $S_{\alpha}\subset Y^{\dim f^{-1}(y_{\alpha})}$ and the second estimate in $(iii)$ applies, which gives
    $$\dim S_{\alpha}\leq \dim Y^{\dim f^{-1}(y_{\alpha})} \leq n-l-\dim f^{-1}(y_{\alpha})\leq \dim S_{\alpha}-1.$$
    This is a contradiction.
    
    Finally, suppose that $d-2l< \dim S_{\alpha}\leq d$. 
    Assume, for contradiction, that
    \begin{equation}\label{dn19i}
        \dim f^{-1}(y_{\alpha})> n-\frac{d}{2}-\frac{\dim S_{\alpha}}{2}.
    \end{equation}
    Since $\dim f^{-1}(y_{\alpha})$ is an integer, this implies 
    $$\dim f^{-1}(y_{\alpha})\geq n-\lceil \frac{d+\dim S_{\alpha}}{2}\rceil+1\geq n-d.$$
    If $\dim f^{-1}(y_{\alpha}) >n-d+l$, then the second estimate in $(iii)$ gives
    \begin{equation*}
        \dim S_{\alpha}\leq \dim Y^{\dim f^{-1}(y_{\alpha})}\leq n-l-\dim f^{-1}(y_{\alpha})< d-2l,
    \end{equation*}
    contradicting $\dim S_{\alpha}>d-2l$.
    We may therefore assume
    $$n-d\leq \dim f^{-1}(y_{\alpha}) \leq n-d+l.$$
    The first estimate in $(iii)$ combined with \eqref{dn19i} then gives
    \begin{equation*}
        \dim S_{\alpha} \leq \dim Y^{\dim f^{-1}(y_{\alpha})} \leq 2n-d-2 \dim f^{-1}(y_{\alpha})<\dim S_{\alpha},
    \end{equation*}
    a contradiction. This proves $(iv)$.

    $(iv)\implies (i)$
    We prove by induction on $k$ that 
    \begin{equation*} L^{d-2k}\cdot[V]\neq0 \end{equation*} 
    for every $V\in\Irred^k(X)$ and every $0\leq k\leq l-1.$
    
    For the base case, we need to show that $L^{d}\cdot [X]\neq 0$ which is equivalent to $\dim Y \geq d$. Suppose otherwise. Then every stratum $S_\alpha$ satisfies $\dim S_\alpha<d$.
    Hence, by $(iv)$, 
    $$\dim f^{-1}(S_{\alpha})=\dim f^{-1}(y_{\alpha})+\dim S_{\alpha}<n.$$
    Thus every $f^{-1}(S_\alpha)$ has dimension strictly smaller than $n$. Since there are only finitely many strata, this contradicts 
    $X=\bigsqcup_{\alpha\in\Lambda}f^{-1}(S_\alpha).$
    Therefore $\dim Y\geq d$.
    
    Now let $1\leq k\leq l-1$, and assume inductively that $L$ is $d$-lef below degree $k$. We show that 
    $$L^{d-2k}\cdot [V]\neq 0$$
    for every $V\in \Irred^{k}(X)$. 
    Suppose otherwise. 
    Then there exists $V\in \Irred^k(X)$ such that $L^{d-2k}\cdot [V]=0.$ 
    By Lemma \ref{rigidity}, there exists a stratum $S_{\alpha}$ such that 
    $f(V)=\overline{S_{\alpha}}$ and 
    $V$ is an irreducible component of $\overline{f^{-1}(S_{\alpha})}.$
    Since $L^{d-2k}\cdot [V]=0,$ we have $$\dim S_{\alpha}\leq d-2k-1.$$
    We distinguish two cases.
    If $\dim S_{\alpha}\le d-2l$, then $(iv)$ gives
    \begin{equation*}
        \dim f^{-1}(y_{\alpha})\leq n-l-\dim S_{\alpha}.
    \end{equation*}
    hence
    \begin{equation*}
        \dim V\leq \dim \overline{f^{-1}(S_{\alpha})}=\dim S_{\alpha}+\dim f^{-1}(y_{\alpha})\leq n-l<n-k,
    \end{equation*}
    contradicting $\dim V=n-k$.
    
    If  $d-2l<\dim S_{\alpha}\le d-2k-1$, then $(iv)$ gives
    $$\dim f^{-1}(y_{\alpha})\leq n-\frac{d}{2}-\frac{\dim S_{\alpha}}{2}.$$
    It follows that
    \begin{equation*}
        \dim V\leq \dim \overline{f^{-1}(S_{\alpha})}=\dim S_{\alpha}+\dim f^{-1}(y_{\alpha})\leq n-k-\frac{1}{2}, 
    \end{equation*}
   again contradicting $\dim V=n-k.$
    
   Thus no such $V$ exists, and $L$ is $d$-lef below degree $k+1$. The induction proves $(i)$.
\end{proof}

\subsection{Perverse filtration and lefness}
Throughout this subsection, we fix integers $d,l$ satisfying $0\le 2l-1\le d\leq n$ and a basepoint-free line bundle $L$.
Unless otherwise stated, we assume that $L$ is $d$-lef below degree $l$.
We write $$f:X\to Y\subset \mathbb{P}(H^0(X,L))$$ for the Kodaira map associated with $L$ and
fix a Whitney stratification $Y=\bigsqcup_{\alpha \in \Lambda}S_{\alpha}$ of $Y$ adapted to $f$.
We write $\alpha\leq \beta$ if $S_{\alpha}\subset \overline{S_{\beta}}$.
Set $K=Rf_*\mathbb{R}_X[n]$, $P_t={}^p\mathcal{H}^t(K)$ and $\mathcal{L}_{\alpha,t}=\mathcal{H}^{-\dim S_{\alpha}}(P_{t|S_{\alpha}})$. Then the decomposition and semisimplicity theorems imply
\begin{equation*}
    K\cong \bigoplus_{t\in \mathbb{Z}} P_t[-t]  \cong  \bigoplus_{t\in \mathbb{Z}}\bigoplus_{\alpha \in \Lambda} \IC(\overline{S_{\alpha}},\mathcal{L}_{\alpha,t})[-t].
\end{equation*}

The morphism $f$ determines an increasing perverse filtration on $H^*(X)$. 
Our goal is to determine the subspace
$H^{k}_{\leq d-n-1}(X)$ for all $0\le k\leq 2l.$

\begin{prop}\label{deterPerv}
    For every $0\le k\le 2l-1$, we have $H^k_{\leq d-n-1}(X)=0,$ while
    \begin{equation*}
        H^{2l}_{\leq d-n-1}(X) =\Span_{\mathbb{R}}\{[V]:V\in \Irred^l(X),L^{d-2l}\cdot [V]=0\}.
    \end{equation*}
    Moreover, the classes $\{[V]:V\in \Irred^l(X),L^{d-2l}\cdot [V]=0\}$
    are linearly independent.
\end{prop}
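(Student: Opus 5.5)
Using the chosen splitting $K\cong\bigoplus_t\bigoplus_\alpha \IC(\overline{S_\alpha},\mathcal{L}_{\alpha,t})[-t]$, we have
\begin{equation*}
H^k_{\leq d-n-1}(X)\cong\bigoplus_{t\leq d-n-1}\bigoplus_{\alpha}\mathbb{H}^{k-n-t}\bigl(Y,\IC(\overline{S_\alpha},\mathcal{L}_{\alpha,t})\bigr),
\end{equation*}
so the plan is to control which pairs $(\alpha,t)$ with $t\le d-n-1$ can carry a nonzero local system, and in which degrees their hypercohomology lives. The group $\mathbb{H}^{j}(Y,\IC(\overline{S_\alpha},\mathcal{L}))$ vanishes for $j<-\dim S_\alpha$, and equals $H^0(S_\alpha$-generic$,\ldots)$-type global sections in degree $j=-\dim S_\alpha$. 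Hence a summand $(\alpha,t)$ contributes to $H^k$ only if $k-n-t\ge -\dim S_\alpha$, i.e. $k\ge n+t-\dim S_\alpha$.

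\textbf{Step 1: support bounds.} Since $\mathcal{L}_{\alpha,t}$ is a summand of $\mathcal{H}^{-\dim S_\alpha}(K|_{S_\alpha}[t])$, it is a subquotient of $R^{n+t-\dim S_\alpha}f_*\mathbb{R}|_{S_\alpha}$. This forces $n+t-\dim S_\alpha\le 2\dim f^{-1}(y_\alpha)$. Relative hard Lefschetz gives $P_{-t}\cong P_t$, and I will work with negative $t\le d-n-1$. By symmetry, $\mathcal{L}_{\alpha,t}\neq0$ gives $n+|t|-\dim S_\alpha\le 2\dim f^{-1}(y_\alpha)$ as well; the $|t|$ version is the standard one from de Cataldo–Migliorini. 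Combine this with Proposition \ref{equiv}(iv). If $\dim S_\alpha\ge d-2l$, then
\begin{equation*}
n+|t|-\dim S_\alpha\le 2n-d-\dim S_\alpha,
\end{equation*}
so $|t|\le n-d$. This contradicts $|t|\ge n-d+1$, except possibly at the boundary $\dim S_\alpha=d-2l$, where the first bound of (iv) applies instead. If $\dim S_\alpha\le d-2l$, then $n+|t|-\dim S_\alpha\le 2n-2l-2\dim S_\alpha$, so $|t|\le n-2l-\dim S_\alpha$. Therefore the contributing degree satisfies
\begin{equation*}
k\ge n+t-\dim S_\alpha=n-|t|-\dim S_\alpha\ge 2l.
\end{equation*}
This yields $H^k_{\le d-n-1}=0$ for $k\le 2l-1$.

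\textbf{Step 2: degree $2l$.} In degree exactly $2l$, only the extremal case $|t|=n-2l-\dim S_\alpha$ survives. This requires $\dim S_\alpha\le d-2l-1$, which follows from $|t|\ge n-d+1$, and $2\dim f^{-1}(y_\alpha)=n+|t|-\dim S_\alpha$, i.e. $\dim f^{-1}(y_\alpha)=n-l-\dim S_\alpha$. In that case $\overline{f^{-1}(S_\alpha)}$ has codimension $l$, and the contribution is $H^0(S_\alpha,\mathcal{L}_{\alpha,t})$. The local system in top degree $R^{2(n-l-\dim S_\alpha)}f_*$ is spanned by fundamental classes of the top-dimensional fiber components, and its monodromy invariants correspond to the maximal-dimensional irreducible components $V$ of $\overline{f^{-1}(S_\alpha)}$. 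The first thing I would try is to identify $H^0(S_\alpha,\mathcal{L}_{\alpha,t})\to H^{2l}(X)$ with the span of these $[V]$. Here I would follow the argument in \cite{decataldoLefsemismall} for relevant strata: the map is the pushforward from $H^{BM}_{2(n-l)}(\overline{f^{-1}(S_\alpha)})$, which has a basis given by the components.

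Conversely, Lemma \ref{rigidity} says every $V\in\Irred^l(X)$ with $L^{d-2l}[V]=0$, i.e. $\dim f(V)\le d-2l-1$, is exactly such a component. Each such $[V]$ lies in $\ker L^{d-2l}$, which is contained in $H^{2l}_{\le d-n-1}$ by Lemma \ref{kerandperv}. Together these give equality of spans.

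\textbf{Step 3: linear independence.} This is the main obstacle. The plan is to use that the summands for different $\alpha$ are direct summands of $K$, so the images from different strata are independent. Within a single stratum, the restriction to a normal slice at $y_\alpha$, composed with the local intersection form on the fiber, is nondegenerate: this is the de Cataldo–Migliorini negativity/nondegeneracy of the refined intersection form on the top-dimensional fiber components. It should give injectivity of the span of the components into $H^{2l}(X)$. Making precise the identification of the degree-$2l$ perverse piece with $\mathcal{L}_{\alpha,t}$ invariants, and checking nondegeneracy in this non-semismall setting, is where I expect the difficulty. The fallback is to cut by general hyperplane sections of $Y$ to reduce to the point-stratum case.
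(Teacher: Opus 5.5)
Your Steps 1 and 2 match the paper's bookkeeping (Lemmas \ref{dual}--\ref{dimcount}, with relative hard Lefschetz in place of Verdier duality). They correctly give $H^k_{\le d-n-1}(X)=0$ for $k\le 2l-1$ and $\dim H^{2l}_{\le d-n-1}(X)\le \#\{V\in\Irred^l(X): L^{d-2l}\cdot[V]=0\}$. This upper bound is all you need, since independence plus the inclusion then forces equality. At $\dim S_\alpha=d-2l$ the two bounds of Proposition \ref{equiv}(iv) coincide, so there is no boundary exception.

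The gap is everything after that. ``Together these give equality of spans'' does not follow. From $\Span_{\mathbb{R}}\{[V]\}\subset\ker L^{d-2l}\cap H^{2l}(X)\subset H^{2l}_{\le d-n-1}(X)$ and the count, you get equality only once the $[V]$ are known to be linearly independent, and Step 3 does not prove this. Its two ideas both fall short:
\begin{enumerate}
\item \emph{Direct summands.} The claim that images of different strata are independent because the $\IC(\overline{S_\alpha},\mathcal{L}_{\alpha,t})[-t]$ are direct summands of $K$ fails as stated. The splitting is not canonical, so the image in $H^{2l}(X)$ of an individual summand depends on it. Nothing you say places $[V]$ in the image of the summand attached to the stratum dense in $f(V)$; the canonical perverse filtration only separates strata of different dimensions.
\item \emph{Refined intersection forms.} These are not available off the shelf here. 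Over $S_\alpha$ the fibers have dimension $n-l-\dim S_\alpha>\frac{n-\dim S_\alpha}{2}$, because $\dim S_\alpha\le n-2l-1$. So the top-dimensional fiber components are not middle-dimensional in a normal slice. Pairing them needs an extra ample twist, and you would have to prove that twisted form nondegenerate.
\end{enumerate}

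One way to supply the missing step is to localize and use $i^!$, which is canonical.
\begin{enumerate}
\item Given $\sum a_V[V]=0$, let $j$ be the largest $\dim f(V)$ with $a_V\neq0$. Fix a stratum $S=S_\alpha$ of dimension $j$ with $f(V)=\overline{S}$ for some such $V$ (Lemma \ref{rigidity}).
\item Restrict to $f^{-1}(U)$, where $U=Y\setminus\bigcup_{\beta\neq\alpha,\,\dim S_\beta\le j}\overline{S_\beta}$. Then $S$ is closed in $U$, and every other $V$ in the relation misses $f^{-1}(U)$.
\item The cycle map $H^{BM}_{2n-2l}(f^{-1}(S))=\mathbb{H}^{2l-n}(S,i_S^!K)\to\mathbb{H}^{2l-n}(U,K)=H^{2l}(f^{-1}(U))$ is induced by the natural morphism $i_{S*}i_S^!K\to K$, so it is compatible with any splitting.
\item For a summand $\IC(\overline{S_\beta},\mathcal{L}_{\beta,t})$ with $S\subsetneq\overline{S_\beta}$, the cosupport condition allows a nonzero contribution only if $t\le j+2l-n-1$. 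But $\mathcal{L}_{\beta,t}\neq0$ forces $t\ge -b(j+1)=j+2l-n+1$, so these contribute nothing.
\item For $\beta=\alpha$ the source is $H^{2l-n-t+j}(S,\mathcal{L}_{\alpha,t})$, which vanishes unless $t=j+2l-n$. On that summand the counit is an isomorphism.
\item Hence the cycle map is injective. The $(n-l)$-dimensional components of $f^{-1}(S)$ form a basis of its source, so $a_V=0$.
\end{enumerate}

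The paper argues differently. It first treats $d=2l+1$, where only point strata occur and $\tau_{\ge n-2l}K$ is a sum of skyscrapers. There Verdier duality identifies $\mathbb{D}_Y(\tau_{\ge n-2l}K)\to K$ with ${}^p\tau_{\le 2l-n}K\to K$, giving $\Span_{\mathbb{R}}\{[V]:\dim f(V)=0\}=H^{2l}_{\le 2l-n}(X)$ (Lemma \ref{linindeppartial}). It then cuts by general members of $|L|$: these preserve lefness (Lemma \ref{Lpreserving}), miss the $V$ with smaller $\dim f(V)$, and keep traces of distinct $V$ distinct. It concludes by descending on $\dim f(V)$ (Lemma \ref{linindepfull}). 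Your fallback is this route, but without the point-stratum case it is not yet a proof.
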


The proof of Proposition \ref{deterPerv} consists of two steps.
First, we determine the dimensions
of $H^k_{\leq d-n-1}(X)$
for every $0\leq k\leq 2l$ in Lemmas \ref{dual}-\ref{dimcount}.
In particular, this proves the statement in degree range $0\leq k\leq 2l-1$ and 
\begin{equation*}
    \dim H^{2l}_{\leq d-n-1}(X) =\# \{V\in \Irred^l(X):L^{d-2l}\cdot [V]=0\}
\end{equation*}
It remains to identify $H^{2l}_{\leq d-n-1}(X)$. By
Lemma \ref{kerandperv},
\begin{equation*}
    \Span_{\mathbb{R}}\{ [V]: V\in\Irred^l(X), L^{d-2l}\cdot[V]=0\} \subset
    \ker L^{d-2l} \subset H^{2l}_{\leq d-n-1}(X).
\end{equation*}
Thus it suffices to show that the classes
$\{[V]: V\in\Irred^l(X), L^{d-2l}\cdot[V]=0\}$
are linearly independent. This is proved in
Lemmas \ref{linindeppartial}-\ref{linindepfull}.

The following result is well-known to experts. We include a short proof.
\begin{lem}\label{dual}
    For every $\alpha \in\Lambda$ and every $k\in \mathbb{Z}$, we have the duality $\mathcal{L}_{\alpha,k}^*\cong \mathcal{L}_{\alpha,-k}$.
\end{lem}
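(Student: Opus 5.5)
The plan is to combine Verdier self-duality of $K=Rf_*\mathbb{R}_X[n]$ with relative hard Lefschetz. First, since $X$ is smooth, $\mathbb{D}_X(\mathbb{R}_X[n])\cong \mathbb{R}_X[n]$. Because $f$ is proper, $\mathbb{D}_Y Rf_*=Rf_*\mathbb{D}_X$, so $\mathbb{D}_Y K\cong K$. Verdier duality is t-exact for the perverse t-structure and exchanges ${}^p\tau_{\le k}$ with ${}^p\tau_{\ge -k}$. Hence
\begin{equation*}
\mathbb{D}_Y P_k=\mathbb{D}_Y\,{}^p\mathcal{H}^k(K)\cong {}^p\mathcal{H}^{-k}(\mathbb{D}_Y K)\cong P_{-k}.
\end{equation*}

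Second, I would pass to the semisimple decompositions $P_{\pm k}\cong\bigoplus_\alpha \IC(\overline{S_\alpha},\mathcal{L}_{\alpha,\pm k})$. Two standard facts are needed:
\begin{itemize}
\item $\mathbb{D}_Y\IC(\overline{S_\alpha},\mathcal{L})\cong \IC(\overline{S_\alpha},\mathcal{L}^*)$, which follows from the uniqueness characterization of the intersection complex, since the normalization is dualized and the support and cosupport conditions are swapped;
\item the decomposition of a semisimple perverse sheaf into intersection complexes with simple local systems is unique up to isomorphism, because the category of perverse sheaves is Artinian and Noetherian and Krull--Schmidt applies.
\end{itemize}
Together these give $\bigoplus_\alpha \IC(\overline{S_\alpha},\mathcal{L}_{\alpha,k}^*)\cong\bigoplus_\alpha \IC(\overline{S_\alpha},\mathcal{L}_{\alpha,-k})$.

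To avoid invoking Krull--Schmidt, one can argue more directly. Restrict to $S_\alpha$ and take $\mathcal{H}^{-\dim S_\alpha}$, using the formula $\mathcal{L}_{\alpha,t}=\mathcal{H}^{-\dim S_\alpha}(P_{t|S_\alpha})$ recalled above. Summands supported on $\overline{S_\beta}$ with $S_\alpha\not\subset\overline{S_\beta}$ vanish on $S_\alpha$. Summands with $\beta>\alpha$ contribute nothing in degree $-\dim S_\alpha$ by the support condition. So only the term $\mathcal{L}_{\alpha,\pm k}$ survives, and comparing the two sides yields $\mathcal{L}_{\alpha,k}^*\cong\mathcal{L}_{\alpha,-k}$.

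The main obstacle is bookkeeping. The isomorphism $\mathbb{D}_Y K\cong K$ does not respect a chosen splitting, so one must apply the argument to perverse cohomology sheaves, which are canonical, rather than to the splitting itself. Alternatively, and as a cross-check, relative hard Lefschetz gives $P_{-k}\cong P_k$, and combining this with the duality above yields the same conclusion.
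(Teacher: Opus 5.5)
Your proposal is correct and matches the paper's proof: you use $\mathbb{D}_Y P_k\cong P_{-k}$, dualize the semisimple decomposition $P_k\cong\bigoplus_\beta \IC(\overline{S_\beta},\mathcal{L}_{\beta,k})$ via $\mathbb{D}_Y\IC(\overline{S_\beta},\mathcal{L})\cong\IC(\overline{S_\beta},\mathcal{L}^*)$, then restrict to $S_\alpha$ and take $\mathcal{H}^{-\dim S_\alpha}$, which is exactly the paper's argument. Relative hard Lefschetz plays no role, since only the Verdier self-duality of $K$ is used, so you can drop it from your stated plan.
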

\begin{proof} 
    By the semisimplicity theorem, we have
    \begin{equation*}
        P_k\cong \bigoplus_{\beta \in \Lambda} \IC(\overline{S_{\beta}},\mathcal{L}_{\beta,k}).
    \end{equation*}
    By the duality 
    $\mathbb{D}_Y(P_k) \cong P_{-k}$,
    taking the Verdier dual of the above decomposition gives
    \begin{equation*}
        P_{-k}\cong \bigoplus_{\beta \in \Lambda} \IC(\overline{S_{\beta}},\mathcal{L}_{\beta,k}^*).
    \end{equation*}
    It follows that
    \begin{equation*}
        \bigoplus_{\beta \in \Lambda} \IC(\overline{S_{\beta}},\mathcal{L}_{\beta,k}^*) \cong \bigoplus_{\beta \in \Lambda} \IC(\overline{S_{\beta}},\mathcal{L}_{\beta,-k}).
    \end{equation*}
    Restricting to $S_{\alpha}$ and taking the $(-\dim S_{\alpha})$-th cohomology sheaf then gives $\mathcal{L}_{\alpha,k}^*\cong \mathcal{L}_{\alpha,-k}.$
\end{proof}

The fiber dimension estimate of $f$ imposes strong constraints on these local systems $\mathcal{L}_{\alpha,k}$. To simplify the notation, we write 
    \begin{equation*}
        b(s)=
        \begin{cases}
            n-2l-s  & \text{if } 0\leq s\leq d-2l\\
            n-d & \text{if } d-2l\leq s\leq d\\
            n-s & \text{if } d\leq s\leq n
        \end{cases}.
    \end{equation*}

\begin{lem}\label{pervrange}
    The local system $\mathcal{L}_{\alpha,k}$ vanishes unless $|k|\leq b(\dim S_{\alpha})$.
    Moreover, we have
    $$\mathcal{L}_{\alpha, b(\dim S_{\alpha})}\cong (R^{b(\dim S_{\alpha})+n-\dim S_{\alpha}}f_*\mathbb{R}_X)_{|S_{\alpha}}.$$
    
\end{lem}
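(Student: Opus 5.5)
The plan follows de Cataldo--Migliorini's semismall argument. We compute the stalks of $K=Rf_*\mathbb{R}_X[n]$ along $S_\alpha$ and compare them with the support condition for the summands $\IC(\overline{S_\alpha},\mathcal{L}_{\alpha,k})[-k]$. Fix a stratum $S_\alpha$ with $s=\dim S_\alpha$ and a point $y\in S_\alpha$. By proper base change, $\mathcal{H}^j(K)_y=H^{j+n}(f^{-1}(y))$. This vanishes for $j+n>2\dim f^{-1}(y)$, i.e. for $j>2\dim f^{-1}(y)-n$.

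The first step is to check the elementary inequality
\[
2\dim f^{-1}(y)-n\le b(s)-s
\]
from Proposition \ref{equiv}(iv), case by case.
\begin{itemize}
\item If $s\le d-2l$, then $2(n-l-s)-n=n-2l-2s=b(s)-s$.
\item If $d-2l\le s\le d$, then $2\big(n-\tfrac{d+s}{2}\big)-n=n-d-s=b(s)-s$.
\item If $s\ge d$, then $2(n-s)-n=n-2s=b(s)-s$.
\end{itemize}
Hence $\mathcal{H}^j(K)|_{S_\alpha}=0$ for $j>b(s)-s$.

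Next, restrict the decomposition $K\cong\bigoplus_{t,\beta}\IC(\overline{S_\beta},\mathcal{L}_{\beta,t})[-t]$ to $S_\alpha$ and take $\mathcal{H}^{-s+k}$. The summand with $\beta=\alpha$, $t=k$ contributes exactly $\mathcal{L}_{\alpha,k}$ by normalization. This summand is a direct summand of $\mathcal{H}^{k-s}(K)|_{S_\alpha}$, so $\mathcal{L}_{\alpha,k}\neq 0$ forces $k-s\le b(s)-s$, i.e. $k\le b(s)$. Lemma \ref{dual} gives $\mathcal{L}_{\alpha,-k}\cong\mathcal{L}_{\alpha,k}^*$, and so we also get $-k\le b(s)$. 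This proves $|k|\le b(\dim S_\alpha)$.

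For the identification of the top piece, take $k=b(s)$ and compute $\mathcal{H}^{b(s)-s}(K)|_{S_\alpha}=(R^{b(s)+n-s}f_*\mathbb{R}_X)|_{S_\alpha}$. I will show that every summand other than $(\beta,t)=(\alpha,b(s))$ contributes zero in this degree on $S_\alpha$. Such a summand contributes $\mathcal{H}^{b(s)-s-t}(\IC(\overline{S_\beta},\mathcal{L}_{\beta,t}))|_{S_\alpha}$, which needs $\alpha\le\beta$. There are two cases.
\begin{itemize}
\item If $\beta=\alpha$ and $t\ne b(s)$, this sheaf is nonzero only in degree $-s$, which would require $t=b(s)$.
\item If $\beta>\alpha$, the strict support condition kills degrees $\ge -s$. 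Nonvanishing would therefore need $b(s)-s-t<-s$, i.e. $t>b(s)$. On the other hand, nonvanishing of $\mathcal{L}_{\beta,t}$ requires $t\le b(\dim S_\beta)$.
\end{itemize}
The main obstacle is that $b$ is not monotone decreasing in general: it is constant on $[d-2l,d]$, so $t>b(s)$ and $t\le b(\dim S_\beta)$ are not immediately contradictory. The inequality needed is $b(s')\le b(s)$ for $s'>s$, and indeed $b$ is non-increasing, since its slopes are $-1$, $0$, $-1$ and it is continuous at the breakpoints ($n-2l-(d-2l)=n-d$ and $n-d=n-d$). Thus $t\le b(\dim S_\beta)\le b(s)<t$, a contradiction. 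The only off-diagonal contributions would be in degree exactly $-s$, and these are excluded because $t>b(s)$ fails, so strictness of the support condition is exactly what is needed.

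Summing up, $\mathcal{H}^{b(s)-s}(K)|_{S_\alpha}\cong\mathcal{L}_{\alpha,b(s)}$. The isomorphism $K\cong\bigoplus P_t[-t]$ is noncanonical, but it still gives an isomorphism of local systems on $S_\alpha$, which is all that is claimed.
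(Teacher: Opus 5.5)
Your proof is correct and follows essentially the same route as the paper. Both arguments restrict the decomposition to $S_\alpha$ and isolate $\mathcal{L}_{\alpha,k}$ as a direct summand of $(R^{k+n-\dim S_\alpha}f_*\mathbb{R}_X)_{|S_\alpha}$. Both bound $k$ via proper base change and the fiber-dimension estimates of Proposition \ref{equiv}, get the bound on $-k$ from Lemma \ref{dual}, and kill the off-diagonal terms in degree $b(\dim S_\alpha)$ by combining the strict support condition with the fact that $b$ is nonincreasing. Your explicit case-by-case check of $2\dim f^{-1}(y)-n\le b(s)-s$, and of the monotonicity of $b$, spells out steps the paper states in one line.
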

\begin{proof}
    By the decomposition and semisimplicity theorems, we have
    \begin{equation}\label{fnqi2}
        K\cong \bigoplus_{j\in \mathbb{Z}}\bigoplus_{\beta\in \Lambda} \IC(\overline{S_{\beta}},\mathcal{L}_{\beta,j})[-j].
    \end{equation}
    Fix a stratum $S_{\alpha}$. Restricting (\ref{fnqi2}) to $S_{\alpha}$ and taking the $(k-\dim S_{\alpha})$-th cohomology sheaf yields
    \begin{equation*}          
    (R^{k+n-\dim S_{\alpha}}f_{*}\mathbb{R}_X)_{|S_{\alpha}} =\mathcal{H}^{k-\dim S_{\alpha}}(K)_{|S_{\alpha}}\cong
    \bigoplus_{j\in \mathbb{Z}}\bigoplus_{\beta\in \Lambda} 
    \mathcal{H}^{k-j-\dim S_{\alpha}} (\IC(\overline{S_{\beta}},\mathcal{L}_{\beta,j}))_{|S_{\alpha}}.
    \end{equation*}
    By the support condition, we have $\mathcal{H}^k (\IC(\overline{S_{\beta}},\mathcal{L}_{\beta,j}))_{|S_{\alpha}}=0$ unless
    \begin{itemize}
        \item $\beta=\alpha$ and $k=-\dim S_{\alpha}$, or
        \item $\beta>\alpha$ and $k\leq -\dim S_{\alpha}-1$.
    \end{itemize}
    Moreover, $\mathcal{H}^{-\dim S_{\alpha}} (\IC(\overline{S_{\alpha}},\mathcal{L}_{\alpha,j}))_{|S_{\alpha}}=\mathcal{L}_{\alpha,j}$.
    Consequently,
    \begin{equation}\label{fn2iu}
        (R^{k+n-\dim S_{\alpha}}f_{*}\mathbb{R}_X)_{|S_{\alpha}} \cong 
    \mathcal{L}_{\alpha,k} \oplus  \bigoplus_{\beta>\alpha} \bigoplus_{j\geq k+1} \mathcal{H}^{k-j-\dim S_{\alpha}}(\IC(\overline{S_{\beta}},\mathcal{L}_{\beta,j}))_{|S_{\alpha}}.
    \end{equation}
    In particular, $\mathcal{L}_{\alpha,k}$ appears as a direct summand of $(R^{k+n-\dim S_{\alpha}}f_{*}\mathbb{R}_X)_{|S_{\alpha}}$. By proper base change, the latter sheaf vanishes unless 
    $$k\leq \dim S_{\alpha}-n+2\dim f^{-1}(y_{\alpha})$$
    where $y_{\alpha}\in S_{\alpha}$ is an arbitrary point. 
    By the lefness assumption of $L$ and Proposition \ref{equiv}, we have
    \begin{equation*}
        \dim f^{-1} y_{\alpha} \leq
        \begin{cases}
        n - l - \dim S_{\alpha} & \text{if } 0 \leq \dim S_{\alpha} \leq d - 2l \\
        n - \frac{d}{2} - \frac{\dim S_{\alpha}}{2} & \text{if } d - 2l \leq \dim S_{\alpha} \leq d \\
        n - \dim S_{\alpha} & \text{if } d \leq \dim S_{\alpha} \leq n
        \end{cases}.
    \end{equation*}
    Hence $\mathcal{L}_{\alpha,k}$ can be nonzero only when 
    \begin{equation*}
        k\leq b(\dim S_{\alpha})=
        \begin{cases}
        n - 2l - \dim S_{\alpha} & \text{if } 0 \leq \dim S_{\alpha} \leq d - 2l \\
        n - d & \text{if } d - 2l \leq \dim S_{\alpha} \leq d \\
        n - \dim S_{\alpha} & \text{if } d \leq \dim S_{\alpha} \leq n
        \end{cases}.
    \end{equation*}
    By Lemma \ref{dual}, $\mathcal{L}_{\alpha,-k}\cong \mathcal{L}_{\alpha,k}^*.$
    The same bound therefore holds for $-k$, proving the first assertion.

    Next we show
    $$\mathcal{L}_{\alpha, b(\dim S_{\alpha})}\cong (R^{b(\dim S_{\alpha})+n-\dim S_{\alpha}}f_*\mathbb{R}_X)_{|S_{\alpha}}.$$
    By (\ref{fn2iu}), it is enough to prove that
    \begin{equation*}
        \mathcal{H}^{b(\dim S_{\alpha})-j-\dim S_{\alpha}}( \IC(\overline{S_{\beta}}, \mathcal{L}_{\beta,j}) )_{|S_{\alpha}}=0
    \end{equation*}
     whenever $\beta>\alpha$ and $j\geq b(\dim S_{\alpha})+1$.
     Since $b(-)$ is nonincreasing and $\dim S_{\beta}> \dim S_{\alpha}$, we have
     $$j\ge b(\dim S_{\alpha})+1\geq b(\dim S_{\beta})+1.$$
     Therefore $\mathcal{L}_{\beta,j}=0$
    and the desired vanishing follows.
\end{proof}

\begin{lem}\label{IH0}
    Let $S_{\alpha}$ be a stratum. Suppose that $\dim f^{-1}(y_{\alpha})\leq k$ for every $y_{\alpha}\in S_{\alpha}$. Then 
    $$IH^0(\overline{S_{\alpha}},(R^{2k}f_*\mathbb{R}_X)_{|S_{\alpha}})^*)
    \cong H_{2k+2\dim S_{\alpha}}^{\text{BM}}(\overline{f^{-1}(S_{\alpha})}).$$
    Moreover, 
    $H_{2k+2\dim S_{\alpha}}^{\mathrm{BM}} ( \overline{f^{-1}(S_{\alpha})} )$
    has a basis consisting of the fundamental classes of the $(k+\dim S_{\alpha})$-dimensional irreducible components of $\overline{f^{-1}(S_{\alpha})}$.
\end{lem}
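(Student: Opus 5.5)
The plan is to compute both sides through the restriction of $f$ over the closure $\overline{S_\alpha}$ and to compare them by Verdier duality. Set $Z=\overline{f^{-1}(S_\alpha)}$, $T=\overline{S_\alpha}$ and $s=\dim S_\alpha$, and write $\mathcal{F}=(R^{2k}f_*\mathbb{R}_X)_{|S_\alpha}$. Since $f$ is a stratified submersion over $S_\alpha$, Thom's isotopy lemma makes $\mathcal{F}$ a local system on $S_\alpha$. Its stalk at $y$ is $H^{2k}(f^{-1}(y))$, and since $\dim f^{-1}(y)\le k$ this is the top cohomology of the fiber. It therefore has a basis indexed by the $k$-dimensional irreducible components of $f^{-1}(y)$, and monodromy acts by permuting these components. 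The dual local system $\mathcal{F}^*$ has stalks $H_{2k}(f^{-1}(y))$, again spanned by the fundamental classes of these components.

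First I would identify $IH^0(T,\mathcal{F}^*)$ with global sections of the local system. By the support condition, $\mathcal{H}^{-s}\IC(T,\mathcal{F}^*)$ is the pushforward $j_*\mathcal{F}^*$ for the inclusion $j:S_\alpha\hookrightarrow T$; this is the standard fact that the lowest cohomology sheaf of the intersection complex is $j_*$ of the local system, because the lower cohomology sheaves vanish. Hence
$$IH^0(T,\mathcal{F}^*)=\mathbb{H}^{-s}(T,\IC(T,\mathcal{F}^*))=H^0(S_\alpha,\mathcal{F}^*),$$
the space of monodromy invariants. I expect it is enough to use that $T$ is irreducible, so that $S_\alpha$ is connected and dense, together with the fact that $\IC$ lies in perverse degrees concentrated in degree $\ge -s$.

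Second, I would compute the Borel–Moore side. The dimension count gives $\dim Z\le k+s$, with $Z\setminus f^{-1}(S_\alpha)$ of dimension $<k+s$. The long exact sequence of Borel–Moore homology for the closed subset $Z\setminus f^{-1}(S_\alpha)$ then shows
$$H^{\mathrm{BM}}_{2k+2s}(Z)\cong H^{\mathrm{BM}}_{2k+2s}(f^{-1}(S_\alpha)),$$
because Borel–Moore homology vanishes above twice the dimension. The top Borel–Moore homology of any variety of dimension $\le k+s$ has a basis given by the fundamental classes of its $(k+s)$-dimensional irreducible components. This is standard: reduce to the smooth locus minus a closed subset of lower dimension, where top Borel–Moore homology is $H^0$ by Poincaré duality. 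This proves the ``moreover'' statement. Concretely, by Poincaré–Verdier duality on the smooth manifold $S_\alpha$ and the Leray spectral sequence,
$$H^{\mathrm{BM}}_{2k+2s}(f^{-1}(S_\alpha))=H^0(S_\alpha,\mathcal{H}^{-2k-2s}(f_*\omega))=H^0(S_\alpha,\mathcal{F}^*).$$
Here the relative Borel–Moore homology sheaf is $\mathcal{F}^*\otimes\mathrm{or}$, and the orientation sheaf is trivial on the complex manifold $S_\alpha$. The spectral sequence degenerates in this extreme degree because the sheaves $\mathcal{H}^{j}$ vanish for $j<-2k-2s$. Comparing the two computations gives the isomorphism.

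The main obstacle is this extreme-degree Leray argument. One has to check the vanishing that makes the edge map an isomorphism, and also that it matches the $IH^0$ identification with the correct normalization; note that in the statement $\mathcal{F}^*$ is placed in IC degree $-s$. As a sanity check on the final answer, the invariants of the permutation representation on components are spanned by the monodromy orbits of components, and these orbits correspond exactly to the irreducible components of $Z$ of dimension $k+s$.
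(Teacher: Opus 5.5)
Your argument is correct and is essentially the paper's proof. Both reduce $IH^0$ to $H^0(S_\alpha,\mathcal{F}^*)$ via $\mathcal{H}^{-\dim S_\alpha}\IC(\overline{S_\alpha},\mathcal{F}^*)=j_*\mathcal{F}^*$, and both then use an extreme-degree Leray degeneration forced by $R^qf_*\mathbb{R}_X=0$ for $q>2k$. Note that the identification $\mathcal{H}^{-\dim S_\alpha}\IC=j_*\mathcal{F}^*$ comes from Deligne's construction, i.e.\ the cosupport condition, not from the support condition alone.

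The only difference is the side of duality on which the computation is run. The paper applies Poincar\'e duality on $S_\alpha$, reads off the corner term $E_2^{2\dim S_\alpha,2k}$ of the compactly supported Leray spectral sequence, and then applies Verdier duality on $f^{-1}(S_\alpha)$. You instead do the Verdier-dual computation directly on the lowest cohomology sheaf of the pushforward of the dualizing complex. You also justify $H^{\mathrm{BM}}_{2k+2\dim S_\alpha}(\overline{f^{-1}(S_\alpha)})\cong H^{\mathrm{BM}}_{2k+2\dim S_\alpha}(f^{-1}(S_\alpha))$ through the long exact sequence, which the paper leaves implicit.
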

\begin{proof}
    Write $\mathcal{L}= ((R^{2k }f_*\mathbb{R}_X)_{|S_{\alpha}})^*$ and denote by $j:S_{\alpha}\to \overline{S_{\alpha}}$ denotes the open inclusion.
    By the hypercohomology spectral sequence
    \begin{equation*}
        E_{2}^{p,q}=H^{p}(\overline{S_{\alpha}},\mathcal{H}^{q}(\IC(\overline{S_{\alpha}}, \mathcal{L}))  \implies  IH^{p+q-\dim S_{\alpha}}(\overline{S_{\alpha}},\mathcal{L})
    \end{equation*}
    and Deligne's construction of the intersection complex, we have
    \begin{equation*}
        IH^0(\overline{S_{\alpha}},\mathcal{L})=H^{0}(\overline{S_{\alpha}},\mathcal{H}^{-\dim S_{\alpha}}(\IC(\overline{S_{\alpha}}, \mathcal{L})) =H^0(\overline{S_{\alpha}},j_*\mathcal{L})=H^0(S_{\alpha},\mathcal{L}).
    \end{equation*}
    Poincar\'e duality then gives
    \begin{equation*}
        IH^0(\overline{S_{\alpha}},\mathcal{L})\cong  H^{2\dim S_{\alpha}}_{c}(S_{\alpha},\mathcal{L})^*.
    \end{equation*}
    We claim that 
    \begin{equation*}
        H^{2\dim S_{\alpha}}_{c}(S_{\alpha},(R^{2k}f_*\mathbb{R}_X)_{|S_{\alpha}}) 
        \cong H_c^{2k+2\dim S_{\alpha}}(f^{-1}(S_{\alpha})).
    \end{equation*}
    To prove the claim, consider the Leray spectral sequence
    \begin{equation*}
        E^{p,q}_2=H^{p}_c(S_{\alpha},(R^{q}f_*\mathbb{R}_X)_{|S_{\alpha}})\implies H^{p+q}_c(f^{-1}S_{\alpha}).
    \end{equation*}
    Since $\dim f^{-1}(y_{\alpha})\leq k$ for every $y_{\alpha}\in S_{\alpha}$, 
    proper base change gives $(R^{q}f_*\mathbb{R}_X)_{|S_{\alpha}}=0$ unless $0\leq q\leq 2k.$
    Moreover, we have $H_c^p(S_{\alpha},-)=0$ unless $0\leq p\leq 2\dim S_{\alpha}. $
    Hence all differentials entering or leaving $E_r^{2\dim S_{\alpha},2k}$ vanish, and therefore 
    \begin{equation*} E_2^{2\dim S_{\alpha},2k} = E_{\infty}^{2\dim S_{\alpha},2k}. \end{equation*}
    Moreover, this is the only possible nonzero term on the anti-diagonal $p+q=2k+2\dim S_{\alpha}$. Consequently,
    \begin{equation*}
        E_{\infty}^{2\dim S_{\alpha},2k}=H_c^{2k+2\dim S_{\alpha}}(f^{-1}(S_{\alpha})).
    \end{equation*}
    Therefore
    \begin{equation*}
        H^{2\dim S_{\alpha}}_{c}(S_{\alpha},(R^{2k}f_*\mathbb{R}_X)_{|S_{\alpha}}) 
        \cong H_c^{2k+2\dim S_{\alpha}}(f^{-1}(S_{\alpha})).
    \end{equation*}
    This proves the claim.
    
    Combining the preceding isomorphisms and using Verdier duality, we obtain
    \begin{equation*}
        IH^0(\overline{S_{\alpha}},L)\cong H_c^{2k+2\dim S_{\alpha }}(f^{-1}(S_{\alpha}))^* \cong H^{\text{BM}}_{2k+2\dim S_{\alpha }}(f^{-1}(S_{\alpha})) \cong H_{2k+2\dim S_{\alpha}}^{\mathrm{BM}} ( \overline{f^{-1}(S_{\alpha})} )
    \end{equation*}
    Finally, since $\dim \overline{f^{-1}(S_{\alpha})} \leq k+\dim S_{\alpha},$
    the group 
    $H_{2k+2\dim S_{\alpha}}^{\mathrm{BM}} ( \overline{f^{-1}(S_{\alpha})} )$ 
    is freely generated by the fundamental classes of the $(k+\dim S_{\alpha})$-dimensional irreducible components of $\overline{f^{-1}(S_{\alpha})}$. This completes the proof.
\end{proof}

We are now ready to determine $\dim H^k_{\leq d-n-1}(X)$.

\begin{lem}\label{dimcount}
    For every $0\le k\leq 2l-1$, we have  $H^k_{\leq d-n-1}(X)=0,$
    while
    \begin{equation*}
        \dim H^{2l}_{\leq d-n-1}(X)= \#\{V\in \Irred^l(X):\nd_V(L)=\dim f(V )\leq d-2l-1\}.
    \end{equation*}
    Moreover, for every $0\leq j\leq d-2l-1$,
    \begin{equation*}
        \dim  H^{2l}_{2l-n+j}(X)= \#\{V\in \Irred^l(X):\nd_V(L)=\dim f(V)=j\}.
    \end{equation*}
\end{lem}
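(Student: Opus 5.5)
We use the splitting $H^k_{\le d-n-1}(X)=\bigoplus_{t\le d-n-1}H^k_t(X)$ with
\begin{equation*}
H^k_t(X)\cong \mathbb{H}^{k-n-t}(Y,P_t)\cong\bigoplus_{\alpha}IH^{k-n-t+\dim S_\alpha}(\overline{S_\alpha},\mathcal{L}_{\alpha,t}),
\end{equation*}
and count which summands can be nonzero for $k\le 2l$. By Lemma \ref{pervrange}, $\mathcal{L}_{\alpha,t}\neq 0$ forces $|t|\le b(\dim S_\alpha)$. Since $t\le d-n-1$ means $-t\ge n-d+1$, we need $b(\dim S_\alpha)\ge n-d+1$. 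This excludes all strata with $\dim S_\alpha\ge d-2l$, because there $b\le n-d$. So only strata with $s:=\dim S_\alpha\le d-2l-1$ contribute, and for them $b(s)=n-2l-s$. Setting $t=-u$, the constraint is $n-d+1\le u\le n-2l-s$.

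Next we use the degree bound. Intersection cohomology $IH^m(\overline{S_\alpha},\mathcal{L})$ vanishes for $m<0$, and the contributing degree is
\begin{equation*}
m=k-n+u+s\le k-n+(n-2l-s)+s=k-2l.
\end{equation*}
Hence for $k\le 2l-1$ we get $m<0$, every summand vanishes, and $H^k_{\le d-n-1}(X)=0$.

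For $k=2l$, the only surviving terms have $m=0$. This forces $u=b(s)=n-2l-s$, i.e. $t=2l-n+s$; note $t\le d-n-1$ holds automatically since $s\le d-2l-1$. So for each $0\le j\le d-2l-1$,
\begin{equation*}
H^{2l}_{2l-n+j}(X)\cong\bigoplus_{\dim S_\alpha=j}IH^0(\overline{S_\alpha},\mathcal{L}_{\alpha,-b(j)}).
\end{equation*}
By Lemma \ref{dual} and the last part of Lemma \ref{pervrange},
\begin{equation*}
\mathcal{L}_{\alpha,-b(j)}\cong \big((R^{2(n-l-j)}f_*\mathbb{R}_X)_{|S_\alpha}\big)^*,
\end{equation*}
using $b(j)+n-j=2(n-l-j)$. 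Proposition \ref{equiv}(iv) gives $\dim f^{-1}(y_\alpha)\le n-l-j$, so Lemma \ref{IH0} applies with fiber bound $n-l-j$. It identifies $IH^0$ with a space having a basis indexed by the irreducible components of $\overline{f^{-1}(S_\alpha)}$ of dimension $n-l$, i.e. by elements $V\in\Irred^l(X)$.

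The main point is matching these components bijectively with $\{V\in\Irred^l(X):\dim f(V)=j\}$. Given such a component $V$ over a stratum with $\dim S_\alpha=j$, we have $f(V)\subset\overline{S_\alpha}$, so $\dim f(V)\le j$. For the reverse inequality, the fibers of $V\to f(V)$ have dimension at most $n-l-j$, which gives $\dim f(V)\ge j$; hence $f(V)=\overline{S_\alpha}$. Conversely, if $V\in\Irred^l(X)$ has $\dim f(V)=j\le d-2l-1$, Lemma \ref{rigidity} shows $f(V)=\overline{S_\alpha}$ for a stratum $S_\alpha$, necessarily of dimension $j$, and that $V$ is a maximal-dimensional component of $\overline{f^{-1}(S_\alpha)}$. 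The stratum is uniquely determined by $V$, so the two sets correspond bijectively.

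This proves the formula for $\dim H^{2l}_{2l-n+j}(X)$. Summing over $0\le j\le d-2l-1$, and using $\nd_V(L)=\dim f(V)$, gives the formula for $\dim H^{2l}_{\le d-n-1}(X)$.
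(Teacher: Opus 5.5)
Your proposal is correct and follows the paper's proof essentially step by step. It uses the same restriction to strata with $\dim S_\alpha\le d-2l-1$ via Lemma \ref{pervrange}, the same vanishing $IH^m=0$ for $m<0$ (which forces $k\ge 2l$ and $t=2l-n+\dim S_\alpha$), and the same identification through Lemmas \ref{dual}, \ref{pervrange}, \ref{IH0} and \ref{rigidity}. Your explicit bijection between the $(n-l)$-dimensional components and $\{V\in\Irred^l(X):\dim f(V)=j\}$ is more careful than the paper's. One correction: the fiber bound $n-l-j$ there should be applied only to the general fiber of $V\to f(V)$, that is, to fibers over points of $S_\alpha$, using that $V\cap f^{-1}(S_\alpha)$ is dense in $V$; fibers over $\overline{S_\alpha}\setminus S_\alpha$ may be larger.
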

\begin{proof}
    By the definition of the perverse filtration and the semisimplicity theorem, we have
    \begin{align*}
        H^{k}_{\leq d-n-1}(X)
        &\cong 
        \bigoplus_{t\leq d-n-1} \mathbb{H}^{k-n-t}(Y,P_t)\\
        &\cong \bigoplus_{t\leq d-n-1} \bigoplus_{\alpha \in \Lambda} \mathbb{H}^{k-n-t} (\overline{S_{\alpha}},\IC({\overline{S_{\alpha}}},\mathcal{L}_{\alpha,t}))\\
        &= \bigoplus_{t\leq d-n-1} \bigoplus_{\alpha \in \Lambda}  
        IH^{k-n-t+\dim S_{\alpha}}(\overline{S_{\alpha}},\mathcal{L}_{\alpha,t}).
    \end{align*}
    Since $L$ is $d$-lef below degree $l$, Lemma \ref{pervrange} implies that $\mathcal{L}_{\alpha,t}=0$ unless $|t|\leq b(\dim S_{\alpha})$.
    Since $t\leq d-n-1$ and $b(s)\geq n-d$ whenever $s\geq d-2l$, only the
    strata satisfying $\dim S_{\alpha}\leq d-2l-1$ 
    can contribute. 
    For such a stratum, the possible values of $t$ satisfy
    $\dim S_{\alpha}+2l-n \leq t\leq d-n-1.$
    Therefore,
    \begin{equation*}
        H^{k}_{\leq d-n-1}(X) \cong 
         \bigoplus_{\makecell{\scriptstyle \alpha \in \Lambda\\\ \scriptstyle \dim S_{\alpha}\leq d-2l-1}} 
         \bigoplus_{\dim S_{\alpha}+2l-n \leq t\leq {d-n-1}}
         IH^{k-n-t+\dim S_{\alpha}}(\overline{S_{\alpha}},\mathcal{L}_{\alpha,t}).
    \end{equation*}
    On the other hand, we have 
    $ IH^{j}(\overline{S_{\alpha}},\mathcal{L}_{\alpha,t})=0$ for all $j<0$.
    Hence a summand can be nonzero only if $k-n-t  +\dim S_{\alpha}\geq 0$,
    or equivalently  $t\leq\dim S_{\alpha}+k-n.$
    Combining this with 
    $\dim S_{\alpha}+2l-n \leq t\leq {d-n-1},$
    we see that the only terms that can contribute satisfy
    $$\dim S_{\alpha}+2l-n \leq t\leq \dim S_{\alpha}+k-n.$$
    Consequently, the direct sum is zero if $k<2l$, while for $k=2l$
    there is only one term that can contribute for each $\alpha$, namely
    $$t=\dim S_{\alpha}+2l-n=-b(\dim S_{\alpha}).$$
    It follows that $H^k_{\leq d-n-1}=0$ for  $k\leq 2l-1$
    and
    \begin{align*}
        H^{2l}_{\leq d-n-1}(X)
        &\cong  \bigoplus_{\makecell{\scriptstyle \alpha \in \Lambda\\ \scriptstyle \dim S_{\alpha}\leq d-2l-1}} IH^0(\overline{S_{\alpha}},\mathcal{L}_{\alpha,-b(\dim S_{\alpha})})\\
        &\cong \bigoplus_{\makecell{\scriptstyle \alpha \in \Lambda\\ \scriptstyle \dim S_{\alpha}\leq d-2l-1}}  IH^0(\overline{S_{\alpha}},\mathcal{L}_{\alpha,b(\dim S_{\alpha})}^*)\\
         &\cong \bigoplus_{\makecell{\scriptstyle \alpha \in \Lambda\\ \scriptstyle \dim S_{\alpha}\leq d-2l-1}} IH^0(\overline{S_{\alpha}},(R^{2n-2l-2\dim S_{\alpha}}f_*\mathbb{R}_X)^*)
    \end{align*}
    where the second isomorphism follows from Lemma \ref{dual}, the third from Lemma \ref{pervrange}. 
    For every stratum occurring in the above direct sum, Lemma \ref{equiv}
    gives
    \begin{equation*}
        \dim f^{-1}(y_{\alpha})
        \leq n-l-\dim S_{\alpha}.
    \end{equation*}
    Hence Lemma \ref{IH0} applied with $k=n-l-\dim S_{\alpha}$ gives
    \begin{equation*}
        IH^0(\overline{S_{\alpha}},(R^{2n-2l-2\dim S_{\alpha}}f_*\mathbb{R}_X)^*) \cong H_{2n-2l}^{\text{BM}}(\overline{f^{-1}(S_{\alpha})}).
    \end{equation*}
    Moreover, $H_{2n-2l}^{\text{BM}}(\overline{f^{-1}(S_{\alpha})})$ has a basis consisting of the
    fundamental classes of the $(n-l)$-dimensional irreducible components
    of $\overline{f^{-1}(S_{\alpha})}$. 
    Therefore
    \begin{equation*}
        \dim \bigoplus_{\makecell{\scriptstyle \alpha \in \Lambda\\\ \scriptstyle \dim S_{\alpha}\leq d-2l-1}}
        H_{2n-2l}^{\text{BM}}(\overline{f^{-1}(S_{\alpha})})
        \leq 
        \#\{V\in \Irred^l(X): \dim f(V)\leq d-2l-1\}.
    \end{equation*}
    Conversely, by Lemma \ref{rigidity}, every $V\in \Irred^l(X)$ satisfying $\dim f(V)\leq d-2l-1$ appears as an $(n-l)$-dimensional irreducible component
    of $\overline{f^{-1}(S_{\alpha})}$ for some stratum $S_{\alpha}$ with $\dim S_{\alpha}\leq d-2l-1$. 
    Hence the converse inequality also holds. Thus
    \begin{equation*}
        \dim \bigoplus_{\makecell{\scriptstyle \alpha \in \Lambda\\\ \scriptstyle \dim S_{\alpha}\leq d-2l-1}}
        H_{2n-2l}^{\text{BM}}(\overline{f^{-1}(S_{\alpha})})
        =
        \#\{V\in \Irred^l(X): \dim f(V)\leq d-2l-1\}.
    \end{equation*}
    Combining the above isomorphisms gives
    \begin{equation*}
        \dim H^{2l}_{\leq d-n-1}(X)= \#\{V\in \Irred^l(X):\dim f(V )\leq d-2l-1\}.
    \end{equation*}
    
    Finally, for every $0\le j\le d-2l-1$,
    restricting the above calculation to the strata of dimension $j$ gives
    \begin{align*}
        H^{2l}_{2l-n+j}(X) &\cong
        \bigoplus_{\makecell{\scriptstyle \alpha \in \Lambda\\\ \scriptstyle \dim S_{\alpha}=j}} 
        H_{2n-2l}^{\text{BM}}(\overline{f^{-1}(S_{\alpha})}).
    \end{align*}
    Therefore
    \begin{equation*}
        \dim H^{2l}_{2l-n+j}(X)=\#\{V\in \Irred^l(X):\nd_V(L)=\dim f(V)=j\}.
    \end{equation*}
    This completes the proof.
\end{proof}

Next, we prove that the fundamental classes of these subvarieties are linearly independent. We first treat a special case and then prove the general case by induction.

\begin{lem}\label{linindeppartial}
If $L$ is $(2l+1)$-lef below degree $l$, then the classes
\begin{equation*}
    \{[V]\in H^{2l}(X):V\in \Irred^l (X), L\cdot[V]=0\}
\end{equation*}
are linearly independent.
\end{lem}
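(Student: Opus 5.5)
The plan is to prove linear independence by a Hodge-index/intersection-pairing argument on the finitely many subvarieties in question. When $d=2l+1$, the condition $L\cdot[V]=0$ means $\dim f(V)=0$ for $V\in\Irred^l(X)$, since $\nd_V(L)=\dim f(V)$. By Lemma \ref{rigidity}, applied with $d-2l=1$, each such $V$ has $f(V)=\overline{S_\alpha}$ for a stratum $S_\alpha$ with $\dim S_\alpha\le 1$. Because $L\cdot[V]=0$, we must have $\dim S_\alpha=0$, so $S_\alpha=\{y_\alpha\}$ is a point stratum. Moreover, $V$ is a maximal-dimensional irreducible component of the fiber $f^{-1}(y_\alpha)$. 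By Proposition \ref{equiv}(iv), that fiber has dimension at most $n-l$, so $V$ is an $(n-l)$-dimensional component.

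Suppose a relation $\sum_V a_V[V]=0$ holds. Group the terms by the point $y_\alpha$, writing $\sum_\alpha \xi_\alpha=0$ with $\xi_\alpha=\sum_{f(V)=y_\alpha}a_V[V]$. I first want to show that each $\xi_\alpha$ vanishes separately, and then that the classes of the components of a single fiber are independent.

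For the separation step, restrict to a small neighborhood $U_\alpha=f^{-1}(B_\alpha)$ of the fiber, where $B_\alpha$ is a contractible ball around $y_\alpha$. The map $H^{BM}_{2n-2l}(f^{-1}(y_\alpha))\to H^{2l}(X)$ factors through $H^{2l}_{f^{-1}(y_\alpha)}(X)$. It is cleaner to use the decomposition: by the computation in Lemma \ref{dimcount}, with $d=2l+1$ and $j=0$, we get
\begin{equation*}
H^{2l}_{2l-n}(X)\cong\bigoplus_{\dim S_\alpha=0}H^{BM}_{2n-2l}(f^{-1}(y_\alpha)),
\end{equation*}
of dimension equal to the number of such $V$. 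It remains to show that the natural map sending $[V]$ into $H^{2l}_{\le 2l-n}(X)$ realizes this isomorphism.

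The concrete tool I would use for this is de Cataldo--Migliorini's negativity of the intersection form on the fibers. For $V,W$ in the fiber over $y_\alpha$, consider the pairing $\int_X [V]\cdot[W]\cdot \eta^{n-2l}$, where $\eta$ is an ample class on $X$. Cycles over distinct points are disjoint, so the form is block diagonal. On each block, the refined intersection form on $H^{BM}_{2n-2l}(f^{-1}(y_\alpha))$, twisted by the $(n-2l)$-th power of an ample class, should be definite. This is the analogue of the Hodge--Riemann/semismall "definite intersection form" theorem, which follows from the decomposition theorem: the summand $\IC(y_\alpha,\mathcal{L}_{\alpha,2l-n})$ is a skyscraper, and the pure Hodge structure restricted to it is of Hodge--Tate type and polarized. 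A nondegenerate block-diagonal Gram matrix then forces all $a_V=0$.

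The main obstacle is exactly this definiteness (or just nondegeneracy) of the local intersection form on the top-dimensional fiber components. I would try first to deduce it from the perverse-filtration picture. Lemma \ref{kerandperv} places each $[V]$ in $H^{2l}_{\le d-n-1}(X)=H^{2l}_{2l-n}(X)$, which has the right dimension by Lemma \ref{dimcount}. The pairing with $\eta^{n-2l}$ then induces the polarization on the graded piece $H^{2l}_{2l-n}$, which is a lowest perverse piece and is therefore primitive in the relative sense. By the relative Hodge--Riemann relations of de Cataldo--Migliorini, this pairing is definite there. Hence the Gram matrix of the $[V]$ is the restriction of a definite form to the span of the classes. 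Showing that the Gram matrix itself is nonsingular, rather than merely that the span carries a definite form, reduces to showing that the map $\bigoplus_\alpha H^{BM}_{2n-2l}(f^{-1}(y_\alpha))\to H^{2l}(X)$ is injective. I would obtain this by identifying the map with the inclusion of the summand $\bigoplus_\alpha\IC(y_\alpha,\mathcal{L}_{\alpha,2l-n})$ via Lemma \ref{IH0}.
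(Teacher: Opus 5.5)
Your reduction is right: you correctly isolate the crux, but the argument stops exactly where the content of the lemma lies. Each $V$ is an $(n-l)$-dimensional component of a fibre $f^{-1}(y_\alpha)$ over a point stratum. So linear independence is \emph{equivalent} to injectivity of the cycle map $\bigoplus_{\dim S_\alpha=0}H^{BM}_{2n-2l}(f^{-1}(y_\alpha))\to H^{2l}(X)$, and you propose to get this ``by identifying the map with the inclusion of the summand via Lemma~\ref{IH0}''. Lemma~\ref{IH0} cannot do this. It produces only an abstract isomorphism $IH^0(\overline{S_\alpha},\cdot)\cong H^{BM}_{2k+2\dim S_\alpha}(\overline{f^{-1}(S_\alpha)})$, built from spectral sequences and Poincar\'e/Verdier duality. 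That is dimension information, which is how Lemma~\ref{dimcount} uses it. It says nothing about how the cycle class map compares with the canonical map $\mathbb{H}^{2l-n}(Y,{}^p\tau_{\le 2l-n}K)\to\mathbb{H}^{2l-n}(Y,K)$, where $K=Rf_*\mathbb{R}_X[n]$.

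The Hodge-index detour does not help either. If a form is definite on $H^{2l}_{\le 2l-n}(X)$, the Gram matrix of the $[V]$ is positive semidefinite with kernel exactly the space of linear relations. It is therefore nonsingular if and only if the classes are already independent, so the argument is circular at that point, as you noticed. Definiteness of the \emph{intrinsic} refined intersection form on $H^{BM}_{2n-2l}(f^{-1}(y_\alpha))$ would suffice; this is the de Cataldo--Migliorini result the paper mentions as an alternative. But you neither prove nor precisely cite it, and deriving it from Hodge--Riemann on perverse graded pieces requires the same identification.

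The missing step, as the paper supplies it, is a comparison of $t$-structures. With $d=2l+1$, the fibre bounds of Proposition~\ref{equiv} give three vanishings: $R^qf_*\mathbb{R}_X=0$ for $q>2n-2l$; $(R^{2n-2l}f_*\mathbb{R}_X)_{|S_\alpha}=0$ unless $\dim S_\alpha=0$; and $(R^qf_*\mathbb{R}_X)_{|S_\alpha}=0$ for $q\ge 2n-2l-\dim S_\alpha$ when $\dim S_\alpha\ge 1$. Hence $\tau_{\ge n-2l}K\cong\bigoplus_{\dim S_\alpha=0}(R^{2n-2l}f_*\mathbb{R}_X)_{|S_\alpha}[2l-n]$ lies in ${}^p\mathcal{D}^{\ge n-2l}$, while $\tau_{\le n-2l-1}K\in{}^p\mathcal{D}^{\le n-2l-1}$. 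So the standard truncation triangle is the perverse one. The cycle map is induced, via proper base change and duality, by the dual of $K\to\tau_{\ge n-2l}K$. Dualizing and using $\mathbb{D}_YK\cong K$, this morphism becomes the canonical ${}^p\tau_{\le 2l-n}K\to K$. Taking $\mathbb{H}^{2l-n}$ shows that the span of the $[V]$ equals $H^{2l}_{\le 2l-n}(X)$, whose dimension is the number of such $V$ by Lemma~\ref{dimcount}. This gives independence directly, with no pairing needed.
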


\begin{proof}
    %
    This also follows from \cite[Proposition 6.2.2]{deCMdecomp}. 
    For the reader's convenience, we give an alternative proof using our previous calculations.
    
    Let $f:X\to Y$ be the Kodaira map associated with $L$. Then 
    \begin{equation*}
        L\cdot[V]=0\iff \dim f(V)=0.
    \end{equation*}
    To simplify notation, write
    \begin{equation*}
        V_0=\Span_{\mathbb{R}}\{[V]\in H^{2l}(X):V\in \Irred^l (X), \dim f(V)=0\}.
    \end{equation*}
    To prove the linear independence of the generating set
    \begin{equation*}
        \{[V]\in H^{2l}(X):V\in \Irred^l (X), \dim f(V)=0\},
    \end{equation*}
    it suffices to show that
    $$\dim V_0=\# \{V\in \Irred^l (X): \dim f(V)=0\}.$$
    We claim that $V_0=H^{2l}_{\leq 2l-n}.$
    The result then follows from Lemma \ref{dimcount}, which gives 
    \begin{equation*} 
        \dim H^{2l}_{\leq 2l-n}(X) = \#\left\{ V\in\Irred^l(X): \dim f(V)=0 \right\}. 
    \end{equation*}
   
    We first reformulate $V_0$ in a form convenient for the proof.
    By Lemma \ref{rigidity}, every $V\in\Irred^l(X)$ with $\dim f(V)=0$ is an $(n-l)$-dimensional irreducible component of $f^{-1}(S_{\alpha})$ for some zero-dimensional stratum $S_{\alpha}$.
    Consequently,
    \begin{equation*}
        V_0= \Image( \bigoplus_{\dim S_{\alpha}=0} H^{BM}_{2n-2l}(f^{-1}S_{\alpha})
        \to H^{BM}_{2n-2l}(X)\cong H^{2l}(X)).
    \end{equation*}
    By proper base change and Verdier duality, the above cycle map is induced on hypercohomology by the natural morphism
    \begin{equation*} 
        \bigoplus_{\dim S_{\alpha}=0} ( (R^{2n-2l}f_*\mathbb{R}_X)|_{S_{\alpha}} )^* [n-2l] \longrightarrow \mathbb{D}_Y(Rf_*\mathbb{R}_X[n]), 
    \end{equation*} 
    where the summands on the left are viewed as skyscraper sheaves on $Y$.
    More precisely, the cycle map above is identified with the map
    \begin{equation*}
        \mathbb{H}^{2l-n}(Y,\bigoplus_{\dim S_{\alpha}=0}
    ((R^{2n-2l}f_*\mathbb{R}_X)_{|S_{\alpha}})^*[n-2l])\to \mathbb{H}^{2l-n}(Y,\mathbb{D}_Y(Rf_*\mathbb{R}_X[n]))
    \end{equation*}
    We next describe the morphism giving rise to this map.
    Since $L$ is $(2l+1)$-lef below degree $l$, Lemma \ref{equiv} implies
    \begin{equation*} 
    \dim f^{-1}(y_{\alpha}) \leq 
    \begin{cases} n-l & \text{if } \dim S_{\alpha}=0\\ 
    n-l-\dfrac{\dim S_{\alpha}+1}{2} & \text{if } 1\leq \dim S_{\alpha}\leq 2l+1\\
    n-\dim S_{\alpha} & \text{if } 2l+1\leq \dim S_{\alpha}\leq n 
    \end{cases} .
    \end{equation*} 
    In particular, $R^qf_*\mathbb{R}_X=0$ for $q>2n-2l$ and $(R^{2n-2l}f_*\mathbb{R}_X)_{|S_{\alpha}}=0$ for $\dim S_{\alpha}>0$.
    Hence the standard truncation satisfies 
    \begin{equation*} 
    \tau_{\geq n-2l}(Rf_*\mathbb{R}_X[n]) \cong \bigoplus_{\dim S_{\alpha}=0} (R^{2n-2l}f_*\mathbb{R}_X)|_{S_{\alpha}} [2l-n]. 
    \end{equation*}
    Thus the canonical morphism 
    \begin{equation*} 
        Rf_*\mathbb{R}_X[n] \to \tau_{\geq n-2l}(Rf_*\mathbb{R}_X[n]) 
    \end{equation*} 
    induces the morphism 
    \begin{equation*} 
        \mathbb{D}_Y( \tau_{\geq n-2l}(Rf_*\mathbb{R}_X[n]) ) \to
        \mathbb{D}_Y(Rf_*\mathbb{R}_X[n]) 
    \end{equation*} 
    considered above after applying Verdier duality.

    We now identify this morphism with the canonical morphism 
    \begin{equation*}  
        {}^p\tau_{\leq 2l-n}(Rf_*\mathbb{R}_X[n]) \to Rf_*\mathbb{R}_X[n]. 
    \end{equation*}
    Consider the distinguished triangle for the standard $t$-structure
    \begin{equation*}
        \tau_{\leq n-2l-1}(Rf_*\mathbb{R}_X[n]) 
        \to 
        Rf_* \mathbb{R}_X[n]
        \to 
        \tau_{\geq n-2l}(Rf_*\mathbb{R}_X[n])
        \overset{+1}{\to}.
    \end{equation*}
    We claim that $\tau_{\leq n-2l-1}(Rf_*\mathbb{R}_X[n]) \in {}^p\mathcal{D}^{\leq n-2l-1}$ and $\tau_{\geq n-2l}(Rf_*\mathbb{R}_X[n]) \in {}^p\mathcal{D}^{\geq n-2l}.$

    For the first assertion, it is enough to verify that
    \begin{equation*}
        \mathcal{H}^{j}(\tau_{\leq n-2l-1}(Rf_*\mathbb{R}_X[n]))_{|S_{\alpha}}=0
    \end{equation*}
    for every stratum $S_{\alpha}$ and every $j>n-2l-1-\dim S_{\alpha}$.
    If $\dim S_{\alpha}=0$, this follows immediately from the definition of the standard truncation. 
    Suppose that $\dim S_{\alpha}\geq 1$. 
    It is then enough to prove that $(R^{q}f_*\mathbb{R}_X)_{|S_{\alpha}}=0$ for all $q\geq 2n-2l-\dim S_{\alpha}.$
    From the fiber-dimension estimate above, we have 
    \begin{equation*} 
        \dim f^{-1}(y_{\alpha}) \leq n-l-\frac{\dim S_{\alpha}+1}{2},
    \end{equation*}
    whenever $\dim S_{\alpha}\geq 1. $
    By proper base change $(R^{j}f_*\mathbb{R}_X)_{|S_{\alpha}}=0$ for all $j\geq 2n-2l-\dim S_{\alpha}.$
    Therefore $$\tau_{\leq n-2l-1}(Rf_*\mathbb{R}_X[n])\in {}^p\mathcal{D}^{\leq n-2l-1}.$$

    For the second assertion, we have already shown that
    \begin{equation*}
        \tau_{\geq n-2l}(Rf_*\mathbb{R}_X[n])=\bigoplus_{\dim S_{\alpha}=0}(R^{2n-2l}f_*\mathbb{R}_X)_{|S_{\alpha}}[2l-n] .
    \end{equation*}
    Each summand is a skyscraper perverse sheaf shifted by $[2l-n]$.
    Hence 
    $$\tau_{\geq n-2l}(Rf_*\mathbb{R}_X[n]) \in {}^p\mathcal{D}^{\geq n-2l}. $$

    Applying Verdier duality to the standard truncation triangle gives the distinguished triangle
    \begin{equation*}
        \mathbb{D}_Y(\tau_{\geq n-2l}(Rf_*\mathbb{R}_X[n]))
        \to
        \mathbb{D}_Y(Rf_* \mathbb{R}_X[n])
        \to
        \mathbb{D}_Y(\tau_{\leq n-2l-1}(Rf_*\mathbb{R}_X[n]))
        \overset{+1}{\to}.
    \end{equation*}
    Since Verdier duality exchanges the two halves of the perverse $t$-structure,
    the first and the third terms belong to ${}^p\mathcal{D}^{\leq 2l-n}$ and ${}^p\mathcal{D}^{\geq 2l-n+1}$ respectively.
    Using the canonical self-duality 
    $$\mathbb{D}_Y(Rf_* \mathbb{R}_X[n])\cong Rf_* \mathbb{R}_X[n],$$ 
    the uniqueness of the perverse truncation triangle identifies
    \begin{equation*}
        \mathbb{D}_Y(\tau_{\geq n-2l}(Rf_*\mathbb{R}_X[n]))
        \to
        \mathbb{D}_Y(Rf_* \mathbb{R}_X[n])\cong Rf_* \mathbb{R}_X[n]
    \end{equation*}
    with the canonical perverse truncation morphism
    \begin{equation*}
        ^{p}\tau_{\leq 2l-n}(Rf_*\mathbb{R}_X[n])\to Rf_* \mathbb{R}_X[n].
    \end{equation*}
    Taking $\mathbb{H}^{2l-n}(Y,-)$ and using the definition of the perverse filtration, the image of this morphism is $H^{2l}_{\leq 2l-n}(X).$
    On the other hand, by the construction above, the same image is precisely $V_0$. Therefore, 
    \begin{equation*}
        V_0=H^{2l}_{\leq 2l-n}(X).
    \end{equation*}
    This completes the proof.
\end{proof}

We will use the following version of Bertini's theorem; see, for example, \cite{lazarsfeldPosI}.
\begin{lem}\label{Bertini}
    Let $L$ be a basepoint-free line bundle on $X$, and let $V_1,...,V_m\subset X$ be subvarieties. 
    There exists a Zariski dense open subset $U\subset |L|$ such that every hypersurface $H\in U$ satisfies the following properties.
    \begin{enumerate}
    \item For every $i$, the intersection $V_i\cap H$ is proper and
    pure-dimensional. If $$V_i\cap H=W_1\cup\cdots\cup W_N$$
    is the decomposition into irreducible components, then
    $[V_i]_{X|H}=\sum_{j=1}^N[W_j]_H.$
    \item If $V_i$ is smooth, then $V_i\cap H$ is smooth.
    \item If $\nd_{V_i}(L)\ge2$, then $V_i\cap H$ is irreducible.
    \end{enumerate}
\end{lem}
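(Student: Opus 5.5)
The plan is to reduce everything to the classical Bertini theorems for the morphism defined by the linear system. Since $L$ is basepoint-free, $|L|$ defines a morphism $\phi:X\to \mathbb{P}^N=\mathbb{P}(H^0(X,L))$, and members of $|L|$ are exactly the pullbacks $\phi^{-1}(\Lambda)$ of hyperplanes $\Lambda\subset\mathbb{P}^N$. For each $i$ separately I will produce a dense open subset of $|L|$ on which the required properties hold for $V_i$. The set $U$ is then the intersection of these finitely many dense open subsets, which is again dense and open. So I fix one irreducible $V=V_i$; in (3) irreducibility is implicit, and for a reducible $V_i$ in (1) and (2) I treat each component separately.

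For (1), I first arrange properness. Choose a point in $V$ and a point in each irreducible component of $\operatorname{Sing}(V)$. For each such point $x$, basepoint-freeness implies that the divisors containing $x$ form a hyperplane in $|L|$. Avoiding these finitely many hyperplanes, $H$ contains neither $V$ nor any component of $\operatorname{Sing}(V)$. Since $H$ is Cartier, every component of $V\cap H$ has dimension at least $\dim V-1$. Properness gives equality, so $V\cap H$ is pure of dimension $\dim V-1$. Moreover $\dim(\operatorname{Sing}(V)\cap H)<\dim V-1$, so no component $W_j$ lies in $\operatorname{Sing}(V)$.

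Next I show each component has multiplicity one. Apply the Bertini smoothness theorem in characteristic zero to the morphism $\phi|_{V^{\mathrm{reg}}}:V^{\mathrm{reg}}\to\mathbb{P}^N$ (see \cite{lazarsfeldPosI}). For a general hyperplane $\Lambda$, the scheme $V^{\mathrm{reg}}\cap\phi^{-1}(\Lambda)$ is smooth, hence reduced. Every $W_j$ meets $V^{\mathrm{reg}}$, so the scheme $V\cap H$ is generically reduced along each $W_j$. Because the intersection is proper and $H$ is a Cartier divisor, the Gysin pullback $[V]_{X|H}$ equals the fundamental cycle of the scheme $V\cap H$. That cycle is $\sum_j[W_j]_H$ with all multiplicities equal to $1$.

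Statement (2) is the same Bertini smoothness theorem, now applied to $\phi|_{V}$ when $V$ is smooth.

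For (3), use that $\nd_V(L)=\dim\phi(V)$. This holds because $L|_V=\phi|_V^*\mathcal{O}(1)$ and $L^k\cdot[V]\neq 0$ exactly when $k\le\dim\phi(V)$, by the projection formula. So the hypothesis says $\dim\phi(V)\ge 2$. Then the Bertini irreducibility theorem of Jouanolou \cite[Theorem 3.3.1]{lazarsfeldPosI} applies to the morphism $V\to\mathbb{P}^N$ from an irreducible variety with image of dimension at least $2$. It gives that $\phi|_V^{-1}(\Lambda)=V\cap H$ is irreducible for general $\Lambda$.

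The step I expect to be the main obstacle is (3), since irreducibility genuinely fails when $\dim\phi(V)=1$. I will rely on the cited theorem rather than reprove it. The only further care needed is the cycle-level equality in (1), where one must rule out embedded or non-reduced components along $\operatorname{Sing}(V)$; the choice of $H$ meeting $\operatorname{Sing}(V)$ properly handles this.
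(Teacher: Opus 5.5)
Your proposal is correct and takes essentially the same route as the paper, which gives no proof beyond pointing to the Bertini theorems in \cite{lazarsfeldPosI}. Your reduction to the smoothness and Jouanolou irreducibility statements there (applied to $\phi|_{V^{\mathrm{reg}}}$ and $\phi|_V$, using $\nd_V(L)=\dim\phi(V)$), together with the generic-reducedness argument giving multiplicity one in (1), is the standard derivation; the only implicit point is that you also want $H$ itself smooth so that $[W_j]_H$ lives in $H^*(H)$, which is just (2) applied to $V=X$.
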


\begin{lem}\label{Lpreserving}
    Let $d,l$ be integers satisfying $l\geq 1$ and $ 2l+1\leq d\leq n$, 
    and let $L$ be a basepoint-free line bundle that is $d$-lef below degree l. Then for every $k\leq d-2l+1$ and general hypersurfaces $H_1,...,H_k\in |L|$, the subvariety
    $X_k=H_1\cap \cdots\cap H_k$
    is smooth and irreducible, and the restriction $L_{|X_k}$ is $(d-k)$-lef below degree $l$.
\end{lem}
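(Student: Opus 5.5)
We argue by induction on $k$, reducing to a single general hypersurface: it suffices to show that if $L$ is basepoint-free and $d$-lef below degree $l$ with $d\ge 2l$ (so that $d-1\ge 2l-1$ and the notion makes sense on the hypersurface), then a general $H\in|L|$ is smooth, irreducible, and $L_{|H}$ is $(d-1)$-lef below degree $l$ on $H$. Iterating $k\le d-2l+1$ times keeps the degree $d-j\ge 2l-1$ at every stage, since at stage $j<k$ we have $d-j\ge 2l$. Smoothness of $H$ follows from Lemma \ref{Bertini}(2) applied to $V_1=X$. Irreducibility follows from Lemma \ref{Bertini}(3), because $\nd_X(L)\ge d\ge 2l\ge 2$ by the degree-$0$ lef condition.

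The main work is the lefness condition on $H$. Here I would use the fiber-dimension criterion (iv) of Proposition \ref{equiv} rather than test against all subvarieties of $H$, since there are uncountably many $W\in\Irred^k(H)$ and a Bertini argument cannot handle them directly. Let $f:X\to Y\subset\mathbb{P}(H^0(X,L))$ be the Kodaira map, so that $H=f^{-1}(Y\cap \Pi)$ for a general hyperplane $\Pi$. The restriction $f_{|H}:H\to Y_H:=Y\cap\Pi$ is the morphism associated to $L_{|H}$; it may not be the full Kodaira map, but any morphism with $L_{|H}=f_{|H}^*A$, $A$ ample, suffices for Proposition \ref{equiv}.

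I would not use (iv) with a specific stratification, but rather the stratification-free form (iii). Set $Y^m=\{y:\dim f^{-1}(y)=m\}$. Its analogue for $f_{|H}$ is $Y^m\cap\Pi$, since fibers are unchanged, and for general $\Pi$ we have $\dim(Y^m\cap \Pi)\le \dim Y^m-1$; when $\dim Y^m=0$ the intersection is empty. Now check (iii) for $(n-1,d-1,l)$. For $(n-1)-(d-1)=n-d\le m\le n-d+l$ we need $\dim(Y^m\cap\Pi)\le 2(n-1)-(d-1)-2m=2n-d-2m-1$, which follows from the bound $2n-d-2m$ on $X$ minus one. For $m\ge n-d+l$ we need $\le n-1-l-m$, which again is the bound on $X$ minus one. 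Fibers of dimension $m>n-1$ (that is, $m=n$) occur only over points of $Y^n$, which $\Pi$ misses, so the range $m\le n-1$ is covered. Then (iii)$\Rightarrow$(i) of Proposition \ref{equiv} on $H$ gives the claim.

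The step I expect to be the main obstacle is the care needed in applying Proposition \ref{equiv} on $H$. Its implications were proved via (iv) with an adapted stratification, so I must make sure (iii)$\Rightarrow$(iv) holds for $f_{|H}$: a stratification adapted to $f_{|H}$ exists, and (iii)$\Rightarrow$(iv) only uses the sets $Y^m$, so this is fine. I also need that $Y^m$ is constructible, so that its dimension drops by one under a general hyperplane section; this follows from upper semicontinuity of fiber dimension. A further check is that the boundary case $2l-1=d-1$ of the inequality constraints is allowed in Proposition \ref{equiv}, which it is, since there $0\le 2l-1\le d$.
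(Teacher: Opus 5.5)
Your proposal is correct and is essentially the paper's argument. Both realize the $H_i$ as preimages under $f$ of general hyperplanes, observe that the fibers over the cut-down base are unchanged while the relevant base loci drop in dimension, and reapply Proposition \ref{equiv} to the restricted morphism. Read your one-step claim for the morphism $f_{|X_j}$, i.e.\ for the possibly incomplete restricted linear system, rather than for the complete system $|L_{|X_j}|$; your argument already covers this, and the paper applies Bertini to these restricted systems in the same way.

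The only differences are these. You verify criterion (iii) through the loci $Y^m$, which needs only that a general hyperplane cuts the dimension of a constructible set by one, and you cut one hyperplane at a time. The paper instead verifies criterion (iv), using a Whitney stratification of $Y$ made transverse to all $k$ hyperplanes at once.
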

\begin{proof}
    As before, let $f:X\to Y\subset \mathbb{P}(H^{0}(X,L))$ be
    the Kodaira map associated with $L$. 
    Then the system $|L|$ is identified with the pullback of the hyperplane linear system: $|L|=f^*|\mathcal{O}_{\mathbb{P}(H^{0}(X,L))}(1)|.$
    Let $Y=\bigsqcup_{\alpha\in \Lambda}S_{\alpha}$ be a Whitney stratification of $Y$ adapted to $f$.
    By Lemma \ref{equiv}, for every stratum $S_\alpha$ and every point
    $y_\alpha\in S_\alpha$, we have
    \begin{equation*}
    \dim f^{-1}(y_\alpha)\leq
    \begin{cases}
     n-l-\dim S_\alpha &  \text{if }0\leq \dim S_\alpha\leq d-2l\\
     n-\frac{d}{2}-\frac{\dim S_\alpha}{2} & \text{if }d-2l\leq \dim S_\alpha\leq d\\ 
     n-\dim S_\alpha &  \text{if }d\leq \dim S_\alpha\leq n
    \end{cases}.
    \end{equation*}
    Choose general hyperplanes $A_1,...,A_k \in |\mathcal{O}_{\mathbb{P}(H^{0}(X,L))}(1)|$ such that every nonempty intersection 
    $S_{\alpha}\cap A_1\cap\cdots \cap A_k$
    is transversal for every stratum $S_{\alpha}$. 
    Let $H_i=f^{-1}A_i$ be the corresponding members of $|L|$, and set
    $X_k=H_1\cap\cdots \cap H_k.$ 
    Since $L$ is $d$-lef below degree $l$ and $l\geq 1$, we have $\nd(L)=\dim Y\geq d.$
    We prove inductively that $X_s$ is smooth and irreducible for every $1\leq s\leq k$. Suppose that $X_s$ is smooth and irreducible. 
    Since the hyperplanes are general, we have $f(X_s)=Y\cap A_1\cap\cdots\cap A_s$ and hence 
    \begin{equation*} 
        \nd_{X_s}(L) \geq \dim f(X_s) = \dim Y-s \geq d-s. 
    \end{equation*} 
    For $s\leq k-1$, we have 
    \begin{equation*} 
        d-s \geq d-k+1 \geq 2l \geq 2. 
    \end{equation*} 
    Therefore, Lemma \ref{Bertini} shows that the general intersection $X_{s+1}=X_s\cap H_{s+1}$ is smooth and irreducible. 
    Thus $X_k$ is smooth and irreducible by induction.
    
    Let $Y_k=Y\cap A_1\cap \cdots\cap A_k.$ 
    By the transversality above, the connected components of
    $S_\alpha\cap A_1\cap\cdots\cap A_k$
    form a Whitney stratification of $Y_k$ adapted to the restricted
    morphism $f_{|X_k}:X_k\longrightarrow Y_k .$
    Moreover, if $y_\alpha\in S_\alpha\cap A_1\cap\cdots\cap A_k,$
    then $(f|_{X_k})^{-1}(y_\alpha)=f^{-1}(y_\alpha).$
    And  we have 
    $$\dim(S_\alpha\cap A_1\cap\cdots\cap A_k)
        =\dim S_\alpha-k$$
        by transversality.
    Therefore, the fiber-dimension estimates above become
    \begin{equation*}
        \dim f^{-1} y_{\alpha} \leq
        \begin{cases}
        n-k - l - \dim S_{\alpha}\cap A_1\cap \cdots \cap A_k & \text{if } 0 \leq \dim S_{\alpha}\cap A_1\cap \cdots \cap A_k \leq d-k - 2l \\
        n-k - \frac{d-k}{2} - \frac{\dim S_{\alpha}\cap A_1\cap \cdots \cap A_k}{2} & \text{if } d -k- 2l \leq \dim S_{\alpha}\cap A_1\cap \cdots \cap A_k \leq d -k\\
        n-k - \dim S_{\alpha}\cap A_1\cap \cdots \cap A_k & \text{if } d -k\leq \dim S_{\alpha}\cap A_1\cap \cdots \cap A_k \leq n-k
        \end{cases}.
    \end{equation*}
    Applying Lemma \ref{equiv} once again, we conclude that the restricted
    line bundle $L_{|X_k}$
    is $(d-k)$-lef below degree $l$. This completes the proof.

\end{proof}

\begin{lem}\label{linindepfull}
    Let $d,l$ be integers satisfying $l\geq 1$ and $2l+1\leq d\leq n$ and let $L$ be a basepoint-free line bundle. If $L$ is $d$-lef below degree l, then the classes
    \begin{equation*}
        \{[V]\in H^{2l}(X):V\in \Irred^l (X), L^{d-2l}\cdot [V]=0\}
    \end{equation*}
    are linearly independent.
\end{lem}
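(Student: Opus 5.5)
We argue by induction on $d$, with $l$ fixed. The base case $d=2l+1$ is exactly Lemma \ref{linindeppartial}, since for $d=2l+1$ the condition $L^{d-2l}\cdot[V]=0$ reads $L\cdot[V]=0$. So assume $d\geq 2l+2$ and that the statement holds for $(d-1)$-lef bundles below degree $l$ on smooth projective varieties of dimension $n-1$.

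Let $V_1,\dots,V_N$ be the subvarieties in $\Irred^l(X)$ with $L^{d-2l}\cdot[V_i]=0$. This set is finite by Corollary \ref{finiteness}, since $L^{d-2l+1}\cdot[V_i]=0$ as well. Suppose $\sum a_i[V_i]=0$. We split the indices by $j_i:=\dim f(V_i)=\nd_{V_i}(L)\leq d-2l-1$.

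\textbf{Step 1: indices with $j_i=0$.} Choose a general $H\in|L|$ as in Lemma \ref{Bertini} and Lemma \ref{Lpreserving}. Then $H$ is smooth and irreducible, $L_{|H}$ is $(d-1)$-lef below degree $l$, and $H\cap V_i=\emptyset$ when $j_i=0$. For $j_i\geq 1$, $V_i\cap H$ is pure of codimension $l$ in $H$, with components $W$ satisfying
\[
L_{|H}^{d-1-2l}\cdot[W]_H=0,
\]
because $\dim f(W)\leq j_i-1\leq d-2l-2$. When $j_i\geq 2$, $V_i\cap H$ is moreover irreducible.

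\textbf{Step 2: restriction to $H$.} Restricting the relation to $H$ gives
\[
\sum_{j_i\geq1}a_i\,[V_i\cap H]_H=0.
\]
By the induction hypothesis, the classes of distinct such $W$ are linearly independent in $H^{2l}(H)$. Distinct $V_i$ produce disjoint sets of components, since a general $H$ meets $V_i\cap V_{i'}$, which has smaller dimension, in something of dimension less than $n-l-1$. Each $V_i\cap H$ with $j_i\geq1$ is nonempty, so every $a_i$ with $j_i\geq 1$ vanishes.

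\textbf{Step 3: the remaining relation.} We are left with $\sum_{j_i=0}a_i[V_i]=0$. These $V_i$ satisfy $L\cdot[V_i]=0$. To apply Lemma \ref{linindeppartial} we would need $L$ to be $(2l+1)$-lef below degree $l$, which does not follow directly from $d$-lefness. Instead we use Lemma \ref{dimcount}. The classes $[V_i]$ with $j_i=0$ lie in $H^{2l}_{\leq 2l-n}(X)=H^{2l}_{2l-n}(X)$, and this space has dimension equal to their number. The proof of Lemma \ref{linindeppartial} only uses the fiber-dimension structure near zero-dimensional strata, so we rerun it with the general bounds of Proposition \ref{equiv}(iv). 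These bounds still give $R^qf_*\mathbb{R}_X=0$ for $q>2n-2l$, and they give $(R^{2n-2l}f_*\mathbb{R}_X)_{|S_\alpha}=0$ for $\dim S_\alpha>0$, since then the fiber dimension is below $n-l$. The truncation argument then identifies the span with $H^{2l}_{\leq 2l-n}(X)$, and the dimension count yields independence.

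\textbf{Expected obstacle.} The main difficulty is the bookkeeping in Step 2. We must ensure that the induction hypothesis applies on $H$ in the correct form, namely to $L_{|H}$, which is basepoint-free and $(d-1)$-lef below degree $l$ with $2l+1\leq d-1$. We must also check that the components $W$ arising from different $V_i$ are distinct and codimension-$l$ in $H$, which is a genericity argument. The case $j_i=1$, where $V_i\cap H$ may be reducible, is harmless: we only need one nonzero coefficient on a component belonging to $V_i$ alone.
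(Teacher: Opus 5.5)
Your argument is correct and is essentially the paper's proof. The paper cuts by $d-2l-1$ general members of $|L|$ at once and then kills the coefficients layer by layer ($\dim f(V)=d-2l-1,\dots,0$) by applying Lemma \ref{linindeppartial} on $X_{d-2l-1},\dots,X_1,X_0$; your one-hypersurface induction on $d$ is the same argument unrolled.

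In Step 3, however, the detour is unnecessary and its premise is false. Since $L$ is nef and $d\geq 2l+1$, the nonvanishing $L^{d-2k}\cdot[V]\neq 0$ forces $L^{2l+1-2k}\cdot[V]\neq 0$. Hence $d$-lef below degree $l$ implies $(2l+1)$-lef below degree $l$, and Lemma \ref{linindeppartial} applies directly to the classes with $j_i=0$, exactly as the paper does on $X_0=X$. Your rerun also understates what that proof uses: it needs $(R^qf_*\mathbb{R}_X)_{|S_\alpha}=0$ for $q\geq 2n-2l-\dim S_\alpha$ on positive-dimensional strata, not only the behaviour near zero-dimensional strata. That vanishing does hold here, but you did not check it.
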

\begin{proof}
    Let $f:X\to Y$ be the Kodaira map associated with $L$. Then 
    \begin{equation*}
        L^{d-2l}\cdot [V]=0\iff \dim f(V)\leq d-2l-1.
    \end{equation*}
    Reorder these subvarieties as 
    $$V_{0,1},...,V_{0,M_0},V_{1,1},...,V_{1,M_1},...,V_{d-2l-1,1},...,V_{d-2l-1,M_{d-2l-1}}$$ 
    such that $\dim f(V_{i,j})=i$ for every $i,j$.
    Suppose that there is a linear relation 
    \begin{equation*}
        \sum_{i=0}^{d-2l-1}\sum_{j=1}^{M_i}a_{i,j}[V_{i,j}]=0.
    \end{equation*}
    Choose general hypersurfaces $H_1,...,H_{d-2l-1}\in |L|$, and set $X_{s}= H_1\cap \cdots \cap H_{s}$ for every $1\leq s\le d-2l-1.$
    We choose the hypersurfaces sufficiently generally so that Lemma \ref{Bertini} applies simultaneously to all the subvarieties $V_{i,j}$ and their pairwise intersections.
    
    By Lemma \ref{Lpreserving},  the variety $X_{d-2l-1}$  is smooth and irreducible, and the restriction $L_{|X_{d-2l-1}}$ is $(2l+1)$-lef below degree $l$. 
    Since $X_{d-2l-1}$ is the inverse image under $f$ of a general linear subspace of codimension $d-2l-1$, we have $V_{i,j}\cap X_{d-2l-1}=\emptyset$ unless $i=d-2l-1$.
    For every $1\leq j\leq M_{d-2l-1}$, write 
    $$V_{d-2l-1,j}\cap X_{d-2l-1}=W_{j,1}\cup \cdots \cup W_{j,N_j}$$
    as the decomposition into irreducible components.
    By Lemma \ref{Bertini}, the intersection is proper and pure-dimensional, and hence every $W_{j,k}$ has codimension $l$ in $X_{d-2l-1}$.
    Moreover, we have $$\dim f_{|X_{d-2l-1}}(W_{j,k})=0.$$
    
    Restricting the above linear relation to $X_{d-2l-1}$, we obtain
    \begin{equation*}
        \sum_{j=1}^{M_{d-2l-1}}a_{d-2l-1,j}[V_{d-2l-1,j}\cap X_{d-2l-1}]_{X_{d-2l-1}}=\sum_{j=1}^{M_{d-2l-1}}\sum _{k=1}^{N_j}a_{d-2l-1,j}[W_{j,k}]_{X_{d-2l-1}}=0.
    \end{equation*}
    Since $L_{|X_{d-2l-1}}$ is $(2l+1)$-lef below degree $l$ and $L_{|X_{d-2l-1}}\cdot [W_{j,k}]_{X_{d-2l-1}}$=0 for all $j,k$,  Lemma \ref{linindeppartial} implies that the classes
    \begin{equation*}
        \{[W_{j,k}]_{X_{d-2l-1}}: 1\leq j \leq M_{d-2l-1},1\leq k\leq N_{j}\}
    \end{equation*}
    are linearly independent.
    Therefore $a_{d-2l-1,j}=0$ for every $1\leq j \leq M_{d-2l-1}$. Applying the same argument successively to the varieties
    $X_{d-2l-2},...,X_1,X_0$, we conclude that $a_{i,j}=0$ for all $i,j$. Hence the classes
    \begin{equation*}
        \{[V]\in H^{2l}(X):V\in \Irred^l (X), \dim f(V)\leq d-2l-1\}
    \end{equation*}
    are linearly independent, which completes the proof.
\end{proof}

\begin{proof}[Proof of Proposition \ref{deterPerv}]
    By Lemma \ref{dimcount}, we have $H^k_{\leq d-n-1}(X)=0,$
    for every $0\leq k\leq 2l-1$,
    and
    \begin{equation*}
        \dim H^{2l}_{\leq d-n-1}(X) = \#\{V\in\Irred^l(X): L^{d-2l}\cdot[V]=0 \}.
    \end{equation*}
    On the other hand, Lemma \ref{kerandperv} gives
    \begin{equation*}
        \Span_{\mathbb{R}} \{ [V]: V\in\Irred^l(X), L^{d-2l}\cdot[V]=0\}
        \subset H^{2l}_{\leq d-n-1}(X).
    \end{equation*}
    By Lemma \ref{linindepfull}, the classes $\{[V]:V\in\Irred^l(X),L^{d-2l}\cdot[V]=0\}$
    are linearly independent. Hence the subspace on the left has the same
    dimension as $H^{2l}_{\leq d-n-1}(X)$, and therefore
    \begin{equation*}
        H^{2l}_{\leq d-n-1}(X)=\Span_{\mathbb{R}} \{[V]:V\in\Irred^l(X),L^{d-2l}\cdot[V]=0\}.
    \end{equation*}
\end{proof}


\subsection{Kernel characterizations}
In this subsection, we prove Theorem \ref{purelefcharintro} and \ref{purekerintro}.

\begin{thrm}\label{hardlefsupercrit}
    Let $d,l$ be integers satisfying $0\leq 2l-1\leq d\leq n$ and let $L$ be a semiample line bundle.
    If $L$ is $d$-lef below degree $l$, then
    $$L^{d-k}:H^{k}(X)\rightarrow H^{2d-k}(X)$$ 
    is injective for every $0\leq k\leq 2l-1$.
    If moreover $2l+1\leq d$, then we have
    \begin{equation*}
        \ker (L^{d-2l}:H^{2l}(X)\rightarrow H^{2d-2l}(X))=\Span_{\mathbb{R}}\{[V]: V\in \Irred^l(X), L^{d-2l}\cdot [V]=0\}.
    \end{equation*}
    Moreover, the classes $\{[V]: V\in \Irred^l(X), L^{d-2l}\cdot [V]=0\}$
    are linearly independent.
\end{thrm}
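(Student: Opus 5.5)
The plan is to reduce to the basepoint-free case and then read everything off Proposition \ref{deterPerv} via Lemma \ref{kerandperv}.

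\textbf{Reduction.} Replacing $L$ by a positive multiple $mL$ does not change the kernels $\ker L^{d-k}\cap H^k(X)$. It also does not change the lefness condition, since $(mL)^{d-2k}\cdot[V]=m^{d-2k}L^{d-2k}\cdot[V]$, nor the set $\{V\in\Irred^l(X): L^{d-2l}\cdot[V]=0\}$. So we may assume $L$ is basepoint-free. Let $f:X\to Y\subset\mathbb{P}(H^0(X,L))$ be its Kodaira map, so that $L=f^*\mathcal{O}(1)$ with $\mathcal{O}(1)$ ample. Then Lemma \ref{kerandperv} applies with $A=\mathcal{O}(1)$.

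\textbf{Injectivity for $0\le k\le 2l-1$.} Lemma \ref{kerandperv} gives $\ker(L^{d-k})\cap H^k(X)\subset H^k_{\le d-n-1}(X)$ whenever $0\le k\le d$. This holds in our range, since $k\le 2l-1\le d$. Proposition \ref{deterPerv} says $H^k_{\le d-n-1}(X)=0$ for $k\le 2l-1$, so the kernel vanishes.

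\textbf{The case $k=2l$ when $2l+1\le d$.} Here $2l\le d$, so Lemma \ref{kerandperv} again gives $\ker L^{d-2l}\cap H^{2l}(X)\subset H^{2l}_{\le d-n-1}(X)$. By Proposition \ref{deterPerv}, the right-hand side equals $\Span_{\mathbb{R}}\{[V]: V\in\Irred^l(X),\ L^{d-2l}\cdot[V]=0\}$. Conversely, each such $[V]$ lies in the kernel by definition. This gives both inclusions, hence the equality. Linear independence of the classes is the last statement of Proposition \ref{deterPerv}, equivalently Lemma \ref{linindepfull}.

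\textbf{Potential obstacle.} All the difficulty already sits in Proposition \ref{deterPerv}, so the only points needing care here are bookkeeping. First, the degree constraints $k\le d$ required by Lemma \ref{kerandperv} hold in both ranges, as checked above. Second, Proposition \ref{deterPerv} was stated under the basepoint-free and lefness hypotheses, and the reduction above ensures both are preserved when passing from $L$ to $mL$.
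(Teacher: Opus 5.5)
Your proposal is correct and follows the paper's proof essentially step for step. You reduce to the basepoint-free case by passing to a sufficiently divisible multiple. You then combine the inclusion $\ker L^{d-k}\cap H^k(X)\subset H^k_{\le d-n-1}(X)$ from Lemma \ref{kerandperv} with Proposition \ref{deterPerv}, which gives vanishing for $k\le 2l-1$, while in degree $2l$ the kernel is sandwiched between the span of the cycle classes and $H^{2l}_{\le d-n-1}(X)$.
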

\begin{proof}
    Since $L$ is semiample, after replacing $L$ by a sufficiently divisible positive multiple, we may assume that $L$ is basepoint-free. This replacement does not affect the lefness assumption, the kernels of the Lefschetz operators, or the condition 
    $L^{d-2l}\cdot[V]=0. $

    By Lemma \ref{kerandperv},
    \begin{equation*}
        \ker (L^{d-k}:H^{k}(X)\rightarrow H^{2d-k}(X)) \subset H^k_{\leq d-n-1}(X).
    \end{equation*}
    Proposition \ref{deterPerv} gives $H^k_{\leq d-n-1}(X)=0$ for every $0\leq k\leq 2l-1$. Hence
    $$L^{d-k}:H^{k}(X)\rightarrow H^{2d-k}(X)$$ 
    is injective for every $0\leq k\leq 2l-1$.

    Suppose now that $2l+1\leq d$. We have
    \begin{equation*}
        \Span_{\mathbb{R}}\{[V]: V\in \Irred^l(X), L^{d-2l}\cdot [V]=0\} \subset \ker L^{d-2l}\cap H^{2l}(X)\subset H^{2l}_{\leq d-n-1}(X).
    \end{equation*}
    By Proposition \ref{deterPerv}, the first and the last terms are equal.
    Therefore both inclusions are equalities, and hence
    \begin{equation*}
        \ker (L^{d-2l}:H^{2l}(X)\rightarrow H^{2d-2l}(X))=\Span_{\mathbb{R}}\{[V]: V\in \Irred^l(X), L^{d-2l}\cdot [V]=0\}.
    \end{equation*}
   The linear independence of these classes also follows from Proposition \ref{deterPerv}. This completes the proof.
\end{proof}

Since the Lefschetz operators are compatible with the Hodge decomposition and the fundamental classes of subvarieties are of diagonal Hodge type, we immediately obtain the following corollaries.

\begin{cor}\label{purelefchar}(=Theorem \ref{purelefcharintro})
    Let $d,l$ be integers satisfying $0\leq 2l-1\leq d\leq n$ and let $L$ be a semiample line bundle.
    Then the following statements are equivalent.
    \begin{enumerate}
        \item $L$ is $d$-lef below degree $l$.
        \item For every integer $k$ with $0\leq k\leq l-1$, the linear map
                \begin{equation*}
                    L^{d-2k}:H^{k,k}(X)\to H^{d-k,d-k}(X)
                \end{equation*}
                is injective.
        \item For all integers $p,q\geq 0$ with $ p+q\leq 2l-2$, the linear map 
            \begin{equation*}
                L^{d-p-q}: H^{p,q}(X)\to H^{d-q,d-p}(X)
            \end{equation*}
            is injective.
    \end{enumerate}
\end{cor}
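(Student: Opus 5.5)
The plan is to prove the cycle of implications $(1)\Rightarrow(3)\Rightarrow(2)\Rightarrow(1)$, using Theorem \ref{hardlefsupercrit} for the only substantial step.

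\textbf{$(1)\Rightarrow(3)$.} Assume $L$ is $d$-lef below degree $l$. By Theorem \ref{hardlefsupercrit}, the map $L^{d-k}:H^k(X,\mathbb{R})\to H^{2d-k}(X,\mathbb{R})$ is injective for every $0\le k\le 2l-1$. Injectivity survives tensoring with $\mathbb{C}$, and $L$ is a real $(1,1)$-class, so $L^{d-k}$ respects the Hodge decomposition $H^k(X,\mathbb{C})=\bigoplus_{p+q=k}H^{p,q}(X)$ and sends $H^{p,q}(X)$ into $H^{p+d-k,q+d-k}(X)$. For $p+q=k$ this target is $H^{d-q,d-p}(X)$. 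Restricting an injective map to a direct summand keeps it injective, so each $L^{d-p-q}:H^{p,q}(X)\to H^{d-q,d-p}(X)$ with $p+q\le 2l-2$ is injective. This actually gives injectivity for $p+q\le 2l-1$, which is more than needed.

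\textbf{$(3)\Rightarrow(2)$.} Take $p=q=k$ with $k\le l-1$; then $2k\le 2l-2$, so (2) is a special case of (3).

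\textbf{$(2)\Rightarrow(1)$.} Let $0\le k\le l-1$ and $V\in\Irred^k(X)$. The fundamental class $[V]$ lies in $H^{k,k}(X)$ and is nonzero, because $X$ is projective: intersecting with a complementary power of an ample class gives a positive degree. Injectivity of $L^{d-2k}$ on $H^{k,k}(X)$ then gives $L^{d-2k}\cdot[V]\neq 0$. Here the exponent $d-2k$ is nonnegative, since $d\ge 2l-1\ge 2k+1$. This is exactly the definition of $d$-lef below degree $l$.

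The only real content is Theorem \ref{hardlefsupercrit}. The one point to check is that the Hodge bidegree bookkeeping matches the target $H^{d-q,d-p}(X)$ stated in the theorem, and it does: $(p+d-k,\,q+d-k)=(d-q,\,d-p)$ when $k=p+q$. No other step is an obstacle.
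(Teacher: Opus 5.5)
Your proposal is correct and matches the paper's proof: the paper also treats $(3)\Rightarrow(2)\Rightarrow(1)$ as immediate. It obtains $(1)\Rightarrow(3)$ the same way you do, by tensoring Theorem \ref{hardlefsupercrit} with $\mathbb{C}$ and restricting the injective map on $H^{p+q}(X,\mathbb{C})$ to the Hodge summand $H^{p,q}(X)$.
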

\begin{proof}
    The implications $(3)\implies(2) \implies (1)$ are clear. 
    It remains to prove $(1)\implies(3)$. Since $p+q\leq 2l-2<2l-1,$ Theorem \ref{hardlefsupercrit} tensored with $\mathbb{C}$ implies that 
    \begin{equation*} 
        L^{d-p-q}:H^{p+q}(X,\mathbb{C}) \to H^{2d-p-q}(X,\mathbb{C}) 
    \end{equation*} 
    is injective. Its restriction to the summand $H^{p,q}(X)$ is therefore injective. This proves $(3)$.
\end{proof}

\begin{cor}\label{pureker}(=Theorem \ref{purekerintro})
    Let $d,l$ be integers satisfying $l\ge 1$ and  $2l+1\leq d\leq n$, and
    let $L$ be a semiample line bundle that is $d$-lef below degree $l$. Then for all integers $p,q\geq 0$ with $p+q\leq 2l$, the following statements hold.
    \begin{enumerate}
        \item If $(p,q)\neq (l,l)$, then the linear map 
            \begin{equation*}
                L^{d-p-q}: H^{p,q}(X)\to H^{d-q,d-p}(X)
            \end{equation*}
            is injective;
        \item If $(p,q)=(l,l)$, then
        \begin{equation*}
            \ker (L^{d-2l}:H^{l,l}(X)\to H^{d-l,d-l}(X))=\Span_{\mathbb{C}}\{[V]: V\in \Irred^l(X), L^{d-2l}\cdot [V]=0\}.
        \end{equation*}
    \end{enumerate} 
        Moreover, the classes $\{[V]: V\in \Irred^l(X), L^{d-2l}\cdot [V]=0\}$
        are linearly independent.
\end{cor}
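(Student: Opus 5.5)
The plan is to deduce everything from Theorem \ref{hardlefsupercrit} by tensoring with $\mathbb{C}$ and using the Hodge decomposition, exactly as in the proof of Corollary \ref{purelefchar}. After replacing $L$ by a multiple we may assume it is basepoint-free. Since $L$ is a real $(1,1)$-class, $L^{d-p-q}$ maps $H^{p,q}(X)$ into $H^{d-q,d-p}(X)$. Consequently the kernel of $L^{d-k}$ on $H^k(X,\mathbb{C})$ splits as
\[
\ker\bigl(L^{d-k}|_{H^k(X,\mathbb{C})}\bigr)=\bigoplus_{p+q=k}\ker\bigl(L^{d-p-q}|_{H^{p,q}(X)}\bigr).
\]

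First consider $p+q\le 2l-1$. Theorem \ref{hardlefsupercrit}, tensored with $\mathbb{C}$, gives injectivity of $L^{d-p-q}$ on $H^{p+q}(X,\mathbb{C})$. Hence its restriction to each summand $H^{p,q}(X)$ is injective. This settles (1) in all bidegrees with $p+q\le 2l-1$.

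Next consider $p+q=2l$. By the second part of Theorem \ref{hardlefsupercrit}, which applies since $2l+1\le d$, the complexified kernel satisfies
\[
\ker\bigl(L^{d-2l}|_{H^{2l}(X,\mathbb{C})}\bigr)=\Span_{\mathbb{C}}\{[V]: V\in\Irred^l(X),\ L^{d-2l}\cdot[V]=0\}.
\]
Each fundamental class $[V]$ of a codimension-$l$ subvariety lies in $H^{l,l}(X)$. So this whole kernel lies in $H^{l,l}(X)$. Comparing with the direct sum decomposition above, the summands with $(p,q)\ne(l,l)$ must vanish, giving (1) in these bidegrees. The summand with $(p,q)=(l,l)$ equals the displayed span, giving (2).

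Linear independence over $\mathbb{R}$ comes from Theorem \ref{hardlefsupercrit}. It implies independence over $\mathbb{C}$, since the classes are real and $H^{2l}(X,\mathbb{C})=H^{2l}(X,\mathbb{R})\otimes\mathbb{C}$. No step is a real obstacle. The only point needing care is the observation that the complex kernel is concentrated in type $(l,l)$, and this follows from the cycle classes being of Hodge type $(l,l)$.
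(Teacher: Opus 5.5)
Your proposal is correct and follows essentially the same route as the paper. You complexify Theorem \ref{hardlefsupercrit}, get injectivity for $p+q\le 2l-1$ on each Hodge summand, and note that the degree-$2l$ kernel is spanned by cycle classes of type $(l,l)$, which kills the off-diagonal summands and identifies the $(l,l)$ piece; real linear independence then passes to $\mathbb{C}$.
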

\begin{proof}
    If $p+q\leq 2l-1$, the first assertion follows from Theorem \ref{hardlefsupercrit} after tensoring with $\mathbb{C}$. 
    
    It remains to consider $p+q=2l$. Tensoring the kernel characterization in Theorem \ref{hardlefsupercrit} with $\mathbb{C}$ gives 
    \begin{equation*} 
    \ker ( L^{d-2l}:H^{2l}(X,\mathbb{C}) \to H^{2d-2l}(X,\mathbb{C}) ) = \Span_{\mathbb{C}} \{ [V]: V\in\Irred^l(X),\ L^{d-2l}\cdot[V]=0 \}. 
    \end{equation*} 
    Since every fundamental class on the right-hand side has Hodge type $(l,l)$, the kernel is contained in $H^{l,l}(X)$. 
    Hence the restriction of $L^{d-2l}$ to $H^{p,q}(X)$ is injective whenever $p+q=2l$ and $(p,q)\neq(l,l)$, while for $(p,q)=(l,l)$ its kernel is exactly 
    \begin{equation*} \Span_{\mathbb{C}} \left\{ [V]: V\in\Irred^l(X),\ L^{d-2l}\cdot[V]=0 \right\}. \end{equation*} 
    The linear independence also follows from Theorem \ref{hardlefsupercrit}.
    This completes the proof.
\end{proof}

We can also state these results in a uniform way.

\begin{cor}\label{pureker(p,q)}
    Let $d,p,q\geq 0$ be integers satisfying $p+q < d\leq n$. Assume that $L$ is $d$-lef below degree $\lceil \frac{p+q}{2} \rceil$. 
    Then 
    \begin{equation*}
        \ker L^{d-p-q}\cap H^{p,q}(X)=
        \begin{cases}
            0 &\text{if } p\ne q\\
            \Span_{\mathbb{C}}\{[V]: V\in \Irred^p(X), L^{d-2p}\cdot [V]=0\}
            & \text{if }p=q
        \end{cases}.
    \end{equation*}
\end{cor}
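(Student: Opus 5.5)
This is a reindexing of Corollaries \ref{purelefchar} and \ref{pureker}. The plan is to set $l=\lceil \frac{p+q}{2}\rceil$, so that $2l-1\le p+q\le 2l$, and split according to the parity of $p+q$. In both cases we first check that the hypotheses of the relevant earlier result hold. Since $p+q<d$ and $d$, $p+q$ are integers, we get $p+q\le d-1$. $L$ is assumed semiample throughout this section.

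\emph{Case $p+q=0$.} Here $l=0$ and $p=q=0$, and the condition ``$d$-lef below degree $0$'' is vacuous. We then need $\ker L^{d}\cap H^{0,0}(X)=\Span_{\mathbb{C}}\{[V]:V\in \Irred^0(X),L^d\cdot[V]=0\}$. Since $\Irred^0(X)=\{X\}$ and $H^{0,0}(X)=\mathbb{C}[X]$, both sides equal $H^{0,0}(X)$ if $L^d=0$ and both are $0$ otherwise. So this case is trivial and we set it aside.

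\emph{Case $p+q=2l-1$ odd, $l\ge 1$.} Then $p\neq q$ automatically, and $2l-1\le d$ holds. Theorem \ref{hardlefsupercrit} gives injectivity of $L^{d-(2l-1)}$ on all of $H^{2l-1}(X)$. Tensoring with $\mathbb{C}$ and restricting to the Hodge summand $H^{p,q}(X)$ yields $\ker L^{d-p-q}\cap H^{p,q}(X)=0$, as in the proof of Corollary \ref{pureker}(1).

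\emph{Case $p+q=2l$ even, $l\ge1$.} From $p+q<d$ we get $2l+1\le d\le n$, so Corollary \ref{pureker} applies with this $l$. If $p\neq q$, part (1) gives that the kernel is zero. If $p=q=l$, part (2) gives exactly $\Span_{\mathbb{C}}\{[V]:V\in\Irred^p(X),L^{d-2p}\cdot[V]=0\}$.

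The only point needing care is the matching of indices: that $\lceil\frac{p+q}{2}\rceil$ is the correct $l$ in each parity case, and that the strict inequality $p+q<d$ supplies $2l+1\le d$ in the even case. There is no genuine obstacle; the statement follows from the earlier results by this case split.
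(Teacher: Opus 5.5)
Your proposal is correct and follows essentially the same route as the paper. The paper sets aside the trivial case $p=q=0$, takes $l=\lceil\frac{p+q}{2}\rceil$, applies Theorem \ref{hardlefsupercrit} in the odd case, and in the even case uses $p+q<d\Rightarrow 2l+1\le d$ together with the Hodge type $(l,l)$ of the cycle classes; your citation of Corollary \ref{pureker} in the even case just packages that same last step.
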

\begin{proof}
    When $p=q=0$, the statements are trivial. We may therefore assume $p+q\geq 1$, and set $l=\left\lceil \frac{p+q}{2}\right\rceil.$

    If $p+q$ is odd, then $p+q=2l-1.$ By tensoring Theorem \ref{hardlefsupercrit} with $\mathbb{C}$, the linear map
    \begin{equation*}
        L^{d-p-q}:H^{p+q}(X,\mathbb{C}) \to H^{2d-p-q}(X,\mathbb{C})
    \end{equation*}
    is injective. Consequently,
    \begin{equation*}
        L^{d-p-q}:H^{p,q}(X) \to H^{d-q,d-p}(X)
    \end{equation*}
    is also injective.

    If $p+q$ is even, then $p+q=2l.$ Since $p+q<d$, we have $2l+1\leq d$. Hence tensoring the second statement of Theorem \ref{hardlefsupercrit} with $\mathbb{C}$ gives
    \begin{equation*}
        \ker( L^{d-2l}:H^{2l}(X,\mathbb{C}) \to H^{2d-2l}(X,\mathbb{C}) )
        = \Span_{\mathbb{C}} \{ [V]: V\in\Irred^l(X), L^{d-2l}\cdot[V]=0\}.
    \end{equation*}
    Every fundamental class appearing on the right-hand side has Hodge
    type $(l,l)$. Therefore the kernel is contained in
    $H^{l,l}(X)$. 
    It follows that
    \begin{equation*}
        \ker L^{d-2l}\cap H^{p,q}(X)=
        \begin{cases}
            0  & \text{if } p\neq q\\
            \Span_{\mathbb{C}}\{[V]:V\in\Irred^l(X),L^{d-2l}\cdot[V]=0\}
            &\text{if } p= q=l
        \end{cases}.
    \end{equation*}
    
    Thus the equality holds in all cases, completing the proof.
\end{proof}



\section{Geometric decoupling}\label{geometricdecoupling}

In the previous section, we characterized the kernels of Lefschetz operators associated with a single semiample line bundle $L$ under suitable lefness assumptions. In this section, we extend the discussion to the mixed setting and prove Theorems \ref{mixedlefcharintro} and \ref{mixedkerintro}.
The passage from the pure to the mixed setting holds more generally for Lefschetz modules. Before turning to this abstract formulation in the next section, we give a geometric proof for semiample classes. The main ideas already appear in \cite{hushangxiao2023hardlefArxiv,HuXiaoLefChar ,HuXiao25LefCharFree}. 

The proof proceeds by induction in two steps. First, we construct suitable hypersurfaces on which the restricted line bundles satisfy the positivity assumptions needed for induction; see Section~\ref{hypersurf}. 
Second, we deduce the desired statements on the original variety from those on the hypersurfaces. This step uses the Lefschetz theorems for a single class and is carried out in Section~\ref{mixedlefsec}.

\subsection{Lefness for collections of semiample classes}
In this subsection, we introduce the notion of lefness for collections of semiample classes and establish some of its basic properties.
Let $X$ be a smooth projective variety over $\mathbb{C}$ of dimension $n$ and
let $\mathcal{L}=(L_1,...,L_m)$ be a collection of semiample line bundles.
For every nonempty subset $I\subset [m]$, 
we write $L_I=\sum_{i\in I}L_i.$
For every $r\in [m]$, we denote by 
$$\mathcal{L}_{\backslash r}=(L_1,...,L_{r-1},L_{r+1},...,L_m)$$ 
the collection obtained from $\mathcal{L}$ by removing $L_r$.
For every $\alpha\in \mathbb{Z}_{>0}^m$, we write 
$\mathcal{L}^{\alpha}=\prod_{i=1}^m L_i^{\alpha_i}$
for the corresponding complete intersection class. 
By an abuse of notation, we also write
$\mathcal{L}=L_1\cdots L_m.$

Recall that a semiample line bundle $L$ is $d$-lef below degree $l$ if 
\begin{equation*} 
    L^{d-2\codim_X V}\cdot[V]\neq 0 
\end{equation*} 
for every irreducible subvariety $V\subset X$ with $\codim_XV\leq l-1$. 
This definition naturally extends to collections of semiample line bundles.

\begin{defn}   
    A collection of semiample line bundles $\mathcal{L}=(L_1,...,L_{m})$ is called $d$-lef below degree $l$ if $m\leq d-2l+2$ and we have
    \begin{equation*}
        \mathcal{L}^{\alpha}\cdot [V]\neq 0
    \end{equation*}
    for every subvariety $V\subset X$ with $\codim_XV\leq l-1$ and every $\alpha\in \mathbb{Z}_{>0}^m$ satisfying $|\alpha|=d-2\codimension_X V.$
\end{defn}

The condition $m\leq d-2l+2$ ensures that the above definition is non-vacuous in every codimension under consideration. 
Indeed, if $m> d-2l+2$, then there is no $\alpha\in \mathbb{Z}_{>0}^m$ with $|\alpha|=d-2l+2$, and hence the condition for  $V\in \Irred^{l-1}(X)$ would be vacuous.

We will see in Lemma \ref{equivlefcolfree} that 
\begin{equation*}
    \mathcal{L} \text{ is } d\text{-lef below degree } l \iff L_I \text{ is } (d-m+|I|)\text{-lef below degree } l, \forall \emptyset \neq I\subset [m].
\end{equation*}
To prove this, we use a nonvanishing criterion for such complete intersection classes,  established in \cite{KK16} for semiample classes and extended in \cite{hxhardlef} to arbitrary nef classes. See also Corollary \ref{hallradonef} for a Lefschetz-module proof of this result.
For every $\alpha\in\mathbb{Z}_{>0}^m$ and every nonempty subset $I\subset[m]$, we write $\alpha_I=\sum_{i\in I}\alpha_i.$

\begin{prop}\label{hallrado}
    Let $V$ be an irreducible subvariety and let $(L_1,...,L_m)$ be a collection of nef classes. For any $\alpha\in \mathbb{Z}_{>0}^{m}$,
    the following statements are equivalent.
    \begin{enumerate}
        \item $\mathcal{L}^{\alpha}\cdot [V]\neq 0$.
        \item $\nd_V(L_I)\geq \alpha_I$ for every nonempty subset $I\subset [m]$.
    \end{enumerate}
Consequently, for every integer $k\geq m$, the following statements are equivalent.
\begin{enumerate}
    \item  $\mathcal{L}^{\alpha}\cdot [V]\neq 0$ for every $\alpha\in \mathbb{Z}_{>0}^{m}$ with $|\alpha|=k$;
    \item $\nd_V(L_I)\geq |I|+k-m$ for every nonempty subset $I\subset [m]$.
\end{enumerate} 
\end{prop}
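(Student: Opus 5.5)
The plan is to prove the first equivalence by induction on $|\alpha|$ (equivalently on $\dim V$), reducing everything to intersection numbers on $V$. Since $\mathcal{L}^{\alpha}\cdot[V]$ is a class supported on $V$, it is convenient to pass to a resolution $\pi:\widetilde V\to V$ and work with the nef classes $\pi^*L_i$. By the projection formula, $\mathcal{L}^{\alpha}\cdot[V]\neq 0$ in $H^*(X)$ if and only if $\pi^*\mathcal{L}^{\alpha}\neq 0$ in $H^*(\widetilde V)$. One direction of this needs care: if $\pi^*\mathcal{L}^\alpha\neq0$, pair it with a power of an ample class $H$ pulled back from $X$. The class $\pi^*\mathcal{L}^\alpha\cdot \pi^*H^{\dim V-|\alpha|}$ is a nonzero product of nef classes with a Kähler-type factor on the image, so its degree is positive. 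Likewise $\nd_V(L_I)=\nd(\pi^*L_I)$. So we may assume $V=X$ is smooth projective and all $L_i$ are nef.

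For (1)$\Rightarrow$(2), expand $L_I^{\alpha_I}$ multinomially. Every monomial $\prod_{i\in I}L_i^{\beta_i}$ with $|\beta|=\alpha_I$ has nonnegative coefficient, and the monomial with $\beta_i=\alpha_i$ occurs. Multiplying by $\prod_{i\notin I}L_i^{\alpha_i}\cdot H^{n-|\alpha|}$, all terms are nonnegative numbers, since they are products of nef classes. The distinguished term is positive precisely because $\mathcal{L}^\alpha\neq 0$, which is detected by pairing with $H^{n-|\alpha|}$. Hence $L_I^{\alpha_I}\cdot\prod_{i\notin I}L_i^{\alpha_i}\neq0$, and in particular $L_I^{\alpha_I}\neq 0$. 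Here I use that a nonzero product of nef classes pairs positively with $H^{\text{complementary}}$, by Hodge index or Khovanskii--Teissier positivity.

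For (2)$\Rightarrow$(1), which is the main obstacle, I would argue as in the Hall-type (Rado) marriage criterion from \cite{KK16,hxhardlef}. First reduce to the case where one $\alpha_r$ can be lowered. Then use the standard fact that $\mathcal{L}^\alpha\neq0$ for nef classes if and only if the mixed intersection number $\mathcal{L}^\alpha\cdot H^{n-|\alpha|}$ is positive. Since all nef classes are limits of ample ones and mixed numbers are multilinear and nonnegative, this is a positivity question for mixed volumes.

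The cleanest route is to show that the function $I\mapsto \nd(L_I)$ is submodular and monotone, so that it defines a polymatroid. For monotonicity, note $L_J=L_I+(\text{nef})$. For submodularity, I would invoke the known inequality $\nd(A+B)+\nd(C)\le \nd(A+C)+\nd(B+C)$ type relations for nef classes, or argue via $\nd(\sum) $ being a rank function of the associated matroid of supports. Given this, the condition $\alpha_I\le \nd(L_I)$ says $\alpha$ is an independent vector of the polymatroid. By Rado's theorem, there is then a decomposition choosing, for each of the $|\alpha|$ slots, one $L_i$ with $i$ repeated $\alpha_i$ times. This is compatible with a nonvanishing transversal. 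One realizes the nonvanishing by induction: choose $r$ with $\alpha_r\ge1$, cut by a general member of a sufficiently ample perturbation of $L_r$, or use the Khovanskii--Teissier-style inequality, and check the Hall condition persists on the cut.

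I expect the submodularity and Hall-condition preservation to be the delicate step. The second ("Consequently") statement then follows immediately by quantifying over all $\alpha$ with $|\alpha|=k$. The maximum of $\alpha_I$ over positive $\alpha$ with $|\alpha|=k$ is $k-(m-|I|)$, which gives exactly the bound $|I|+k-m$.
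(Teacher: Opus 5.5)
Your reduction to a resolution $\pi:\widetilde V\to V$, your proof of (1)$\Rightarrow$(2) by multinomial expansion, and your deduction of the second equivalence are fine. In the reduction, the positivity of $\pi^*\mathcal{L}^{\alpha}\cdot(\pi^*H)^{\dim V-|\alpha|}$ deserves a line; for example, Kodaira's lemma $\pi^*H=A+F$ with $A$ ample and $F$ effective gives $\pi^*\mathcal{L}^{\alpha}\cdot(\pi^*H)^{\dim V-|\alpha|}\geq \pi^*\mathcal{L}^{\alpha}\cdot A^{\dim V-|\alpha|}>0$.

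The gap is (2)$\Rightarrow$(1), which is the whole content of the proposition, and your sketch does not prove it. Calling $I\mapsto \nd_V(L_I)$ a polymatroid rank function and invoking Rado's theorem only restates (2). That hypothesis says exactly that $\alpha$ lies in $\{\alpha_I\leq \nd_V(L_I)\ \text{for all } I\}$. Neither submodularity nor Rado's theorem says anything about which monomials $\mathcal{L}^{\alpha}$ are nonzero.

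Rado's theorem needs an actual matroid on a ground set. For semiample classes the natural one is the algebraic matroid of rational functions coming from the linear systems $|L_i|$, whose rank on $\bigcup_{i\in I}S_i$ is $\nd_V(L_I)$. The decisive geometric step, which your sketch omits, is that an algebraically independent transversal gives a dominant, generically finite map, so a generic system of sections has solutions. For real nef classes there is no such ground set at all.

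Your fallback induction also fails as stated, for three reasons:
a nef class (even a nef line bundle that is not semiample) has no general members to cut by; proving nonvanishing after replacing $L_r$ by $L_r+\epsilon A$ gives nothing as $\epsilon\to 0$; and ``the Hall condition persists on the cut'' is itself an instance of the statement. For $r\notin I$ it requires $L_I^{\alpha_I}\cdot L_r\cdot[V]\neq 0$, which is the two-class case of the criterion for $(L_I,L_r)$ with exponents $(\alpha_I,1)$.

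The paper cites \cite[Corollary~4.7]{hxhardlef} (and \cite{KK16} in the semiample case). It also reproves the criterion in Corollary \ref{hallradonef} as the degree-zero case of kernel decoupling: $\ker\mathcal{L}^{\alpha}\cap H^{0,0}(X)=\sum_{I}\ker L_I^{\alpha_I}\cap H^{0,0}(X)$, with $H^{0,0}(X)$ one-dimensional. There, cutting by $L_r$ is replaced by the descent lemma (Lemma \ref{descentlem}). Your missing two-class step is supplied by the Cattani--Kaplan--Schmid purity inclusion $\ker(L_1^{s_1}L_2^{s_2})\subset\ker L_1^{s_1}+\ker L_2^{s_2}+W_{s_1+s_2-1}(L_1+L_2)$ (Proposition \ref{algpurity}). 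This inclusion is what makes the Hall-type condition survive descent (Lemma \ref{alglefpres}).

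If you prefer to keep a polymatroid argument, the ingredient you need is again Hodge-theoretic. For $H$ ample on $\widetilde V$, the volume polynomial $\int_{\widetilde V}(t_0H+\sum_i t_i\pi^*L_i)^{\dim V}$ is Lorentzian, so by Br\"and\'en--Huh its support is M-convex. Hence the support equals the set of lattice points of a polymatroid base polytope whose rank function on $I\subset[m]$ is $\nd_V(L_I)$; the constraints on sets containing the index $0$ are automatic. Then $(\dim V-|\alpha|,\alpha)$ lies in the support exactly when (2) holds. Submodularity of $\nd_V$ is an output of this argument, not an input.
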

\begin{proof}
    The first equivalence follows from \cite[Corollary 4.7]{hxhardlef}; see also Corollary \ref{hallradonef}. In the semiample case, this criterion also follows from \cite[Theorems 2.14 and 2.19]{KK16}. We then explain the consequence. By the first part, $\mathcal{L}^{\alpha}\cdot [V]\neq 0$ for every $\alpha\in \mathbb{Z}_{>0}^{m}$ with $|\alpha|=k$ is equivalent to $\nd(L_I)\geq \alpha_I$ for every such $\alpha$ and every nonempty subset $I\subset [m]$. 
    Since $$\max_{|\alpha|=k}\alpha_I =|I|+k-m,$$ the result follows.
\end{proof}

This motivates the following notion of numerical dimension for a collection of nef classes.

\begin{defn}
    Let $\mathcal{L}=(L_1,\ldots,L_m)$ be a collection of nef classes. For every irreducible subvariety $V\subset X$, we define the numerical dimension of $\mathcal{L}$ with respect to $V$ by
    \begin{equation*}
        \nd_V(\mathcal{L})=\min_{\emptyset \neq I\subset [m]}\{\nd_V(L_I)+m-|I|\}.
    \end{equation*}
\end{defn}

When $\mathcal{L}=(L,\ldots,L)$ consists of copies of a single nef class $L$, we recover the usual numerical dimension: $\nd_V(\mathcal{L})=\nd_V(L).$
By definition,
$$\nd_V(\mathcal{L})\geq k\iff \nd_V(L_I)\geq |I|+k-m, \forall \emptyset \neq I\subset [m] .$$
Combining this observation with Proposition \ref{hallrado}, we obtain the following equivalent characterizations of lefness for a collection.

\begin{lem}\label{equivlefcolfree}
    Let $d,l$ be integers satisfying $0\leq 2l-1\leq d\leq n$ and 
    let $\mathcal{L}=(L_1,...,L_m)$ be a collection of semiample line bundles with $1\leq m\leq d-2l+2$. Then the following statements are equivalent.
    \begin{enumerate}
        \item $\mathcal{L}$ is $d$-lef below degree $l$;
        \item $L_I$ is $(|I|+d-m)$-lef below degree $l$ for every nonempty subset $I\subset [m]$;
        \item For every $V\subset X$ with $\codimension _XV\leq l-1$, we have $\nd_V(\mathcal{L})\geq d-2\codimension_XV.$
    \end{enumerate}
\end{lem}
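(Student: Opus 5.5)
The plan is to prove the equivalences by unwinding definitions and applying Proposition \ref{hallrado} codimension by codimension. Everything reduces to one observation. Fix an irreducible $V$ with $c=\codim_X V\leq l-1$ and set $k=d-2c$. Since $m\leq d-2l+2$ and $c\leq l-1$, we have $k\geq d-2l+2\geq m$, so the second part of Proposition \ref{hallrado} applies with this $k$. It says that $\mathcal{L}^\alpha\cdot[V]\neq 0$ for all $\alpha\in\mathbb{Z}_{>0}^m$ with $|\alpha|=k$ if and only if
\begin{equation*}
\nd_V(L_I)\geq |I|+d-m-2c
\end{equation*}
for every nonempty $I\subset[m]$.

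\textbf{(1)$\iff$(3).} Condition (1) asserts exactly the left-hand side above for every such $V$. The displayed inequality is, by the remark after the definition of $\nd_V(\mathcal{L})$, equivalent to $\nd_V(\mathcal{L})\geq d-2c$, which is (3).

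\textbf{(2)$\iff$(3).} Fix a nonempty $I$ and put $d_I=|I|+d-m$. First check that $L_I$ is a semiample line bundle and that the indices are admissible: $2l-1\leq d_I$ follows from $m\leq d-2l+2$ and $|I|\geq 1$, and $d_I\leq d\leq n$. By definition, $L_I$ is $d_I$-lef below degree $l$ if and only if $L_I^{d_I-2c}\cdot[V]\neq0$ for all $V$ of codimension $c\leq l-1$. This in turn is equivalent to $\nd_V(L_I)\geq d_I-2c=|I|+d-m-2c$, using that $L_I$ is nef and that $L_I^r\cdot[V]\neq0$ is monotone in $r$. The monotonicity holds because $L_I^{r}\cdot[V]=0$ implies $L_I^{r+1}\cdot[V]=0$, so the condition is the same as $r\le \nd_V(L_I)$. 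Quantifying over all $I$ and all $V$ then gives exactly (3).

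The only point needing care is the range check $k\geq m$ that makes Proposition \ref{hallrado} applicable, together with $d_I\geq 2l-1$. Both follow from the standing hypothesis $1\leq m\leq d-2l+2$, so I expect no real obstacle: the lemma is a bookkeeping consequence of the Hall–Rado type criterion.
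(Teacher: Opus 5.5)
Your proposal is correct and follows the paper's argument: both proofs reduce everything, codimension by codimension, to the single inequality $\nd_V(L_I)\geq |I|+d-m-2\codim_X V$, using Proposition~\ref{hallrado} and the remark after the definition of $\nd_V(\mathcal{L})$; the only cosmetic difference is that the paper applies Hall--Rado to prove (1)$\iff$(2), while you apply it to prove (1)$\iff$(3). Your explicit range checks, $d-2\codim_X V\geq m$ and $|I|+d-m\geq 2l-1$, are correct and worth keeping, since the paper leaves them implicit.
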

\begin{proof}
    $(1)\iff(2)$ 
    By definition, $\mathcal{L}$ is $d$-lef below degree $l$ if and only if
    \begin{equation*}
        \mathcal{L}^{\alpha}\cdot [V]\neq 0
    \end{equation*}
    for every subvariety $V\subset X$ with $\codimension_X V\leq l-1$ and every $\alpha \in \mathbb{Z}_{>0}^m$ with $|\alpha|=d-2\codimension_XV$.
    By Proposition \ref{hallrado}, this is equivalent to 
    $$\nd_V(L_I)\geq |I|+d-m-2\codimension_XV$$ 
    for every such $V$ and every nonempty subset $I\subset[m]$.
    By the definition of lefness for a single class, this is equivalent to saying that $L_I$ is $(|I|+d-m)$-lef below degree $l$ for every nonempty subset $I\subset[m]$.

    $(2)\iff(3)$ This follows directly from
    \begin{equation*}
        \nd_V(\mathcal{L})\geq d-2\codimension_X V \iff \nd_V(L_I)\geq |I|+d-m-2\codimension_X V, \forall \emptyset \neq I\subset [m].
    \end{equation*}
\end{proof}

We next study how lefness behaves when classes are added to the collection.
Denote by $$(L_1[\alpha_1],...,L_{m}[\alpha_m])$$ the collection in which $L_i$ is repeated $\alpha_i$ times for every $i$.

\begin{lem}\label{addgeom}
    Let $d,l$ be integers satisfying $0\leq 2l-1\leq d\leq n$ and 
    let $\mathcal{L}=(L_1,...,L_m)$ be a collection of semiample line bundles. 
    Suppose that $\mathcal{L}$ is $d$-lef below degree $l.$ Then the following statements hold.
    \begin{enumerate}
        \item If $m<d-2l+2$, then for every semiample line bundle $L$ that is $(d-m)$-lef below degree $l$, the collection $(L_1,...,L_m,L)$ is $d$-lef below degree $l$.
        \item For every $\alpha=(\alpha_1,\ldots,\alpha_m)\in\mathbb{Z}_{>0}^m$ with $|\alpha|\leq d-2l+2$, the collection $(L_1[\alpha_1],\ldots,L_m[\alpha_m])$ is $d$-lef below degree $l$.
        \item If $d\leq n-1$, then for every ample line bundle $A$, the
        collection $(L_1,\ldots,L_m,A)$ is $(d+1)$-lef below degree $l$.
    \end{enumerate}
\end{lem}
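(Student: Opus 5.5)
All three parts will be derived from the criterion in Lemma \ref{equivlefcolfree}(3): a collection $\mathcal{M}=(M_1,\dots,M_s)$ with $s\le d'-2l+2$ is $d'$-lef below degree $l$ if and only if $\nd_V(M_J)\ge |J|+d'-s-2\codim_X V$ for every $V$ with $\codim_X V\le l-1$ and every nonempty $J\subset[s]$. Fix such a $V$ and write $c=\codim_X V$. By hypothesis and Lemma \ref{equivlefcolfree}, $\nd_V(L_I)\ge |I|+d-m-2c$ for all nonempty $I\subset[m]$. Two elementary facts about nef (semiample) classes will be used throughout. First, if $M,N$ are nef then $\nd_V(M+N)\ge \nd_V(M)$, since expanding $(M+N)^r\cdot[V]$ gives a sum of effective (pseudoeffective) terms containing $M^r\cdot[V]$. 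Second, $\nd_V(M+N)\le \nd_V(M)+\nd_V(N)$, and $\nd_V(A)=\dim V$ for $A$ ample. The only superadditivity needed is monotonicity in the added summand.

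For (1), the enlarged collection has $m+1\le d-2l+2$ members. A subset $J\subset[m+1]$ either avoids the new index, in which case $J=I$ and we need $\nd_V(L_I)\ge |I|+d-(m+1)-2c$, which is weaker than the hypothesis. Otherwise $J=I\cup\{m+1\}$ with $I$ possibly empty. If $I=\emptyset$, we need $\nd_V(L)\ge 1+d-m-1-2c=(d-m)-2c$, which is exactly $(d-m)$-lefness of $L$ below degree $l$. If $I\neq\emptyset$, monotonicity gives $\nd_V(L_I+L)\ge \nd_V(L_I)\ge |I|+d-m-2c=|J|+d-(m+1)-2c$.

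For (2), the repeated collection has $|\alpha|\le d-2l+2$ members. A subset $J$ of its index set projects to a nonempty $I\subset[m]$, and the corresponding sum is $\sum_{i\in I}\beta_iL_i$ with $1\le\beta_i\le\alpha_i$ and $\sum\beta_i=|J|$. Its numerical dimension equals $\nd_V(L_I)$, since the same nef classes with positive coefficients span the same faces of the intersection expansion. We then need $\nd_V(L_I)\ge |J|+d-|\alpha|-2c$. This follows from $|J|-|I|\le |\alpha|-m$, because the number of extra copies chosen is at most the total number of extra copies. Alternatively, one can iterate (1) with $L=L_i$, using that $L_i$ is $(d-m+1)$-lef below degree $l$ by Lemma \ref{equivlefcolfree}(2). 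The counting argument is cleaner, though, and I would use it.

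For (3), the new collection has $m+1\le d+1-2l+2$ members and target degree $d+1$. For $J=I$ with no $A$, the required bound is the hypothesis itself. For $J=I\cup\{A\}$, we need $\nd_V(L_I+A)\ge |I|+1+d-m-2c$. Since $L_I+A$ is ample, $\nd_V(L_I+A)=\dim V=n-c$. It therefore suffices that $n-c\ge |I|+1+d-m-2c$, which is $n\ge d+1+|I|-m-c$. This holds because $|I|\le m$ and $d\le n-1$; the case $I=\emptyset$ is also covered. The main point needing care is confirming that the degree bound $m\le d-2l+2$ and the range $0\le 2l-1\le d+1$ make Lemma \ref{equivlefcolfree} applicable in each case. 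I expect the rest to be routine bookkeeping.
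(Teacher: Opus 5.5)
Your proposal is correct and is essentially the paper's proof: every part reduces, through Lemma \ref{equivlefcolfree}, to the monotonicity $\nd_V(M+N)\ge \nd_V(M)$ for nef $N$ and, in (3), to $\nd_V(L_I+A)=\dim V$. The only difference is in (2), where you count directly via $|J|-|I|\le|\alpha|-m$ instead of iterating (1) as the paper does; this works equally well, and the inequality you need there, $\nd_V(\sum_{i\in I}\beta_iL_i)\ge\nd_V(L_I)$, already follows from monotonicity because $\sum_{i\in I}(\beta_i-1)L_i$ is nef.
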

\begin{proof}
    $(1)$ 
    By Lemma \ref{equivlefcolfree}, for every nonempty subset $I\subset[m]$, the line bundle $L_I$ is $(d-m+|I|)$-lef below degree $l$.
    In particular, it is also $(d-m+|I|-1)$-lef below degree $l$. Moreover, $L$ is $(d-m)$-lef below degree $l$ by assumption.
    It remains to consider sums of the form $L_I+L$, where $\emptyset\neq I\subset[m]$. For every irreducible subvariety $V\subset X$ with $\codim_X V\leq l-1$, we have 
    \begin{equation*} 
        \nd_V(L_I+L) \geq \nd_V(L_I) \geq d-m+|I|-2\codim_X V. 
    \end{equation*} 
    Hence $L_I+L$ is $(d-m+|I|)$-lef below degree $l$. Applying Lemma \ref{equivlefcolfree} to the collection $(L_1,\ldots,L_m,L)$ proves $(1)$.

    $(2)$ The collection
    $(L_1[\alpha_1],\ldots,L_m[\alpha_m])$ 
    is obtained from $\mathcal{L}$ by successively adding copies of the line bundles $L_i$.
    By Lemma \ref{equivlefcolfree}, each $L_i$ is $(d-m+1)$-lef below degree $l$. 
    We then apply $(1)$ successively, and the repeated collection remains $d$-lef below degree $l$. 

    $(3)$ By Lemma \ref{equivlefcolfree}, it suffices to show that 
    $A$ is $(1+d-m)$-lef below degree $l$,
    $L_I$ is $(|I|+d-m)$-lef below degree $l$ and $L_I+A$ is $(|I|+d-m+1)$-lef below degree $l$ for every nonempty subset $I\subset[m]$.
    The first and third assertions follow from the ampleness of $A$ and $L_I+A$ respectively, while
    the second assertion follows immediately from the assumption that
    $\mathcal{L}$ is $d$-lef below degree $l$.
\end{proof}

As in Corollary \ref{finiteness}, we have the following finiteness result for a collection.
\begin{lem}\label{fincol}
    Let $d,l$ be integers satisfying $0\le 2l-1\leq d\leq n$ and let
    $\mathcal{L}=(L_1,...,L_m)$ be a collection of semiample line bundles that is $d$-lef below degree $l$. Then for every $\alpha\in \mathbb{Z}_{>0}^{m}$ with $|\alpha|=d-2l+1$,
    the set $\{V\in \Irred^l(X): \mathcal{L}^{\alpha}\cdot [V]=0\}$
    is finite.
\end{lem}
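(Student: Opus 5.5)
We reduce the finiteness statement to the single-class finiteness result, Corollary \ref{finiteness}, by means of the nonvanishing criterion Proposition \ref{hallrado}. Fix $\alpha\in\mathbb{Z}_{>0}^m$ with $|\alpha|=d-2l+1$, and let $V\in\Irred^l(X)$ satisfy $\mathcal{L}^{\alpha}\cdot[V]=0$. By Proposition \ref{hallrado}, there is a nonempty subset $I\subset[m]$ with
\begin{equation*}
    \nd_V(L_I)\leq \alpha_I-1.
\end{equation*}
Since there are only finitely many subsets $I$, it suffices to show that for each fixed $I$ the set
\begin{equation*}
    \{V\in\Irred^l(X):\nd_V(L_I)\leq \alpha_I-1\}
\end{equation*}
is finite.

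Fix such an $I$ and put $d_I=|I|+d-m$. By Lemma \ref{equivlefcolfree}, $L_I$ is $d_I$-lef below degree $l$. Note that $2l-1\leq d_I$, because $m\leq d-2l+2$ forces $d-m\geq 2l-2$. Moreover, since each $\alpha_j\geq 1$,
\begin{equation*}
    \alpha_I=|\alpha|-\alpha_{[m]\setminus I}\leq d-2l+1-(m-|I|)=d_I-2l+1.
\end{equation*}
Hence $\nd_V(L_I)\leq\alpha_I-1$ implies $\nd_V(L_I)\leq d_I-2l$, that is, $L_I^{d_I-2l+1}\cdot[V]=0$. We also need $d_I\leq n$ in order to apply the single-class results. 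This is not automatic, but Corollary \ref{finiteness} and Lemma \ref{rigidity} only use the fiber-dimension arguments, so they go through for any $d$ with $2l-1\leq d$. Alternatively, one simply notes that $d_I\leq d\leq n$ because $|I|\leq m$.

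Corollary \ref{finiteness}, applied to the semiample line bundle $L_I$ with parameters $(d_I,l)$, now shows that $\{V\in\Irred^l(X): L_I^{d_I-2l+1}\cdot[V]=0\}$ is finite. The set attached to $I$ is contained in it, so it is finite as well. Taking the union over the finitely many $I$ proves the lemma.

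The only delicate point is the inequality $\alpha_I\leq d_I-2l+1$, which uses the positivity of every entry of $\alpha$. Without it, the exceptional $V$ for the collection would not be controlled by the exceptional sets of the single classes $L_I$. Everything else is a direct invocation of earlier results.
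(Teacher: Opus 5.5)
Your proof is correct and follows the paper's argument. Both reduce via Proposition \ref{hallrado} to the single classes $L_I$, use Lemma \ref{equivlefcolfree} together with the bound $\alpha_I\le |I|+d-m-2l+1$, and conclude with Corollary \ref{finiteness}. The paper applies that corollary with lef degree $\alpha_I+2l-1$, whereas you keep degree $d_I$ and raise the exponent to $d_I-2l+1$; this difference is immaterial, and $d_I\le d\le n$ is automatic, so your hedge about it is unnecessary.
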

\begin{proof}
    By Proposition \ref{hallrado}, $\mathcal{L}^{\alpha}\cdot [V]=0$ if and only if 
    $L_I^{\alpha_I}\cdot [V]=0$ for some $I\subset [m]$. 
    Therefore
    \begin{equation*}
        \{V\in \Irred^l(X): \mathcal{L}^{\alpha}\cdot [V]=0\}=\bigcup_{\emptyset \neq I\subset [m]} \{V\in \Irred^l(X): L_I^{\alpha_I}\cdot [V]=0\}.
    \end{equation*}
    It remains to prove the finiteness of each set on the right-hand side.
    By Lemma \ref{equivlefcolfree}, $L_I$ is $(|I|+d-m)$-lef below degree $l$. Since 
    $$\alpha_I\leq d-2l+1-(m-|I|)\leq d-m+|I|-2l+1,$$
    we have  $\alpha_I+2l-1  \leq |I|+d-m . $
    Hence $L_I$ is $(\alpha_I+2l-1)$-lef below degree $l$. Corollary \ref{finiteness} then implies that the set $\{V\in \Irred^l(X): L_I^{\alpha_I}\cdot [V]=0\}$
    is finite. This completes the proof.
\end{proof}

\subsection{Restriction to general hypersurfaces}\label{hypersurf}
In this subsection, we study the preservation of lefness under restriction to a general hypersurface. 
Let $V\subset X$ be a subvariety. We denote by $[V]_{X,hom}$ the fundamental class of $V$ in the homology group of $X$.
We will use the following consequence of \cite[Lemma 2.6]{HHMWW}; see also \cite[Lemma 2.13]{HuXiao25LefCharFree}.

\begin{lem}\label{homfiber}
    Let $L$ be a basepoint-free line bundle on $X$, and let $W\subset X$ be an irreducible subvariety. For a general member $Y\in|L|$ and every irreducible component $V$ of $Y\cap W$, there exists a constant $c>0$ such that $[V]_{W,hom}=cL_{|W}\cap [W]_{W,hom}$ in $H_*(W)$
\end{lem}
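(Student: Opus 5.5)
The plan is to show that all irreducible components of $Y\cap W$ have the same class in $H_*(W)$. Combined with the cycle-theoretic identity for a Cartier divisor, this forces each $[V]_{W,hom}$ to be a positive multiple of $L_{|W}\cap[W]_{W,hom}$. I will not split into cases according to $\dim f(W)$, where $f$ is the morphism defined by $|L|$; a monodromy argument over the parameter space of hypersurfaces handles all cases at once.

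\textbf{Step 1 (vacuous case and the divisor identity).} If $L_{|W}$ is numerically trivial, then $f(W)$ is a point, a general $Y$ misses $W$, and there is nothing to prove. Otherwise, for general $Y$ the restriction $Y_{|W}$ is an effective Cartier divisor on $W$ that does not contain $W$. Its associated Weil cycle is $\sum_V m_V [V]$ with multiplicities $m_V\geq 1$, summed over the irreducible components $V$ of $Y\cap W$. Taking fundamental classes gives
\begin{equation*}
    L_{|W}\cap [W]_{W,hom}=\sum_V m_V\,[V]_{W,hom}\quad\text{in } H_*(W).
\end{equation*}

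\textbf{Step 2 (the incidence variety).} Consider the incidence variety
\begin{equation*}
    Z=\{(w,Y)\in W\times |L| : w\in Y\}.
\end{equation*}
Since $L$ is basepoint-free, the fiber of $Z\to W$ over $w$ is the hyperplane of divisors through $w$. Hence $Z$ is a projective bundle over $W$, and in particular $Z$ is irreducible.

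\textbf{Step 3 (monodromy).} Choose a Zariski dense open subset $U\subset |L|$ over which the projection $Z\to |L|$ is topologically locally trivial; this is possible by generic topological triviality and Thom's isotopy lemma. Shrink $U$ so that the number of irreducible components of the fibers and their multiplicities $m_V$ are constant over $U$. Because $Z_U$ is irreducible, the monodromy of $\pi_1(U)$ acts transitively on the set of irreducible components of a fiber $Y\cap W$. Transporting a component $V$ along a path $\gamma$ in $U$ gives a continuous family of cycles in the fixed space $W$. Therefore the class $[V]_{W,hom}$ is unchanged along $\gamma$, and so are the multiplicities. It follows that all components have a common class $\sigma\in H_*(W)$ and a common multiplicity $m$.

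\textbf{Step 4 (conclusion).} Let $N\geq 1$ be the number of components. Step 1 gives $L_{|W}\cap[W]_{W,hom}=mN\sigma$, so
\begin{equation*}
    [V]_{W,hom}=\frac{1}{mN}\,L_{|W}\cap [W]_{W,hom},
\end{equation*}
which is the claim with $c=1/(mN)>0$.

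The main obstacle is Step 3, specifically making rigorous the passage from irreducibility of $Z_U$ to transitivity of monodromy on the components of the general fiber. This is the standard fact that the irreducible components of the generic fiber correspond to the irreducible components of the pullback of $Z_U$ to a finite étale cover of $U$, namely the cover obtained from the Stein factorization of the family of components. As an alternative fallback, when $\dim f(W)\geq 2$ one can use Lemma \ref{Bertini}(3) directly: $Y\cap W$ is then irreducible, so $c=1/m$. When $\dim f(W)=1$, one can use the Stein factorization $W\to C$ onto a curve: the components of $Y\cap W$ are components of fibers over points of $C$, the fibers are all homologous, and monodromy over $C$ permutes the components of a general fiber.
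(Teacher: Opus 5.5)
Your proof is correct, and it differs from the paper mainly in that the paper gives no argument here: it imports the statement from \cite[Lemma 2.6]{HHMWW} (see also \cite[Lemma 2.13]{HuXiao25LefCharFree}). The citation buys brevity; your route gives a self-contained proof that is uniform in $\dim f(W)$. The Cartier-divisor identity $L_{|W}\cap[W]_{W,hom}=\sum_V m_V[V]_{W,hom}$ reduces the lemma to showing that all components of $Y\cap W$ are homologous in $W$. That in turn follows from the irreducibility of the incidence variety $Z$. Your Bertini fallback covers $\dim f(W)\geq 2$ directly, but the curve case still needs the same transitivity input, as you note.

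The step that needs care is the transport in Step 3. A topological trivialization of $Z_U\to U$, composed with the projection to $W$, does show that pushed-forward classes agree. However, a bare homeomorphism of fibres need not carry the fundamental class of a component to that of a component. One could fix this with a stratum-preserving trivialization and an inspection of top-dimensional strata.

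The cleaner route is the étale-cover argument you mention at the end. After shrinking $U$, take a finite étale Galois cover $U'\to U$ with group $G$ such that $Z_U\times_U U'\subset W\times U'$ has irreducible components $Z'_1,\dots,Z'_r$ with the following property: their fibres over every $t'\in U'$ are irreducible, pairwise distinct, and exactly the components of $Y_t\cap W$. The argument then runs in three steps.
\begin{enumerate}
\item The fibres of a fixed $Z'_i$ over the connected base $U'$ are algebraically equivalent cycles on $W$. Hence they have the same class in $H_*(W)$ (Fulton, \emph{Intersection Theory}, Chapter 19).
\item $G$ permutes the $Z'_i$ transitively, because $Z_U=(Z_U\times_U U')/G$ is irreducible.
\item If $g(Z'_i)=Z'_j$, then $(Z'_j)_{gt'}=(Z'_i)_{t'}$ as subsets of $W$.
\end{enumerate}
Together these give a common class $\sigma$ for all components of a general fibre.

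Finally, you do not need a common multiplicity. Once the classes agree, $c=1/\sum_V m_V$ works.
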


The following proposition generalizes \cite[Proposition 3.3]{hushangxiao2023hardlefArxiv} to the setting of lefness below a fixed degree. In their proof, the authors use Hironaka's principle of counting constants to bound the dimension of the loci where the fibers have large dimension. The same method can be adapted to the present setting. Here we give a different proof.

\begin{prop}\label{lefpres}
    Let $d,l$ be integers satisfying $0\leq 2l-1\leq  d\leq n-1$, and let $L,M$ be basepoint-free line bundles. 
    Suppose that $\nd(M)\geq 2$, 
    $L$ is $d$-lef below degree $l$ and 
    $L+M$ is $(d+1)$-lef below degree $l$.
    Then for a general hypersurface $Y\in |M|$, the restriction
    $L_{|Y}$ is $d$-lef below degree $l$.
\end{prop}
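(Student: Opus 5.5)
We verify criterion (ii) of Proposition \ref{equiv} for $L_{|Y}$ rather than the cycle-theoretic definition, by a dimension count on fibers. Let $f:X\to Y_f$, $h:X\to Y_h$ and $g:X\to Z$ be the Kodaira maps of $L$, $M$ and $L+M$. Up to passing to multiples, $g$ is the map $(f,h)$, so the fiber of $g$ through $x$ is the intersection of the $f$-fiber and the $h$-fiber through $x$. A general $Y\in|M|$ is $h^{-1}(H)$ for a general hyperplane $H$. It is smooth by Lemma \ref{Bertini}, and irreducible because $\nd(M)\ge 2$. The fibers of $f_{|Y}$ are $F\cap Y$ for $f$-fibers $F$. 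Since $\dim Y=n-1$, we must show
\begin{equation*}
\dim Y^{\ge k}\le 2(n-1)-d-k,\qquad Y^{\ge k}:=\{x\in Y:\dim(f^{-1}f(x)\cap Y,x)\ge k\},
\end{equation*}
for every $n-1-d\le k\le n-1-d+l$. For $k=n-1-d$ the bound is $n-1=\dim Y$, so it is automatic, and we may assume $k\ge n-d$.

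Take $x\in Y^{\ge k}$ and a $k$-dimensional irreducible germ $G\subset F\cap Y$ at $x$. We split into three cases.

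\emph{Case (a).} The germ $G$ is not a component of $F$, so $\dim(F,x)\ge k+1$. Then $x\in X^{\ge k+1}\cap Y$. Since $n-d\le k+1\le n-d+l$, the $d$-lefness of $L$ below degree $l$ and Proposition \ref{equiv}(ii) give $\dim X^{\ge k+1}\le 2n-d-k-1$. A general member of the basepoint-free system $|M|$ meets each component properly, so this locus has dimension at most $2n-d-k-2$.

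\emph{Case (b).} The germ $G$ is a component of $F$ and $h(G)$ is a point. Then $G$ lies in a fiber of $g$, so $x$ lies in $X_g^{\ge k}$, the analogous locus for $g$. Applying Proposition \ref{equiv}(ii) to the $(d+1)$-lef bundle $L+M$, whose range $n-d-1\le k\le n-d-1+l$ contains our $k$, gives $\dim X_g^{\ge k}\le 2n-d-1-k$. Cutting by general $Y$ again gives the bound $2n-d-k-2$.

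\emph{Case (c).} The germ $G$ is a component of $F$ with $\dim h(G)\ge 1$. Here we use an incidence count. Let $B\subset X^{\ge k}$ be the locus of points admitting such a $G$, so $\dim B\le\dim X^{\ge k}\le 2n-d-k$. Consider
\begin{equation*}
I=\{(x,H): x\in B,\ G\subset h^{-1}(H)\}\subset B\times|\mathcal{O}(1)|.
\end{equation*}
Since $h(G)$ is positive-dimensional, its linear span has dimension at least $1$, so containing it imposes at least $2$ independent conditions on $H$. Hence
\begin{equation*}
\dim I\le \dim B+\dim|\mathcal{O}(1)|-2,
\end{equation*}
and the fiber of $I$ over a general $H$ has dimension at most $2n-d-k-2$.

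Combining the three cases gives $\dim Y^{\ge k}\le 2(n-1)-d-k$. Proposition \ref{equiv} then says that $L_{|Y}$ is $d$-lef below degree $l$. Note that Proposition \ref{equiv} was proved for an arbitrary smooth projective variety with a morphism whose pullback of an ample bundle is the given bundle, and $f_{|Y}$ is such a morphism for $L_{|Y}$.

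The main obstacle is case (c). To apply the incidence argument rigorously, the components $G$ must be organized into finitely many algebraic families, for instance via the relative Hilbert scheme or via a stratification of $Y_f$ adapted to $f$ over which the fiber components vary in flat families. One also has to check that "general $Y$" can be chosen uniformly for all $k$ in the range, which holds because only finitely many conditions are involved.
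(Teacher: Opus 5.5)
Your argument is correct, but it takes a genuinely different route from the paper.

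\textbf{Your route.} You verify the fiber-dimension criterion (ii) of Proposition \ref{equiv} on $Y$ directly, sorting $k$-dimensional fiber germs into three types:
\begin{itemize}
\item germs that are not components of $f$-fibers, controlled by the $d$-lefness of $L$ at level $k+1$;
\item components on which $h$ is constant, controlled by the $(d+1)$-lefness of $L+M$ through $g=(f,h)$;
\item components on which $h$ is nonconstant, controlled by an incidence count, since containing a positive-dimensional $h(G)$ imposes two conditions on $H$.
\end{itemize}
For the second type you do not need to pass to multiples: $(f,h)^*(\mathcal{O}(1)\boxtimes\mathcal{O}(1))=L+M$ already puts you in the setting of Proposition \ref{equiv}. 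This is essentially the counting-constants method of Hu--Shang--Xiao, which the paper mentions and deliberately bypasses.

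\textbf{The paper's route.} It argues by contradiction from the cycle-theoretic definition.
\begin{itemize}
\item A bad $W\subset Y$ with $\codim_Y W\le l-1$ lies in $Z=\{x:\dim(f^{-1}f(x),x)\ge 2n-d-\dim W-1\}$.
\item Proposition \ref{equiv}(ii) gives $\dim Z\le \dim W+1$. This forces $W$ to be a component of $V\cap Y$ for a top-dimensional component $V$ of $Z$.
\item Lemma \ref{homfiber} then gives $[W]_X=c_W\,M\cdot[V]_X$.
\item The Hall--Rado criterion (Proposition \ref{hallrado}) on $V$, fed by the lefness of $L$ and $L+M$, yields $L^{d+2-2\codim_X W}\cdot M\cdot[V]_X\neq 0$, a contradiction.
\end{itemize}

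\textbf{What each buys.} The paper needs genericity of $Y$ only with respect to finitely many fixed subvarieties (the components of the loci $Z$). It uses no parameter spaces, at the cost of invoking the homology lemma and Hall--Rado.

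Your proof is purely dimension-theoretic. It makes transparent exactly where the $(L+M)$-hypothesis enters: fiber components on which $M$ is trivial. However, your case (c) does require the $k$-dimensional fiber components to move in finitely many algebraic families, so that the incidence set is constructible. This is standard, for instance via a stratification of the base over which fiber components form finitely many families, as in EGA IV, or via the Chow variety. So it is a technical point, not a gap. With it, your bound $\dim B-2\le 2(n-1)-d-k$ is exactly what is needed.
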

\begin{proof}
    Let $f$ be the Kodaira map associated with $L$.
    By Bertini's theorem and Lemma \ref{homfiber}, a general hypersurface $Y\in|M|$ can be chosen such that
    \begin{itemize}
        \item $Y$ is a smooth irreducible hypersurface;
        \item For every integer $n-d\leq k\leq n-d+l$,
         the hypersurface $Y$ intersects every irreducible component $V$ of 
         $\{x\in X: \dim(f^{-1}f(x),x)\geq k\}$ 
         properly. 
         Moreover, for every irreducible component $W$ of $V\cap Y$, there is a constant $c_W>0$ such that 
        $[W]_{V,hom}=c_W M_{|V}\cap [V]_{V,hom}$ in $H_*(V).$
    \end{itemize}

    By contradiction, suppose that $L_{|Y}$ is not $d$-lef below degree $l$.
    Then there exists an irreducible subvariety $W\subset Y$ with $\codimension_Y W\leq l-1$ such that 
    $$L_{|Y}^{d-2\codimension_Y W}\cdot [W]_Y=0.$$
    Since $\codimension_Y W= \codimension_X W-1$, the projection formula
    gives 
    \begin{equation}\label{a1}
        L^{d+2-2\codimension_X W}\cdot [W]_X=0.
    \end{equation}
    Since $f$ is the Kodaira map associated with $L$, the equality \eqref{a1} implies 
    \begin{equation*}  
        \dim f(W) \leq d+1-2\codim_X W. 
    \end{equation*}
    Hence $W$ is contained in
    \begin{equation*}
        Z:=\{x\in X:\dim (f^{-1}f(x),x)\geq 2n-d-\dim W-1\}.
    \end{equation*}
    Since $2n-d-\dim W-1  = n-d+\codim_Y W$ and $\codim_Y W\le l-1$,
    we have 
    \begin{equation*} 
        n-d \leq 2n-d-\dim W-1 \leq n-d+l-1. 
    \end{equation*} 
    Therefore Proposition \ref{equiv}$(ii)$ gives 
    $$\dim Z\leq\dim W+1.$$
    Let $V$ be an irreducible component of $Z$ containing $W$. 
    Since $Y$ intersects $V$ properly, 
    \begin{equation*} 
        \dim V\cap Y=\dim V-1. 
    \end{equation*} 
    On the other hand, $W\subset V\cap Y$ implies
    \begin{equation*} 
        \dim V\cap Y \geq \dim W \geq \dim Z-1. 
    \end{equation*} 
    Hence $V$ is a maximal-dimensional component of $Z$ and $W$ is an irreducible component of $V\cap Y$.
    
    By the choice of $Y$ and Lemma \ref{homfiber}, we have
    \begin{equation*} 
        [W]_{V,\mathrm{hom}} = c_W\,M_{|V}\cap[V]_{V,\mathrm{hom}} 
    \end{equation*} 
    for some $c_W>0$. Pushing forward to $X$ and using Poincar\'e duality gives
    \begin{equation*}
        [W]_X=c_W M\cdot [V]_X.
    \end{equation*}
    Together with (\ref{a1}), this yields
    \begin{equation}\label{a2}
        L^{d+2-2\codimension_X W}\cdot M\cdot [V]_X=0
    \end{equation}
    On the other hand, since $\codim_X V=\codim_X W-1=\codim_Y W\leq l-1,$
    the lefness assumptions on $L$ and $L+M$ give
    \begin{equation*}
        (L+M)^{d+1-2\codimension_XV}\cdot [V]_X \neq 0 \text{ and } L^{d-2\codimension_X V}\cdot [V]_X\neq0.
    \end{equation*}
    Equivalently,
    \begin{equation*}
        (L+M)^{d+3-2\codimension_X W}\cdot [V]_X\neq 0 \text{ and }L^{d+2-2\codimension_X W}\cdot [V]_X \neq 0.
    \end{equation*}
    Moreover, $$M\cdot[V]_X = \frac{1}{c_W}[W]_X \neq 0.$$ 
    Then Proposition \ref{hallrado} yields 
    $$L^{d+2-2\codimension_X W}\cdot M\cdot [V]_X\neq 0,$$ contradicting  (\ref{a2}). This completes the proof.
\end{proof}

The preservation of lefness for a collection then follows immediately by Lemma \ref{equivlefcolfree}.

\begin{cor}\label{lefprecoll}
    Let $d,l$ be integers satisfying $0\le 2l-1\leq d\leq n$ and let $\mathcal{L}=(L_1,...,L_{m})$ be a collection of basepoint-free line bundles. Suppose that  $2\leq m\leq d-2l+2$ and that $\mathcal{L}$ is $d$-lef below degree $l$. 
    Then for every $r\in [m]$ with $\nd(L_r)\geq 2$, a general hypersurface $Y\in |L_r|$ is smooth and irreducible, and satisfies that the restricted collection 
    $$\mathcal{L}_{\backslash r|Y}=(L_{1|Y},...,L_{r-1|Y},L_{r+1|Y},...,L_{m|Y})$$ 
    is $(d-1)$-lef below degree $l$;
\end{cor}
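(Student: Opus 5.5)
The plan is to reduce the corollary to Proposition \ref{lefpres} through the equivalence in Lemma \ref{equivlefcolfree}. By Lemma \ref{equivlefcolfree}, the restricted collection $\mathcal{L}_{\backslash r|Y}$ of $m-1$ classes is $(d-1)$-lef below degree $l$ exactly when, for every nonempty $J\subset[m]\setminus\{r\}$, the restriction $L_{J|Y}$ is $(|J|+(d-1)-(m-1))$-lef below degree $l$, that is, $(|J|+d-m)$-lef below degree $l$. This needs the numerical condition $1\le m-1\le (d-1)-2l+2$, which follows from $2\le m\le d-2l+2$. The hypotheses on $Y$ are generic conditions in $|L_r|$. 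There are finitely many subsets $J$, and smoothness and irreducibility of a general $Y$ hold by Lemma \ref{Bertini}, since $\nd(L_r)\ge 2$. So it suffices to treat each $J$ separately and then intersect the finitely many dense open subsets of $|L_r|$.

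Fix $J$. Apply Proposition \ref{lefpres} with $L=L_J$, $M=L_r$ and $d'=|J|+d-m$. By Lemma \ref{equivlefcolfree} applied to $\mathcal{L}$, the class $L_J$ is $(|J|+d-m)$-lef below degree $l$. Likewise $L_J+L_r=L_{J\cup\{r\}}$ is $(|J|+1+d-m)=(d'+1)$-lef below degree $l$. Both $L_J$ and $L_r$ are basepoint-free, being sums of basepoint-free bundles, and $\nd(L_r)\ge 2$ holds by assumption.

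It remains to check the numerical range $0\le 2l-1\le d'\le n-1$. For the upper bound, $d'\le d-1\le n-1$ because $|J|\le m-1$. For the lower bound, when $|J|=1$ we need $d-m+1\ge 2l-1$, which is exactly $m\le d-2l+2$, and larger $J$ only increases $d'$.

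Proposition \ref{lefpres} then says that $L_{J|Y}$ is $d'$-lef below degree $l$ for general $Y\in|L_r|$. Intersecting over all $J$ and applying Lemma \ref{equivlefcolfree} on $Y$ completes the argument.

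The only delicate point is this bookkeeping of index ranges, in particular confirming that the case $|J|=1$ stays within the admissible range of Proposition \ref{lefpres}. This is where the hypothesis $m\le d-2l+2$ is used. Everything else is formal.
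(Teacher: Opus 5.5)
Your proposal is correct and follows the paper's proof essentially verbatim. Both use Lemma \ref{Bertini} for smoothness and irreducibility, then Lemma \ref{equivlefcolfree} to see that $L_J$ and $L_J+L_r$ are $(|J|+d-m)$- and $(|J|+d-m+1)$-lef below degree $l$, then Proposition \ref{lefpres} for each $J$, and finally Lemma \ref{equivlefcolfree} again on $Y$; your explicit checks of the index range $2l-1\le |J|+d-m\le n-1$ and of intersecting finitely many dense open subsets of $|L_r|$ are details the paper leaves implicit.
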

\begin{proof} 
    By Lemma \ref{Bertini}, a general hypersurface $Y\in |L_r|$ is smooth and irreducible.
    Lemma \ref{equivlefcolfree} implies that 
    $L_I$ is $(|I|+d-m)$-lef below degree $l$ and 
    $L_I+L_r $ is $(|I|+1+d-m)$-lef below degree $l$ for every nonempty subset $I\subset [m]\backslash\{r\}$. 
    Hence Proposition \ref{lefpres} implies that for a general hypersurface $Y\in |L_r|$, $L_{I|Y}$ is $(|I|+d-m)$-lef below degree $l$. 
    Again by Lemma \ref{equivlefcolfree}, the collection $\mathcal{L}_{\backslash r|Y}$ is $(d-1)$-lef below degree $l$.
    This completes the proof.
\end{proof}

This preservation result allows us to generalize the linear independence result Lemma \ref{linindepfull} for a single line bundle to the mixed setting.

\begin{lem}\label{mixedlinindep}
Let $d,l$ be integers satisfying $l\ge 1$ and $2l+1\leq d\leq n$ and let $\mathcal{L}=(L_1,\ldots,L_{d-2l})$ be a collection of basepoint-free line bundles that is $d$-lef below degree $l$.
Then the classes
\begin{equation*}
    \{[V]:V\in\Irred^l(X),\ \mathcal{L}\cdot[V]=0\}
\end{equation*}
are linearly independent.
\end{lem}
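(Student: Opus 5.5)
Our plan is induction on $m=d-2l$, the number of line bundles, using Corollary \ref{lefprecoll} to pass to general hypersurfaces. For $m=1$ we have $d=2l+1$ and $\mathcal{L}=(L_1)$ with $L_1$ $(2l+1)$-lef below degree $l$. The statement is then exactly Lemma \ref{linindeppartial} (or Lemma \ref{linindepfull} with $d=2l+1$).

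For the inductive step, suppose $m\ge 2$ and fix a relation $\sum_V a_V[V]=0$ over the finitely many $V\in\Irred^l(X)$ with $\mathcal{L}\cdot[V]=0$. Finiteness holds by Lemma \ref{fincol} with $\alpha=(1,\dots,1)$, which has $|\alpha|=m=d-2l<d-2l+1$; one may add one extra power to some factor, since annihilation by $\mathcal{L}$ implies annihilation by $\mathcal{L}^{\alpha'}$ with $|\alpha'|=d-2l+1$. By Proposition \ref{hallrado}, each such $V$ has some nonempty $I\subset[m]$ with $\nd_V(L_I)<|I|$. We split the $V$'s into those with $L_r\cdot[V]=0$ for the chosen index $r$ and those with $L_r\cdot[V]\neq0$.

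Pick $r$ with $\nd(L_r)\geq 2$. This is possible because Lemma \ref{equivlefcolfree} gives $\nd(L_r)\ge d-m+1\ge 2l+1\ge 3$. Take a general $Y\in|L_r|$. By Corollary \ref{lefprecoll}, $Y$ is smooth and irreducible and $\mathcal{L}_{\backslash r|Y}$ is $(d-1)$-lef below degree $l$, with $m-1=(d-1)-2l$ members. The induction hypothesis on $Y$ therefore applies to codimension-$l$ subvarieties of $Y$ killed by $\mathcal{L}_{\backslash r|Y}$.

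Now restrict the relation to $Y$. The $V$ with $L_r\cdot[V]=0$ are disjoint from general $Y$, since $f_r(V)$ is a point. For the others, Lemma \ref{Bertini} gives $[V]_{|Y}=\sum_j[W_j]_Y$ with each $W_j\in\Irred^l(Y)$. The projection formula gives $\mathcal{L}_{\backslash r|Y}\cdot[W_j]_Y$ as the pushforward of a nonnegative piece of $\mathcal{L}\cdot[V]=0$. More precisely, using Lemma \ref{homfiber}, $[W_j]_X=c\,L_r\cdot[V]$, so $\mathcal{L}_{\backslash r}\cdot[W_j]_X=c\,\mathcal{L}\cdot[V]=0$. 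Hence each $W_j$ is killed by $\mathcal{L}_{\backslash r|Y}$, viewed via pushforward. Here we would use that $\mathcal{L}_{\backslash r|Y}\cdot[W_j]_Y=0$ in $H^*(Y)$. This does not follow formally from vanishing in $X$, since pushforward need not be injective. One can instead argue via nonvanishing numerical dimensions with Proposition \ref{hallrado}: $\nd_{W_j}(L_{I|Y})=\nd_{W_j}(L_I)$ is intrinsic to $W_j$.

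Distinct $V$ share no component $W_j$; the generic choice makes pairwise intersections of the $V$'s meet $Y$ properly, so they have smaller dimension. Linear independence on $Y$ then kills all $a_V$ with $L_r\cdot[V]\neq 0$.

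The remaining $V$ satisfy $L_r\cdot[V]=0$, i.e.\ $f_r(V)$ is a point, so the relation involves only these. We handle them by the pure case. Since $L_r$ is $(d-m+1)$-lef below degree $l$ with $d-m+1=2l+1$, Lemma \ref{linindeppartial} applied to $L_r$ gives linear independence of $\{[V]:L_r\cdot[V]=0\}$. So the leftover coefficients vanish.

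The main obstacle is the restriction step. One must verify that every restricted component $W_j$ is genuinely annihilated by $\mathcal{L}_{\backslash r|Y}$ in $H^*(Y)$, and that the restriction map sends distinct $V$ to disjoint sets of components. The first point is settled by phrasing annihilation via numerical dimensions through Proposition \ref{hallrado}, which is intrinsic to $W_j$.
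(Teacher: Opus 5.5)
Your proposal is correct and follows essentially the same route as the paper. Both argue by induction on $d-2l$: restrict to a general $Y\in|L_r|$ via Corollary \ref{lefprecoll}, apply the induction hypothesis to the pairwise distinct components $W_j$ of the $V\cap Y$, and finish with Lemma \ref{linindeppartial} for the $V$ contracted by $L_r$. The only difference is how you prove $\mathcal{L}_{\backslash r|Y}\cdot[W_j]_Y=0$ in $H^*(Y)$: the paper uses that a pseudo-effective class with vanishing pushforward is zero, while you transfer the vanishing from $X$ via Proposition \ref{hallrado} and the intrinsic nature of $\nd_{W_j}$. Both work, and your finiteness remark is unnecessary.
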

\begin{proof}
When $d-2l=1$, the assertion is exactly
the linear independence statement in Lemma \ref{linindeppartial}.
We proceed by induction on $d-2l$.
Assume that $d-2l\ge 2$ and that the assertion holds for collections of
size $d-2l-1$. 

Suppose that there is a linear relation
\begin{equation*}
    \sum_{j=1}^N a_j[V_j]=0,
\end{equation*}
where $V_j\in\Irred^l(X)$ are distinct and satisfy $\mathcal{L}\cdot [V_j]=0$ for every $j$. 
We divide the $V_j$'s into two groups
\begin{equation*}
    I=\{j:L_{d-2l}\cdot[V_j]=0\} \text{ and } J=\{j:L_{d-2l}\cdot[V_j]\neq 0\}.
\end{equation*}
We then have $Y\cap V_i=\emptyset$ for every $i\in I$ for a general hypersurface $Y\in |L_{d-2l}|$.
On the other hand, for every $j\in J$, a general $Y$ intersects $V_j$ properly and in pure dimension.
Write $$V_j\cap Y = W_{j,1}\cup\cdots\cup W_{j,r_j} $$
as the decomposition into irreducible components. 
By Lemma \ref{Bertini}, 
\begin{equation*} 
    [V_j]_{X|Y} = \sum_{\nu=1}^{r_j}[W_{j,\nu}]_Y. 
\end{equation*}
In particular, $W_{j,\nu}\in \Irred^l(Y)$ for every $j$ and $\nu$.
We can also choose $Y$ generally so that it intersects $V_j\cap V_{j'}$ properly for all $j\neq j'$. This guarantees that $W_{j,\nu}$'s are pairwise distinct.
By Corollary \ref{lefprecoll}, for a general hypersurface $Y\in |L_{d-2l}|$,
the restricted collection $\mathcal{L}_{\backslash d-2l|Y}$ is $(d-1)$-lef below degree $l$.
We choose a general hypersurface $Y\in |L_{d-2l}|$ satisfying all above conditions.

We claim that $\mathcal{L}_{\backslash d-2l|Y}\cdot [W_{j,\nu}]_{|Y}=0 $ for every $j\in J$ and every $1\le \nu\leq r_j$. 
Indeed, since 
$\mathcal{L}\cdot [V_{j}]=0$,  the projection formula gives
\begin{equation*}
    j_{Y*}(\mathcal{L}_{\backslash d-2l|Y}\cdot [V_j]_{X|Y})=\mathcal{L}\cdot [V_{j}]=0.
\end{equation*}
The class $\mathcal{L}_{\backslash d-2l|Y}\cdot [V_j]_{X|Y}$ is pseudo-effective. Hence its vanishing after pushforward implies that it is zero.
Therefore
\begin{equation*}
    \mathcal{L}_{\backslash d-2l|Y}\cdot [V_j]_{X|Y}= \sum_{\nu=1}^{r_j}\mathcal{L}_{\backslash d-2l|Y}\cdot [W_{j,\nu}]_Y=0.
\end{equation*}
Since every class $\mathcal{L}_{\backslash d-2l|Y}\cdot [W_{j,\nu}]_Y$ is pseudo-effective, each of them must vanish. This proves the claim.

Restricting the above linear relation to $Y$, we obtain
\begin{equation*}
    \sum_{j\in J}a_j[V_j]_{X|Y}=\sum_{j\in J}\sum_{\nu=1}^{r_j}a_j[W_{j,\nu}]_{Y}=0.
\end{equation*}
These classes are linearly independent by the claim and the induction hypothesis. Hence $a_j=0$ for every $j\in J$. The original linear relation then reduces to
$$\sum_{i\in I}a_i[V_i]=0.$$
Since every $V_i$ is annihilated by $L_{d-2l}$ which is $(2l+1)$-lef below degree $l$, they are linearly independent by Lemma \ref{linindeppartial}.
Therefore $a_i=0$ for every $i\in I$.
Hence all the coefficients in the original relation vanish, and the desired classes are linearly independent.
\end{proof}

\subsection{Mixed Lefschetz theorems}\label{mixedlefsec}
In this subsection, we prove Theorem \ref{mixedlefcharintro} and Theorem \ref{mixedkerintro}. We begin with the following kernel characterization theorem.

\begin{thrm}\label{mixedker(p,q)}
    Let $d,p,q\geq 0$ be integers satisfying $ p+q <  d\leq n$ and let $\mathcal{L}=(L_1,...,L_{d-p-q})$ be a collection of semiample line bundles. 
    Suppose that 
    $\mathcal{L}$ is $d$-lef below degree $\lceil \frac{p+q}{2}\rceil$. Then 
    \begin{equation*}
       \ker \mathcal{L}\cap H^{p,q}(X)=
       \begin{cases}
           0   & \text{if }p\ne q \\
           \Span_{\mathbb{C}}\{[V]:V\in \Irred^p(X), \mathcal{L}\cdot [V]=0\} & \text{if }p=q
       \end{cases}.
    \end{equation*}
    Moreover, if $p=q$, then the classes $\{[V]:V\in \Irred^p(X), \mathcal{L}\cdot [V]=0\}$
    are linearly independent.
\end{thrm}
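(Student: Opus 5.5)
We prove Theorem \ref{mixedker(p,q)} by induction on $m=d-p-q$, the number of line bundles in $\mathcal{L}$, using the single-class results of Section \ref{singlelefchar} as the base case and the hypersurface restriction of Corollary \ref{lefprecoll} as the inductive step. As in Theorem \ref{hardlefsupercrit}, we first pass to positive multiples so that all $L_i$ are basepoint-free; this changes neither kernels nor lefness nor the vanishing conditions $\mathcal{L}\cdot[V]=0$. Set $l=\lceil\frac{p+q}{2}\rceil$. If $m=1$, the statement is exactly Corollary \ref{pureker(p,q)} applied to $L_1$, which is $d$-lef below degree $l$ by Lemma \ref{equivlefcolfree} (with $I=\{1\}$). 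The case $p=q=0$ is trivial, so assume $p+q\ge1$.

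For the inductive step with $m\ge2$, we first arrange that some $L_r$ satisfies $\nd(L_r)\geq 2$. This holds automatically, since $L_r$ is $(d-m+1)$-lef below degree $l\ge1$ and hence $\nd(L_r)\ge d-m+1=p+q+1\ge2$. Let $x\in\ker\mathcal{L}\cap H^{p,q}(X)$, and take a general $Y\in|L_r|$ as in Corollary \ref{lefprecoll}, so that $\mathcal{L}_{\backslash r|Y}$ is $(d-1)$-lef below degree $l$ on the smooth irreducible $Y$. With $j:Y\to X$ the inclusion, the projection formula gives $j_*(\mathcal{L}_{\backslash r|Y}\cdot j^*x)=\mathcal{L}\cdot x=0$. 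Here we must know that $j_*$ is injective in the relevant degree. By Poincaré duality on $Y$ this is dual to the surjectivity of $j^*:H^{2d-p-q-2}(X)\to H^{2d-p-q-2}(Y)$, and that is not available for a nef $L_r$. So instead we argue as follows. We have $\mathcal{L}_{\backslash r}\cdot L_r\cdot x=0$. Apply the single-class theorem to $L_r^{m}$ in place of $\mathcal{L}$ by considering the auxiliary class $y=\mathcal{L}_{\backslash r}\cdot x$; this does not immediately close up either. The cleaner route is to run the induction on $Y$. The class $j^*x$ lies in $\ker(\mathcal{L}_{\backslash r|Y}\cdot L_{r|Y})$ on $Y$, because $\mathcal{L}_{\backslash r|Y}\cdot L_{r|Y}\cdot j^*x=j^*(\mathcal{L}\cdot x)=0$. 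The collection $(\mathcal{L}_{\backslash r|Y},L_{r|Y})$ of size $m$ in dimension $n-1$ is then handled by a secondary induction on $n$. The primary induction on $m$ applies to $\mathcal{L}_{\backslash r|Y}$, together with the single-class statement for the appropriate sums $L_I$.

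Concretely, the key step we aim for is the inclusion
\[
\ker\mathcal{L}\cap H^{p,q}(X)\subset \ker \mathcal{L}_{\backslash r}L_r\cap H^{p,q}(X)\subset \sum_{\emptyset\neq I}\ker L_I^{\alpha_I}\cap H^{p,q}(X),
\]
a decoupling in the spirit of Theorem \ref{algkerintro}. We would try to prove it by the following argument. Write $x$ in terms of its image under $j^*$. By induction on $Y$, the class $\mathcal{L}_{\backslash r|Y}$ has kernel on $H^{p,q}(Y)$ that is zero if $p\neq q$, or spanned by classes $[W]$ with $W\in\Irred^p(Y)$ annihilated by $\mathcal{L}_{\backslash r|Y}$. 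To show that $j^*x$ lies in this kernel we use Hodge--Riemann positivity on $Y$, which we expect to be the main obstacle: we need a mixed Hodge index argument showing that $Q(j^*x,j^*x)=0$ forces $\mathcal{L}_{\backslash r|Y}\cdot j^*x=0$. We would first attempt this by perturbing $L_r$ with a small ample class $\epsilon A$, using Lemma \ref{addgeom}(3), and passing to the limit $\epsilon\to0$. Given this, the case $p\ne q$ yields $j^*x=0$. Then $L_r\cdot x=0$, and since $L_r$ is $(p+q+1)$-lef below degree $l$, the single-class injectivity (Corollary \ref{pureker(p,q)}) gives $x=0$. In the case $p=q$, each $W$ is a component of $V\cap Y$ for some $V\in\Irred^p(X)$ with $\mathcal{L}\cdot[V]=0$; the finiteness in Lemma \ref{fincol} and the rigidity in Lemma \ref{rigidity} ensure that such $W$ arise this way. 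Subtracting a suitable combination of the classes $[V]$ leaves a class $x'$ with $L_r\cdot x'=0$, and Corollary \ref{pureker(p,q)} for $L_r$ then shows that $x'$ is spanned by the classes $[V]$ with $L_r\cdot[V]=0$, which are in particular annihilated by $\mathcal{L}$. The linear independence follows from Lemma \ref{mixedlinindep}, after the repetition argument of Lemma \ref{addgeom}(2) reduces to the case where all exponents equal $1$.
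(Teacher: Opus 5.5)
Your outline of the inductive step matches the paper: restrict $x$ to a general $Y\in|L_r|$, kill $j^*x$ by a Hodge--Riemann argument on $Y$, push forward, and finish with Corollary~\ref{pureker(p,q)} for $L_r$, which is $(p+q+1)$-lef below degree $l$. However, the step you flag as the main obstacle is a genuine gap, and your induction on $m$ cannot fill it.

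Perturbing by a small ample class only shows that $Q$ is semidefinite on limits of the perturbed primitive spaces $P_t$, which have dimension $h^{p,q}(Y)-h^{p-1,q-1}(Y)$. Such a limit equals $P^{p,q}(Y)=\ker(\mathcal{L}_{\backslash r|Y}L_{r|Y})\cap H^{p,q}(Y)$, the space containing $j^*x$, only if this dimension does not jump at $t=0$. Otherwise $Q$ need not be semidefinite on $P^{p,q}(Y)$. The paper secures this, together with the orthogonal decomposition giving $\ker Q_{|P^{p,q}(Y)}=\ker\mathcal{L}_{\backslash r|Y}\cap H^{p,q}(Y)$, through the hypothesis of Lemma~\ref{lefdecomp}:
\[
\mathcal{L}_{\backslash r|Y}\cdot L_{r|Y}^2\colon H^{p-1,q-1}(Y)\longrightarrow H^{d-q,d-p}(Y)\ \text{is an isomorphism.}
\]
This is the theorem on $Y$ in bidegree $(p-1,q-1)$ for the collection $(\mathcal{L}_{\backslash r|Y},L_{r|Y},L_{r|Y})$ of $m+1$ classes. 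It therefore lies outside an induction on $m$.

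The paper handles this in two steps. It first reduces to $n=d$ by appending $A^{n-d}$ with $A$ ample; this is what Lemma~\ref{addgeom}(3) is for, together with a sandwich of kernels, not a perturbation of $L_r$. It then inducts on $d=\dim X$. It notes that $\mathcal{L}$ with two extra copies of $L_r$ is $d$-lef below degree $l-1$ (below degree $l$ if $p=q$), restricts via Corollary~\ref{lefprecoll}, and applies the induction hypothesis on $Y$ in bidegree $(p-1,q-1)$.

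Some reduction to $n=d$ is needed in your argument in any case. The class $j^*x\cdot\overline{j^*x}\cdot\mathcal{L}_{\backslash r|Y}$ has degree $2(d-1)$ while $\dim Y=n-1$, so otherwise neither $Q$ nor the identity $Q(j^*x,j^*x)=\pm\int_X x\cdot\overline{x}\cdot\mathcal{L}$ makes sense. Your ``secondary induction on $n$'' targets the wrong object: $\ker(\mathcal{L}_{\backslash r|Y}L_{r|Y})\cap H^{p,q}(Y)$ is the primitive space, which the theorem does not describe. The displayed decoupling inclusion is never proved, nor is it needed on this route.

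In the case $p=q$, the lifting of the classes $[W]_Y$ is also unjustified. The $W\in\Irred^p(Y)$ with $\mathcal{L}_{\backslash r|Y}\cdot[W]_Y=0$ depend on $Y$, so finiteness on $X$ does not show they come from $X$. Lemma~\ref{rigidity} concerns a single class and subvarieties of codimension $l$ in $X$, whereas $W$ has codimension $p+1$.

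The paper instead uses Proposition~\ref{hallrado} to get $L_I^{|I|}\cdot[W]_X=0$ for some $\emptyset\neq I\subset[m]\setminus\{r\}$. Hence $W\subset Z_I=\{x:\dim(f_I^{-1}f_I(x),x)\geq d-p-|I|\}$, where $f_I$ is the Kodaira map of $L_I$. This locus is independent of $Y$, and after the reduction to $n=d$ it has dimension at most $d-p$ by Proposition~\ref{equiv}. Choosing $Y$ to meet every component of every $Z_I$ properly and applying Lemma~\ref{homfiber}, the paper obtains $V\in\Irred^p(X)$ with $L_r\cdot[V]=c^{-1}[W]_X$ for some $c>0$, and hence $\mathcal{L}\cdot[V]=0$. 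You need this proportionality for each individual component, not merely that $W$ is a component of $V\cap Y$, in order to build $x'$ with $L_r\cdot x'=0$. The remaining steps, including Lemma~\ref{mixedlinindep} for linear independence, agree with the paper.
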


The proof strategy is as follows. We take $x$ in the kernel and choose a hypersurface preserving the positivity of the collection, whose existence is guaranteed by Corollary \ref{lefprecoll}. By the induction hypothesis, the restriction of $x$ is either zero or a linear combination of algebraic cycle classes. By the homology lemma \ref{homfiber}, all these classes can be lifted to $X$. We then conclude by applying the weak Lefschetz theorem under our lefness assumptions. More precisely, we use a special case of our single-class Lefschetz theorem.

The following version of the Lefschetz decomposition and the Hodge-Riemann relations will be used to show that the restricted class is annihilated by the restricted collection.

\begin{lem}\label{lefdecomp}
    Let $d,p,q\geq 0$ be integers satisfying $p+q\leq d$. Let $X$ be an $d$-dimensional compact K\"ahler manifold, and let $L,L_1,...,L_{d-p-q}\in H^{1,1}(X,{\mathbb{R}})$ be nef classes. 
    Set $\mathcal{L}=(L_1,...,L_{d-p-q})$ and consider the Hermitian form
    \begin{equation*}
        Q: H^{p,q}(X)\times H^{p,q}(X)\rightarrow \mathbb{C} \ \ \ (\alpha,\beta ) \mapsto (\sqrt{-1})^{p-q} (-1)^{\frac{(p+q)(p+q-1)}{2}} \int_X \alpha \cdot \overline{\beta}\cdot \mathcal{L}. 
    \end{equation*}
    Suppose that the linear map
    \begin{equation*}
        \mathcal{L}\cdot L^2 : H^{p-1,q-1}(X)\rightarrow H^{d-q+1,d-p+1}(X)
    \end{equation*}
    is an isomorphism.  Then there is a $Q$-orthogonal decomposition
    \begin{equation*}
        H^{p,q}(X)=LH^{p-1,q-1}(X)\oplus P^{p,q}(X)
    \end{equation*}
    where 
    \begin{equation*}
        P^{p,q}(X)=\ker (\mathcal{L}\cdot L : H^{p,q}(X)\rightarrow H^{d-q+1,d-p+1}(X))
    \end{equation*}
    is the primitive space.
    Moreover, $Q$  is  positive semi-definite on $P^{p,q}(X)$ and
    \begin{equation*}
        \ker Q_{|P^{p,q}(X)}=\ker \mathcal{L}\cap H^{p,q}(X).
    \end{equation*}
\end{lem}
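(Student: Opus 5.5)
The plan is to prove the statement in three steps: the decomposition is linear algebra from the assumed isomorphism, positivity comes from approximating the nef classes by K\"ahler classes, and the kernel identity comes from Poincar\'e duality.

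\emph{Decomposition and orthogonality.} Write $T=\mathcal{L}\cdot L^2:H^{p-1,q-1}(X)\to H^{d-q+1,d-p+1}(X)$, which is an isomorphism by hypothesis. Given $x\in H^{p,q}(X)$, set $y=T^{-1}(\mathcal{L}\cdot L\cdot x)$. Then $\mathcal{L}\cdot L\cdot(x-Ly)=0$, so $x-Ly\in P^{p,q}(X)$, and therefore $H^{p,q}(X)=LH^{p-1,q-1}(X)+P^{p,q}(X)$.

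If $Ly\in P^{p,q}(X)$, then $Ty=0$, so $y=0$; hence the sum is direct. The same argument shows that $L$ is injective on $H^{p-1,q-1}(X)$.

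For orthogonality, take $z\in P^{p,q}(X)$. Since $L$ and the $L_i$ are real, $Q(Ly,z)$ is a constant multiple of $\int_X y\cdot\overline{\mathcal{L}\cdot L\cdot z}=0$. (If $p=0$ or $q=0$, then $H^{p-1,q-1}=0$ and $\mathcal{L}\cdot L$ lands in degree $>2d$, so $P^{p,q}=H^{p,q}$ and this step is vacuous.)

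\emph{Semi-definiteness.} Fix a K\"ahler class $\omega$ and put $L_\epsilon=L+\epsilon\omega$, $L_{i,\epsilon}=L_i+\epsilon\omega$, and $\mathcal{L}_\epsilon=\prod_i L_{i,\epsilon}$. For $\epsilon>0$, the mixed hard Lefschetz theorem and mixed Hodge--Riemann relations for K\"ahler classes (Dinh--Nguy\^en, Cattani) give three facts:
\begin{itemize}
\item $\mathcal{L}_\epsilon L_\epsilon:H^{p,q}\to H^{d-q+1,d-p+1}$ is surjective, being dual to the injective map $\mathcal{L}_\epsilon L_\epsilon^2$ on $H^{p-1,q-1}$ composed with the injective map $L_\epsilon$;
\item consequently $\dim P^{p,q}_\epsilon=h^{p,q}-h^{p-1,q-1}$;
\item the form $Q_\epsilon$ is positive definite on $P^{p,q}_\epsilon$.
\end{itemize}
At $\epsilon=0$, the decomposition above gives $\dim P^{p,q}=h^{p,q}-h^{p-1,q-1}$ as well. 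So the linear maps $\mathcal{L}_\epsilon L_\epsilon$ have constant rank for $\epsilon\ge 0$. Their kernels $P^{p,q}_\epsilon$ therefore converge to $P^{p,q}$ in the Grassmannian. A limit of forms that are positive definite on converging subspaces is positive semi-definite on the limit, so $Q$ is positive semi-definite on $P^{p,q}(X)$. This constant-rank continuity argument is the only non-formal step and the one I expect to need the most care.

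\emph{Kernel.} First suppose $\mathcal{L}x=0$. Then $\mathcal{L}Lx=0$, so $x\in P^{p,q}$, and $Q(x,\cdot)\equiv 0$; hence $x\in\ker Q_{|P^{p,q}}$.

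Conversely, suppose $x\in P^{p,q}$ lies in the radical of $Q_{|P^{p,q}}$. (For a semi-definite form the radical is exactly its null space, so this is what $\ker Q_{|P^{p,q}}$ means.) By the orthogonality step, $x$ is also $Q$-orthogonal to $LH^{p-1,q-1}$, so $Q(x,z)=0$ for all $z\in H^{p,q}(X)$. That is, $\int_X\mathcal{L}x\cdot\bar z=0$ for every $z\in H^{p,q}(X)$. Here $\mathcal{L}x$ lies in $H^{d-q,d-p}(X)$, and the pairing of $H^{d-q,d-p}$ with $\overline{H^{p,q}}$ is perfect by Poincar\'e duality and Hodge symmetry. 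Hence $\mathcal{L}x=0$.
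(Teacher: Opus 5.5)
Your proposal is correct and follows the paper's proof essentially step by step: the decomposition comes from the assumed isomorphism, positive semi-definiteness comes from perturbing all classes by $t\omega$ and using the constancy of $\dim P_t$ to pass to the limit, and the kernel identity comes from $Q$-orthogonality plus Poincar\'e duality. One harmless slip: when $p=0$ or $q=0$, the target of $\mathcal{L}\cdot L$ vanishes because one Hodge index exceeds $d$, not because the total degree exceeds $2d$.
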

\begin{proof}
    The isomorphism
    \begin{equation*}
        \mathcal{L}\cdot L^2: H^{p-1,q-1}(X) \to H^{d-q+1,d-p+1}(X)
    \end{equation*}
    immediately gives the decomposition
    \begin{equation*}
        H^{p,q}(X) = LH^{p-1,q-1}(X)\oplus P^{p,q}(X).
    \end{equation*}
    It is moreover $Q$-orthogonal since for
    $\eta\in H^{p-1,q-1}(X)$ and $\gamma\in P^{p,q}(X)$, we have $Q(L\eta,\gamma)=0.$

    Fix a K\"ahler class $\omega$. 
    For $t>0$, let $P_t^{p,q}(X)$ and $Q_t$ be obtained by replacing $L_i$ and $L$ with $L_i+t\omega$ and $L+t\omega$ respectively. 
    By the mixed Hodge-Riemann relations \cite{DN06,cattanimixedHRR}, $Q_t$ is
    positive definite on $P_t^{p,q}(X)$. Moreover,
    \begin{equation*}
        (L_1+t\omega)\cdots(L_{d-p-q}+t\omega)(L+t\omega)^2 :H^{p-1,q-1}(X)\to H^{d-q+1,d-p+1}(X)
    \end{equation*}
    is an isomorphism for $t>0$ by the mixed hard Lefschetz theorem, and for $t=0$ by assumption. Hence $\dim P_t^{p,q}(X)$ is constant for $t\geq0$,
    so $ P_t^{p,q}(X)\to  P^{p,q}(X)$ as $t\to 0$.
    Passing to the limit shows that $Q$ is positive semi-definite on
    $P^{p,q}(X)$.

    Finally, let $\alpha\in\ker Q_{|P^{p,q}(X)}$. By the
    $Q$-orthogonal decomposition above, we have  $Q(\alpha,\beta)=0$ for every 
    $\beta\in H^{p,q}(X).$
    Poincar\'e duality therefore gives $\mathcal{L}\cdot \alpha=0.$
    Conversely, if $\mathcal{L}\cdot \alpha=0$, then
    $\alpha\in P^{p,q}(X)$ and $Q(\alpha,\beta)=0$ for every  $\beta\in H^{p,q}(X).$
    Thus
    \begin{equation*}
        \ker Q_{|P^{p,q}(X)} = \ker\mathcal{L}\cap H^{p,q}(X).
    \end{equation*}
    This completes the proof.
\end{proof}

\begin{proof}[proof of Theorem \ref{mixedker(p,q)}]
    We may assume that each $L_i$ is basepoint-free by replacing each $L_i$ by a sufficiently divisible positive multiple.

    We first show that it suffices to prove the theorem when $n=d$.    
    Let $A$ be an ample line bundle and consider the collection
    $$\overline{\mathcal{L}}=(L_1,...,L_{d-p-q},A,...,A)$$ 
    where $A$ is repeated $n-d$ times. 
    By Lemma \ref{addgeom}, this collection is $n$-lef below degree $\lceil\frac{p+q}{2}\rceil$. 
    Assume that the theorem has been proved when the dimension of the ambient variety equals the lef degree.
    If $p\neq q$, the injectivity of 
    \begin{equation*} 
        \mathcal{L}\cdot A^{n-d}: H^{p,q}(X)\to H^{n-q,n-p}(X) 
    \end{equation*} 
    immediately implies the injectivity of 
    \begin{equation*} 
        \mathcal{L}: H^{p,q}(X)\to H^{d-q,d-p}(X). 
    \end{equation*}
    If $p=q$, then
    \begin{equation*}
        \ker (\mathcal{L} \cdot A^{n-d}:H^{p,p}(X)\to H^{n-p,n-p}(X))=\Span_{\mathbb{C}}\{[V]:V\in \Irred^p(X),\mathcal{L}\cdot A^{n-d}\cdot [V]=0\}.
    \end{equation*}
     Hence
    \begin{align*}
        &\ker (\mathcal{L}:H^{p,p}(X)\to H^{d-p,d-p}(X))\\
        \subset &\ker (\mathcal{L} \cdot A^{n-d}:H^{p,p}(X)\to H^{n-p,n-p}(X))\\
        =&\Span_{\mathbb{C}}\{[V]:V\in \Irred^p(X),\mathcal{L} \cdot A^{n-d}\cdot [V]=0\}\\
        =&\Span_{\mathbb{C}}\{[V]:V\in \Irred^p(X),\mathcal{L}\cdot [V]=0\}\\
        \subset &\ker (\mathcal{L} :H^{p,p}(X)\to H^{d-p,d-p}(X)).
    \end{align*}
    Thus all the inclusions are equalities.

    We now assume $n=d$ and proceed by induction on the dimension $d$. 
    If $p=q=0$, the assertion is immediate from $H^{0,0}(X)=\mathbb{C}[X]$. Moreover, if $d-p-q=1$ the assertion is a consequence of Corollary \ref{pureker(p,q)}. 
    We may therefore assume $(p,q)\neq (0,0)$ and $d-p-q\geq 2.$ 
    The case $d=1$ is already covered. 
    Assume that the theorem holds in dimension $d-1$ for all possible bidegrees.

    We split the discussion into two cases.

    \medskip\noindent\textbf{Case 1: $p\neq q$.}
 
    By Lemma \ref{addgeom}, the collection $(\mathcal{L},L_{d-p-q},L_{d-p-q})$ is $d$-lef below degree $\lceil\frac{p+q}{2}\rceil-1$. Here we cannot conclude that this collection is $d$-lef below degree $\lceil\frac{p+q}{2}\rceil$ since the size of this collection is $d-p-q+2$ which may be strictly greater than $d-2\lceil\frac{p+q}{2}\rceil+2$.
    By Bertini's theorem and Corollary \ref{lefprecoll}, a general hypersurface $Y\in |L_{d-p-q}|$ is smooth and irreducible, and satisfies the following conditions
    \begin{itemize}
        \item $\mathcal{L}_{\backslash d-p-q|Y}$ is $(d-1)$-lef below degree $\lceil \frac{p+q}{2}\rceil$, and
        \item $(\mathcal{L}_{\backslash d-p-q|Y},L_{d-p-q|Y},L_{d-p-q|Y})$ is $(d-1)$-lef below degree $\lceil \frac{p+q}{2}\rceil-1$.
    \end{itemize}
    Then by the induction hypothesis, the linear maps
    \begin{equation*}
        \mathcal{L}_{\backslash d-p-q|Y}:H^{p,q}(Y)\to H^{d-q-1,d-p-1}(Y)
    \end{equation*}
    and 
    \begin{equation*} 
        \mathcal{L}_{\backslash d-p-q|Y}\cdot L_{d-p-q|Y}^2: H^{p-1,q-1}(Y)\to H^{d-q,d-p}(Y) 
    \end{equation*} are both injective. They are further isomorphisms for dimensional reasons.

    Hence Lemma \ref{lefdecomp} implies that the Hermitian form
    \begin{equation*}
        Q(\alpha,\beta)=(\sqrt{-1})^{p-q}(-1)^{\frac{(p+q)(p+q-1)}{2}}\int_Y \alpha \cdot \overline{\beta}\cdot \mathcal{L}_{\backslash d-p-q|Y}.
    \end{equation*}
    defined on $H^{p,q}(Y)$ is positive definite on the primitive space
    \begin{equation*}
        P^{p,q}(Y)=\ker(\mathcal{L}_{\backslash d-p-q|Y}\cdot L_{d-p-q|Y}:H^{p,q}(Y)\to H^{d-q,d-p}(Y)).
    \end{equation*}
    Let $x\in \ker (\mathcal{L}:H^{p,q}(X)\to H^{d-q,d-p}(X)).$ 
    Restricting to $Y$, we obtain
    \begin{equation*}
        \mathcal{L}_{\backslash d-p-q|Y}\cdot L_{d-p-q|Y}\cdot x_{|Y}=0,
    \end{equation*}
    and hence $x_{|Y}\in P^{p,q}(Y)$. 
    On the other hand, 
    \begin{align*}
        Q(x_{|Y},x_{|Y})&=(\sqrt{-1})^{p-q}(-1)^{\frac{(p+q)(p+q-1)}{2}}\int_Y x_{|Y} \cdot \overline{x_{|Y}} \cdot \mathcal{L}_{\backslash d-p-q|Y}\\
        &=(\sqrt{-1})^{p-q}(-1)^{\frac{(p+q)(p+q-1)}{2}}\int_X x \cdot \overline{x}\cdot 
        \mathcal{L}_{\backslash d-p-q} \cdot L_{d-p-q}\\
        &=0.
    \end{align*}
    Since $Q$ is positive definite on $P^{p,q}(Y)$, we conclude that $x_{|Y}=0$. Pushing forward to $X$ gives $L_{d-p-q}\cdot x=0$. 
    By Lemma \ref{equivlefcolfree}, the line bundle $L_{d-p-q}$ is $(p+q+1)$-lef below degree $\left\lceil\frac{p+q}{2}\right\rceil$.
    Hence Corollary \ref{pureker(p,q)} implies that 
    \begin{equation*}
        L_{d-p-q}:H^{p,q}(X)\to H^{p+1,q+1}(X)
    \end{equation*}
    is injective. Therefore $x=0$, proving the case $p\neq q$.

    \medskip\noindent\textbf{Case 2: $p=q$.} 
    
    We may assume $p\geq 1$.
    By Lemma \ref{equivlefcolfree}, $L_I$ is $(|I|+2p)$-lef below degree $p$ for every nonempty subset $I\subset [d-2p]$. Let $f_I:X\to Y_I$ be the Kodaira map associated with $L_I$.
    Set 
    \begin{equation*}
        Z_I:=\{x\in X:\dim (f_I^{-1}f_I(x),x)\geq d-p-|I|\}.
    \end{equation*}
    By Proposition \ref{equiv}, $\dim Z_I\leq d-p.$
    By Lemma \ref{addgeom}, the collection
    $(\mathcal{L},L_{d-2p},L_{d-2p})$ is $d$-lef below degree $p$.    
    We choose a general hypersurface $Y\in |L_{d-2p}|$ that is smooth and irreducible, and satisfies the following conditions.
    \begin{itemize}
        \item $\mathcal{L}_{\backslash d-2p|Y}$ is $(d-1)$-lef below degree $p$.
        \item $(\mathcal{L}_{\backslash d-2p|Y},L_{d-2p|Y},L_{d-2p|Y})$ is $(d-1)$-lef below degree $p$.
        \item For every nonempty subset $I\subset [d-2p]$ and every irreducible component $W$ of $Z_I$, the intersection $Y\cap W$ is proper and pure-dimensional.
        Moreover, for every irreducible component $V$ of $W\cap Y$, there exists a constant $c>0$ such that 
        \begin{equation*}
            [V]_{W,hom}=cL_{d-2p|W}\cap [W]_{W,hom} \in H_{*}(W).
        \end{equation*}
    \end{itemize}
    The existence of such a hypersurface is guaranteed by Bertini's theorem, Lemma \ref{lefprecoll} and Lemma \ref{homfiber}.
    
    Applying the induction hypothesis to the bidegree $(p-1,p-1)$, we have
    \begin{equation*}
        \ker(\mathcal{L}_{\backslash d-2p|Y}\cdot L_{d-2p|Y}^2) \cap H^{p-1,p-1}(Y)=\Span_{\mathbb{C}}\{[V]_Y:V\in \Irred^{p-1}(Y),\mathcal{L}_{\backslash d-2p|Y}\cdot L_{d-2p|Y}^2\cdot [V]_Y=0\}
    \end{equation*}
     Since the above collection is $(d-1)$-lef below degree $p$, the right-hand side is zero.
     Hence the map is injective, and therefore an isomorphism for dimensional reasons.
    Moreover, we have
    \begin{equation}\label{d2odqe}
        \ker (\mathcal{L}_{\backslash d-2p|Y}:H^{p,p}(Y)\to H^{d-p-1,d-p-1}(Y))=\Span_{\mathbb{C}}\{[V]_Y:V\in \Irred^p(Y), \mathcal{L}_{\backslash d-2p|Y}\cdot [V]_Y=0\}.
    \end{equation}
    Lemma \ref{lefdecomp} therefore implies that the Hermitian form
    \begin{equation*}
        Q(\alpha,\beta)=(-1)^p\int_Y \alpha\cdot \overline{\beta}\cdot \mathcal{L}_{\backslash d-2p|Y}
    \end{equation*}
    on $H^{p,p}(Y)$ is positive semi-definite on the primitive space
    \begin{equation*}
        P^{p,p}(Y)=\ker (\mathcal{L}_{\backslash d-2p|Y}\cdot L_{d-2p|Y}:H^{p,p}(Y)\to H^{d-p,d-p}(Y))
    \end{equation*}
    with kernel 
    $$\ker Q_{|P^{p,p}(Y)}=\ker (\mathcal{L}_{\backslash d-2p|Y}:H^{p,p}(Y)\to H^{d-p-1,d-p-1}(Y)).$$
    
    Let $x \in \ker(\mathcal{L}:H^{p,p}(X)\to H^{d-p,d-p}(X)).$ 
    Restricting to $Y$, we obtain
    \begin{equation*}
        \mathcal{L}_{\backslash d-2p|Y}\cdot L_{d-2p|Y}\cdot x_{|Y} =0.
    \end{equation*}
    Hence $x_{|Y}\in P^{p,p}(Y)$. Furthermore,
    \begin{align*}
        Q(x_{|Y},x_{|Y})&=(-1)^p\int_Y x_{|Y}\cdot \overline{x_{|Y}}\cdot \mathcal{L}_{\backslash d-2p|Y}  \\
        &= (-1)^p\int_Xx\cdot \overline{x}\cdot \mathcal{L} \\
        &=0.
    \end{align*}
    Therefore 
    $$x_{|Y}\in \ker Q_{|P^{p,p}(Y)}=\ker (\mathcal{L}_{\backslash d-2p|Y}:H^{p,p}(Y)\to H^{d-p-1,d-p-1}(Y)).$$
    By \eqref{d2odqe}, there exist $V_1,...,V_N \in \Irred^p(Y)$ satisfying 
    \begin{equation}\label{b4}
        \mathcal{L}_{\backslash d-2p|Y}\cdot [V_i]_Y=0
    \end{equation}
    and constants $a_1,...,a_N\in \mathbb{C}$ such that 
    \begin{equation}\label{b1}
        x_{|Y}=\sum_{i=1}^Na_i [V_i]_{Y}.
    \end{equation}
    
    We claim that for every $i$, there exists a real constant $c_i>0$ and $W_i \in\Irred^p(X)$ such that
    \begin{equation}\label{b2}
        \mathcal{L}\cdot [W_i]_X=0
    \end{equation}
    and 
    \begin{equation}\label{b3}
        L_{d-2p}\cdot [W_i]_X=c_i[V_i]_X.
    \end{equation}

    To prove the claim, fix $i$ and write $V=V_i$. 
    By (\ref{b4}) and Proposition \ref{hallrado}, 
    there exists a nonempty subset $I\subset [d-2p-1]$ such that $(L_{I}^{|I|})_{|Y}\cdot [V]_Y=0$, which implies $L_I^{|I|}\cdot [V]_X=0$. Therefore
    \begin{equation*}
        V\subset Z_I=\{x\in X:\dim (f_I^{-1}f_I(x),x)\geq d-p-|I|\}.
    \end{equation*}
    Since $\dim V=d-p-1$, $\dim Z_I\leq d-p$ and $Y$ intersects every irreducible component of $Z_I$ properly, there exists a $(d-p)$-dimensional irreducible component $W\subset Z_I$ such that $V$ is an irreducible component of $W\cap Y$. In particular, $W\in\Irred^p(X).$
    Moreover, by our choice of $Y$, there exists $c>0$ such that
    \begin{equation*}
         [V]_{W,hom}=c L_{d-2p|W}\cap [W]_{W,hom}.
    \end{equation*}
    Pushing forward to $X$ and using Poincar\'e duality, we obtain 
    \begin{equation*} 
    L_{d-2p}\cdot[W]_X = \frac{1}{c}[V]_X. 
    \end{equation*} 
    Thus \eqref{b3} holds.
    Moreover,  pushing forward \eqref{b4} to $X$ implies
    \begin{equation*} 
    \mathcal{L}_{\backslash d-2p}\cdot[V]_X=0. 
    \end{equation*} 
    Therefore 
    \begin{equation*} 
    \mathcal{L}\cdot[W]_X = \mathcal{L}_{\backslash d-2p} \cdot L_{d-2p}\cdot[W]_X = \frac{1}{c}\, \mathcal{L}_{\backslash(d-2p)}\cdot[V]_X = 0. 
    \end{equation*} 
    This proves \eqref{b2} and hence the claim.

    Pushing forward \eqref{b1} to $X$ gives 
    \begin{equation*} 
        L_{d-2p}\cdot x = \sum_{i=1}^N a_i[V_i]_X. 
    \end{equation*}
   Using (\ref{b3}), we obtain
    \begin{equation*}
       L_{d-2p}\cdot(x-\sum_{i=1}^N \frac{a_i}{c_i}[W_i]_X)=0.
    \end{equation*}
    By Lemma \ref{equivlefcolfree}, 
    $L_{d-2p}$ is $(2p+1)$-lef below degree $p$. 
    Hence Corollary \ref{pureker(p,q)} shows that
    \begin{equation*} 
        \ker ( L_{d-2p}:H^{p,p}(X)\to H^{p+1,p+1}(X) ) =\Span_{\mathbb{C}} \{ [V]: V\in\Irred^p(X),\ L_{d-2p}\cdot[V]=0 \}.
    \end{equation*}
    Together with \eqref{b2}, this shows 
    \begin{equation*}
        x\in \Span_{\mathbb{C}} \{ [V]: V\in\Irred^p(X),\mathcal{L} \cdot[V]=0 \}.
    \end{equation*}
    Since $x\in \ker(\mathcal{L}:H^{p,p}(X)\to H^{d-p,d-p}(X))$ is arbitrary, we obtain 
    \begin{equation*}
        \ker(\mathcal{L}:H^{p,p}(X)\to H^{d-p,d-p}(X))\subset \Span_{\mathbb{C}} \{ [V]: V\in\Irred^p(X),\mathcal{L} \cdot[V]=0 \}.
    \end{equation*}
    The reverse inclusion is immediate, so equality holds. 
    The linear independence of these classes follows from Lemma \ref{mixedlinindep}.
\end{proof}

We now prove Theorem \ref{mixedlefcharintro} and \ref{mixedkerintro}.

\begin{cor}\label{mixedlefchar}(=Theorem \ref{mixedlefcharintro})
    Let $d,l$ be integers satisfying $0\leq 2l-1\leq d\leq n$, 
    and let $\mathcal{L}=(L_1,...,L_m)$ be a collection of semiample line bundles with $1\leq m\leq d-2l+2$. Then the following statements are equivalent.
    \begin{enumerate}
        \item $\mathcal{L}$ is $d$-lef below degree $l$. Equivalently,
        \begin{equation*}
            \mathcal{L}^{\alpha} \cdot [V]\neq 0
        \end{equation*}
        for every subvariety $V\subset X$ with $\codimension_XV \leq l-1$ and every $\alpha\in \mathbb{Z}_{>0}^{m}$ satisfying $|\alpha|=d-2\codimension_XV.$
        
        \item For every nonempty subset $I\subset [m]$, $L_I$ is $(|I|+d-m)$-lef below degree $l$. Equivalently,
        \begin{equation*}
            \nd_V(L_I)\geq |I|+d-m-2\codimension_XV
        \end{equation*}
        for every subvariety $V\subset X$ with $\codimension_XV \leq l-1$ and every nonempty subset $I\subset [m]$.

        \item For every integer $0\leq k\leq l-1$ and every $\alpha\in \mathbb{Z}_{>0}^{m}$ satisfying $|\alpha|=d-2k$, the linear map 
        \begin{equation*}
            \mathcal{L}^{\alpha}:H^{k,k}(X)\to H^{d-k,d-k}(X)
        \end{equation*}
        is injective.
        
        \item For all integers $p,q\ge 0$ with $ p+q\leq 2l-2$ and every $\alpha\in \mathbb{Z}_{>0}^{m}$ satisfying $|\alpha|=d-p-q$, the linear map 
        \begin{equation*}
            \mathcal{L}^{\alpha}:H^{p,q}(X)\to H^{d-q,d-p}(X)
        \end{equation*}
        is injective.

    \end{enumerate}
\end{cor}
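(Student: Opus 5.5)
The plan is to prove the cycle of implications $(1)\Leftrightarrow(2)$, $(4)\Rightarrow(3)\Rightarrow(1)$, and $(1)\Rightarrow(4)$. The equivalence $(1)\Leftrightarrow(2)$ is exactly Lemma \ref{equivlefcolfree}, so nothing new is needed there. The implication $(4)\Rightarrow(3)$ is the specialization $p=q=k$, since $2k\le 2l-2$. For $(3)\Rightarrow(1)$, take $V\in\Irred^k(X)$ with $k\le l-1$ and $\alpha$ with $|\alpha|=d-2k$. The fundamental class $[V]\in H^{k,k}(X)$ is nonzero because $X$ is projective: pairing with a power of an ample class gives a positive number. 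Injectivity of $\mathcal{L}^\alpha$ on $H^{k,k}(X)$ then forces $\mathcal{L}^\alpha\cdot[V]\neq0$.

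The substantive direction is $(1)\Rightarrow(4)$, which I would reduce to Theorem \ref{mixedker(p,q)}. Fix $p,q\ge0$ with $p+q\le 2l-2$ and $\alpha\in\mathbb{Z}_{>0}^m$ with $|\alpha|=d-p-q$. Form the repeated collection
\[
\mathcal{L}'=(L_1[\alpha_1],\ldots,L_m[\alpha_m]),
\]
which has exactly $d-p-q$ entries and whose complete intersection class is $\mathcal{L}^\alpha$. Set $l'=\lceil\frac{p+q}{2}\rceil$. Then $l'\le l-1$, and in particular $p+q<d$ because $p+q\le 2l-2<2l-1\le d$.

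The one check needed is that $\mathcal{L}'$ is $d$-lef below degree $l'$. Since lefness below degree $l$ implies lefness below any smaller degree (the defining conditions are a subset), $\mathcal{L}$ is $d$-lef below degree $l'$. The size condition for Lemma \ref{addgeom}(2) at degree $l'$ is $|\alpha|\le d-2l'+2$. This holds because $|\alpha|=d-p-q\le d-2l'+1$, using $2l'\le p+q+1$. Lemma \ref{addgeom}(2) therefore shows that $\mathcal{L}'$ is $d$-lef below degree $l'$; its size $d-p-q$ also satisfies the bound built into the definition.

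Theorem \ref{mixedker(p,q)} now applies to $\mathcal{L}'$. If $p\ne q$, it gives injectivity of $\mathcal{L}^\alpha$ on $H^{p,q}(X)$ directly. If $p=q$, the kernel is spanned by the classes $[V]$ with $V\in\Irred^p(X)$ and $\mathcal{L}^\alpha\cdot[V]=0$. Since $p\le l-1$ and $|\alpha|=d-2p$, condition $(1)$ says no such $V$ exists, so the kernel is zero. The only delicate point in the whole argument is this bookkeeping of the degree parameter in Lemma \ref{addgeom}(2), which I expect to be the main, if minor, obstacle.
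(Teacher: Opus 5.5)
Your proposal is correct and follows the paper's proof: you get $(1)\Leftrightarrow(2)$ from Lemma~\ref{equivlefcolfree}, the easy implications $(4)\Rightarrow(3)\Rightarrow(1)$, and $(1)\Rightarrow(4)$ by applying Theorem~\ref{mixedker(p,q)} to the repeated collection $(L_1[\alpha_1],\ldots,L_m[\alpha_m])$. Your ordering of the degree bookkeeping is actually the cleaner one. The paper applies Lemma~\ref{addgeom}(2) at degree $l$ and only then lowers to degree $\lceil\frac{p+q}{2}\rceil$, but the hypothesis $|\alpha|\le d-2l+2$ fails at degree $l$ unless $p+q=2l-2$; lowering the degree first, as you do, makes the size condition hold.
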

\begin{proof}
    The equivalence $(1)\iff (2)$ follows from Lemma \ref{equivlefcolfree}. The implications $(4)\implies (3) \implies (1)$ are immediate. 
    It remains to prove $(1)\implies (4)$.
    If $(p,q)=(0,0)$, injectivity follows directly from (1).
    We may therefore assume that $p+q\geq1$.
    By Lemma \ref{addgeom}, the collection 
    $(L_1[\alpha_1],\ldots,L_m[\alpha_m])$ is $d$-lef below degree $l$, and hence also below degree $\lceil\frac{p+q}{2}\rceil.$
    If $p\neq q$, Theorem \ref{mixedker(p,q)} immediately gives
    \begin{equation*}
        \ker (\mathcal{L}^{\alpha}:H^{p,q}(X)\to H^{d-q,d-p}(X))=0.
    \end{equation*}
    If $p=q$, Theorem \ref{mixedker(p,q)} gives
    \begin{equation*}
        \ker(\mathcal{L}^{\alpha}:H^{p,p}(X)\to H^{d-p,d-p}(X))=\Span_{\mathbb{C}}\{[V]:V\in \Irred^p(X), \mathcal{L}^{\alpha}\cdot[V]=0\}.
    \end{equation*}
    The right-hand side is zero by $(1)$. 
    Thus $\mathcal{L}^{\alpha}$ is injective in both cases, proving $(4)$.
\end{proof}

\begin{cor}\label{mixedker}(=Theorem \ref{mixedkerintro})
    Let $d,l$ be integers satisfying $l\geq 1$ and $ 2l+1\leq d\leq n$, 
    and let $\mathcal{L}=(L_1,...,L_m)$ be a collection of semiample line bundles. Suppose that $\mathcal{L}$ is $d$-lef below degree $l$. 
    Then for all integers $p,q\geq 0$ with $p+q\leq 2l$ and every $\alpha\in \mathbb{Z}^m_{>0}$ with $|\alpha|=d-p-q$, the following statements hold.
    \begin{enumerate}
        \item If $(p,q)\neq (l,l)$, then the map
        \begin{equation*}
            \mathcal{L}^{\alpha}:H^{p,q}(X)\to H^{d-q,d-p}(X)
        \end{equation*}
        is injective.
        \item If $(p,q)= (l,l)$, then
        \begin{equation*}
            \ker(\mathcal{L}^{\alpha}:H^{l,l}(X)\to H^{d-l,d-l}(X))=\Span_{\mathbb{C}}\{[V]:V\in \Irred^l(X), \mathcal{L}^{\alpha }\cdot [V]=0\}.
        \end{equation*}
    \end{enumerate}
\end{cor}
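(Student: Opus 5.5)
The plan is to reduce Corollary \ref{mixedker} to Theorem \ref{mixedker(p,q)} applied to the repeated collection $\mathcal{L}[\alpha]:=(L_1[\alpha_1],\ldots,L_m[\alpha_m])$. Its complete intersection class is $\mathcal{L}^{\alpha}$, and it consists of exactly $|\alpha|=d-p-q$ semiample line bundles, which is the format required in Theorem \ref{mixedker(p,q)}. The case $p=q=0$ is trivial, since $H^{0,0}(X)=\mathbb{C}[X]$ and $\mathcal{L}^{\alpha}\cdot[X]\neq 0$ by lefness. Otherwise $1\le p+q\le 2l<d$, so the hypothesis $p+q<d$ of Theorem \ref{mixedker(p,q)} is satisfied.

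The first step is to check that $\mathcal{L}[\alpha]$ is $d$-lef below degree $\lceil\frac{p+q}{2}\rceil$. Since $|\alpha|=d-p-q\geq d-2l\geq d-2l+2-2$, Lemma \ref{addgeom}(2) does not apply verbatim when $|\alpha|>d-2l+2$ fails in the other direction. We therefore argue directly with Lemma \ref{equivlefcolfree}.

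Set $l'=\lceil\frac{p+q}{2}\rceil\le l$. The size constraint for the repeated collection is $|\alpha|=d-p-q\le d-2l'+2$, which holds because $p+q\ge 2l'-1$. By Lemma \ref{equivlefcolfree}, it suffices to show that for every sub-multiset $J$ of the repeated collection, the sum $L_J$ is $(|J|+d-|\alpha|)$-lef below degree $l'$. Such a $J$ corresponds to a vector $\beta\le\alpha$ with support $I\subset[m]$, and then $L_J=\sum_{i\in I}\beta_iL_i$. This has the same numerical dimensions on every $V$ as $L_I$, because all the classes are nef and semiample. Hence the required bound reduces to
\[
\nd_V(L_I)\ge |\beta|+d-|\alpha|-2\codim V .
\]
Since $|\alpha|-|\beta|\ge m-|I|$, this follows from the bound $\nd_V(L_I)\ge |I|+d-m-2\codim V$ given by Lemma \ref{equivlefcolfree}. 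Finally, lefness below degree $l$ implies lefness below degree $l'$. This bookkeeping step is the main point to get right; it amounts to the proof of Lemma \ref{addgeom}(2) redone with the weaker size bound.

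With lefness established, Theorem \ref{mixedker(p,q)} gives two conclusions. If $p\neq q$, the kernel of $\mathcal{L}^{\alpha}$ on $H^{p,q}(X)$ is zero. If $p=q$, the kernel on $H^{p,p}(X)$ is $\Span_{\mathbb{C}}\{[V]:V\in\Irred^p(X),\ \mathcal{L}^{\alpha}\cdot[V]=0\}$, and these classes are linearly independent.

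It remains to treat $p=q<l$. Then $\codim V=p\le l-1$ and $|\alpha|=d-2p$, so $\mathcal{L}^{\alpha}\cdot[V]\neq0$ for every $V\in\Irred^p(X)$ by the $d$-lefness of $\mathcal{L}$ below degree $l$. The spanning set is therefore empty and the map is injective.

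Combining the cases, $\mathcal{L}^{\alpha}$ is injective for $(p,q)\neq(l,l)$, and for $(p,q)=(l,l)$ its kernel is the stated span of fundamental classes, with linear independence coming from Theorem \ref{mixedker(p,q)}.
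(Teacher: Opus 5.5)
Your proposal is correct and is the paper's argument: pass to the repeated collection $(L_1[\alpha_1],\ldots,L_m[\alpha_m])$, check that it is $d$-lef below degree $\lceil\frac{p+q}{2}\rceil$, and invoke Theorem~\ref{mixedker(p,q)}, with the empty span for $p=q<l$ coming directly from the lefness of $\mathcal{L}$. Your hand verification via Lemma~\ref{equivlefcolfree} is just Lemma~\ref{addgeom}(2) applied with $l'=\lceil\frac{p+q}{2}\rceil$ in place of $l$, which is legitimate because $d$-lefness below degree $l$ implies $d$-lefness below degree $l'$ and $|\alpha|\le d-2l'+2$; so you can drop the muddled sentence claiming that lemma does not apply.
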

\begin{proof}
    By Lemma \ref{addgeom}, the collection $(L_1[\alpha_1],...,L_{m}[\alpha_m])$ is $d$-lef below degree $\lceil\frac{p+q}{2}\rceil$.
    The statements then follows from Theorem \ref{mixedker(p,q)} immediately.
\end{proof}

\section{Decoupling for Lefschetz modules}\label{decoupLefMod}

In this section, we introduce the notion of Lefschetz modules, study the kernels of Lefschetz operators in this setting and then provide a new proof of the main results of Section \ref{geometricdecoupling}.
We focus on complex Lefschetz modules, although all the results remain valid for real Lefschetz modules as well.

\subsection{Lefschetz modules}\label{Lefschetzmodules}
We begin by recalling the notion of a Lefschetz module from \cite{aminihuhlarson25lefschetzmodule}.
Let $A=\bigoplus_{i\geq 0}A^i$ be a finite-dimensional graded commutative $\mathbb{R}$-algebra and let $\mathcal{K}_A\subset A^1$ be an open convex cone. We call $\mathcal{K}_A$ the K\"ahler cone and its closure $\overline{\mathcal{K}_A}$ the nef cone. 
Set $A_{\mathbb{C}}=A\otimes\mathbb{C}$.
Let $M=\bigoplus_{i\geq 0} M^i$ be a finite-dimensional graded $A_{\mathbb{C}}$-module endowed with an $A_{\mathbb{C}}$-invariant Hermitian form
$$Q_M: M\times M\to \mathbb{C}.$$
Here $A_{\mathbb{C}}$-invariance means that $Q_M(ax,y)=Q_M(x,\overline a\,y)$ for all $x,y\in M$ and $a\in A_{\mathbb{C}}$.

Every element $L\in A$ acts on $M$ by multiplication. We will not distinguish between $L$ and its associated endomorphism. For example, we will denote by $\ker L$ the kernel of the map 
$$L:M\to M \ \ \ x\mapsto L  x$$
if there is no confusion on the module $M$ in the context.

\begin{defn}
    The pair $(M,Q_M)$ is called a complex Lefschetz module of degree $n$ over $(A,\mathcal{K}_A)$ if the following conditions hold.
    \begin{enumerate}
        \item For every $i\in \mathbb{Z}$, the Hermitian pairing 
        $$Q_M: M^i\times M^{n-i}\rightarrow \mathbb{C}$$ is nondegenerate, and $Q_M(M^i,M^j)=0$ whenever $i+j\neq n.$
        \item For every $L\in \mathcal{K}_A$ and every integer $i\leq \lfloor\frac{n}{2}\rfloor$, the linear map $$L^{n-2i}:M^i\to M^{n-i}$$ is an isomorphism.
        \item For every $L\in \mathcal{K}_A$ and every integer $i\leq \lfloor\frac{n}{2}\rfloor$, the Hermitian form $$M^i\times M^i\to \mathbb{C} \ \ \ (x,y)\mapsto (-1)^iQ_M(x,L^{n-2i}\cdot y)$$
        is positive definite on the primitive subspace 
        $$\ker(L^{n-2i+1}:M^i\to M^{n-i+1}).$$
    \end{enumerate}
\end{defn}

A basic example is provided by the cohomology of a compact K\"ahler manifold.

\begin{example}\label{geomlefmod}
    Let $X$ be a compact K\"ahler manifold of dimension $n$. Let
    $A= \bigoplus_{i=0}^n H^{i,i}(X,\mathbb R)$, and let
    $\mathcal K_A\subset A^1$ be the K\"ahler cone. 
    For every $k\geq 0$, set 
    $$M_k=\bigoplus_{i\geq 0}H^{k+i,i}(X) \text{ and } M_{-k}=\bigoplus_{i\geq 0}H^{i,k+i}(X)$$
    with natural grading $M_k^i=H^{k+i,i}(X)$ and $M_{-k}^i=H^{i,k+i}(X)$.
    We endow $M_k$ with the Hermitian form
    \begin{equation*}
        Q_{M_k}(x,y)  = (\sqrt{-1})^k(-1)^{\frac{|k|(|k|-1)}{2}}
        \int_X x\cdot \overline{y}.
    \end{equation*}
    Then $(M_k,Q_{M_k})$ is a complex Lefschetz module of degree
    $n-|k|$ over $(A,\mathcal K_A)$.
\end{example}

\begin{rmk}\label{mixHRKah}
    As a consequence of the descent lemma (see, e.g., \cite{CKS87} or Lemma \ref{descentlem}),  Cattani \cite{cattanimixedHRR} showed that the mixed hard Lefschetz theorem and the mixed Hodge-Riemann relations hold for any Lefschetz module. More precisely, we have the following results.
    \begin{enumerate}
        \item For every integer $i\leq \lfloor\frac{n}{2}\rfloor$ and every $L_1,...,L_{n-2i}\in \mathcal{K}_A$, the linear map $$L_1\cdots L_{n-2i}:M^i\to M^{n-i}$$ is an isomorphism.
        \item For every integer $i\leq \lfloor\frac{n}{2}\rfloor$ and every $L_1,...,L_{n-2i},L_{n-2i+1}\in \mathcal{K}_A$, the Hermitian pairing $$M^i\times M^i\to \mathbb{C} \ \ \ (x,y)\mapsto (-1)^iQ_M(x,L_1\cdots L_{n-2i}\cdot y)$$
        is positive definite on the primitive space $\ker(L_1\cdots L_{n-2i+1})\cap M^i.$
    \end{enumerate}
\end{rmk}

Although \cite{aminihuhlarson25lefschetzmodule} is formulated primarily
for real Lefschetz modules, the authors also establish the corresponding
results for complex Lefschetz modules. We recall below the facts that
will be used in this paper.

Throughout this subsection, let $(M,Q_M)$ be a complex Lefschetz module of degree $n$ over $(A,\mathcal K_A)$. 
For a graded module $N$ and an integer $k$,
we denote by $N[k]$ the shifted graded module defined by
$N[k]^i=N^{i+k}.$
Let $B\subset A$ be a graded subalgebra generated by $B^1$, and set
$\mathcal K_B :=\operatorname{int}_{B_1} ( B^1\cap\overline{\mathcal K_A} ).$
Assume that $\mathcal K_B\neq\emptyset$. Viewing $M$ as a graded
$B_{\mathbb C}$-module, choose a decomposition
\begin{equation*}
    M \cong \bigoplus_{\alpha}\bigoplus_k N_\alpha[-k]^{\oplus m(\alpha,k)}
\end{equation*}
into indecomposable graded $B_{\mathbb C}$-modules, where the
$N_\alpha$ are chosen such that $\min\{i:N_{\alpha}^i\neq 0\}=0.$
This decomposition is not canonical. However, by the Krull-Schmidt
theorem, the isomorphism classes of the indecomposable modules
$N_\alpha$ and the multiplicities $m(\alpha,k)$ are independent of the
chosen decomposition.

By \cite[Theorem 3.10]{aminihuhlarson25lefschetzmodule}, for each
$\alpha$ there exists, up to a nonzero complex scalar, a unique
nonzero $B_{\mathbb C}$-invariant form
\begin{equation*}
    Q_\alpha:N_\alpha\times N_\alpha\longrightarrow\mathbb C
\end{equation*}
satisfying the degree orthogonality condition. Moreover, there is a
unique scalar $\epsilon_\alpha\in\mathbb C$ with $|\epsilon_\alpha|=1$
such that $(N_\alpha,\epsilon_\alpha Q_\alpha)$
is a complex Lefschetz module over $(B,\mathcal K_B)$.


Given a nef class $L\in \overline{\mathcal{K}_A}$, its action on $M$ is nilpotent and determines a unique increasing filtration
$W_\bullet=W_\bullet(L,M),$
called the weight filtration. It is characterized by
$LW_j\subset W_{j-2}$
and by the requirement that the induced map
\begin{equation*}
    L^k:\Gr_k^W\to \Gr_{-k}^W
\end{equation*}
is an isomorphism for every $k\geq 0$ where $\Gr_k^W=W_k/W_{k-1}$
denotes the $k$-th associated graded piece.
Equivalently, the weight filtration admits the following explicit description. For all $k\geq 0$,
\begin{equation*}
    W_k= \sum_{c\geq 0} L^c\ker L^{k+2c+1}  \text{ and } W_{-k}=L^kW_k.
\end{equation*}
For example, this follows from \cite[\S4.5, (23)]{deCMdecomp}.

In the remaining part of this subsection,
we explain the relationship between complex Lefschetz modules and polarized
Hodge-Lefschetz modules in the sense of
\cite[Definition 2.3]{cattanimixedHRR}.
A related argument appears  in \cite[Section 3.4.1]{aminihuhlarson25lefschetzmodule}.

Let $(M,Q_M)$ be a complex Lefschetz module of degree $n$ over
$(A,\mathcal K_A)$. 
Let $V=M_{\mathbb R}$ be the underlying real vector space of $M$ with grading $V_{n-2i}=M^i_{\mathbb R}$.
For every $i$, there is a canonical decomposition $ (V_{n-2i})_{\mathbb C} = M^i\oplus\overline{M^i}.$
We define a Hodge decomposition $V_{\mathbb{C}}=\bigoplus_{0\leq p,q\leq n+1}V^{p,q}$ by setting $V^{n+1-i,n-i}=M^i$ and $V^{n-i,n+1-i}=\overline{M^i}.$ 
Thus every $L\in A^1$ has Hodge type $(-1,-1)$.
For $x\in M^i$ and $y\in M^{n-i}$, regarded as elements in $V$, define
\begin{equation*}
    S(x,y)=2(-1)^i\operatorname{Im}Q_M(x,y).
\end{equation*}
Set $S(M^i,M^j)=0$ if $i+j\neq n$. Extending by linearity then gives a nondegenerate bilinear form $S:V\times V\to \mathbb{R}$.
The Hermitian condition of $Q_M$ gives
\begin{equation*}
     S(x,y)=2(-1)^i\operatorname{Im}Q_M(x,y)= 2(-1)^i \operatorname{Im}\overline{Q_M(y,x)}=(-1)^{n+1}S(y,x).
\end{equation*}
Hence $S$ has parity $(-1)^{n+1}$.
The $A$-invariance of $Q_M$ implies that 
$$S(Lx,y)+S(x,Ly)=0$$ 
for all $L\in A^1$ and $x,y\in V$.
Therefore, if $\mathfrak a$ denotes the image of $A^1$ in
$\operatorname{End}_{\mathbb R}(V)$, then $\mathfrak a\subset\mathfrak{o}_{-2}(V,S).$
For every $\eta \in \mathcal{K}_A$, the hard Lefschetz property and Hodge-Riemann relations for $(M,Q_M)$ directly imply the corresponding properties for $(V_*,S,\mathfrak a,\eta)$. We therefore obtain the following.


\begin{lem}\label{PHLmodules}
For every $\eta\in \mathcal{K}_A$, $(V_*,S,\mathfrak a,\eta)$ defined above is a polarized Hodge-Lefschetz module of weight $n+1$. Moreover, 
the image of $\mathcal K_A$ in $\mathfrak{a}$ is contained in the polarizing cone 
$\mathcal{C}=\mathcal{C}(V_*,S,\mathfrak a,\eta)$ of this polarized Hodge-Lefschetz module.
\end{lem}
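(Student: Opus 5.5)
The argument is a direct check of the axioms of a polarized Hodge-Lefschetz module in the sense of \cite[Definition 2.3]{cattanimixedHRR}, transporting each Lefschetz-module axiom of $(M,Q_M)$ through the dictionary $V_{n-2i}=M^i_{\mathbb R}$, $V^{n+1-i,n-i}=M^i$, $V^{n-i,n+1-i}=\overline{M^i}$, $S=2(-1)^i\operatorname{Im}Q_M$.

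\emph{Structural axioms.} The complex structure on $M^i$ splits $(V_{n-2i})_{\mathbb C}$ as $M^i\oplus\overline{M^i}$. This gives a real Hodge structure of weight $(n+1-i)+(n-i)=2n+1-2i=(n+1)+(n-2i)$ on $V_{n-2i}$, which is what weight $n+1$ requires with the grading index $n-2i$. Each $L\in A^1$ maps $M^i\to M^{i+1}$ complex-linearly, so it maps $V^{n+1-i,n-i}\to V^{n-i,n-1-i}$. Hence it has type $(-1,-1)$ and lowers the grading by $2$. Nondegeneracy of $S$ on $V_{n-2i}\times V_{-(n-2i)}$ follows from nondegeneracy of $Q_M$ on $M^i\times M^{n-i}$, because $\operatorname{Im}$ of a nondegenerate Hermitian pairing is a nondegenerate real pairing. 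The parity $(-1)^{n+1}$ and $\mathfrak a\subset\mathfrak o_{-2}(V,S)$ were checked above. Commutativity of $A$ gives $[\mathfrak a,\mathfrak a]=0$. One also checks that $S_{\mathbb C}$ pairs $V^{p,q}$ only with $V^{p',q'}$ for $p+p'=q+q'=2n+1-\ldots$ in the usual way. This reduces to the observation that the $\mathbb C$-bilinear extension of $\operatorname{Im}Q_M$ pairs $M^i$ with $\overline{M^{n-i}}$ and not with $M^{n-i}$: writing $\operatorname{Im}Q=\frac{1}{2\sqrt{-1}}(Q-\overline Q)$ shows the $(\mathbb C$-bilinear$)$ extension kills $M^i\times M^{n-i}$ by sesquilinearity.

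\emph{Hard Lefschetz.} For $\eta\in\mathcal K_A$, the map $\eta^{n-2i}:V_{n-2i}\to V_{-(n-2i)}$ is the realification of $\eta^{n-2i}:M^i\to M^{n-i}$, so it is an isomorphism. The same holds for any $L\in\mathcal K_A$ in place of $\eta$, and this is what membership in the polarizing cone additionally requires.

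\emph{Hodge-Riemann.} This is the only step needing care with signs, and I expect it to be the main obstacle. For primitive $v\in P_{n-2i}\cap V^{p,q}$, one must show that the form $S(C v,\eta^{n-2i}\bar v)$ (Weil operator $C=\sqrt{-1}^{\,p-q}$) is positive, in the normalization of Cattani. Take $v=x\in M^i$ primitive, with $(p,q)=(n+1-i,n-i)$, so $C=\sqrt{-1}$. Then $\bar v$ corresponds to the same vector $x\in M_{\mathbb R}$, now viewed in $\overline{M^i}$. Unwinding,
\begin{equation*}
S(\sqrt{-1}x,\eta^{n-2i}x)=2(-1)^i\operatorname{Im}Q_M(\sqrt{-1}x,\eta^{n-2i}x)=2(-1)^i\operatorname{Re}Q_M(x,\eta^{n-2i}x).
\end{equation*}
By axiom (3) this is positive. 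The $\overline{M^i}$ component follows by conjugation. The real work is matching conventions exactly: Cattani's sign $(-1)^{\ldots}$ depending on weight, and whether $S$ is evaluated as $S(Cv,\bar v)$ or $S(v,C\bar v)$. If a sign mismatch appears, I would absorb it by replacing $S$ with $-S$ globally, or check that the factor $(-1)^i$ in the definition of $S$ was chosen precisely to cancel it.

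Finally, repeating the hard Lefschetz and Hodge-Riemann verifications for an arbitrary $L\in\mathcal K_A$ shows that every such $L$ makes $(V_*,S,\mathfrak a,L)$ polarized. This is the defining property of $\mathcal C$, so the image of $\mathcal K_A$ lies in $\mathcal C$.
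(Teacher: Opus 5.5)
Your proposal is correct and is the same direct verification the paper relies on: the paper simply asserts that hard Lefschetz and the Hodge--Riemann relations for $(M,Q_M)$ transfer to $(V_*,S,\mathfrak a,L)$ for every $L\in\mathcal K_A$, and this also gives the statement about $\mathcal C$. Your sign check is right and settles the worry you raised. With $v=\tfrac12\bigl(x\otimes1-(\sqrt{-1}x)\otimes\sqrt{-1}\bigr)\in V^{n+1-i,n-i}$ for primitive $x\in M^i$, one gets $\sqrt{-1}\,S_{\mathbb C}(v,\eta^{n-2i}\bar v)=(-1)^iQ_M(x,\eta^{n-2i}x)=\tfrac12 S(\sqrt{-1}x,\eta^{n-2i}x)$, so the factor $(-1)^i$ in $S$ yields axiom (3) exactly; no replacement of $S$ by $-S$ is needed, nor would one be permitted, since $S$ is fixed in the statement.
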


Conversely, from polarized Hodge-Lefschetz modules we obtain complex Lefschetz modules. 
Let $(V_*,S,\mathfrak a,\eta)$ be a polarized Hodge-Lefschetz module of weight $n$ with Hodge decomposition $V_{\mathbb{C}}=\bigoplus_{0\leq p,q\leq n}V^{p,q}$. For $0\leq k\leq n$, set 
$$M_k= \bigoplus_{0\leq q\leq n-k} V^{k+q,q}\text{ and } M_{-k}=\bigoplus_{0\leq p\leq n-k} V^{p,k+p}$$
with grading $M_k^i=V^{n-i,n-k-i}$ and $M_{-k}^i=V^{n-k-i,n-i}$. 
Thus $M_{-k}=\overline{M_k}$.
For $x\in M_k^i$ and $y\in M_k^{n-|k|-i}$, define 
$$Q_{M_{k}}(x,y)=(\sqrt{-1})^k(-1)^iS(x,\overline{y})$$ 
and $Q_{M_k}(M_k^i,M_k^j)=0$ whenever $i+j\neq n-|k|$.
Let $A\subset \End_{\mathbb{R}}(V)$ be the unital subalgebra generated by $\mathfrak{a}$.
Since $\mathfrak{a}$ is abelian, $A$ is commutative.
Moreover, it is naturally graded by $A^i=A\cap \End_{-2i}(V)$.
Since every element of $\mathfrak a$ has Hodge type $(-1,-1)$, the action of $A$ preserves each $M_k$ with $A^jM_{ k}^i\subset M_{k}^{i+j}. $
Let $\mathcal{C}=\mathcal{C}(V_*,S,\mathfrak a,\eta)$ be the polarizing cone. 
The following observation is immediate from the definition.

\begin{lem}\label{PHLmodulesconverse}
For every $k\in \mathbb{Z}$ with $|k|\leq n$, $(M_k,Q_{M_k})$ defined above is  a complex Lefschetz module of degree $n-|k|$ over $(A,\mathcal{C})$.
\end{lem}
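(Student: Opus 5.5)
We need to verify the three axioms of a complex Lefschetz module for $(M_k,Q_{M_k})$: Poincaré duality, hard Lefschetz and Hodge--Riemann. First we set up the bookkeeping. Recall that a polarized Hodge--Lefschetz module of weight $n$ consists of four pieces of data. There is a real graded space $V=\bigoplus V_j$ with $\eta\in\mathfrak a\subset\mathfrak{o}_{-2}(V,S)$. The form $S$ is nondegenerate of parity $(-1)^n$ and pairs $V_j$ with $V_{-j}$. Each $V_j$ carries a Hodge structure of weight $n+j$, and the elements of $\mathfrak a$ have type $(-1,-1)$. Every element of the polarizing cone $\mathcal C$ satisfies hard Lefschetz $L^j:V_j\to V_{-j}$. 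Finally, for $x$ primitive in $V_j^{p,q}$ the form $S(Cx,L^j\bar x)$ is positive, with $C$ the Weil operator, acting by $(\sqrt{-1})^{p-q}$.

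In the conventions above, $M_k^i=V^{n-i,n-k-i}$ has total degree $2n-k-2i$. So it lies in $V_j$ with $j=n-k-2i=\deg(M_k)-2i$, and the degree $n-|k|$ Lefschetz grading matches the $V$-grading. Elements of $A^j$ have type $(-j,-j)$, so they map $M_k^i$ to $M_k^{i+j}$, as already noted.

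\textbf{Step 1: the pairing.} The pairing $S$ is compatible with Hodge type, that is, $S(V^{p,q},V^{p',q'})=0$ unless $(p',q')=(2n-q... )$ complementary. So $S(x,\bar y)$ for $x\in M_k^i$ is nonzero only when $\bar y\in V^{n-k-i',\,n-i'}$ has the complementary type, which forces $i+i'=n-|k|$. Nondegeneracy of $S$ on $V_j\times V_{-j}$ restricted to complementary Hodge components gives nondegeneracy of $Q_{M_k}:M_k^i\times M_k^{n-|k|-i}\to\mathbb C$. Next, Hermitian symmetry: we have $\overline{Q(x,y)}=(\sqrt{-1})^{-k}(-1)^iS(\bar x,y)$. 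Using parity $S(\bar x,y)=(-1)^nS(y,\bar x)$, this should equal $Q(y,x)=(\sqrt{-1})^k(-1)^{i'}S(y,\bar x)$. Since $i'=n-|k|-i$, the check reduces to $(-1)^{n+i}(\sqrt{-1})^{-k}=(\sqrt{-1})^{k}(-1)^{n-|k|-i}$, which holds because $(\sqrt{-1})^{2k}=(-1)^k$. Invariance $Q(ax,y)=Q(x,\bar a y)$ follows from $S(Lx,y)=-S(x,Ly)$ for $L\in\mathfrak a$. Here the sign $(-1)^i$ shifting to $(-1)^{i+1}$ absorbs the minus sign, so the definition of $Q_{M_k}$ with the index $i$ of the first argument is exactly designed for this.

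\textbf{Step 2: hard Lefschetz.} For $L\in\mathcal C$ and $i\le\lfloor (n-|k|)/2\rfloor$, the map $L^{j}:V_j\to V_{-j}$ with $j=n-|k|-2i$ is an isomorphism. It preserves the bigrading shifted by $(-j,-j)$, so it restricts to an isomorphism $M_k^i\to M_k^{n-|k|-i}$.

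\textbf{Step 3: Hodge--Riemann.} This is the main step. For $x$ in the $L$-primitive part of $M_k^i\subset V_j^{p,q}$, where $p=n-i$ and $q=n-k-i$, the Hodge--Riemann relation gives $S(Cx,L^j\bar x)>0$ for $x\ne0$, with $C=(\sqrt{-1})^{p-q}=(\sqrt{-1})^k$. On the other side,
\begin{equation*}
(-1)^iQ_{M_k}(x,L^jx)=(\sqrt{-1})^kS(x,L^j\bar x).
\end{equation*}
These agree, possibly up to a universal sign depending on the convention of \cite[Definition 2.3]{cattanimixedHRR}, e.g.\ $(-1)^{(n+j)(n+j-1)/2}$. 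The hard part is sign bookkeeping: I would fix Cattani's exact normalization and check that the $(-1)^i$ factor in $Q_{M_k}$, together with the factor $(-1)^i$ in the Lefschetz module axiom, compensates it. If a global sign remains, I would absorb it in the $(-1)^i$ convention, which is consistent across $i$ since $j$ changes by $2$. The primitive subspaces correspond because $L^{j+1}$ respects the bigrading. Finally, $\mathcal C$ is an open convex cone in $A^1=\mathfrak a$, so $(M_k,Q_{M_k})$ is a complex Lefschetz module over $(A,\mathcal C)$.
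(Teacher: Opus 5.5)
Your proposal takes the same route as the paper, which treats the lemma as immediate from the definitions. Steps 1 and 2 are fine. The identity $(-1)^iQ_{M_k}(x,L^jx)=(\sqrt{-1})^kS(x,L^j\bar x)=S(Cx,L^j\bar x)$ you derive in Step 3 already completes the Hodge--Riemann check.

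Drop the hedge about a leftover universal sign. The polarization condition is $S(Cu,N^j\bar u)>0$ on primitive vectors, with no extra sign. This is the normalization the paper uses in the opposite construction, Lemma~\ref{PHLmodules}, where one checks $S(Cu,L^ju)=2(-1)^iQ_M(u,L^ju)$.

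In any case you could not ``absorb'' a sign into the convention, because $Q_{M_k}$ is fixed by the statement. Your sample sign $(-1)^{(n+j)(n+j-1)/2}$ also flips when $j$ changes by $2$, so it would not be uniform in $i$ as you claim.

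Finally, the garbled Hodge-compatibility condition in Step 1 should read: $S(V^{p,q},V^{p',q'})=0$ unless $p+p'=q+q'=n$. This is exactly what forces $i+i'=n-|k|$.
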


\subsection{Lefness on Lefschetz modules}
In this subsection, we introduce the notion of lefness for a nef class
and more generally for a collection of nef classes acting on a Lefschetz
module. We also establish some basic properties of this notion.
Throughout this subsection, we fix a complex Lefschetz module
$(M,Q_M)$ of degree $n$ over $(A,\mathcal K_A)$.

In this abstract setting, there is no intrinsic notion of algebraic cycle classes.
Nevertheless, Corollary \ref{purelefchar} characterizes the lefness of a semiample line bundle by the injectivity of the induced Lefschetz operators. This motivates the following definiton.

\begin{defn}
    Let $d,l$ be integers satisfying $ 2l-1 \leq d\leq n$. A nef class $L\in\overline{\mathcal K_A}$ is said to be
    $d$-lef below degree $l$ on $M$ if the map
    \begin{equation*}
        L^{d-2k}:M^k\longrightarrow M^{d-k}
    \end{equation*}
    is injective for every $k\leq l-1$.
\end{defn}

The lefness condition admits a convenient characterization in terms of
the weight filtration.

\begin{lem}\label{alglefequiv}
Let $l,d$ be integers satisfying $2l-1 \leq d\leq n$ and let $L\in \overline{\mathcal{K}_A}$ be a nef class with weight filtration $W_\bullet$.
Then the following statements are equivalent:
\begin{enumerate}
    \item $L$ is $d$-lef below degree $l$ on $M$.
    \item $W_{d-2k-1}\cap M^k=0$ for every $k\leq  l-1$.
\end{enumerate}
Moreover, if $2l+1\leq d$ and $L$ is $d$-lef below degree $l$ on $M$, then $$\ker L^{d-2l}\cap M^l=W_{d-2l-1}\cap M^l.$$
\end{lem}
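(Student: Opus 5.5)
The plan is to work directly with the explicit description of the weight filtration recalled in Section \ref{Lefschetzmodules}:
\begin{equation*}
    W_j=\sum_{c\geq 0}L^c\ker L^{j+2c+1}\quad (j\geq 0).
\end{equation*}
This applies in every case below. For $k\leq l-1$ we have $d-2k-1\geq d-2l+1\geq 0$, and in the last assertion $d-2l-1\geq 0$.

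The first step is to make this description compatible with the grading. Since $L\in A^1$ is homogeneous of degree $1$, each $\ker L^{s}$ is a graded subspace of $M$. Hence each $L^c\ker L^{s}$ is graded, and so is $W_j$. Taking the degree-$k$ component of a relation $y=\sum_c L^c x_c$, we get: every $y\in W_j\cap M^k$ can be written as
\begin{equation*}
    y=\sum_{c\geq 0}L^c x_c,\qquad x_c\in \ker L^{j+2c+1}\cap M^{k-c}.
\end{equation*}
We use the convention $M^{i}=0$ for $i<0$. This decomposition is the only tool needed.

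For $(2)\Rightarrow(1)$, take the $c=0$ term. It gives $\ker L^{d-2k}\cap M^k\subset W_{d-2k-1}\cap M^k$, which is $0$ by (2). So $L^{d-2k}:M^k\to M^{d-k}$ is injective for every $k\leq l-1$.

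For $(1)\Rightarrow(2)$, fix $k\leq l-1$ and $y\in W_{d-2k-1}\cap M^k$, written as above with $j=d-2k-1$. Then
\begin{equation*}
    x_c\in \ker L^{d-2k+2c}\cap M^{k-c}=\ker L^{d-2(k-c)}\cap M^{k-c}.
\end{equation*}
Since $k-c\leq k\leq l-1$, lefness gives injectivity of $L^{d-2(k-c)}$ on $M^{k-c}$ whenever $k-c\geq 0$. When $k-c<0$ the space $M^{k-c}$ is zero anyway. Hence all $x_c=0$, and so $y=0$.

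For the final assertion, assume $2l+1\leq d$ and that $L$ is $d$-lef below degree $l$. The inclusion $\ker L^{d-2l}\cap M^l\subset W_{d-2l-1}\cap M^l$ is again the $c=0$ term. Conversely, write $y\in W_{d-2l-1}\cap M^l$ as $\sum_c L^c x_c$ with $x_c\in\ker L^{d-2(l-c)}\cap M^{l-c}$. For $c\geq 1$ we have $l-c\leq l-1$, so lefness forces $x_c=0$. Thus $y=x_0\in \ker L^{d-2l}\cap M^l$.

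I expect no serious obstacle here. The only point requiring care is the grading reduction in the first step: the weight filtration is defined ungradedly, so one must check that it splits into homogeneous pieces before extracting degree components. This holds because $L$ is homogeneous.
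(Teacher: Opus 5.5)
Your proof is correct and follows essentially the paper's approach. Both arguments rest on the degree-$k$ part of the explicit description $W_j=\sum_{c\ge 0}L^c\ker L^{j+2c+1}$. The paper packages this as the recursion $W_{d-2k-1}\cap M^k=\ker L^{d-2k}\cap M^k+L\bigl(W_{d-2k+1}\cap M^{k-1}\bigr)$ and inducts on $k$, whereas you apply lefness to every term of the expanded sum at once, and you also make explicit the grading compatibility that the paper leaves implicit.
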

\begin{proof}
    For every
    $k\leq l-1$, we have $d-2k-1\geq 0$.
    Hence the explicit description of the weight filtration gives
    \begin{align*}
        W_{d-2k-1}\cap M^k
        &=\sum_{c\geq 0} L^c(\ker L^{d-2k+2c}\cap M^{k-c}) \\
        &= \ker L^{d-2k}\cap M^k +L(W_{d-2k+1}\cap M^{k-1}).
    \end{align*}
    Since $d-2k+1=d-2(k-1)-1,$
    the equivalence follows by induction on $k$.
        If $2l+1\leq d$, then the same identity with $k=l$ gives
    \begin{equation*}
        W_{d-2l-1}\cap M^l = \ker L^{d-2l}\cap M^l + L(W_{d-2l+1}\cap M^{l-1} ).
    \end{equation*}
    Since $L$ is $d$-lef below degree $l$ on $M$, the first part gives
    \begin{equation*}
        W_{d-2l+1}\cap M^{l-1}=0,
    \end{equation*}
    and hence
    \begin{equation*}
        W_{d-2l-1}\cap M^l = \ker L^{d-2l}\cap M^l.
    \end{equation*}
\end{proof}

Let $L,M \in\overline{\mathcal{K}_A}$ be nef classes. We write $L\gtrsim M$ if there exists $\lambda>0$ such that $\lambda L-M\in \overline{\mathcal{K}_A}$ and write $L\sim M$ if both $L\gtrsim M$ and $M\gtrsim L$ hold. Equivalently,
$L_1\sim L_2$ if and only if $L_1$ and $L_2$ lie in the relative
interior of the same face of the nef cone $\overline{\mathcal K_A}$.
Next, we show that the lefness of a nef class $L$ depends only on its $\sim$-equivalent class, or equivalently, on the face whose relative interior contains it. This follows from Lemma \ref{alglefequiv} together with the invariance of the weight filtration due to Cattani and Kaplan \cite{CK82invention}. 

\begin{lem}\label{invwtfil}
    Let $L_1,L_2\in \overline{\mathcal{K}_A}$ be nef classes. If $L_1\sim L_2$, then $$W(L_1,M)=W(L_2,M).$$
\end{lem}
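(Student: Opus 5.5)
The plan is to reduce the statement to the Cattani--Kaplan invariance theorem for weight filtrations of nilpotent orbits. That theorem says: if $N_1,\dots,N_r$ are commuting nilpotent endomorphisms coming from a polarized Hodge--Lefschetz (or nilpotent orbit) structure, then every element $\sum \lambda_j N_j$ with all $\lambda_j>0$ has the same weight filtration.

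First we pass from the complex Lefschetz module to the setting of \cite{CK82invention}. By Lemma \ref{PHLmodules}, for a fixed $\eta\in\mathcal{K}_A$ the tuple $(V_*,S,\mathfrak a,\eta)$ is a polarized Hodge--Lefschetz module of weight $n+1$, and the image of $\mathcal K_A$ lies in the polarizing cone $\mathcal C$. Consequently the image of $\overline{\mathcal K_A}$ lies in $\overline{\mathcal C}$. In the formulation of \cite{cattanimixedHRR}, the elements of $\mathfrak a$ are nilpotent, of type $(-1,-1)$, and infinitesimal isometries of $S$; this is exactly the framework in which the invariance theorem applies.

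Since $L_1\sim L_2$, choose $\lambda>0$ with $\lambda L_1-L_2=N\in\overline{\mathcal K_A}$ and $\mu>0$ with $\mu L_2-L_1=N'\in\overline{\mathcal K_A}$. The key step is to show that $L_1$ and $L_1+L_2$ have the same weight filtration; by symmetry the same holds for $L_2$ and $L_1+L_2$, which proves the lemma. Write $L_1+L_2=(1+\lambda)L_1-N$. We consider the commuting pair $(L_1,L_2)$, both lying in $\overline{\mathcal C}$. The invariance theorem requires that the nilpotent elements generate a nilpotent orbit, i.e.\ that the cone they span, with an added interior Kähler direction, meets the polarizing cone. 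Concretely, we apply it to the cone $\sigma=\mathbb R_{>0}L_1+\mathbb R_{>0}L_2$. Its relative interior contains $L_1+L_2$. Because $L_2\lesssim L_1$ and $L_1\lesssim L_2$, the element $L_1$ itself lies in the relative interior of the same face, and hence so does every element of $\sigma$. Therefore $W(tL_1+sL_2)$ is constant for $t,s>0$. Finally we let $s\to 0$ and argue that $W$ does not jump. For this we use the explicit description $W_k=\sum_{c}L^c\ker L^{k+2c+1}$, and note that $\ker$ and $\mathrm{Im}$ dimensions are constant along the segment from $L_1$ to $L_1+L_2$, since each point on it is $\sim L_1$. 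Alternatively, and more cleanly, we apply Cattani--Kaplan to the face $F$ of $\overline{\mathcal K_A}$ containing $L_1,L_2$ in its relative interior: every element of $\mathrm{relint}(F)$ is a positive combination of finitely many generators of $F$ (taken inside $\overline{\mathcal{C}}$), and the theorem gives a common weight filtration on the open cone they span.

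The main obstacle is verifying the hypotheses of \cite{CK82invention}, namely that finitely many elements of the nef face define a nilpotent orbit (a limit of polarized structures) rather than merely being limits of polarizing elements. We would handle this as follows. For $N_1,\dots,N_r\in\overline{\mathcal C}$ and $\eta\in\mathcal C$, the map $z\mapsto\exp(\sum z_jN_j)$ composed with the Hodge decomposition gives a nilpotent orbit, because $\sum t_jN_j+\epsilon\eta$ is polarizing for all $t_j\ge0$ and $\epsilon>0$ by convexity of $\mathcal C$. In the Hodge--Lefschetz language this is exactly the polarizing condition used by Cattani. With this in place, the Cattani--Kaplan theorem (or its Lefschetz-module form via the descent lemma of \cite{CKS87}) applies directly and gives $W(L_1,M)=W(L_2,M)$.
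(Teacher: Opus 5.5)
Your overall route is the paper's: transfer to the polarized Hodge--Lefschetz module $(V_*,S,\mathfrak a,\eta)$ of Lemma~\ref{PHLmodules} and invoke Cattani--Kaplan invariance. The paper states the lemma with no argument beyond citing \cite{CK82invention}. However, the two steps you add to make that citation apply do not work as written.

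\emph{The limit $s\to 0$ in your first route is circular.} You justify constancy of $\dim\ker(L_1+sL_2)^j$ along the segment ``because each point is $\sim L_1$'', but that constancy is a consequence of the lemma, not an input. Weight filtrations really do jump at the boundary of a cone on which they are constant: $W((1-s)\eta+sL)$ is the degree filtration for $s<1$, but equals $W(L)$ at $s=1$ for a nef class $L$ without hard Lefschetz.

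\emph{Your nilpotent orbit has the wrong base point.} With $F$ the Hodge filtration of the module, $(N_1,\dots,N_r;F)$ is \emph{not} a nilpotent orbit when the $N_j$ are merely nef. Already for $N_j=0$, the point $\exp(0)F=F$ is not a pure Hodge structure of weight $n+1$ on $V$, since $V_\ell$ has weight $n+1+\ell$. More generally, $(N;F)$ is a nilpotent orbit only when $W(N)$ is the degree filtration. Your observation that $\sum t_jN_j+\epsilon\eta$ is polarizing does not give $\exp(i\sum y_jN_j)F\in D$. Applying \cite{CK82invention} to $(N_1,\dots,N_r,\eta;F)$ instead only gives constancy on the cone where $\eta$ has positive coefficient, which is trivial.

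The repair is to absorb $\eta$ into the base point.
\begin{itemize}
\item Put $F'=\exp(i\eta)F$. The class $\eta$ commutes with $\mathfrak a$, and $\sum y_jN_j+\eta\in\mathcal K_A\subset\mathcal C$ for all $y_j\ge 0$. Here use convexity of $\mathcal K_A$, which is a hypothesis, rather than convexity of $\mathcal C$.
\item By the one-variable fact that a polarizing element $N$ satisfies $\exp(iN)F\in D$, we get $\exp(i\sum y_jN_j)F'=\exp(i(\sum y_jN_j+\eta))F\in D$. Here $D$ is the period domain of weight $n+1$ on $(V,S)$.
\item Also $N_j(F')^p\subset (F')^{p-1}$. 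Hence $(N_1,\dots,N_r;F')$ is a genuine nilpotent orbit.
\end{itemize}

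You also need a single finite set of generators whose open cone contains \emph{both} classes; ``finitely many generators of the face'' does not supply this. Choose $\lambda,\mu>0$ such that $G_1=\lambda L_1-L_2$ and $G_2=\mu L_2-L_1$ are nef and $\lambda\mu>1$, enlarging $\lambda$ if necessary. Then $(\lambda\mu-1)L_1=\mu G_1+G_2$ and $(\lambda\mu-1)L_2=G_1+\lambda G_2$. So \cite{CK82invention} applied to $(G_1,G_2;F')$ gives $W(L_1,V)=W(L_2,V)$ directly, with no limiting argument. Taking the $M$-summand of $V_{\mathbb C}=M\oplus\overline M$, as in the proof of Lemma~\ref{purity}, yields $W(L_1,M)=W(L_2,M)$.
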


\begin{cor}\label{kerinclusion}
    Let $d,l$ be integers satisfying $2l-1\leq d\leq n$, and
    let $L_1,L_2\in \overline{\mathcal{K}_A}$ be nef classes. 
    Suppose that $L_1\gtrsim L_2$ and that $L_2$ is $d$-lef below degree $l$ on $M$. Then $L_1$ is also $d$-lef below degree $l$ on $M$.
    If moreover $2l+1\leq d$, then
    \begin{equation*}
        \ker L_1^{d-2l}\cap M^l \subset \ker L_2^{d-2l} \cap M^l.
    \end{equation*}
    Consequently, if $L_1\sim L_2$ then $L_1$ is $d$-lef below degree $l$ on $M$ if and only if $L_2$ is.
    If in addition $2l+1\leq d$ and these equivalent conditions hold, then
    \begin{equation*}
        \ker L_1^{d-2l}\cap M^l = \ker L_2^{d-2l} \cap M^l.
    \end{equation*}
\end{cor}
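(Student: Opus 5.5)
The plan is to reduce everything to the weight filtration, using Lemma \ref{alglefequiv} and the Cattani--Kaplan invariance Lemma \ref{invwtfil}. The main tool is a comparison of weight filtrations when $L_1\gtrsim L_2$. We want to show that $W(L_1,M)_j\cap M^k\subset W(L_2,M)_j\cap M^k$ for the relevant indices $j\ge 0$. By Lemma \ref{alglefequiv}, lefness of $L_2$ means $W(L_2)_{d-2k-1}\cap M^k=0$ for $k\le l-1$. Hence the inclusion gives $W(L_1)_{d-2k-1}\cap M^k=0$, i.e.\ $L_1$ is $d$-lef below degree $l$. When $2l+1\le d$, the "moreover" part of Lemma \ref{alglefequiv}, applied to both classes, then gives
\begin{equation*}
\ker L_1^{d-2l}\cap M^l=W(L_1)_{d-2l-1}\cap M^l\subset W(L_2)_{d-2l-1}\cap M^l=\ker L_2^{d-2l}\cap M^l.
\end{equation*}
The statements for $L_1\sim L_2$ then follow by applying the result in both directions, or directly from Lemma \ref{invwtfil} combined with Lemma \ref{alglefequiv}.

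The key step, and the main obstacle, is the filtration comparison. The idea is to use the sum $L_1+L_2$. Since $\lambda L_1-L_2$ is nef for some $\lambda>0$, we have $\lambda L_1\sim \lambda L_1+L_2$; indeed $\lambda L_1+L_2\lesssim 2\lambda L_1$ and $\lambda L_1\lesssim \lambda L_1+L_2$. Rescaling does not change the weight filtration, so Lemma \ref{invwtfil} gives $W(L_1)=W(L_1+\lambda^{-1}L_2)$. It therefore suffices to show that for nef $N,L$ one has $W(N+L)_j\cap M^k\subset W(L)_j\cap M^k$ in the range needed. This is the comparison we reduce to; we believe it holds in the needed range, though it is not established here.

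To prove this comparison, the first thing to try is the Cattani--Kaplan--Schmid theory of nilpotent orbits, via Lemma \ref{PHLmodules}. In that setting $W(N+L)$ is the relative weight filtration of $N$ with respect to $W(L)$; this is the descent picture for a two-variable nilpotent orbit indexed by the face generated by $L$ and $N$. The relative weight filtration is compatible with $W(L)$, and the purity theorem says that $\Gr^{W(L)}$ carries polarized structures on which $N$ acts. Within the range where $W(L)_{j}\cap M^k$ controls injectivity, this should yield the inclusion.

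If that route is too heavy, the fallback is a more elementary argument in the degrees that matter. Use the explicit description $W_{d-2k-1}\cap M^k=\sum_c L^c\ker L^{d-2k+2c}\cap M^{k-c}$, and proceed by induction on $k$ as in Lemma \ref{alglefequiv}. The induction reduces everything to showing that $\ker L_1^{d-2k}\cap M^k\subset \ker L_2^{d-2k}\cap M^k$ given the vanishing in lower degrees. To prove that inclusion, expand $L_1^{d-2k}$ with $L_1=\mu L_2+N$, where $N=L_1-\mu L_2$ is nef for a suitable $\mu>0$. Then use the positive semidefinite Hodge--Riemann forms obtained as limits of Remark \ref{mixHRKah}, as in Lemma \ref{lefdecomp}, to show that a class killed by the sum is killed by each mixed monomial, in particular by $L_2^{d-2k}$.
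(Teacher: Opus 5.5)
Your reduction is fine up to the point where everything rests on the comparison $W_j(L_1)\cap M^k\subset W_j(L_2)\cap M^k$, equivalently $W_j(N+L)\cap M^k\subset W_j(L)\cap M^k$ for nef $N,L$. You leave this unproved, and in the hypothesis-free form you state it is false.

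Take $M=\bigoplus_i H^{i,i}(X)$ with $X=(\mathbb{P}^1)^3$, so $n=3$, and let $L=h_1$, $N=h_2$ be the hyperplane classes pulled back from the first two factors.
\begin{itemize}
\item Every Jordan string of $h_1$ has length two, so $W_0(h_1)=\ker h_1=h_1M$ and $W_0(h_1)\cap M^1=\mathbb{C}h_1$.
\item The strings of $h_1+h_2$ are $1\mapsto h_1+h_2\mapsto 2h_1h_2$ and $h_1-h_2$, together with their products with $h_3$. Hence $W_0(h_1+h_2)\cap M^1=\mathbb{C}h_1\oplus\mathbb{C}h_2$.
\end{itemize}
This is the index $j=d-2k-1$ with $(d,k)=(3,1)$, which lies in the range you need (e.g.\ for $(d,l)=(3,2)$).

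Here $W(h_1+h_2)$ really is the relative weight filtration of $h_2$ with respect to $W(h_1)$: $h_2$ has $W(h_1)$-weight $1$ but relative weight $0$. So the CKS route cannot produce the inclusion either, and the lower-degree lefness of $L_2$ must enter the argument.

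Your fallback does set up the right induction on $k$, but its crux has no mechanism behind it. You claim that $x\in\ker(\mu L_2+N)^{d-2k}\cap M^k$ is killed by each monomial $L_2^iN^{d-2k-i}$ because of semidefinite Hodge--Riemann forms. However, the forms $(-1)^kQ_M(x,L_2^iN^{d-2k-i}\eta^{n-d}x)$ are semidefinite only on their own primitive subspaces, and $x$ has no reason to lie in them. So the vanishing of their weighted sum gives no control on the individual terms. Even isotropy would not give $L_2^iN^{d-2k-i}x=0$.

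The missing idea is that $L_1\gtrsim L_2$ puts the whole half-open segment $N_t=(1-t)L_1+tL_2$, $0\le t<1$, in the relative interior of the face containing $L_1$. By Lemma \ref{invwtfil}, all the $N_t$ then share the weight filtration of $L_1$.

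The paper inducts on $l$:
\begin{itemize}
\item By the induction hypothesis, each $N_t$ is $d$-lef below degree $l-1$.
\item Lemma \ref{alglefequiv} then gives $\ker N_t^{d-2l+2}\cap M^{l-1}=W_{d-2l+1}(L_1)\cap M^{l-1}=\ker L_1^{d-2l+2}\cap M^{l-1}$ for every $t<1$.
\item Since $N_t^{d-2l+2}x=0$ is a closed condition in $t$, letting $t\to1$ puts this space inside $\ker L_2^{d-2l+2}\cap M^{l-1}=0$.
\item The same limit argument in degree $l$ gives the kernel inclusion.
\end{itemize}

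An equally short variant avoids the limit. Since $L_1\sim L_1+L_2$, Lemma \ref{invwtfil} gives $W(L_1)=W(L_1+L_2)$, hence $L_2W_j(L_1)\subset W_{j-2}(L_1)$. So $L_2^{d-2k}$ maps $\ker L_1^{d-2k}\cap M^k=W_{d-2k-1}(L_1)\cap M^k$ into
\[
W_{-(d-2k+1)}(L_1)\cap M^{d-k}=L_1^{d-2k+1}\bigl(W_{d-2k+1}(L_1)\cap M^{k-1}\bigr).
\]
This vanishes once $L_1$ is known to be $d$-lef below degree $k$; it is the mechanism of Lemma \ref{singlekerinmixed}.

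In both versions, the lefness already obtained in lower degrees is what makes each step work. That is exactly what an unconditional filtration comparison tries to bypass.
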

\begin{proof}
    We first prove $L_1$ is $d$-lef below degree $l$ on $M$. 
    Set $e=\min\{i\in \mathbb{Z}: M^i\neq 0\}.$
    When $l=e$, there is nothing to prove, since the condition of being
    $d$-lef below degree $e$ on $M$ is vacuous.
    We therefore proceed by induction on $l$. 
    Suppose that the assertion has already been proved below degree $l-1$.
    For $0\leq t\leq 1$, set
    $$N_t=(1-t)L_1+tL_2.$$
    For every $0\leq t<1$, we have $$N_t\sim L_1\gtrsim L_2.$$
    Since $L_2$ is $d$-lef below degree $l$ on $M$, it is in particular
    $d$-lef below degree $l-1$ on $M$. 
    Hence, by the induction hypothesis, $N_t$ is $d$-lef below degree $l-1$ on $M$ for every $0\leq t<1$.
    Lemma \ref{alglefequiv} therefore gives
    \begin{equation*}
        \ker N_t^{d-2l+2}\cap M^{l-1} = W_{d-2l+1}(N_t)\cap M^{l-1}.
    \end{equation*}
    By Lemma \ref{invwtfil}, $W_\bullet(N_t)=W_\bullet(L_1)$ for $0\leq t<1$.
    Hence for $0\leq t<1$, we have
    \begin{equation*}
        \ker N_t^{d-2l+2}\cap M^{l-1} = \ker L_1^{d-2l+2}\cap M^{l-1},
    \end{equation*}
    Letting $t\to1$, we obtain 
    \begin{equation*} 
        \ker L_1^{d-2l+2}\cap M^{l-1} \subset \ker L_2^{d-2l+2}\cap M^{l-1}. 
    \end{equation*} 
    Since $L_2$ is $d$-lef below degree $l$ on $M$, the space on the right-hand side vanishes. Therefore 
    \begin{equation*} 
        \ker L_1^{d-2l+2}\cap M^{l-1}=0, 
    \end{equation*} 
    and hence $L_1$ is $d$-lef below degree $l$ on $M$.
    
    Suppose now that $2l+1\leq d$. 
    We have already shown that both $N_t$ and $L_1$ are $d$-lef below degree $l$ on $M$ for every $0\leq t<1$.
    Since $N_t\sim L_1$, Lemma \ref{alglefequiv} and Lemma \ref{invwtfil} give
    \begin{equation*} 
        \ker N_t^{d-2l}\cap M^l = \ker L_1^{d-2l}\cap M^l. 
    \end{equation*}
    Letting $t\to1$, we obtain
    \begin{equation*}
        \ker L_1^{d-2l}\cap M^l \subset \ker L_2^{d-2l}\cap M^l.
    \end{equation*}

    Finally, if $L_1\sim L_2$, then both $L_1\gtrsim L_2$ and
    $L_2\gtrsim L_1$. Applying the first assertion in both directions
    proves that $L_1$ is $d$-lef below degree $l$ on $M$ if and only if $L_2$ is. Moreover, when $2l+1\leq d$, applying the kernel inclusion in both directions gives
    \begin{equation*}
        \ker L_1^{d-2l}\cap M^l = \ker L_2^{d-2l} \cap M^l.
    \end{equation*}
\end{proof}

For collections, we define lefness using the equivalent characterizations in Lemma \ref{equivlefcolfree}.

\begin{defn}
    Let $d,l$ be integers satisfying $2l-1\leq d\leq n$.
    A collection of nef classes 
    $\mathcal{L}=(L_1,...,L_{m})\in \overline{\mathcal{K}_A}$ 
    is said to be  $d$-lef below degree $l$ on $M$ if 
    $1\leq m\leq d-2l+2$ and for every nonempty subset $I\subset [m]$, the class $L_I$ is $(|I|+d-m)$-lef below degree $l$ on $M$.
\end{defn}

As in the geometric setting, the lefness of a collection is preserved under
repetitions of its classes and increased by one under adjoining a K\"ahler class.
Denote by 
\begin{equation*}
    (L_1[\alpha_1],\ldots,L_m[\alpha_m])=(L_1,...,L_1,\ldots,L_m,...,L_m)
\end{equation*}
the collection with $L_i$ repeated $\alpha_i$ times.

\begin{lem}\label{addalg}
    Let $d,l$ be integers satisfying $2l-1\leq d\leq n$, and let 
    $\mathcal{L}=(L_1,\ldots,L_m) \in \overline{\mathcal{K}_A}$
    be a collection of nef classes that is $d$-lef below degree $l$ on
    $M$. Then the following statements hold.
    \begin{enumerate}
        \item
        If $m<d-2l+2$, then for every nef class $L$ that is $(d-m)$-lef below degree $l$ on $M$, the collection $(L_1,...,L_m,L)$ is $d$-lef below degree $l$ on $M$.
        \item
        For every
        $\alpha=(\alpha_1,\ldots,\alpha_m)\in\mathbb{Z}_{>0}^m$
        with $|\alpha|\leq d-2l+2$, the collection
        $(L_1[\alpha_1],\ldots,L_m[\alpha_m])$
        is $d$-lef below degree $l$ on $M$.

        \item
        If $d\leq n-1$, then for every $\eta\in\mathcal{K}_A$, the
        collection $(L_1,\ldots,L_m,\eta)$
        is $(d+1)$-lef below degree $l$ on $M$.
    \end{enumerate}
\end{lem}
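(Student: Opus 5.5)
The plan is to mirror the proof of Lemma \ref{addgeom}, replacing the geometric monotonicity $\nd_V(L_I+L)\geq \nd_V(L_I)$ by the algebraic monotonicity of Corollary \ref{kerinclusion}. Throughout, we use that for nef classes $N,N'$ the sum $N+N'$ is nef and satisfies $N+N'\gtrsim N$, since $(N+N')-N=N'\in\overline{\mathcal K_A}$. We also use the trivial fact that a class which is $d'$-lef below degree $l$ on $M$ is $(d'-1)$-lef below degree $l$ on $M$, whenever $2l-1\leq d'-1$. This holds because injectivity of $N^{d'-2k}=N\cdot N^{d'-1-2k}$ on $M^k$ forces injectivity of $N^{d'-1-2k}$.

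For (1), set $\mathcal L'=(L_1,\dots,L_m,L)$. Its size is $m+1\leq d-2l+2$. We must check, for each nonempty $J\subset[m+1]$, that $L_J$ is $(|J|+d-m-1)$-lef below degree $l$ on $M$. There are three cases.
\begin{enumerate}
\item If $J=\{m+1\}$, then $L_J=L$ and we need $(d-m)$-lefness, which is the hypothesis.
\item If $J=I\subset[m]$, then $L_I$ is $(|I|+d-m)$-lef by assumption. It is therefore $(|I|+d-m-1)$-lef, provided $2l-1\leq |I|+d-m-1$; this follows from $m\leq d-2l+1$ and $|I|\geq1$.
\item If $J=I\cup\{m+1\}$, we need $L_I+L$ to be $(|I|+d-m)$-lef. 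Since $L_I+L\gtrsim L_I$ and $L_I$ is $(|I|+d-m)$-lef, this follows from the first assertion of Corollary \ref{kerinclusion}.
\end{enumerate}

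For (2), we argue by induction, adding one copy at a time. The given lefness of $\mathcal L$ with $I=\{i\}$ says that $L_i$ is $(d-m+1)$-lef. For a current collection of size $m'$ with $m\leq m'<|\alpha|\leq d-2l+2$, we need the added copy $L_i$ to be $(d-m')$-lef. This holds by the downward implication above, since $d-m'\leq d-m+1$ and $2l-1\leq d-m'$. We then apply (1) repeatedly.

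One subtlety in (2) is that (1) requires the current collection to be $d$-lef below degree $l$. This holds at each stage by induction, but in the intermediate collections the sums $L_J$ now involve repeated classes, such as $2L_i+\cdots$. This is handled by (1) itself, since (1) imposes no distinctness. It may also be worth noting the positive-scaling invariance $L\sim\lambda L$, which follows from Lemma \ref{invwtfil} together with Corollary \ref{kerinclusion}.

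For (3), set $\mathcal L''=(L_1,\dots,L_m,\eta)$. Its size is $m+1\leq (d+1)-2l+2$, and we must verify $(|J|+d-m)$-lefness of $L_J$ on $M$.
\begin{enumerate}
\item For $J=I\subset[m]$, this is exactly the hypothesis.
\item For $J\ni m+1$, the class $L_J=\eta+L_I$ (or $\eta$ itself) lies in $\mathcal K_A$, because $\mathcal K_A$ is open and convex and adding a nef class to a K\"ahler class stays K\"ahler. Hard Lefschetz in the definition of a Lefschetz module gives injectivity of $\eta'^{\,n-2k}$ on $M^k$, hence of $\eta'^{\,d'-2k}$ for every $d'\leq n$. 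Here $d'=|J|+d-m\leq d+1\leq n$ and $k\leq l-1$ with $2k\leq d'-1$, so K\"ahler classes are $d'$-lef below degree $l$ for all $d'\leq n$.
\end{enumerate}

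The main obstacle is confirming that Corollary \ref{kerinclusion} applies with the right parameters in case (1)(iii), namely that $2l-1\leq |I|+d-m$, which holds since $m\leq d-2l+1$. The rest is bookkeeping of index ranges.
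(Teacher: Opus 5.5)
Your proposal is correct and follows the paper's own proof essentially step for step: (1) uses the downward monotonicity of lefness together with Corollary~\ref{kerinclusion} applied to $L_I+L\gtrsim L_I$; (2) applies (1) successively, using that each $L_i$ is $(d-m+1)$-lef; and (3) uses that $\eta$ and $L_I+\eta$ are K\"ahler. Your extra remark on the scaling invariance $L\sim\lambda L$ is harmless but not needed.
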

\begin{proof}
    $(1)$
    Since $\mathcal{L}$ is $d$-lef below degree $l$ on $M$, the class $L_I$ is $(d-m+|I|)$-lef below degree $l$ on $M$. 
    In particular, $L_I$ is $(d-(m+1)+|I|)$-lef below degree $l$ on $M$.
    Moreover, $L$ is $(d-m)$-lef below degree $l$ on $M$ by assumption.
    It remains to consider sums of the form $L_I+L$, where $\emptyset\neq I\subset[m]$. 
    Since $L_I+L\gtrsim L_I$, Corollary \ref{kerinclusion} implies that $L_I+L$ is $(d-m+|I|)$-lef below degree $l$ on $M$.
    Hence $(L_1,\ldots,L_m,L)$ is $d$-lef below degree $l$ on $M$.

    $(2)$ Observe that
    $(L_1[\alpha_1],\ldots,L_m[\alpha_m])$ is obtained from
    $\mathcal{L}$ by successively adding copies of the classes $L_i$.
    Since $\mathcal{L}$ is $d$-lef below degree $l$ on $M$,
    each $L_i$ is $(d-m+1)$-lef below degree $l$ on $M$. 
    As the number of classes increases, the lef degree required in
    $(1)$ only decreases. 
    Therefore $(1)$ can be applied successively, and the repeated collection remains $d$-lef below
    degree $l$ on $M$. 

    $(3)$
    By definition, it suffices to show that
    $\eta$ is $(1+d-m)$-lef below degree $l$ on $M$,
    $L_I$ is $(|I|+d-m)$-lef below degree $l$ on $M$ and $L_I+\eta$ is $(|I|+d-m+1)$-lef below degree $l$ on $M$ for every nonempty subset $I\subset[m]$.
    The first and third assertions are immediate since $\eta$ and $L_I+\eta$ are both K\"ahler, 
    while the second assertion follows from the assumption that
    $\mathcal{L}$ is $d$-lef below degree $l$ on $M$.    
\end{proof}

\subsection{Descent and purity}
In this subsection, we introduce the key inputs for our kernel characterization theorem for Lefschetz modules --  the descent lemma and the purity theorem of Cattani, Kaplan and Schmid \cite{CKS87}. These results replace the use of general hypersurfaces in the geometric proof. 
In loc.~cit., all results are stated for polarized Hodge-Lefschetz modules.
Using the relation between complex Lefschetz modules and polarized Hodge-Lefschetz modules established in Lemma \ref{PHLmodules} and \ref{PHLmodulesconverse}, we first deduce the versions needed here. We then use the decomposition theorem for Lefschetz modules to extend the purity statement to arbitrary nef classes.

As before, we fix a complex Lefschetz module $(M,Q_M)$ of degree $n$ over $(A,\mathcal{K}_A)$.
For any $L\in A^1$, we set $M_L:=M/\ker(L)$ and equip $M_L$ with the induced pairing 
$$Q_{M_L}([x],[y]):=Q_M(x,Ly) ,\ \ \  \forall [x],[y]\in M_L.$$ 
The descent lemma asserts that the resulting module is again Lefschetz whenever $L$ is nef.

\begin{lem}\label{descentlem}
    For every $L\in \overline{\mathcal{K}_A}$, the pair $(M_L,Q_{M_L})$ is a complex Lefschetz module of degree $n-1$ over
    $(A,\mathcal{K}_A)$.
\end{lem}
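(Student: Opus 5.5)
The plan is to verify the three axioms for $(M_L,Q_{M_L})$ directly, using the ones for $M$. The key tool is the weight filtration of $L$ together with Cattani's mixed Hodge--Riemann relations (Remark \ref{mixHRKah}). The alternative is to translate to polarized Hodge--Lefschetz modules via Lemma \ref{PHLmodules} and quote the descent lemma of \cite{CKS87}. We would do the latter, in the following order.

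\textbf{Step 1: transport the structure.} Fix $\eta\in\mathcal K_A$ and form $(V_*,S,\mathfrak a,\eta)$ as in Lemma \ref{PHLmodules}. This is a polarized Hodge--Lefschetz module of weight $n+1$, and $L$ lies in the closure of the polarizing cone $\mathcal C$, since it is a limit of Kähler classes. The descent lemma of Cattani--Kaplan--Schmid (in the form of \cite[Theorem~3.?]{cattanimixedHRR}) then says the following. For $N$ in the closure of the polarizing cone, the quotient $V/\ker N$, graded by shifting weight by one and equipped with $S_N([x],[y])=S(x,Ny)$, is a polarized Hodge--Lefschetz module of weight $n$ for the induced action of $\mathfrak a$. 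Moreover, the image of $\mathcal C$ lies in its polarizing cone.

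\textbf{Step 2: return to modules.} Apply Lemma \ref{PHLmodulesconverse} to this quotient. Since $L$ has Hodge type $(-1,-1)$, $\ker L$ is compatible with the Hodge decomposition, so the piece $M_0$ of type $(n-i,n+1-i)\to$ corresponding to $M$ descends to exactly $M/\ker L=M_L$. This gives a complex Lefschetz module of degree $n-1$ over $(A,\mathcal C')$ for some cone $\mathcal C'\supset\mathcal K_A$; restricting to $\mathcal K_A$ preserves the axioms. It then remains to check that the pairing produced by Lemma \ref{PHLmodulesconverse} agrees with $Q_{M_L}([x],[y])=Q_M(x,Ly)$. This is bookkeeping with the signs $(-1)^i$ and the factor $2\operatorname{Im}$. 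Since the grading shifts by one, the sign $(-1)^i$ on $M^i$ of $M$ becomes $(-1)^{i}$ on $M_L^i$ paired with degree $n-1-i$. The identity $S(x,Ly)$ versus $Q_M(x,Ly)$ matches, possibly up to a global sign that is fixed by choosing the sign convention in Lemma \ref{PHLmodules}.

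\textbf{Obstacle and fallback.} The main difficulty is that sign and normalization verification, together with making sure the CKS descent lemma is quoted for $L$ on the boundary rather than in the open cone. If quoting proves awkward, we would prove the statement directly. Well-definedness and $A$-invariance of $Q_{M_L}$ follow from $A$-invariance and commutativity. Nondegeneracy follows from $Q_M(x,Ly)=0$ for all $y$ iff $Lx=0$, using Poincaré duality on $M$. For HL and HR, given $\omega\in\mathcal K_A$, the class $\omega^{n-1-2i}$ on $M_L^i$ is governed by $L\omega^{n-1-2i}$ on $M^i$. We would approximate $L$ by $L+t\omega$ and use Remark \ref{mixHRKah}: mixed HR with classes $(L+t\omega,\omega,\dots,\omega)$ yields semi-definiteness in the limit on the primitive part, and the quotient by $\ker L$ removes precisely the null directions. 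The delicate point there is showing that the dimension of the primitive space does not jump in the limit. We would handle it by induction on $n$, applying the statement to $M_\omega$ first.
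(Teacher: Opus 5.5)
Your main route is the paper's own: pass to the polarized Hodge--Lefschetz module of Lemma~\ref{PHLmodules}, apply the Cattani--Kaplan--Schmid descent lemma to $L$ (valid because $L+t\eta\in\mathcal{K}_A$ for all $t>0$, and rerunning it with each $\eta\in\mathcal{K}_A$ covers all of $\mathcal{K}_A$), and return via Lemma~\ref{PHLmodulesconverse}, so the proposal is correct. Two small corrections: the piece of the weight-$n$ quotient equal to $M_L$ is the $k=1$ piece $\overline{V}^{\,n-i,n-1-i}=V^{n+1-i,n-i}/\ker L$, not an ``$M_0$'' of type $(n-i,n+1-i)$, which is the conjugate side; and the sign check closes exactly, since $S(x,\overline{z})=-\sqrt{-1}(-1)^iQ_M(x,z)$ for $x\in M^i$, $z\in M^{n-i}$ gives $\sqrt{-1}(-1)^i\,\overline{S}([x],\overline{[y]})=Q_M(x,Ly)$, so no change of sign convention is needed (nor would one be available, because the conventions of Lemma~\ref{PHLmodules} are what make $V$ polarized).
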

\begin{proof}
    Let $\eta \in \mathcal{K}_A$ and let $(V_*,S,\mathfrak{a},\eta)$ be the polarized Hodge-Lefschetz module of weight $n+1$ in Lemma \ref{PHLmodules}. 
    Since $L\in\overline{\mathcal K_A}$, we have $L+t\eta \in \mathcal{K}_A$ for all $t>0$.
    Hence the descent lemma for polarized Hodge-Lefschetz modules \cite[Lemma 1.16]{CKS87} applies to $L$ (see also \cite[Theorem 3.2 and Remark 3.3]{cattanimixedHRR}).
    Set $\overline{V}=V/\ker L$ with grading $\overline{V}_i= V_{i+1}/\ker L \cap V_{i+1}$ and Hodge bigrading $\overline{V}^{p,q}=V^{p+1,q+1}/\ker L \cap V^{p+1,q+1}$,
    and define $\overline{S}([x],[y])=S(x,Ly)$. 
    Then $(\overline{V}_*,\overline{S},\overline{\mathfrak{a}},\eta)$ is a polarized Hodge-Lefschetz module of weight $n$ where $\overline{\mathfrak a}$ denotes the induced action of $\mathfrak a$ on $\overline V$.
    By Lemma \ref{PHLmodulesconverse}, $M_L=\bigoplus_{0\leq i\leq  n-1} \overline{V}^{n-i,n-1-i}=\bigoplus_{0\leq i\leq n-1} M^i_L$ endowed with Hermitian form $Q_{M_L}$ is a complex Lefschetz module of degree $n-1$ over $(A,\mathcal{K}_A)$.
\end{proof}

\begin{lem}\label{purity}
    Let $L_1,L_2\in \overline{\mathcal{K}_A}$ be nef classes such that
    $L_1+L_2\in\mathcal{K}_A$. Then for all integers $s_1,s_2\geq 0$,
    \begin{equation*}
        \ker(L_1^{s_1}L_2^{s_2}) \subset \ker(L_1^{s_1}) + \ker(L_2^{s_2}) +    W_{s_1+s_2-1}(L_1+L_2,M).
    \end{equation*}
\end{lem}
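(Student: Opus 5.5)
Since the paper explicitly says the descent and purity theorems of Cattani--Kaplan--Schmid are to be imported, the plan is to transfer the statement to the polarized Hodge--Lefschetz setting and invoke the purity theorem of \cite{CKS87} for the two-variable nilpotent orbit generated by $L_1,L_2$. Fix $\eta\in\mathcal{K}_A$ and let $(V_*,S,\mathfrak a,\eta)$ be the polarized Hodge--Lefschetz module of weight $n+1$ from Lemma \ref{PHLmodules}. The kernels $\ker L_i^{s_i}$ and the weight filtration $W_\bullet(L_1+L_2,M)$ are defined purely in terms of the nilpotent endomorphisms. They are compatible with the decomposition $V_{\mathbb C}=M\oplus\overline{M}$ coming from the Hodge bigrading, since every $L\in A^1$ has type $(-1,-1)$. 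Hence it suffices to prove the inclusion on $V$ and intersect with $M$.

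The key steps in order would be as follows.

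First, treat the case where $L_1,L_2$ lie in the closure of the polarizing cone and $L_1+L_2$ lies in its interior. Here the pair $(L_1,L_2)$ generates a nilpotent orbit in the sense of \cite{CKS87}, and each of $W(L_1)$, $W(L_1+L_2)$ is a relative weight filtration. I would use the multivariable $\mathfrak{sl}_2$-orbit theorem, in the algebraic form of Cattani--Kaplan--Schmid, to find commuting $\mathfrak{sl}_2$-triples $(\hat N_1,\hat N_2)$ with associated semisimple elements $Y_1$ and $Y_{12}$. Under this data $L_1$ differs from $\hat N_1$ only by terms of lower $Y_1$-weight, $L_1+L_2$ is compatible with $Y_{12}$, and $V$ decomposes into bigraded pieces $V_{a,b}$ on which $\hat N_1$ and $\hat N_2$ act as a tensor product of irreducible $\mathfrak{sl}_2$-representations. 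The purity theorem then says that $W(L_1+L_2)$ is the convolution: $W_k(L_1+L_2)=\bigoplus_{a+b\le k}V_{a,b}$ in the adapted splitting.

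Second, on a single tensor block $S^a\otimes S^b$, with $\hat N_1$ acting on the first factor and $\hat N_2$ on the second, the claim becomes elementary. Write a vector in weight coordinates $x=\sum_{i,j} x_{ij}\,e_i\otimes f_j$. The condition $N_1^{s_1}N_2^{s_2}x=0$ forces each nonzero component to satisfy one of the following: its first-factor weight is below $s_1$ from the top, so it lies in $\ker N_1^{s_1}$; its second-factor weight is likewise low, so it lies in $\ker N_2^{s_2}$; or the total weight $i+j$ lies in the range accounting for $W_{s_1+s_2-1}$. Summing over blocks gives the inclusion for $(\hat N_1,\hat N_2)$.

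Third, I would pass from $(\hat N_1,\hat N_2)$ back to $(L_1,L_2)$ by a filtration argument. Both sides of the inclusion are preserved under replacing $L_i$ by operators differing by lower-weight terms, modulo lower pieces of the relevant weight filtrations. I would then conclude by downward induction on the weight.

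The main obstacle is this last comparison, since the naive $\hat N$-splitting does not commute with the actual $L_i$. I would first try to avoid the orbit theorem entirely. The alternative route is induction on $s_1+s_2$ using Lemma \ref{descentlem}: pass to $M_{L_2}=M/\ker L_2$. There $\ker(L_1^{s_1}L_2^{s_2-1})$ on $M_{L_2}$ corresponds to $\ker(L_1^{s_1}L_2^{s_2})$ on $M$. The remaining work is to check that the weight filtration of $L_1+L_2$ on $M_{L_2}$, shifted by one, is the image of $W(L_1+L_2,M)$ intersected with the preimage. That compatibility is exactly where the purity theorem of \cite{CKS87} should be invoked as a black box.
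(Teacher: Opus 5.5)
Your outer reduction is the paper's: pass to the polarized Hodge--Lefschetz module of Lemma~\ref{PHLmodules}, note that kernels and weight filtrations split along $V_{\mathbb C}=M\oplus\overline M$, and take the $M$-summand. The gap is the inclusion on $V$ itself. That inclusion is exactly \cite[Lemma~3.16]{CKS87}, which the paper simply cites with $\eta=L_1+L_2$. Your opening sentence points toward CKS purity, but you never use it for the inclusion; instead you try to reprove it, and neither route closes.

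In the $\mathfrak{sl}_2$ route, the model computation carries no information. On a block $S^a\otimes S^b$, the operator $\hat N_1^{s_1}\hat N_2^{s_2}$ sends distinct weight vectors $e_i\otimes f_j$ to multiples of distinct weight vectors. So its kernel is already $\ker\hat N_1^{s_1}+\ker\hat N_2^{s_2}$, and the term $W_{s_1+s_2-1}$ never enters. All the content therefore sits in your third step, where you assert that both sides survive replacing $\hat N_i$ by $L_i$ up to lower-weight terms. There is no general principle making kernels stable under such perturbations. Producing the error term $W_{s_1+s_2-1}$ from that discrepancy is exactly what has to be proved.

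The descent route misplaces the difficulty. Since $L_1+L_2\in\mathcal K_A$, hard Lefschetz forces $W_k(L_1+L_2,M)=\bigoplus_{n-2i\le k}M^i$. Likewise $W_k(L_1+L_2,M_{L_2})=\bigoplus_{n-1-2i\le k}M^i_{L_2}$, because $M_{L_2}$ is Lefschetz of degree $n-1$ by Lemma~\ref{descentlem}. So the filtration compatibility you want to outsource to a black box is automatic.

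What actually fails is the lifting. The preimage in $M$ of $\ker_{M_{L_2}}(L_1^{s_1})$ is $\ker(L_1^{s_1}L_2)$, not $\ker L_1^{s_1}$. Descending along $L_2$ and then along $L_1$ therefore only reduces the lemma to the case $s_1=s_2=1$. That case says $\ker(L_1L_2)\cap M^i\subset\ker L_1\cap M^i+\ker L_2\cap M^i$ whenever $n-2i\ge 2$. There descent is circular: it returns $x\in\ker(L_1L_2)$. This case is a genuine Hodge-theoretic purity statement, and it is precisely what \cite[Lemma~3.16]{CKS87} supplies.

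The reduction to $s_1=s_2=1$ is valid and worth keeping if you want a self-contained argument. As written, however, the proposal does not prove the lemma.
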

\begin{proof}
    Let $\eta =L_1+L_2 \in \mathcal{K}_A$ and let $(V_*,S,\mathfrak{a},\eta)$ be the polarized Hodge-Lefschetz module of weight $n+1$ as in Lemma \ref{PHLmodules}. 
    Viewing $L_1,L_2\in \End_{\mathbb{C}}(V_{\mathbb{C}})$,
    \cite[Lemma~3.16]{CKS87} gives
    \begin{equation*}
        \ker_{V_{\mathbb{C}}}(L_1^{s_1}L_2^{s_2}) \subset
        \ker_{V_{\mathbb{C}}}(L_1^{s_1})+\ker_{V_{\mathbb{C}}}(L_2^{s_2})+W_{s_1+s_2-1}(L_1+L_2,V_{\mathbb{C}}).
    \end{equation*}
    Since $V_{\mathbb{C}}=M\oplus \overline{M}$ and $L_1,L_2$ preserve both summands, we have 
    \begin{equation*}
        \ker_{V_{\mathbb{C}}}(L_1^{s_1}L_2^{s_2})=
        \ker_M(L_1^{s_1}L_2^{s_2})\oplus  \ker_{\overline{M}}(L_1^{s_1}L_2^{s_2})
    \end{equation*}
    and similarly for $\ker_{V_{\mathbb{C}}}(L_1^{s_1})$, $\ker_{V_{\mathbb{C}}}(L_2^{s_2})$. Moreover, the weight filtration is compatible with this decomposition
    \begin{equation*}
        W(L_1+L_2,V_{\mathbb C}) = W(L_1+L_2,M) \oplus W(L_1+L_2,\overline M). 
    \end{equation*}
    Taking the $M$-summand of the first inclusion then yields the desired assertion.
\end{proof}

The decomposition theorem for Lefschetz modules allows us to drop the
assumption that $L_1+L_2\in \mathcal{K}_A$.

\begin{prop}\label{algpurity}
    Let $L_1,L_2\in \overline{\mathcal{K}_A}$ be nef classes. Then for
    all integers $s_1,s_2\geq 0$, we have
    \begin{equation*}
        \ker(L_1^{s_1}L_2^{s_2})\subset
        \ker(L_1^{s_1})+\ker(L_2^{s_2})+W_{s_1+s_2-1}(L_1+L_2,M).
    \end{equation*}
\end{prop}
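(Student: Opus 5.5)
The plan is to reduce to Lemma \ref{purity} using the decomposition theorem for Lefschetz modules. Let $B\subset A$ be the graded subalgebra generated by $L_1,L_2$, and let $F$ be the smallest face of $\overline{\mathcal K_A}$ containing $L_1+L_2$. Set $B^1=\Span_{\mathbb R}(L_1,L_2)$, so that $\mathcal K_B=\operatorname{int}_{B^1}(B^1\cap\overline{\mathcal K_A})$. Since $B^1\cap\overline{\mathcal K_A}$ is a two-dimensional (or lower) cone containing $L_1,L_2$, the class $L_1+L_2$ lies in its relative interior. If $L_1,L_2$ are proportional, the statement is easy, since then $\ker(L_1^{s_1}L_2^{s_2})=\ker(L_1+L_2)^{s_1+s_2}\subset W_{s_1+s_2-1}$ by the explicit description of $W$. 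Otherwise $\mathcal K_B$ is a nonempty open cone in $B^1$ containing $L_1+L_2$, and $L_1,L_2$ lie in its closure, i.e.\ they are nef over $(B,\mathcal K_B)$.

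By \cite[Theorem 3.10]{aminihuhlarson25lefschetzmodule} as recalled above, decompose $M\cong\bigoplus_\alpha\bigoplus_k N_\alpha[-k]^{\oplus m(\alpha,k)}$ into indecomposable graded $B_{\mathbb C}$-modules, each $(N_\alpha,\epsilon_\alpha Q_\alpha)$ a complex Lefschetz module over $(B,\mathcal K_B)$. On each summand $N=N_\alpha[-k]$, the class $L_1+L_2$ is Kähler, so Lemma \ref{purity} gives
\begin{equation*}
\ker_N(L_1^{s_1}L_2^{s_2})\subset\ker_N(L_1^{s_1})+\ker_N(L_2^{s_2})+W_{s_1+s_2-1}(L_1+L_2,N_\alpha)[-k].
\end{equation*}
All operators involved are $B$-module maps, so kernels decompose as direct sums over the summands. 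The remaining point is that the weight filtration of the nilpotent operator $L_1+L_2$ on $M$ is the direct sum of its weight filtrations on the summands, after the appropriate shift. The shift is the step I expect to be the main obstacle. $W(L,M)$ is defined centered at the middle degree of $M$, while $N_\alpha[-k]$ has a different center, so weights in $N_\alpha[-k]$ get translated by an amount depending on $k$ and $\deg N_\alpha$.

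To handle this, I would use that $W_j$ on a graded piece is determined by the Jacobson–Morozov (primitive) decomposition. Concretely, $W_j\cap M^i$ is spanned by $L^c$ applied to kernels, by the explicit formula $W_k=\sum_c L^c\ker L^{k+2c+1}$. This formula is intrinsic to the operator $L$ together with the grading convention. The explicit formula shows $W$ is additive over $L$-stable direct sums provided the weights are measured by the same convention. So I would fix the convention that weights are computed with respect to the degree grading centered at $n/2$ (as $\mathfrak{sl}_2$-weight $n-2i$ shifted by the Jordan-string position). On each summand, Lemma \ref{purity} applied to the Lefschetz module $N_\alpha$ of degree $n_\alpha$ yields the inclusion with $W(L_1+L_2,N_\alpha)$ centered at $n_\alpha/2$.

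Since $(M,Q_M)$ is Lefschetz and $N_\alpha[-k]$ sits symmetrically, Krull–Schmidt together with Poincaré duality pair $N_\alpha[-k]$ with $N_\alpha[-(n-n_\alpha-k)]$. I would verify, as in the Hodge-theoretic setup of \cite{CKS87}, that the relevant kernel-level statement is really the string-length statement. Namely, an element killed by $L_1^{s_1}L_2^{s_2}$ but not in the first two kernels must lie in Jordan strings of $L_1+L_2$ that are short relative to $s_1+s_2$. String length is intrinsic to the operator and insensitive to the shift. Rewriting Lemma \ref{purity} in these terms—"modulo $\ker L_1^{s_1}+\ker L_2^{s_2}$, the element lies in $\sum_c (L_1+L_2)^c\ker(L_1+L_2)^{s_1+s_2+2c}$"—makes it shift-invariant. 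The direct sum then gives the claim on $M$ via the explicit description of $W_{s_1+s_2-1}(L_1+L_2,M)$.
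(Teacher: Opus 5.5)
Your proposal is correct and is essentially the paper's proof. You pass to the subalgebra $B$ generated by $L_1,L_2$ with $\mathcal K_B=\operatorname{int}_{B^1}(B^1\cap\overline{\mathcal K_A})\ni L_1+L_2$, decompose $M$ into indecomposable graded $B_{\mathbb C}$-modules that are Lefschetz over $(B,\mathcal K_B)$ by Amini--Huh--Larson, apply Lemma~\ref{purity} summand by summand, and add up. The paper does exactly this.

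The shift obstacle you anticipate does not actually arise. $W(L_1+L_2,M)$ is the weight filtration of a nilpotent endomorphism centered at $0$, given by $W_k=\sum_{c\ge 0}L^c\ker L^{k+2c+1}$ and $W_{-k}=L^kW_k$, with no reference to the grading. It is therefore automatically the direct sum of the filtrations on the $L$-stable summands and unchanged by $[-k]$, which is the one-line remark the paper uses. Your separate treatment of the proportional case is likewise unnecessary: when $L_1,L_2$ are proportional and nonzero, $\mathcal K_B$ is still a nonempty open ray containing $L_1+L_2$, so the same argument applies.
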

\begin{proof}
    Set $S=L_1+L_2,$
    and let $B\subset A$ be the graded subalgebra generated by $L_1$
    and $L_2$. Define $\mathcal{K}_B=\operatorname{int}_{B_1}(\overline{B^1\cap \mathcal{K}_A}).$
    By construction, $L_1,L_2\in \overline{ \mathcal{K}_B}$ and $S\in\mathcal{K}_B$.
    Choose a decomposition
    \begin{equation*}
         M\cong \bigoplus_{\alpha} \bigoplus_{k}
        N_\alpha[-k]^{\oplus m(\alpha,k)}
    \end{equation*}
    into indecomposable graded $B$-modules. 
    Since the desired statement is
    invariant under $B_{\mathbb{C}}$-module isomorphisms, we may regard this
    decomposition as an internal direct sum.

    By the decomposition theorem of Amini-Huh-Larson
    \cite{aminihuhlarson25lefschetzmodule} recalled in Section \ref{Lefschetzmodules}, each $N_\alpha$ admits a
    nondegenerate $B$-invariant Hermitian form $Q_\alpha$ such that the pair
    $(N_\alpha,Q_\alpha)$ is a complex Lefschetz module over
    $(B,\mathcal{K}_B)$.
    Since $S=L_1+L_2\in\mathcal{K}_B$, Lemma \ref{purity} applied to
    each $N_\alpha$ gives
    \begin{equation}\label{dnfqo}
        \ker((L_{1|N_\alpha})^{s_1}(L_{2|N_\alpha})^{s_2})
    \subset\ker((L_{1|N_\alpha})^{s_1})+\ker((L_{2|N_\alpha})^{s_2})+W_{s_1+s_2-1}(S_{|N_\alpha},N_\alpha).
    \end{equation}

    Kernels are compatible with direct sums and grading shifts.
    Therefore,
    \begin{equation*}
        \ker(L_1^{s_1}L_2^{s_2})=
        \bigoplus_{\alpha,k}
        \ker((L_{1|N_\alpha})^{s_1}(L_{2|N_\alpha})^{s_2})[-k]^{\oplus m(\alpha,k)}.
    \end{equation*}
    The analogous decompositions hold for
    $\ker(L_1^{s_1})$ and $\ker(L_2^{s_2})$.
    The weight filtration of a nilpotent operator is also compatible
    with direct sums and grading shifts. Hence
    \begin{equation*}
        W_{s_1+s_2-1}(S,M)=
        \bigoplus_{\alpha,k}W_{s_1+s_2-1}(S_{|N_\alpha},N_\alpha)[-k]^{\oplus m(\alpha,k)}.
    \end{equation*}
    Taking the direct sum of the inclusions
    \eqref{dnfqo} then yields
    \begin{equation*}
        \ker(L_1^{s_1}L_2^{s_2})\subset
        \ker(L_1^{s_1})+\ker(L_2^{s_2})+W_{s_1+s_2-1}(L_1+L_2,M),
    \end{equation*}
    as desired.
\end{proof}

We are now ready to prove the preservation of lefness under descent. Its geometric analogue is Proposition \ref{lefpres}.

\begin{lem}\label{alglefpres}
    Let $d,l$ be integers satisfying $2l-1\leq d\leq n-1,$
    and let $L_1,L_2\in\overline{\mathcal{K}_A}$ be nef classes.
    Suppose that $L_1$ is $d$-lef below degree $l$ on $M$ and that
    $L_1+L_2$ is $(d+1)$-lef below degree $l$ on $M$.
    Then $L_1$ is $d$-lef below degree $l$ on $M_{L_2}$.
\end{lem}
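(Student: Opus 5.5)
The plan is to unwind the definition of lefness on the quotient module $M_{L_2}$ and reduce the required injectivity to Proposition \ref{algpurity}. The vanishing statements coming from the two lefness hypotheses will then kill the unwanted terms.

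First I will fix the grading of $M_{L_2}$. By the construction in the proof of Lemma \ref{descentlem}, $\overline{V}^{p,q}=V^{p+1,q+1}/\ker L_2\cap V^{p+1,q+1}$. Tracing through Lemma \ref{PHLmodulesconverse}, this gives
$$M_{L_2}^k=M^k/(\ker L_2\cap M^k),$$
and $M_{L_2}$ has degree $n-1$. Since $2l-1\le d\le n-1$, the notion "$d$-lef below degree $l$ on $M_{L_2}$" makes sense. The action of $L_1$ on $M_{L_2}$ is induced from $M$. Hence the claim is equivalent to the following: for every $k\le l-1$ and every $x\in M^k$ with $L_2L_1^{d-2k}x=0$, we have $L_2x=0$.

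Second, fix such $k$ and $x$, and apply Proposition \ref{algpurity} with $s_1=d-2k\ge 1$ and $s_2=1$. This gives
$$x\in \ker(L_1^{d-2k})+\ker(L_2)+W_{d-2k}(L_1+L_2,M).$$
All three subspaces are graded, because $L_1$, $L_2$ and $L_1+L_2$ are homogeneous of degree one. For the weight filtration this follows from the explicit formula $W_j=\sum_c L^c\ker L^{j+2c+1}$. Taking degree-$k$ components, I may therefore assume $x=a+b+c$ with $a\in\ker L_1^{d-2k}\cap M^k$, $b\in\ker L_2\cap M^k$, and $c\in W_{d-2k}(L_1+L_2)\cap M^k$.

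Third, I will show that $a=0$ and $c=0$. Since $L_1$ is $d$-lef below degree $l$ on $M$ and $k\le l-1$, we get $\ker L_1^{d-2k}\cap M^k=0$, so $a=0$. Since $L_1+L_2$ is $(d+1)$-lef below degree $l$, Lemma \ref{alglefequiv} (2) applied with $d+1$ in place of $d$ gives
$$W_{(d+1)-2k-1}(L_1+L_2)\cap M^k=W_{d-2k}(L_1+L_2)\cap M^k=0,$$
so $c=0$. Hence $x=b\in\ker L_2$, and the induced map $L_1^{d-2k}:M_{L_2}^k\to M_{L_2}^{d-k}$ is injective.

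The only step I expect to need care is the first: correctly identifying the grading and degree of $M_{L_2}$ from the Hodge-theoretic construction in Lemma \ref{descentlem}. The substantive input, Proposition \ref{algpurity}, is already available, and once the grading is pinned down the argument is a direct degree-by-degree check.
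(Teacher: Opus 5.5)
Your proposal is correct and follows the paper's proof essentially verbatim: you reduce injectivity on $M_{L_2}^k$ to showing $\ker(L_1^{d-2k}L_2)\cap M^k\subset\ker L_2$, apply Proposition \ref{algpurity}, and kill the first and third summands using the lefness of $L_1$ and Lemma \ref{alglefequiv} for $L_1+L_2$. You also check explicitly that $M_{L_2}^k=M^k/(\ker L_2\cap M^k)$ and that the three subspaces are graded, which is what justifies passing to degree-$k$ components; the paper leaves both points implicit.
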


\begin{proof}
    We need to show that the induced map
    \begin{equation*}
        L_1^{d-2k}\colon M_{L_2}^k\to M_{L_2}^{d-k}
    \end{equation*}
    is injective for every integer $ k\leq l-1$.
    Let $[x]\in \ker (L_1^{d-2k}\colon M_{L_2}^k\longrightarrow M_{L_2}^{d-k})$ and choose a representative $x\in M^k$.
    We then have $L_1^{d-2k}L_2x=0.$
    By Proposition \ref{algpurity}, we have
    \begin{equation*}
        \ker(L_1^{d-2k}L_2)\cap M^k \subset
        \ker L_1^{d-2k}\cap M^k+\ker L_2\cap M^k+W_{d-2k}(L_1+L_2,M)\cap M^k.
    \end{equation*}
    Since $L_1$ is $d$-lef below degree $l$ on $M$,
    \begin{equation*}
        \ker L_1^{d-2k}\cap M^k=0.
    \end{equation*}
    Moreover, since $L_1+L_2$ is $(d+1)$-lef below degree $l$ on $M$,
    Lemma \ref{alglefequiv} gives
    \begin{equation*}
        W_{d-2k}(L_1+L_2,M)\cap M^k=0.
    \end{equation*}
    Since $x\in\ker(L_1^{d-2k}L_2)\cap M^k$, we conclude that $x\in\ker L_2.$ Hence $[x]=0$ in $M_{L_2}$. This proves the injectivity of $L_1^{d-2k}$ on $M_{L_2}^k$.
    Thus $L_1$ is $d$-lef below degree $l$ on $M_{L_2}$, completing the proof.
\end{proof}

\begin{cor}\label{collectiondescent}
    Let $d,l$ be integers satisfying $2l-1\leq d\leq n$, and let $\mathcal{L}=(L_1,\ldots,L_m) $
    be a collection of nef classes that is $d$-lef below degree $l$ on $M$ with $2\leq m\leq d-2l+2$.
    Then for any $r\in[m]$, the collection
        \begin{equation*}
            \mathcal{L}_{\backslash r}=(L_1,\ldots,L_{r-1},L_{r+1},\ldots,L_m)
        \end{equation*}
        is $(d-1)$-lef below degree $l$ on $M_{L_r}$.
\end{cor}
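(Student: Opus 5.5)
The plan is to reduce the statement to Lemma \ref{alglefpres}, applied once for each subset. By the definition of lefness for a collection on $M_{L_r}$, we must check two things. First, the size condition $1\leq m-1\leq (d-1)-2l+2$, which is immediate from $2\leq m\leq d-2l+2$. Second, for every nonempty $I\subset [m]\backslash\{r\}$, the class $L_I$ must be $(|I|+(d-1)-(m-1))$-lef, that is $(|I|+d-m)$-lef, below degree $l$ on $M_{L_r}$. We also record that $M_{L_r}$ is a complex Lefschetz module of degree $n-1$ over $(A,\mathcal{K}_A)$ by Lemma \ref{descentlem}, so this notion makes sense there.

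Fix such an $I$ and set $d'=|I|+d-m$. Since $\mathcal{L}$ is $d$-lef below degree $l$ on $M$, the definition gives two facts. The class $L_I$ is $d'$-lef below degree $l$ on $M$. The class $L_{I\cup\{r\}}=L_I+L_r$ is $(|I|+1+d-m)=(d'+1)$-lef below degree $l$ on $M$. These are exactly the hypotheses of Lemma \ref{alglefpres} with $L_1=L_I$, $L_2=L_r$ and $d$ replaced by $d'$. The lemma then yields that $L_I$ is $d'$-lef below degree $l$ on $M_{L_r}$, as required.

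The only step needing care is the numerical range in Lemma \ref{alglefpres}, namely $2l-1\leq d'\leq n-1$. For the lower bound, $|I|\geq 1$ gives $d'\geq 1+d-m\geq 1+d-(d-2l+2)=2l-1$. For the upper bound, $|I|\leq m-1$ gives $d'\leq d-1\leq n-1$. Both bounds hold, so I expect no real obstacle beyond this index bookkeeping; the substantive input (purity via Proposition \ref{algpurity}) is already packaged in Lemma \ref{alglefpres}.
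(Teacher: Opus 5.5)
Your proof is correct and is exactly the paper's argument: for each nonempty $I\subset[m]\backslash\{r\}$, apply Lemma \ref{alglefpres} to $L_I$ and $L_r$ with lef degree $|I|+d-m$, using that $L_I$ and $L_I+L_r$ inherit the required lefness from $\mathcal{L}$. Your explicit check that $2l-1\leq |I|+d-m\leq n-1$, together with the size condition on $\mathcal{L}_{\backslash r}$, fills in bookkeeping the paper leaves implicit.
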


\begin{proof}
    Fix $r\in[m]$ and a nonempty subset $I\subset[m]\backslash \{r\}$. 
    Since $\mathcal{L}$ is $d$-lef below degree $l$, the class $L_I$ is
    $(|I|+d-m)$-lef below degree $l$ on $M$ and $L_I+L_r$
    is $(|I|+d-m+1)$-lef below degree $l$ on $M$.
    Hence Lemma \ref{alglefpres} implies that $L_I$ is $(|I|+d-m)$-lef below
    degree $l$ on $M_{L_r}$.
    Since $I$ is arbitrary, this is exactly the condition for $\mathcal{L}_{\backslash r}$ to be $(d-1)$-lef below degree $l$ on $M_{L_r}$. 
    This completes the proof.
\end{proof}

By the descent lemma, we can also show that multiplying by K\"ahler classes does not affect the kernel.

\begin{prop}\label{keraddKah}
    Let $d,l\geq 0$ be integers such that $2l\leq d\leq n$.
    Let $L_1,...,L_{d-2l}\in \overline{\mathcal{K}_A}$ be nef classes and let $\eta_1,...,\eta_{n-d}\in \mathcal{K}_A$ be K\"ahler classes. Then
    \begin{equation*}
        \ker(L_1\cdots L_{d-2l}\cdot\eta_1\cdots \eta_{n-d}) \cap M^l=\ker(L_1\cdots L_{d-2l})\cap M^l.
    \end{equation*}
\end{prop}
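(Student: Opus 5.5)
The inclusion $\ker(L_1\cdots L_{d-2l})\cap M^l\subset \ker(L_1\cdots L_{d-2l}\cdot\eta_1\cdots\eta_{n-d})\cap M^l$ is trivial. For the reverse inclusion I argue by induction on the number of nef factors $d-2l$, using the descent lemma (Lemma \ref{descentlem}) to peel off one nef class at a time. This reduces everything to the mixed hard Lefschetz theorem for Kähler classes (Remark \ref{mixHRKah}).

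\emph{Base case $d=2l$.} There are no nef factors, so the product $\eta_1\cdots\eta_{n-2l}:M^l\to M^{n-l}$ consists only of Kähler classes. By Remark \ref{mixHRKah}(1) it is an isomorphism, hence injective. This agrees with $\ker(\text{empty product})\cap M^l=0$. (If $M^l$ lies outside the range $l\le\lfloor n/2\rfloor$ the statement is vacuous; since $2l\le d\le n$, we are always in range.)

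\emph{Inductive step.} Suppose $d-2l\ge1$ and put $L=L_{d-2l}$. By Lemma \ref{descentlem}, $(M_L,Q_{M_L})$ with $M_L=M/\ker L$ is a complex Lefschetz module of degree $n-1$ over the same $(A,\mathcal K_A)$. Its grading is $M_L^i=M^i/(\ker L\cap M^i)$, where the module structure is the quotient one and the form is shifted through $L$. A class $x\in M^l$ satisfies $L_1\cdots L_{d-2l}\,\eta_1\cdots\eta_{n-d}\,x=0$ in $M$ if and only if $[x]\in M_L^l$ is killed by $L_1\cdots L_{d-2l-1}\,\eta_1\cdots\eta_{n-d}$. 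Indeed, $Ly=0$ for $y=L_1\cdots L_{d-2l-1}\eta\cdots x$ is exactly the statement $[y]=0$ in $M_L$, and multiplication commutes since $A$ is commutative.

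Now apply the induction hypothesis in $M_L$, with degree $n-1$, parameter $d'=d-1$ and the same $l$. The constraint $2l\le d-1\le n-1$ holds because $d-2l\ge1$. There are $d'-2l=d-2l-1$ nef classes and $(n-1)-(d-1)=n-d$ Kähler classes, which match. The hypothesis gives that $[x]$ is killed by $L_1\cdots L_{d-2l-1}$ in $M_L$, that is, $L_1\cdots L_{d-2l}\,x=0$ in $M$. This is the desired inclusion.

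The only point needing care is the bookkeeping in the descent: checking that the grading of $M_L$ places $[x]$ in degree $l$ with the correct total degree $n-1$, and that kernels in $M_L$ translate to kernels of the product with $L$ appended in $M$. Both follow directly from the definitions preceding Lemma \ref{descentlem}. With that bookkeeping confirmed, the induction closes without further input.
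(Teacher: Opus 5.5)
Your proof is correct and takes essentially the paper's approach: the paper passes to the quotient $M/\ker(L_1\cdots L_{d-2l})$, which it asserts is a Lefschetz module of degree $n-d+2l$ by the descent lemma, and then applies mixed hard Lefschetz for $\eta_1\cdots\eta_{n-d}$ there. Your induction, which peels off one nef class at a time, spells out the iteration of Lemma \ref{descentlem} that the paper's one-step application to the product uses implicitly, since that lemma is stated only for a single class in $\overline{\mathcal{K}_A}$.
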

\begin{proof}
    It suffices to show
    \begin{equation*}
        \ker(L_1\cdots L_{d-2l}\cdot\eta_1\cdots \eta_{n-d}) \cap M^l\subset \ker(L_1\cdots L_{d-2l})\cap M^l.
    \end{equation*}
    Let $x\in \ker(L_1\cdots L_{d-2l}\cdot\eta_1\cdots \eta_{n-d}) \cap M^l$. Then its image $[x]\in M_{L_1\cdots L_{d-2l}}:=M/\ker(L_1\cdots L_{d-2l})$  satisfies $\eta_1\cdots \eta_{n-d}\cdot [x]=0$.
    On the other hand, Lemma \ref{descentlem} implies that $M_{L_1\cdots L_{d-2l}}$ endowed with the induced pairing is a complex Lefschetz module of degree $n-d+2l$ over $(A,\mathcal{K}_A)$. 
    Therefore the mixed hard Lefschetz property on $M_{L_1\cdots L_{d-2l}}$ gives $[x]=0$. Namely, $L_1\cdots L_{d-2l}\cdot x=0$. This completes the proof.
\end{proof}

\subsection{Kernel decoupling}

In this subsection, we provide the proof of Theorem \ref{alglefcharintro} and \ref{algkerintro}.
We start with the following kernel decoupling result.

\begin{thrm}\label{decoupling}
    Let $d,l$ be integers satisfying $2l+1\leq d\leq n$, and let
    $\mathcal{L}=(L_1,\ldots,L_{d-2l})$
    be a collection of nef classes that is $d$-lef below degree $l$
    on $M$. 
    Then
    \begin{equation*}
        \ker\mathcal{L}\cap M^l = 
        \sum_{\emptyset\neq I\subset[d-2l]} \ker L_I^{|I|}\cap M^l.
    \end{equation*}
\end{thrm}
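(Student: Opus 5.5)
The inclusion $\supseteq$ is immediate. For a nonempty $I\subset[d-2l]$, the product $\mathcal{L}$ factors through $\prod_{i\in I}L_i$. Since $\prod_{i\in I}L_i \lesssim L_I^{|I|}$ in the appropriate sense, one expects $\ker L_I^{|I|}\subset\ker \prod_{i\in I}L_i$. Concretely, I would deduce this from Proposition \ref{algpurity}, or from Corollary \ref{kerinclusion} applied on the descended modules $M_{L_j}$. For the main inclusion $\subseteq$ I would argue by induction on $m=d-2l$, following the geometric Case 2 argument with descent (Lemma \ref{descentlem}) replacing restriction to a hypersurface.

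\textbf{Base case} $m=1$. The statement reads $\ker L_1\cap M^l=\ker L_1\cap M^l$, so there is nothing to prove.

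\textbf{Inductive step} ($m\ge 2$). Put $r=m$ and $N=M_{L_m}$, which is a Lefschetz module of degree $n-1$. By Corollary \ref{collectiondescent}, $\mathcal{L}_{\backslash m}$ is $(d-1)$-lef below degree $l$ on $N$, and $(d-1)-2l=m-1$. Take $x\in\ker\mathcal{L}\cap M^l$. Then $[x]\in\ker(\mathcal{L}_{\backslash m})\cap N^l$, so the induction hypothesis gives
\[
[x]=\sum_{\emptyset\neq I\subset[m-1]}[y_I],\qquad y_I\in M^l,\quad L_I^{|I|}L_m y_I=0 .
\]
Hence $x-\sum_I y_I\in\ker L_m\cap M^l$, and $\ker L_m$ is one of the summands. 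It therefore suffices to show that each $y_I$ can be replaced by an element of $\ker L_I^{|I|}+\ker L_m+\ker L_{I\cup\{m\}}^{|I|+1}$.

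To do this, apply Proposition \ref{algpurity} with $L_1\to L_I$, $s_1=|I|$, and $L_2\to L_m$, $s_2=1$. This gives
\[
y_I\in \ker L_I^{|I|}+\ker L_m+W_{|I|}(L_I+L_m,M)\cap M^l .
\]
Now $L_{I\cup\{m\}}=L_I+L_m$ is $(|I|+1+2l)$-lef below degree $l$ on $M$. So by the last part of Lemma \ref{alglefequiv}, applied with $d'=|I|+1+2l\ge 2l+1$, we get
\[
W_{|I|}(L_I+L_m)\cap M^l=\ker (L_I+L_m)^{|I|+1}\cap M^l ,
\]
which is exactly a summand. Summing over $I$ yields $x\in\sum_{J}\ker L_J^{|J|}\cap M^l$.

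\textbf{Main obstacle.} The delicate point is the degree bookkeeping in the purity decomposition: it must be the component in $M^l$ that lands in the summands. Since all the operators are homogeneous, one projects to degree $l$; this should be routine. A second point to check is that the induction hypothesis on $N$ requires the hypotheses $2l+1\le d-1\le n-1$. This holds when $m\ge2$.
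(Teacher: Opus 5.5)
Your argument for $\subseteq$ is the paper's own. You descend to $M_{L_{d-2l}}$ via Corollary \ref{collectiondescent}, apply the induction hypothesis there, lift with Proposition \ref{algpurity} at $(s_1,s_2)=(|I|,1)$, and identify $W_{|I|}(L_I+L_{d-2l},M)\cap M^l$ with $\ker(L_I+L_{d-2l})^{|I|+1}\cap M^l$ by Lemma \ref{alglefequiv}. That half is fine.

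The gap is in $\supseteq$, which you call immediate. It is not. It amounts to $\ker L_I^{|I|}\cap M^l\subset\ker\bigl(\prod_{i\in I}L_i\bigr)\cap M^l$, and this fails for general nef classes. On $X=(\mathbb{P}^1)^4$ take $L_1=h_1+h_2$, $L_2=h_3$ and $x=h_3-h_1\in H^{1,1}(X)$. Then $(L_1+L_2)^2x=0$, while $L_1L_2x=-h_1h_2h_3\neq 0$. So the lefness of $L_I$ must enter the argument, and neither tool you name supplies it. Proposition \ref{algpurity} only bounds kernels of products from above, so it can never place a vector in $\ker\prod_{i\in I}L_i$. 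Passing to $M_{L_j}$ just reproduces the same kind of claim: to use an inductive statement there, you would first need $L_j(L_I-L_j)^{|I|-1}x=0$.

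The missing input is the invariance of the weight filtration on the relative interior of a face (Lemma \ref{invwtfil}), which the paper uses in Lemma \ref{singlekerinmixed}. Put $N=L_I$. Since $N\sim N+L_i$ for $i\in I$, each such $L_i$ maps $W_j(N,M)$ into $W_{j-2}(N,M)$. Lemma \ref{alglefequiv} gives $\ker N^{|I|}\cap M^l=W_{|I|-1}\cap M^l$. Hence
$\prod_{i\in I}L_i\,x\in W_{-|I|-1}\cap M^{l+|I|}=N^{|I|+1}(W_{|I|+1}\cap M^{l-1})=0$.

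A route closer to your suggestion also works, but you must apply Corollary \ref{kerinclusion} on $M$ itself rather than on a quotient. Every $\sum_{i\in I}c_iL_i$ with all $c_i>0$ is $\sim L_I$, so it has the same kernel on $M^l$ in power $|I|$. Thus $(\sum_{i\in I}c_iL_i)^{|I|}x=0$ on an open set of $c$. Extracting the coefficient of $\prod_{i\in I}c_i$ gives $|I|!\,\prod_{i\in I}L_i\,x=0$.
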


The proof follows the same inductive strategy as that of Theorem \ref{mixedker(p,q)}. Here the descent lemma plays the role of restriction to a hypersurface, while the purity theorem allows us to lift the resulting information back to the original Lefschetz module.

We begin with the following inclusion which follows from the invariance of the weight filtration on the relative interior of each face of the nef cone and Lemma \ref{alglefequiv}.

\begin{lem}\label{singlekerinmixed}
    Let $L_1,...,L_m\in\overline{\mathcal K_A}$ be nef classes and set $ N=L_1+\cdots+L_m.$
    Suppose that $N$ is $(m+2l)$-lef below degree $l$. Then
    \begin{equation*}
        \ker N^m\cap M^l \subset \ker (L_1\cdots L_m)\cap M^l.
    \end{equation*}
\end{lem}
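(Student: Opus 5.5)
Fix $x\in\ker N^m\cap M^l$; the aim is $L_1\cdots L_m x=0$. The idea is a deformation argument: replace the product $L_1\cdots L_m$ by the power $N_t^m$ of a single class $N_t\sim N$, and use Lemma~\ref{invwtfil} to control its kernel.

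\textbf{Step 1: kernels of nearby classes.} For $t=(t_1,\dots,t_m)$ with all $t_i>0$, set $N_t=\sum_i t_iL_i$. Each such $N_t$ satisfies $N_t\sim N$, since both lie in the relative interior of the face spanned by $L_1,\dots,L_m$. By Corollary~\ref{kerinclusion}, $N_t$ is again $(m+2l)$-lef below degree $l$ on $M$. With $d=m+2l$ we have $2l+1\le d$ when $m\ge1$. Hence Lemma~\ref{alglefequiv} applies and gives
\begin{equation*}
\ker N_t^m\cap M^l=W_{m-1}(N_t,M)\cap M^l.
\end{equation*}
Lemma~\ref{invwtfil} gives $W(N_t,M)=W(N,M)$. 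Therefore
\begin{equation*}
\ker N_t^m\cap M^l=\ker N^m\cap M^l
\end{equation*}
for all $t\in\mathbb{R}_{>0}^m$. In particular $N_t^m x=0$ for every such $t$.

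\textbf{Step 2: polarization.} The expression $N_t^m x=\bigl(\sum_i t_iL_i\bigr)^m x$ is a homogeneous polynomial in $t$ with coefficients in $M^{l+m}$. It vanishes on the open set $\mathbb{R}_{>0}^m$, so it vanishes identically. Its coefficient of $t_1t_2\cdots t_m$ is $m!\,L_1\cdots L_m x$. Since $A$ is commutative, this gives $L_1\cdots L_m x=0$, which is the claim.

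\textbf{Main obstacle.} The delicate point is Step 1: the kernel of $N_t^m$ in degree $l$ must be independent of $t$, not merely bounded in dimension. This is exactly where the lefness hypothesis enters. It identifies the kernel with a piece of the weight filtration via Lemma~\ref{alglefequiv}, and that piece is constant on the relative interior of the face by Lemma~\ref{invwtfil}.

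One should also check that the preservation of lefness under $\sim$ holds here. Corollary~\ref{kerinclusion} requires $N_t\gtrsim N$, and this holds because $\lambda N_t-N$ is a nonnegative combination of the $L_i$ once $\lambda\ge 1/\min_i t_i$.
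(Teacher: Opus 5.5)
Your proof is correct, but it takes a different route from the paper's.

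\textbf{The paper's route.} The paper works directly with the weight filtration $W_\bullet=W_\bullet(N,M)$. Since $N\sim N+L_i$, Lemma~\ref{invwtfil} gives $W(N+L_i,M)=W(N,M)$. Hence each $L_i$, being the difference of two operators that lower $W$ by two, satisfies $L_iW_j\subset W_{j-2}$. Then $x\in\ker N^m\cap M^l=W_{m-1}\cap M^l$ is sent by $L_1\cdots L_m$ into $W_{-m-1}\cap M^{l+m}=N^{m+1}(W_{m+1}\cap M^{l-1})$. This space vanishes by lefness in degree $l-1$.

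\textbf{Your route.} You use the invariance of the weight filtration only through one consequence: $\ker N_t^m\cap M^l$ is constant on the open orthant $t\in\mathbb{R}^m_{>0}$. This is exactly the final assertion of Corollary~\ref{kerinclusion} with $d=m+2l$. You then extract the mixed product by polarization. Your check that $N_t\sim N$ and the coefficient extraction are both fine. Coincidences among the $L_i$ cause no trouble, since the monomials in $t$ remain distinct.

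\textbf{Comparison.} Your argument is more modular. Once the kernel of the $m$-th power is known to be constant on an open set of classes, containment in every mixed kernel $\ker L^\beta$ with $|\beta|=m$ follows by pure linear algebra. No further manipulation of $W_\bullet$ is needed. The paper's argument compares $N$ only with the finitely many classes $N+L_i$. It yields the reusable structural fact $L_iW_j\subset W_{j-2}$, which gives the same conclusion for all degree-$m$ monomials at once. It also shows transparently where lefness in degree $l-1$ is used, namely in the vanishing of $W_{-m-1}\cap M^{l+m}$. In your argument that input is hidden inside Lemma~\ref{alglefequiv}.
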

\begin{proof}
    Let $W_\bullet=W_\bullet(N,M)$ be the weight filtration of $N$. 
    We first observe that
    \begin{equation}\label{cnkwd}
        L_iW_j\subset W_{j-2}
    \end{equation}
    for every $1\leq i\leq m$. 
    Indeed, $N\sim N+L_i$. 
    Hence Lemma \ref{invwtfil} gives
    \begin{equation*}
        W_\bullet(N+ L_i,M)  = W_\bullet(N,M).
    \end{equation*}
    By the defining property of the weight filtration, both $N+L_i$ and $N$ send $W_j$ into $W_{j-2}$. 
    Therefore their difference $L_i$ also sends $W_j$ into $W_{j-2}$. 
    
    Now let $x\in\ker N^m\cap M^l.$
    Since $N$ is $(m+2l)$-lef below degree $l$, Lemma \ref{alglefequiv} gives
    \begin{equation*}
        \ker N^m\cap M^l = W_{m-1}\cap M^l.
    \end{equation*}
    By \eqref{cnkwd}, we have $L_1\cdots L_mx\in W_{-m-1}\cap M^{l+m}.$
    Using $W_{-m-1}=N^{m+1}W_{m+1}$, we obtain
    \begin{equation*}
        W_{-m-1}\cap M^{l+m}=N^{m+1}(W_{m+1}\cap M^{l-1}).
    \end{equation*}
    On the other hand, since $N$ is $(m+2l)$-lef below degree $l$, Lemma \ref{alglefequiv} gives $W_{m+1}\cap M^{l-1}=0$. 
    Hence $W_{-m-1}\cap M^{l+m}=0$. A fortiori, $L_1\cdots L_mx=0$. This proves the inclusion
    \begin{equation*}
        \ker N^m\cap M^l \subset \ker (L_1\cdots L_m)\cap M^l.
    \end{equation*}
\end{proof}

\begin{proof}[Proof of Theorem \ref{decoupling}]
    Let  $I\subset[d-2l]$ be a nonempty subset. 
    Since $(L_1,\ldots,L_{d-2l})$ is $d$-lef below degree $l$, 
    the class $L_I$ is $(|I|+2l)$-lef below degree $l$.
    Lemma \ref{singlekerinmixed} therefore gives
    \begin{equation*}
        \ker L_I^{|I|}\cap M^l \subset 
        \ker (\prod_{i\in I}L_i )\cap M^l \subset \ker \mathcal{L}\cap M^l.
    \end{equation*}
    Summing over all such $I$, we obtain
    \begin{equation*}
        \sum_{\emptyset\neq I\subset[d-2l]}\ker L_I^{|I|}\cap M^l \subset
        \ker \mathcal{L}\cap M^l.
    \end{equation*}
    
    It remains to prove
    \begin{equation*}
        \ker\mathcal{L}\cap M^l \subset \sum_{\emptyset\neq I\subset[d-2l]} \ker L_I^{|I|}\cap M^l.
    \end{equation*}
    When $d-2l=1$, the assertion is immediate. 
    We proceed by induction on $d-2l$. 
    Suppose that $d-2l\geq2$ and that the assertion holds for
    $d-2l-1$.
    Let $x\in\ker\mathcal{L}\cap M^l.$ 
    Its image $[x]\in M_{L_{d-2l}}^l$ satisfies
    \begin{equation*}
        L_1\cdots L_{d-2l-1}[x]=0.
    \end{equation*}
    By Corollary \ref{collectiondescent}, the collection
    $(L_1,\ldots,L_{d-2l-1})$ is $(d-1)$-lef below degree $l$ on
    $M_{L_{d-2l}}$. 
    By the induction hypothesis, we have
    \begin{equation*}
        [x] = \sum_{\emptyset\neq I\subset[d-2l-1]}[x_I],
    \end{equation*}
    where $L_I^{|I|}[x_I]=0$ in $M_{L_{d-2l}}$.
    Choose representatives $x_I\in M^l$.
    The above equality in $M_{L_{d-2l}}$ means that 
    \begin{equation*} 
        x-\sum_{\emptyset\neq I\subset[d-2l-1]}x_I \in\ker L_{d-2l}. 
    \end{equation*} 
    Moreover, $L_I^{|I|}[x_I]=0$ in $M_{L_{d-2l}} $ means that $L_{d-2l}L_I^{|I|}x_I=0.$
    Therefore 
    \begin{equation*} 
        x\in \ker L_{d-2l}\cap M^l + \sum_{\emptyset\neq I\subset[d-2l-1]}  \ker(L_{d-2l}L_I^{|I|})\cap M^l . 
    \end{equation*}

    For every $I$, Proposition \ref{algpurity} gives
    \begin{equation*}
        \ker(L_{d-2l}L_I^{|I|})\cap M^l 
        \subset \ker L_{d-2l}\cap M^l + \ker L_I^{|I|}\cap M^l + W_{|I|}(L_I+L_{d-2l},M)\cap M^l
    \end{equation*}
    Since $\mathcal{L}$ is $d$-lef below degree $l$, the class $L_I+L_{d-2l}$ is $(|I|+2l+1)$-lef below degree $l$. Therefore Lemma \ref{alglefequiv} implies that 
    \begin{equation*} 
        W_{|I|}(L_I+L_{d-2l},M)\cap M^l = \ker(L_I+L_{d-2l})^{|I|+1}\cap M^l. 
    \end{equation*}
    Consequently,
    \begin{equation*}
        \ker(L_{d-2l}L_I^{|I|})\cap M^l \subset \ker L_{d-2l}\cap M^l+ \ker L_I^{|I|}\cap M^l + \ker(L_I+L_{d-2l})^{|I|+1}\cap M^l.
    \end{equation*}
    Substituting the above inclusion into the expression for $x$, we obtain 
    \begin{equation*} 
        x\in \sum_{\emptyset\neq I\subset[d-2l]} \ker L_I^{|I|}\cap M^l. 
    \end{equation*} 
    Since $x\in\ker\mathcal{L}\cap M^l$ is arbitrary, it follows
    \begin{equation*} 
        \ker\mathcal{L}\cap M^l \subset \sum_{\emptyset\neq I\subset[d-2l]}  \ker L_I^{|I|}\cap M^l . 
    \end{equation*}
    This completes the proof.
\end{proof}

We are now ready to prove Theorem \ref{alglefcharintro} and \ref{algkerintro}.

\begin{cor}\label{algker}(=Theorem \ref{algkerintro})
    Let $d,l$ be integers satisfying $2l+1\leq d\leq n$, and let $\mathcal{L}=(L_1,...,L_m)$ be a collection of nef classes that is $d$-lef below degree $l$. 
    Then for every $\alpha \in \mathbb{Z}_{>0}^m$ with $|\alpha|=d-2l$, we have
    \begin{equation*}
        \ker \mathcal{L}^{\alpha}\cap M^l =\sum_{\emptyset\neq I\subset [m]}\ker L_I^{\alpha_I}\cap M^l.
    \end{equation*}
\end{cor}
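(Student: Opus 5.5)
The plan is to reduce Corollary \ref{algker} to Theorem \ref{decoupling} by applying the latter to the repeated collection $\mathcal{L}'=(L_1[\alpha_1],\ldots,L_m[\alpha_m])$, which has exactly $|\alpha|=d-2l$ members. By Lemma \ref{addalg}(2) (the hypothesis $|\alpha|=d-2l\le d-2l+2$ is satisfied), $\mathcal{L}'$ is $d$-lef below degree $l$ on $M$. Theorem \ref{decoupling} then gives
\begin{equation*}
    \ker\mathcal{L}^{\alpha}\cap M^l=\sum_{\emptyset\neq J\subset[d-2l]}\ker (L'_J)^{|J|}\cap M^l,
\end{equation*}
where $L'_J$ is the sum of the classes of $\mathcal{L}'$ indexed by $J$. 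It then remains to compare this sum with $\sum_{\emptyset\neq I\subset[m]}\ker L_I^{\alpha_I}\cap M^l$.

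For the inclusion ``$\supset$'', fix $I\subset[m]$ nonempty and let $J$ be the set of all indices of $\mathcal{L}'$ lying over $I$. Then $|J|=\alpha_I$ and $L'_J=\sum_{i\in I}\alpha_iL_i\sim L_I$. Since $L'_J$ is $(|J|+2l)$-lef below degree $l$ (because $\mathcal{L}'$ is $d$-lef with $d-2l$ members) and $|J|+2l\ge 2l+1$, Corollary \ref{kerinclusion} gives $\ker (L'_J)^{\alpha_I}\cap M^l=\ker L_I^{\alpha_I}\cap M^l$. Hence each summand on the right lies in the left-hand side.

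For ``$\subset$'', take an arbitrary $J$ and let $I$ be the set of $i\in[m]$ having at least one copy in $J$, with $\beta_i\le\alpha_i$ the number of such copies. Then $L'_J=\sum_{i\in I}\beta_iL_i\sim L_I$, and again Corollary \ref{kerinclusion} gives $\ker (L'_J)^{|J|}\cap M^l=\ker L_I^{|J|}\cap M^l$. Since $|J|=\beta_I\le\alpha_I$, we have $\ker L_I^{|J|}\subset\ker L_I^{\alpha_I}$, which closes the inclusion.

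The main point requiring care is the applicability of Corollary \ref{kerinclusion}: we need that $L_I$ and $L'_J$ share the same lef degree $|J|+2l\ge 2l+1$ below degree $l$. This follows from the lefness of $\mathcal{L}'$ together with the $\sim$-invariance of lefness, also given by Corollary \ref{kerinclusion}. The remaining steps are bookkeeping.
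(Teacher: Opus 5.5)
Your proposal is correct and follows the paper's proof essentially step for step. You apply Lemma~\ref{addalg}(2) and Theorem~\ref{decoupling} to the repeated collection, then compare the two sums in both directions via $\sim$-equivalence and Corollary~\ref{kerinclusion}, using $|J|\le\alpha_I$ to enlarge the exponent in the inclusion ``$\subset$''.
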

\begin{proof}
    By Lemma \ref{addalg}, the collection 
    \begin{equation*}
        (K_1,...,K_{d-2l})=(L_1[\alpha_1],...,L_{m}[\alpha_m])
    \end{equation*}
    is $d$-lef below degree $l$. 
    Hence, Theorem \ref{decoupling} applied to this collection yields
    \begin{equation*}
        \ker \mathcal{L}^{\alpha} \cap M^l=\sum _{\emptyset \neq I \subset [d-2l]} \ker K_I^{|I|}\cap M^l.
    \end{equation*}
    It remains to prove
    \begin{equation*}
        \sum _{\emptyset \neq I \subset [d-2l]} \ker K_I^{|I|}\cap M^l =\sum_{\emptyset\neq J\subset [m]}\ker L_J^{\alpha_J}\cap M^l.
    \end{equation*}
    Write $[d-2l]=A_1\sqcup\cdots\sqcup A_m$ satisfying $|A_j|=\alpha_j$ and  $K_i=L_j$ for every $i\in A_j$.
    Let $\emptyset\neq I\subset[d-2l]$ and set $J_I=\{j\in[m]:I\cap A_j\neq\emptyset\}.$
    Then 
    $$K_I=\sum _{j\in J_I}|I\cap A_j|L_j\sim L_{J_I}.$$
    By Corollary \ref{kerinclusion}, we obtain
    \begin{equation*}
        \ker K_I^{|I|}\cap M^l= \ker L_{J_I}^{|I|} \cap M^l \subset \ker L_{J_I}^{\alpha_{J_I}} \cap M^l .
    \end{equation*}
    Summing over $I$ gives
    \begin{equation*}
        \sum _{\emptyset \neq I \subset [d-2l]} \ker K_I^{|I|}\cap M^l \subset \sum_{\emptyset\neq J\subset [m]}\ker L_J^{\alpha_J}\cap M^l.
    \end{equation*}
    
    Conversely, let $\emptyset\neq J\subset[m]$ and set $I_J=\bigsqcup_{j\in J}A_j.$
    Then $|I_J|=\alpha_J$ and $$K_{I_J}=\sum_{j\in J}\alpha_j L_j\sim L_J.$$
    Hence Corollary \ref{kerinclusion} implies
    \begin{equation*}
        \ker L_J^{\alpha_J}\cap M^l = \ker K_{I_J}^{|I_J|}\cap M^l.
    \end{equation*}
    Since $I_J$ is a nonempty subset of $[d-2l]$, it follows that
    \begin{equation*}
        \sum_{\emptyset\neq J\subset[m]} \ker L_J^{\alpha_J}\cap M^l \subset \sum_{\emptyset\neq I\subset[d-2l]} \ker K_I^{|I|}\cap M^l.
    \end{equation*}
    Combining the two inclusions completes the proof.
\end{proof}

\begin{cor}\label{alglefchar}(=Theorem \ref{alglefcharintro})
    Let $l,d$ be integers satisfying $ 2l-1\leq d\leq n$ and let $\mathcal{L}=(L_1,...,L_{m})$ be a collection of nef classes
    with $1\leq m\le d-2l+2$. Then the following statements are equivalent.
    \begin{enumerate}
        \item $\mathcal{L}$ is $d$-lef below degree $l$ on $M$. Namely, for every integer $k\leq l-1$ and every nonempty $I\subset [m]$, the linear map
        \begin{equation*}
            L_I^{|I|+d-m-2k}:M^k \to M^{|I|+d-m-k}
        \end{equation*}
        is injective.
        \item For every integer $k\leq l-1$ and every $\alpha\in \mathbb{Z}_{>0}^m$ with $|\alpha|=d-2k$, the linear map
        \begin{equation*}
            \mathcal{L}^{\alpha}:M^{k}\to M^{d-k}
        \end{equation*}
        is injective.
    \end{enumerate}
\end{cor}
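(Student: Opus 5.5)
The plan is to deduce both implications from the kernel decoupling of Corollary \ref{algker}, applied with the degree parameter $l$ replaced by each $k\le l-1$. First I check that this substitution is legitimate. Fix $k\le l-1$, and only consider $k$ with $M^k\neq 0$; otherwise everything is vacuous. Then
\begin{equation*}
2k+1\le 2l-1\le d \quad\text{and}\quad m\le d-2l+2\le d-2k .
\end{equation*}
Hence Corollary \ref{algker} applies at level $k$ to any collection that is $d$-lef below degree $k$ on $M$. For every $\alpha\in\mathbb{Z}_{>0}^m$ with $|\alpha|=d-2k$ it gives
\begin{equation*}
\ker\mathcal{L}^{\alpha}\cap M^k=\sum_{\emptyset\neq J\subset[m]}\ker L_J^{\alpha_J}\cap M^k .
\end{equation*}
The size condition $1\le m\le d-2k+2$ needed for lefness below degree $k$ holds because $m\le d-2k$.

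\textbf{(1)$\Rightarrow$(2).} Assume $\mathcal{L}$ is $d$-lef below degree $l$. Then it is in particular $d$-lef below degree $k$ for each $k\le l-1$, since the defining injectivity conditions only get weaker. So the displayed decoupling holds. For $|\alpha|=d-2k$ and any nonempty $J\subset [m]$,
\begin{equation*}
\alpha_J\le |J|+(d-2k)-m,
\end{equation*}
because every entry outside $J$ is at least $1$. Condition (1) says $L_J^{|J|+d-m-2k}$ is injective on $M^k$. Since this power factors through $L_J^{\alpha_J}$, the map $L_J^{\alpha_J}$ is injective on $M^k$ as well. Every summand on the right-hand side therefore vanishes, so $\mathcal{L}^\alpha$ is injective on $M^k$.

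\textbf{(2)$\Rightarrow$(1).} I argue by induction on $k$ that $\mathcal{L}$ is $d$-lef below degree $k+1$ for every $k\le l-1$. The base case is $k<e:=\min\{i: M^i\neq 0\}$, where the condition is vacuous.

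For the inductive step, suppose $\mathcal{L}$ is $d$-lef below degree $k$ with $k\le l-1$. By the decoupling identity and hypothesis (2), for every $|\alpha|=d-2k$ we get
\begin{equation*}
0=\ker \mathcal{L}^\alpha\cap M^k\supset \ker L_I^{\alpha_I}\cap M^k \quad \text{for every nonempty } I\subset[m].
\end{equation*}
Given $I$, I choose $\alpha$ with $\alpha_i=1$ for $i\notin I$ and the remaining weight placed on $I$. This is possible since $|\alpha|=d-2k\ge m$, and it realizes
\begin{equation*}
\alpha_I=|I|+d-m-2k .
\end{equation*}
Hence $L_I^{|I|+d-m-2k}$ is injective on $M^k$ for all $I$. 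Together with the inductive hypothesis, this shows that $\mathcal{L}$ is $d$-lef below degree $k+1$. Taking $k=l-1$ gives (1).

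The only real input is Corollary \ref{algker}. The point to watch is the bookkeeping in the inductive step: the decoupling at level $k$ needs lefness only below degree $k$, which is exactly what the induction supplies. This is why (2)$\Rightarrow$(1) has to be run inductively rather than in one step.
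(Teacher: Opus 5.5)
Your proposal is correct and follows the paper's proof essentially step for step. Both directions come from Corollary~\ref{algker} applied at each level $k\le l-1$. For (1)$\Rightarrow$(2) you use the bound $\alpha_I\le |I|+d-m-2k$, and for (2)$\Rightarrow$(1) you induct on $k$ starting from the first nonzero degree $e$, choosing $\alpha_j=1$ for $j\notin I$. Your explicit check that $2k+1\le d$ and $m\le d-2k$ (so the decoupling applies and such $\alpha$ exist) is a small bookkeeping step the paper leaves implicit.
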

\begin{proof}
    $(1)\implies (2)$ For every integer $k\leq l-1$, the collection $\mathcal{L}$ is in particular $d$-lef below degree $k$. 
    Hence Corollary \ref{algker} implies that
    \begin{equation*}
        \ker\mathcal{L}^{\alpha} \cap M^k=\sum_{\emptyset\neq I\subset [m]} \ker L_I^{\alpha_I}\cap M^k
    \end{equation*}
    for every $\alpha\in \mathbb{Z}_{>0}^m$ with $|\alpha|=d-2k$.
    Since $|\alpha|=d-2k$ and $\alpha_j\geq 1$ for every $j\notin I$, we have
    \begin{equation*}
        \alpha_I \leq d-2k- (m-|I|) =|I|+d-m-2k.
    \end{equation*}
    Therefore
    \begin{equation*}
        \ker L_I^{\alpha_I} \cap M^k \subset \ker L_I^{|I|+d-m-2k}\cap M^k.
    \end{equation*}
    On the other hand, since $\mathcal{L}$ is $d$-lef below degree $l$, the class $L_I$ is $(|I|+d-m)$-lef below degree $l$. Hence $\ker L_I^{|I|+d-m-2k}\cap M^k=0.$
    It follows that $\ker\mathcal{L}^{\alpha}\cap M^k=0, $ 
    proving $(2)$.
    
    $(2)\implies (1)$
    Set $e=\min\{i\in \mathbb{Z}:M^i\neq 0\}.$
    We prove by induction on $k$ that $\mathcal{L}$ is $d$-lef below degree $k$ for every $e\leq k\leq l$. The case $k=e$ is trivial since the condition is vacuous. 
    Now suppose that $\mathcal{L}$ is $d$-lef below degree $k$ for some $e\leq k < l$.
    By Corollary \ref{algker}, we have
    \begin{equation*}
        \ker \mathcal{L}^{\alpha}\cap M^k =\sum_{\emptyset\neq I\subset [m]} \ker L_I^{\alpha_I}\cap M^k
    \end{equation*}
    for every $\alpha \in \mathbb{Z}_{>0}^m$ with $|\alpha|=d-2k$.
    The left-hand side vanishes by $(2)$. Hence
    \begin{equation*}
    \ker L_I^{\alpha_I}\cap M^k=0
    \end{equation*}
    for every nonempty subset $I\subset[m]$ and every $\alpha \in \mathbb{Z}_{>0}^m$ with $|\alpha|=d-2k$. 
    For a fixed nonempty subset $I\subset[m]$, the quantity $\alpha_I$
    attains its maximum when $\alpha_j=1$ for every $j\notin I$.
    In this case,
    \begin{equation*}
        \max\alpha_I= d-2k-(m-|I|) = |I|+d-m-2k.
    \end{equation*}
    Therefore $\ker L_I^{|I|+d-m-2k}\cap M^k=0.$
    
    Together with the induction hypothesis, this shows that $L_I$ is
    $(|I|+d-m)$-lef below degree $k+1$ for every nonempty
    $I\subset[m]$. 
    Equivalently, $\mathcal{L}$ is $d$-lef below degree
    $k+1$. 
    The induction proves that $\mathcal{L}$ is $d$-lef
    below degree $l$, completing the proof.
\end{proof}

We can also prove the weak Hodge-Riemann relations in this setting.

\begin{cor}\label{weakHRalg}
    Let $l$ be an integer satisfying $2l\leq n$ and
    let $L,L_1,...,L_{n-2l}\in \overline{\mathcal{K}_A}$ be nef classes.
    Set $\mathcal{L}=(L_1,...,L_{n-2l})$ and consider the Hermitian form
    \begin{equation*}
        H: M^l\times M^l \to \mathbb{C} \ \ \  (x,y) \mapsto (-1)^lQ_M(x,\mathcal{L}  y).
    \end{equation*}
    Suppose that the collection $(\mathcal{L},L,L)$ is $n$-lef below degree $l$. Then there exists a $H$-orthogonal decomposition $M^l=LM^{l-1}\oplus P^l$
    where $P^l=\ker(\mathcal{L}L)\cap M^l$ is the primitive space.
    Moreover, $H_{|P^l}$ is positive semidefinite with $\ker H_{|P^l}=\ker \mathcal{L}\cap M^l$. 

    Here the empty collection $\mathcal{L}=\emptyset$ is allowed, with
    the convention $\mathcal{L}=1$.
\end{cor}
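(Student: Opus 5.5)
The plan is to mimic the proof of Lemma \ref{lefdecomp}, with the injectivity inputs now coming from the Lefschetz-module results (Corollary \ref{alglefchar}) in place of the geometric induction.

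\textbf{Step 1: the key isomorphism.} Since $(\mathcal{L},L,L)$ has size $n-2l+2$ and is $n$-lef below degree $l$, Corollary \ref{alglefchar} gives that
\begin{equation*}
\mathcal{L}L^2:M^{l-1}\to M^{n-l+1}
\end{equation*}
is injective. Here we take $k=l-1$ and exponent vector $(1,\dots,1,2)$. By the nondegeneracy of $Q_M$, we have $\dim M^{l-1}=\dim M^{n-l+1}$, so this map is an isomorphism.

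\textbf{Step 2: the orthogonal decomposition.} For $x\in M^l$, the class $\mathcal{L}Lx$ lies in $M^{n-l+1}$, so there is a unique $\eta\in M^{l-1}$ with $\mathcal{L}L^2\eta=\mathcal{L}Lx$. Then $x-L\eta\in P^l$, which gives $M^l=LM^{l-1}+P^l$. The sum is direct because $L\eta\in P^l$ forces $\mathcal{L}L^2\eta=0$, hence $\eta=0$. For orthogonality, $A$-invariance gives
\begin{equation*}
H(L\eta,y)=(-1)^lQ_M(\eta,\mathcal{L}Ly)=0 \quad\text{for } y\in P^l,
\end{equation*}
using that $L$ and the $L_i$ are real.

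\textbf{Step 3: positive semidefiniteness (the main step).} Fix $\omega\in\mathcal{K}_A$ and perturb each $L_i$ and $L$ by $t\omega$ for $t>0$. By the mixed Hodge--Riemann relations of Remark \ref{mixHRKah}, the form $H_t$ is positive definite on $P^l_t=\ker(\mathcal{L}_tL_t)\cap M^l$. By mixed hard Lefschetz, $\mathcal{L}_tL_t^2$ is an isomorphism on $M^{l-1}$, and by Step 1 this also holds at $t=0$. Hence
\begin{equation*}
\dim P^l_t=\dim M^l-\dim M^{l-1}
\end{equation*}
is constant for $t\ge 0$. Indeed, $\mathcal{L}_tL_t:M^l\to M^{n-l+1}$ is surjective, since its image contains the image of $\mathcal{L}_tL_t^2$. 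The kernels therefore have constant dimension, and $P_t^l\to P^l$ in the Grassmannian. Passing to the limit shows $H|_{P^l}\ge 0$. The delicate point is this continuity of the primitive spaces, which is secured by the constant rank.

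\textbf{Step 4: the kernel.} If $x\in\ker H|_{P^l}$, orthogonality gives $H(x,y)=0$ for all $y\in M^l$, so $Q_M(\mathcal{L}x,\cdot)=0$ on $M^l$. Since $Q_M$ is nondegenerate between $M^{n-l}$ and $M^l$, this forces $\mathcal{L}x=0$. Conversely, $\mathcal{L}x=0$ implies $x\in P^l$ and $H(x,\cdot)=0$. The empty collection $\mathcal{L}=1$ is covered by the same argument, since then $n=2l$ and the assumption concerns $(L,L)$.
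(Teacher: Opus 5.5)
Your proposal is correct and is exactly the paper's argument. The paper obtains the isomorphism $\mathcal{L}L^2\colon M^{l-1}\to M^{n-l+1}$ from Corollary \ref{alglefchar} together with $\dim M^{l-1}=\dim M^{n-l+1}$, and then repeats the proof of Lemma \ref{lefdecomp}, which is what your Steps 2--4 do; your surjectivity argument for the constant dimension of $P^l_t$ spells out a detail the paper leaves terse.

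There is one cosmetic slip. When you apply Corollary \ref{alglefchar} to the collection $(\mathcal{L},L,L)$ itself, the exponent vector is $(1,\dots,1)$, not $(1,\dots,1,2)$. The latter would require lefness of $(\mathcal{L},L)$, which is not part of the hypothesis, but the resulting operator $\mathcal{L}L^2$ is the same.
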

\begin{proof}
    By Corollary \ref{alglefchar}, the linear map
    \begin{equation*}
        \mathcal{L} L^2: M^{l-1}\to M^{n-l+1}
    \end{equation*}
    is injective. Since $\dim M^{l-1} =\dim M^{n-l+1}$, it is in fact an isomorphism. 
    The remaining proof is the same as that of Lemma \ref{lefdecomp} and we leave the details to the reader.

\end{proof}

The kernel $\ker\mathcal{L}\cap M^l$ appearing in Corollary \ref{weakHRalg} is characterized by Theorem \ref{decoupling}.
Consequently, if $\mathcal{L}$ is moreover $n$-lef below degree $l+1$, then the Hermitian form $H$ is positive definite on $P^l$, recovering the usual Hodge-Riemann relations.

 \subsection{Geometric applications}\label{geomapp}
In this subsection, we apply the preceding results to Lefschetz modules arising from the cohomology of smooth projective varieties over $\mathbb{C}$. This gives a new proof of the main results of Section \ref{geometricdecoupling}. We also prove Theorem \ref{nefker(1,1)intro} and Corollary \ref{extremalKT}.

Let $X$ be a smooth projective variety of dimension $n$ over $\mathbb{C}$. 
We use the notation introduced in Example \ref{geomlefmod}.
Applying Corollaries \ref{alglefchar} and \ref{algker} to these Lefschetz modules, we get the following decoupling result in the geometric setting.



\begin{thrm}\label{geomdecoupLefmod}
    Let $d,p,q\geq 0$ be integers satisfying $p+q\leq d\leq n$ and let $\mathcal{L}=(L_1,...,L_m)$ be a collection of nef classes with $1\leq m\leq d-p-q+2$. Then the following statements are equivalent.
    \begin{enumerate}
        \item For every $i\geq 1$ and $\alpha\in \mathbb{Z}_{>0}^m$ with $|\alpha|=d-p-q+2i$, the map
        \begin{equation*}
            \mathcal{L}^{\alpha}:H^{p-i,q-i}(X)\to H^{d-q+i,d-p+i}(X)
        \end{equation*}
        is injective.
        \item 
        For every $i\geq 1$ and any nonempty $I\subset [m]$, the map
        \begin{equation*}
            L_I^{|I|-m+d-p-q+2i}:H^{p-i,q-i}(X) \to H^{|I|-m+d-q+i,|I|-m+d-p+i}(X)
        \end{equation*}
        is injective.
    \end{enumerate}
    Moreover, if these equivalent conditions hold and $m\leq d-p-q$, then for every $\alpha\in \mathbb{Z}_{>0}^m$ with $|\alpha|=d-p-q$, we have
    \begin{equation*}
        \ker \mathcal{L}^{\alpha}\cap H^{p,q}(X)=\sum_{\emptyset \neq I \subset [m]} \ker L_I^{\alpha_I}\cap H^{p,q}(X).
    \end{equation*}
\end{thrm}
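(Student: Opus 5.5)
The plan is to apply Corollaries \ref{alglefchar} and \ref{algker} to a single Lefschetz module from Example \ref{geomlefmod}, after a degree shift. Set $k=p-q$ and consider $M_k$, with $M_k^i=H^{k+i,i}(X)$ if $k\ge0$ and $M_k^i=H^{i,|k|+i}(X)$ if $k<0$. By Example \ref{geomlefmod} this is a complex Lefschetz module of degree $n-|k|$ over $(A,\mathcal K_X)$. Put $l=\min(p,q)$, so that $H^{p,q}(X)=M_k^{l}$ and $H^{p-i,q-i}(X)=M_k^{l-i}$. The target $H^{d-q+i,d-p+i}(X)$ is $M_k^{d'-(l-i)}$ with $d'=d-|k|$. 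Indeed, if $k\ge0$, then $(d-q+i)-(d-p+i)=k$ and the second index is $d-p+i=d'-(l-i)$; the case $k<0$ is symmetric. The key observation is $d-p-q+2i=d'-2(l-i)$. Hence condition (1) says exactly that for every $j=l-i\le l-1$ and every $\alpha$ with $|\alpha|=d'-2j$, the map $\mathcal L^\alpha:M_k^j\to M_k^{d'-j}$ is injective. This is condition (2) of Corollary \ref{alglefchar} with parameters $(d',l+1)$ on $M_k$. Likewise, condition (2) of the theorem is the statement that each $L_I$ is $(|I|+d'-m)$-lef below degree $l+1$ on $M_k$.

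Next I will check that the parameters are admissible. We need $2(l+1)-1\le d'\le n-|k|$, i.e. $2l+1\le d-|k|$. Now $2l+|k|=p+q\le d$, so the first inequality holds except in the boundary case $p+q=d$. We also need $1\le m\le d'-2(l+1)+2=d-p-q+2$, which is exactly the hypothesis. The upper bound $d'\le n-|k|$ follows from $d\le n$. With these checks, Corollary \ref{alglefchar} gives the equivalence (1)$\iff$(2).

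In the boundary case $p+q=d$ we have $2l+1=d'+1$, so Corollary \ref{alglefchar} does not literally apply. I will handle this directly: the conditions only involve $j\le l-1$ with $d'-2j\ge 2$, and for those degrees the same argument goes through. Concretely, I will use the proof of Corollary \ref{alglefchar} via Corollary \ref{algker} in degrees $k\le l-1$, for which only the inequality $2(l-1)+1\le d'$ is needed. Alternatively, I will note that the conditions are unchanged if $l$ is replaced by the largest admissible value, since indices $j$ with $j\ge d'/2$ do not occur. I expect this bookkeeping at the boundary to be the only delicate point.

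For the kernel statement, assume $m\le d-p-q=d'-2l$. Then $2l+1\le d'$, and $\mathcal L$ is $d'$-lef below degree $l$ on $M_k$: by the equivalence just proved, it is lef below degree $l+1$, which is a stronger condition. For $|\alpha|=d-p-q=d'-2l$, Corollary \ref{algker} gives $\ker\mathcal L^\alpha\cap M_k^l=\sum_I\ker L_I^{\alpha_I}\cap M_k^l$. Since $M_k^l=H^{p,q}(X)$, this is the claimed decomposition.
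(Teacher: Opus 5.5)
Your route is the same as the paper's: apply Corollaries \ref{alglefchar} and \ref{algker} to $M_{p-q}$ with $l=\min(p,q)$ and $d'=d-|p-q|$. Your index bookkeeping is also correct: $H^{p-i,q-i}(X)=M_k^{l-i}$, the target is $M_k^{d'-(l-i)}$, and $d-p-q+2i=d'-2(l-i)$. However, the lef-degree parameter is off by one, and this breaks the step where you invoke the corollaries.

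Since $i\ge 1$, condition (1) only concerns the degrees $j=l-i\le l-1$. In Corollary \ref{alglefchar}, ``below degree $l$'' means exactly the degrees $k\le l-1$. So condition (1) is condition (2) of Corollary \ref{alglefchar} with parameters $(d',l)$, not $(d',l+1)$. Likewise, condition (2) of the theorem says that each $L_I$ is $(|I|+d'-m)$-lef below degree $l$, not below degree $l+1$.

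With your parameters $(d',l+1)$, the corollary asserts a different equivalence, one that also involves injectivity on $M_k^l=H^{p,q}(X)$. Its admissibility requirement is $m\le d'-2(l+1)+2=d'-2l=d-p-q$; your value ``$d-p-q+2$'' is an arithmetic slip. So, as written, your argument does not cover $m\in\{d-p-q+1,\,d-p-q+2\}$. The ``boundary case $p+q=d$'' that you plan to treat separately is an artifact of the same shift.

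The same error affects your kernel paragraph. There you say the equivalent conditions make $\mathcal L$ lef below degree $l+1$. They do not, and if they did, $\mathcal L^\alpha$ would be injective on $M_k^l=H^{p,q}(X)$ for $|\alpha|=d'-2l$. The decomposition would then reduce to $0=0$. Your conclusion (lefness below degree $l$) is right, but the justification is not.

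The fix is to use $(d',l)$ throughout:
\begin{itemize}
\item $2l-1\le d'$ is equivalent to $p+q\le d+1$, which always holds.
\item $m\le d'-2l+2=d-p-q+2$ is exactly the hypothesis.
\item Hence Corollary \ref{alglefchar} applies directly, with no boundary case.
\item The equivalent conditions say precisely that $\mathcal L$ is $d'$-lef below degree $l$ on $M_k$.
\item If $1\le m\le d-p-q=d'-2l$, then $2l+1\le d'$ and an $\alpha$ with $|\alpha|=d'-2l$ exists. Corollary \ref{algker} with $(d',l)$ then gives the decomposition on $M_k^l=H^{p,q}(X)$.
\end{itemize}
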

\begin{proof}
    By complex conjugation, we assume that $p\geq q$. 
    By Example \ref{geomlefmod}, $M_{p-q}$ is a complex Lefschetz module of degree $n-p+q$ and we have $H^{p-i,q-i}(X)=M_{p-q}^{q-i}$.
    The statements then follow immediately from Corollaries \ref{alglefchar} and \ref{algker}.
\end{proof}


When $p=q=0$, the injectivity of $\mathcal{L}^{\alpha}:H^{0,0}(X)\to H^{d,d}(X)$ is equivalent to the nonvanishing of the class $\mathcal{L}^{\alpha}$. 
As an immediate consequence, we recover the following Hall-Rado criterion for nef classes.

\begin{cor}\label{hallradonef}
    Let $m,d$ be integers satisfying $1\leq m\leq d\leq n$ and
    $\mathcal{L}=(L_1,...,L_{m})$ be a collection of nef classes.
    Then for any $\alpha\in \mathbb{Z}_{>0}^m$ with $|\alpha|=d$, we have
    \begin{equation*}
        \mathcal{L}^{\alpha}\neq 0\iff  \nd(L_I)\geq \alpha_I, \forall \emptyset\neq I\subset [m]
    \end{equation*}
\end{cor}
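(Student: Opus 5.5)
We apply Theorem \ref{geomdecoupLefmod} in bidegree $p=q=0$, where $H^{0,0}(X)=\mathbb{C}[X]$ is one-dimensional. For a nef class $N$ and an integer $r\ge 0$, the map $N^r:H^{0,0}(X)\to H^{r,r}(X)$ is injective if and only if $N^r\neq 0$, which holds if and only if $\nd(N)\ge r$. The proof then reduces to matching the degree parameters of Theorem \ref{geomdecoupLefmod} with the statement and proving both implications.

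For ``$\Leftarrow$'', fix $\alpha$ with $|\alpha|=d$ and assume $\nd(L_I)\ge\alpha_I$ for all nonempty $I$. The natural choice $d'=d$ in Theorem \ref{geomdecoupLefmod} does not work directly: condition (2) there would require $\nd(L_I)\ge |I|-m+d$, which is stronger than the hypothesis unless $\alpha_j=1$ for $j\notin I$. So I will instead reduce to the repeated collection $\mathcal{K}=(L_1[\alpha_1],\ldots,L_m[\alpha_m])$, of size $d$, and use $d'=d$, $m'=d$. Condition (2) for $\mathcal{K}$ asks that $\nd(K_J)\ge |J|$ for every nonempty $J\subset[d]$. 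As in the proof of Corollary \ref{algker}, $K_J\sim L_{J'}$, where $J'$ is the set of indices $j$ with $J\cap A_j\ne\emptyset$. Numerical dimension is invariant under $\sim$, since $\lambda N-N'$ nef implies $N'^r\ne0\Rightarrow N^r\ne0$ by positivity of products of nef classes. Hence $\nd(K_J)=\nd(L_{J'})\ge\alpha_{J'}\ge|J|$. Condition (2) of Theorem \ref{geomdecoupLefmod} for $\mathcal{K}$ (with $i=1$, since $H^{-i,-i}=0$ for $i\ge 1$, so we should use the version with $p=q=0$ shifted) then gives injectivity of $\mathcal{K}^{(1,\ldots,1)}=\mathcal{L}^\alpha$ on $H^{0,0}$.

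There is an indexing subtlety here: Theorem \ref{geomdecoupLefmod} only involves $i\ge1$, which for $p=q=0$ is vacuous. I will therefore apply it with $(p,q)=(1,1)$ and $d'=d+2$, so that $i=1$ corresponds to $H^{0,0}$. Equivalently, I will invoke Corollary \ref{alglefchar} with $l=1$ directly on $M_0$; this is cleaner, and the conditions reduce exactly to $k=0$.

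For ``$\Rightarrow$'', assume $\mathcal{L}^\alpha\ne0$ and fix $I$. Write $\mathcal{L}^\alpha=\prod_{i\in I}L_i^{\alpha_i}\cdot R$ with $R$ a product of nef classes. Since $\prod_{i\in I}L_i^{\alpha_i}$ is a summand, with positive coefficient, of $L_I^{\alpha_I}$ in the multinomial expansion, and all terms are products of nef classes, nonvanishing of $\mathcal{L}^\alpha$ implies $L_I^{\alpha_I}\cdot R\ne0$. This positivity step needs that sums of nonzero products of nef classes with positive coefficients are nonzero. I will argue this by pairing with a Kähler power and using that the pairing of such a product with a suitable Kähler class is nonnegative. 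Hence $L_I^{\alpha_I}\ne 0$. Alternatively, this direction also follows from Corollary \ref{alglefchar} $(2)\Rightarrow(1)$ applied to $\mathcal{K}$.

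I expect the main obstacle to be the bookkeeping that makes the $l=1$, $k=0$ instance of Corollary \ref{alglefchar} apply to the repeated collection. The key step is checking the $\sim$-invariance of $\nd$, so that $\nd(K_J)=\nd(L_{J'})$.
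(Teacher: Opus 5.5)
Your proof is correct, but it is organized differently from the paper's. The paper uses the decoupling identity, namely the ``moreover'' clause of Theorem~\ref{geomdecoupLefmod} with $p=q=0$ (equivalently Corollary~\ref{algker} with $l=0$). There the lefness hypotheses are vacuous and only $m\le d$ is needed. This gives $\ker\mathcal{L}^{\alpha}\cap H^{0,0}(X)=\sum_{\emptyset\neq I\subset[m]}\ker L_I^{\alpha_I}\cap H^{0,0}(X)$ for the given $\alpha$. Since $H^{0,0}(X)$ is one-dimensional, both implications follow at once. No repeated collection and no separate positivity argument are needed, because the repetition and the $\sim$-bookkeeping are already done inside the proof of Corollary~\ref{algker}.

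You instead apply the equivalence of Corollary~\ref{alglefchar} with $l=1$ to the repeated collection $\mathcal{K}$. This is legitimate: $\mathcal{K}$ has size $d\le d-2l+2$, and $|\beta|=d$ forces $\beta=(1,\dots,1)$, so $\mathcal{K}^{\beta}=\mathcal{L}^{\alpha}$. Note, however, that the proof of Corollary~\ref{alglefchar} at $k=0$ itself calls Corollary~\ref{algker} with $l=0$. So you are running the same engine with an extra layer of bookkeeping.

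Your direct argument for ``$\Rightarrow$'' has the merit of avoiding Lefschetz-module theory, but as written it is incomplete. Nonnegativity of the pairings with $\omega^{n-d}$ is not enough; you need that a \emph{nonzero} product of $d$ nef classes has strictly positive degree against $\omega^{n-d}$. This is true, since such products are pseudo-effective and the pseudo-effective cone is salient. Within the paper it is Proposition~\ref{keraddKah} with $l=0$. The same fact is what justifies your claim that $\nd$ is monotone under $\gtrsim$; alternatively, cite Corollary~\ref{kerinclusion} with $l=1$.

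Finally, your intermediate indexing is off. With $(p,q)=(1,1)$ in Theorem~\ref{geomdecoupLefmod}, the case $i=1$ concerns $|\alpha|=d'$, so you would need $d'=d$, not $d+2$. Switching to Corollary~\ref{alglefchar} with $l=1$, as you do, avoids this issue.
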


We now apply the preceding results to give an alternative proof of Theorems \ref{mixedlefcharintro} and \ref{mixedkerintro}.

\begin{proof}[Proof of Theorem \ref{mixedlefcharintro}]
    The implications $(3)\implies (2) \implies (1)$ are clear. It remains to prove $(1)\implies (3)$.
    By Lemma \ref{equivlefcolfree}, $L_I$ is $(|I|+d-m)$-lef below degree $l$ for every nonempty $I\subset [m]$. 
    Hence Theorem \ref{purelefcharintro} implies that the linear map
    \begin{equation*}
        L_I^{|I|+d-m-p-q}:H^{p,q}(X) \to H^{|I|+d-m-q,|I|+d-m-p}(X)
    \end{equation*}
    is injective for all $p+q\leq 2l-2$. Theorem \ref{geomdecoupLefmod} then implies that 
    \begin{equation*}
        \mathcal{L}^{\alpha}:H^{p,q}(X) \to H^{d-q,d-p}(X)
    \end{equation*}
    is injective for all $p+q\leq 2l-2$ and $\alpha\in \mathbb{Z}_{>0}^m$ with $|\alpha|=d-p-q$. This finishes the proof.

\end{proof}

\begin{proof}[Proof of Theorem \ref{mixedkerintro}]
    Fix integers $p,q\geq 0$ satisfying $2l-1\leq p+q\leq 2l$ and $\alpha\in \mathbb{Z}_{>0}^m$ with $|\alpha|=d-p-q$. 
    Since $\mathcal{L}$ is $d$-lef below degree $l$, Theorem \ref{mixedlefcharintro} implies that the linear map
    \begin{equation*}
        \mathcal{L}^{\beta} : H^{p-i,q-i}(X)\to H^{d-q+i,d-p+i}(X)
    \end{equation*}
    is injective for all $i\geq 1$ and $\beta\in\mathbb{Z}_{>0}^m$  with $|\beta|=d-p-q+2i$. Theorem \ref{geomdecoupLefmod} then gives
    \begin{equation*}
        \ker \mathcal{L}^{\alpha}\cap H^{p,q}(X)=\sum_{\emptyset \neq I \subset [m]} \ker L_I^{\alpha_I}\cap H^{p,q}(X).
    \end{equation*}
    By Lemma \ref{equivlefcolfree}, each $L_I$ is $(|I|+d-m)$-lef below degree $l$. In particular, it is $(\alpha_I+p+q)$-lef below degree $l$ since $\alpha_I +m-|I|\leq d-p-q$. 
    It follows from Theorem \ref{purekerintro} that
    \begin{equation*}
        \ker L_I^{\alpha_I}\cap H^{p,q}(X)=
        \begin{cases}
            0 & \text{if }p\neq q \\
            \Span_{\mathbb{C}}\{[V]:V\in \Irred^l(X), L_I^{\alpha_I}\cdot [V]=0\} & \text{if } p=q=l
        \end{cases}.
    \end{equation*}
    On the other hand,  Proposition \ref{hallrado} gives
    \begin{equation*}
        \Span_{\mathbb{C}}\{[V]:V\in \Irred^l(X), \mathcal{L}^{\alpha}\cdot [V]=0\}=\sum_{\emptyset \neq I\subset [m]}
        \Span_{\mathbb{C}}\{[V]:V\in \Irred^l(X), L_I^{\alpha_I}\cdot [V]=0\}.
    \end{equation*}
    Combining the preceding equalities then implies
    \begin{equation*}
        \ker \mathcal{L}^{\alpha}\cap H^{p,q}(X) =
        \begin{cases}
            0 & \text{if }p\neq q \\
            \Span_{\mathbb{C}}\{[V]:V\in \Irred^l(X), \mathcal{L}^{\alpha}\cdot [V]=0\} & \text{if } p=q=l
        \end{cases}.
    \end{equation*}
    The linear independence of these classes follows from Lemma \ref{mixedlinindep}. 
\end{proof}

To prove Theorem \ref{nefker(1,1)intro}, we need a kernel characterization for the powers of a single nef class. 
The arguments of Section \ref{singlelefchar} do not apply directly,
since an arbitrary nef class need not be the pullback of a
K\"ahler class under an algebraic morphism.
Instead, we use the following result of Hu-Xiao \cite[Theorem~A]{HuXiaoLefChar}. Here we state it after tensoring with $\mathbb{C}$

\begin{thrm}\label{hxlefchar}
    Let $X$ be a smooth projective variety over $\mathbb{C}$ of dimension $n$, and let $\mathcal{L}=(L_1,...,L_{n-2})$ be a collection of nef classes. If $\nd(L_i)\ge i+2$ for all $i$, then
    \begin{equation*}
        \ker \mathcal{L}\cap H^{1,1}(X)=\Span_{\mathbb{C}}\{[D]:D\in \prim (X), \mathcal{L}\cdot [D]=0\}.
    \end{equation*}
\end{thrm}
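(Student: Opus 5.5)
This result is Theorem~A of \cite{HuXiaoLefChar}, and we use it as a black box. If one wanted a self-contained argument, I would follow the Lefschetz-module strategy of Section~\ref{decoupLefMod}: split the mixed kernel into single-class kernels, then identify each single-class kernel geometrically. Two remarks frame the hypotheses. The inclusion ``$\supset$'' is trivial. The ordering hypothesis $\nd(L_i)\ge i+2$ is weaker than supercriticality, so Theorem~\ref{algkerintro} cannot be applied verbatim; one first has to repackage the hypothesis.

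\textbf{Step 1 (single nef class).} Let $L$ be a nef class with $\nd(L)\ge k+2$, where $k\ge 1$. Working on $M=\bigoplus_i H^{i,i}(X)$, I would show
\begin{equation*}
\ker L^{k}\cap H^{1,1}(X)=\Span_{\mathbb{C}}\{[D]:D\in\prim(X),\ L^{k}\cdot[D]=0\}.
\end{equation*}
The plan is as follows.
\begin{enumerate}
\item Multiply by Kähler classes using Proposition~\ref{keraddKah}. This reduces the identity to one for $L^{k}\eta^{n-k-2}$.
\item Apply Corollary~\ref{weakHRalg}. The needed lefness in degree $0$ is exactly $\nd(L)\ge k+2$. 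For a real $x$ in the kernel this gives $x^2L^{k}\eta^{n-k-2}=0$ and $x\in P^1$.
\item Describe such $x$ divisorially. Write $x=(x+tL)-tL$ and pass to a divisorial Zariski decomposition $x+tL=P_t+N_t$ of the pseudo-effective class $x+tL$ (for $t\gg0$ with $\nd(L)=n$; in general after cutting down by a suitable complete intersection of $\eta$'s).
\item Use the Hodge index inequality on the movable part $P_t$. It should force $P_t$ to be proportional to $L$ modulo $\ker L^k$. It should also force $N_t$ to be supported on prime divisors $D$ with $L^{k}\cdot[D]=0$.
\end{enumerate}
The identification of the null divisors should use the Collins--Tosatti description of the non-Kähler locus when $L$ is big, applied inductively on the numerical dimension.

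\textbf{Step 2 (mixed case).} I would proceed by induction on $n$, replacing the hypersurface step by descent (Lemma~\ref{descentlem}). Pass to $M_{L_{n-2}}$. There, $(L_1,\dots,L_{n-3})$ inherits the ordered numerical-dimension hypotheses in degree $n-1$; this is checked as in Lemma~\ref{alglefpres}, using Proposition~\ref{algpurity}. The inductive hypothesis on this quotient only yields abstract kernel elements, not cycles. The lifting argument is therefore the exact analogue of the Case~2 argument in Theorem~\ref{mixedker(p,q)}:
\begin{enumerate}
\item Use purity (Proposition~\ref{algpurity}) to write $x\in\ker\mathcal{L}\cap M^1$ as a sum of elements of $\ker L_{n-2}$, of elements of $\ker(L_{n-2}L_I^{|I|})$, and of weight pieces.
\item Kill the weight pieces by lefness, as in Lemma~\ref{alglefequiv}.
\item Reduce every surviving term to a single-class kernel $\ker L_J^{s}\cap H^{1,1}$ with $\nd(L_J)\ge s+2$. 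This is where the ordering $\nd(L_i)\ge i+2$ is used: it guarantees that each partial sum $L_J$ has enough numerical dimension.
\item Apply Step~1 to each such kernel.
\end{enumerate}

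\textbf{Main obstacle.} I expect Step~1 to be the main difficulty, specifically showing that a kernel class of a non-semiample nef class is a combination of prime divisors. There is no Kodaira map, so the perverse-filtration argument of Section~\ref{singlelefchar} is unavailable, and one has to extract actual divisors from analytic positivity, namely Zariski decompositions and non-Kähler loci. My first attempt would be to perturb $L$ to $L+\epsilon\eta$ and control the negative parts $N_t$ uniformly as $\epsilon\to0$. I would use the finiteness of the set of prime divisors annihilated by $L^k$; this finiteness should follow from the linear-independence principle of Lemma~\ref{linindepfull}, once that lemma is transported to the nef setting through the weight filtration.
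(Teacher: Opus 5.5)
Citing \cite[Theorem~A]{HuXiaoLefChar} is exactly what the paper does. Theorem~\ref{hxlefchar} is imported there without proof, so as a black-box argument your proposal agrees with the paper. The self-contained route you sketch, however, starts from a false premise and leaves its essential step unproved.

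\textbf{The hypothesis comparison is backwards.} The ordered hypothesis is stronger than supercriticality, not weaker. For nef classes $\nd(L_I)\ge\max_{i\in I}\nd(L_i)$; this follows from Corollary~\ref{kerinclusion}, since $L_I\gtrsim L_i$. Hence $\nd(L_I)\ge \max I+2\ge |I|+2$. The converse fails: on $(\mathbb{P}^1)^4$, let $h_i$ be the pullback of the point class of the $i$-th factor, and take $L_1=h_1+h_2+h_3$, $L_2=h_2+h_3+h_4$. Consequently Theorem~\ref{decoupling} applies verbatim on $M_0=\bigoplus_i H^{i,i}(X)$. It gives $\ker\mathcal{L}\cap H^{1,1}(X)=\sum_{I}\ker L_I^{|I|}\cap H^{1,1}(X)$ with $\nd(L_I)\ge|I|+2$. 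Your Step~2 only re-derives this, and the ordering plays no role in it. This is precisely how the paper proves the stronger Theorem~\ref{nefker(1,1)intro}.

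\textbf{Step~1 carries all the content and is not established.} In the paper, the single-class nef statement is Corollary~\ref{nefsingleker}. It is \emph{deduced from} Theorem~\ref{hxlefchar}, applied to $(L[d-2],\omega[n-d])$ after Proposition~\ref{keraddKah}. So it cannot be borrowed from the paper without circularity.

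Your substitute is a heuristic, not an argument:
\begin{enumerate}
\item If $L$ is not big, $x+tL$ need not be pseudo-effective.
\item Cutting down by hyperplane sections produces divisors on a complete intersection, with no mechanism to lift them to prime divisors of $X$. The lifting in Theorem~\ref{mixedker(p,q)} depends on the Kodaira maps of semiample classes and on Lemma~\ref{homfiber}.
\item Lemma~\ref{linindepfull} cannot be ``transported through the weight filtration.'' Its proof uses the Kodaira map, the decomposition theorem and the perverse filtration, and an abstract Lefschetz module contains no cycle classes at all.
\end{enumerate}

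Extracting prime divisors from the kernel of a non-semiample nef class is exactly the content of Hu--Xiao's theorem. So the only complete argument in your proposal is the citation itself.
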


As a corollary, we obtain the single-class kernel characterization as follows.

\begin{cor}\label{nefsingleker}
    Let $X$ be a smooth projective variety over $\mathbb{C}$ of dimension $n$. Let $d$ be an integer with $3\leq d\le n$ and let $L$ be a nef class. If $\nd(L)\ge d$, then
    \begin{equation*}
        \ker L^{d-2}\cap H^{1,1}(X)=\Span_{\mathbb{C}}\{[D]:D\in \prim (X), L^{d-2}\cdot [D]=0\}.
    \end{equation*}
\end{cor}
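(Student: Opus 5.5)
We will deduce Corollary \ref{nefsingleker} from Theorem \ref{hxlefchar} by padding the operator $L^{d-2}$ with Kähler classes, so that it becomes a product of exactly $n-2$ nef classes. Fix an ample (or Kähler) class $\eta$ and consider
\begin{equation*}
    \mathcal{L}'=(\underbrace{\eta,\ldots,\eta}_{n-d},\underbrace{L,\ldots,L}_{d-2}),
\end{equation*}
listed in this order, so that $L_i=\eta$ for $i\le n-d$ and $L_i=L$ for $n-d<i\le n-2$.

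First we check the numerical hypothesis of Theorem \ref{hxlefchar}. For $i\le n-d$ we have $\nd(\eta)=n\ge i+2$. For $n-d<i\le n-2$ we have $i+2\le n$, and we need $\nd(L)\ge i+2$. This fails in general, since we only know $\nd(L)\ge d$, while $i+2$ can be as large as $n$. So we reorder, placing the copies of $L$ first: $L_i=L$ for $1\le i\le d-2$ and $L_i=\eta$ for $d-1\le i\le n-2$. Then $\nd(L_i)=\nd(L)\ge d\ge i+2$ for $i\le d-2$, and $\nd(\eta)=n\ge i+2$ for $i\le n-2$. Hence Theorem \ref{hxlefchar} applies and gives
\begin{equation*}
    \ker(L^{d-2}\eta^{n-d})\cap H^{1,1}(X)=\Span_{\mathbb{C}}\{[D]:D\in\prim(X),\ L^{d-2}\cdot\eta^{n-d}\cdot[D]=0\}.
\end{equation*}

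Next we remove $\eta^{n-d}$ on both sides. On the kernel side we use Proposition \ref{keraddKah}, applied to the Lefschetz module $M_0=\bigoplus_k H^{k,k}(X)$ from Example \ref{geomlefmod} with $l=1$. Here $2l=2\le d$, and the nef classes are $d-2$ copies of $L$. This gives
\begin{equation*}
    \ker(L^{d-2}\eta^{n-d})\cap H^{1,1}(X)=\ker L^{d-2}\cap H^{1,1}(X).
\end{equation*}
On the cycle side, for a prime divisor $D$ the class $L^{d-2}\cdot[D]$ is pseudo-effective, as a limit of effective classes. Its product with $\eta^{n-d}$, for $\eta$ ample, vanishes only if it is zero; this is the standard positivity of intersecting a nonzero pseudo-effective class with ample classes. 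Alternatively, and more simply, both conditions are equivalent to the vanishing of the kernel element $[D]$ under the two operators, and these kernels coincide by the equality just obtained. So the index sets agree, and the two displayed equalities combine to give the claim.

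The main subtlety is the ordering in the first step. The hypothesis $\nd(L_i)\ge i+2$ is not symmetric in $i$, so the copies of $L$ must occupy the first $d-2$ slots. The remaining steps are formal consequences of Proposition \ref{keraddKah}.
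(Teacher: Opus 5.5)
Your proposal is correct and follows essentially the same route as the paper: apply Theorem \ref{hxlefchar} to the collection $(L[d-2],\eta[n-d])$ with the copies of $L$ placed first, then remove $\eta^{n-d}$ on the kernel side via Proposition \ref{keraddKah} on $M_0$ with $l=1$. Your second argument for matching the index sets, which applies the kernel equality to $[D]$ itself, is the cleanest way to justify the final equivalence $L^{d-2}\cdot\eta^{n-d}\cdot[D]=0\iff L^{d-2}\cdot[D]=0$ that the paper leaves implicit.
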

\begin{proof}
    Fix a K\"ahler class $\omega $. 
    By Proposition \ref{keraddKah}, we have
    \begin{equation*}
        \ker L^{d-2}\cap H^{1,1}(X)=\ker (L^{d-2}\cdot \omega^{n-d})\cap H^{1,1}(X).
    \end{equation*}
    Applying Theorem \ref{hxlefchar} to the collection $(L[d-2],\omega[n-d])$ yields
    \begin{equation*}
        \ker (L^{d-2}\cdot \omega^{n-d})\cap H^{1,1}(X)=
        \Span_{\mathbb{C}}\{[D]:D\in \prim (X), L^{d-2}\cdot \omega^{n-d}\cdot [D]=0\}.
    \end{equation*}
    The result then follows immediately from
    \begin{equation*}
        L^{d-2}\cdot \omega^{n-d}\cdot [D] =0\iff L^{d-2}\cdot [D]=0.
    \end{equation*}
\end{proof}

We now prove Theorem \ref{nefker(1,1)intro}.

\begin{proof}[Proof of Theorem \ref{nefker(1,1)intro}]
    Since the operator $\mathcal{L}$ and divisor classes are defined over
    $\mathbb R$, it suffices to prove the corresponding statement after
    complexification.
    $\mathcal{L}$ being supercritical is equivalent to $\mathcal{L}$ being $n$-lef below degree $1$ on $M_0=\bigoplus_{i=0}^n H^{i,i}(X)$. Hence Theorem \ref{decoupling} gives
    \begin{equation*}
        \ker  \mathcal{L}\cap H^{1,1}(X)=\sum_{\emptyset \neq I \subset [n-2]}\ker L_I^{|I|}\cap H^{1,1}(X).
    \end{equation*}
    Since $\nd(L_I)\geq |I|+2$, Corollary \ref{nefsingleker} implies
    \begin{equation*}
        \ker L_I^{|I|}\cap H^{1,1}(X)=\Span_{\mathbb{C}}\{[D]:D\in \prim (X), L_{I}^{|I|}\cdot [D]=0\}.
    \end{equation*}
    By Proposition \ref{hallrado},
    \begin{equation*}
        \mathcal{L}\cdot [D]=0 \iff L_{I}^{|I|}\cdot [D]=0 \text{ for some } \emptyset \neq I\subset [n-2].
    \end{equation*}
    The desired statement then follows from the preceding equalities immediately.
\end{proof}

To prove Corollary \ref{extremalKT}, we use the following standard fact about hyperbolic quadratic forms; see, for example, \cite{luotieHodgeindex} or \cite[Lemma 3.3]{HuXiaoLefChar}.

\begin{lem}\label{hypextremal}
    Let $V$ be a finite-dimensional real vector space endowed with a quadratic form $q$ with at most one positive eigenvalue. Then for any $\alpha,\beta \in V$ with $q(\beta,\beta)>0$, we have
    \begin{equation*}
        q(\alpha,\beta)^2\geq q(\alpha,\alpha)q(\beta,\beta).
    \end{equation*}
    Moreover, equality holds if and only if $\alpha-\frac{q(\alpha,\beta)}{q(\beta,\beta)}\beta \in \ker q$.
\end{lem}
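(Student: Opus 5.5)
This is the classical Cauchy--Schwarz-type inequality for a Lorentzian form. My plan is to reduce to a two-dimensional subspace and use the discriminant of a quadratic polynomial, after splitting off the radical. Write $W=\Span\{\alpha,\beta\}$ and set $\gamma=\alpha-\frac{q(\alpha,\beta)}{q(\beta,\beta)}\beta$, so that $q(\gamma,\beta)=0$. Expanding gives
\begin{equation*}
q(\alpha,\alpha)=q(\gamma,\gamma)+\frac{q(\alpha,\beta)^2}{q(\beta,\beta)}.
\end{equation*}
Hence the inequality is equivalent to $q(\gamma,\gamma)\le 0$, and equality in it is equivalent to $q(\gamma,\gamma)=0$.

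To prove $q(\gamma,\gamma)\le 0$, argue by contradiction. Suppose $q(\gamma,\gamma)>0$. Then $\gamma$ and $\beta$ are $q$-orthogonal and both positive, so $q$ is positive definite on $\Span\{\gamma,\beta\}$. In particular $\gamma$ and $\beta$ are linearly independent, since a nonzero multiple of $\beta$ cannot be $q$-orthogonal to $\beta$. This exhibits a two-dimensional positive definite subspace, which contradicts the assumption that $q$ has at most one positive eigenvalue. Here I use Sylvester's law of inertia: the maximal dimension of a positive definite subspace equals the number of positive eigenvalues.

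For the equality case, the direction ``$\gamma\in\ker q\Rightarrow$ equality'' is immediate, since then $q(\gamma,\gamma)=0$. For the converse, assume $q(\gamma,\gamma)=0$; I need $q(\gamma,v)=0$ for every $v\in V$. Let $\beta^{\perp}$ denote the $q$-orthogonal complement of $\beta$. Since $q(\beta,\beta)>0$, we have $V=\mathbb{R}\beta\oplus\beta^{\perp}$, and $q|_{\beta^{\perp}}$ has no positive eigenvalue, again by inertia. So $q|_{\beta^\perp}$ is negative semidefinite, and $\gamma\in\beta^\perp$ is an isotropic vector of this semidefinite form.

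This is the step I expect to need the most care: showing that an isotropic vector of a semidefinite form lies in its radical. For $w\in\beta^\perp$ and $t\in\mathbb{R}$, consider
\begin{equation*}
q(\gamma+tw,\gamma+tw)=2t\,q(\gamma,w)+t^2q(w,w)\le 0.
\end{equation*}
If $q(\gamma,w)\neq 0$, the linear term dominates for small $t$ of the appropriate sign and makes the expression positive, a contradiction. Hence $q(\gamma,w)=0$ for all $w\in\beta^\perp$. Together with $q(\gamma,\beta)=0$, this gives $q(\gamma,V)=0$, i.e. $\gamma\in\ker q$.
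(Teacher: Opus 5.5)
The paper gives no proof of this lemma. It quotes it as a standard fact, citing Luo and Hu--Xiao, so there is no internal argument to compare against.

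Your proof is correct and complete. The identity $q(\alpha,\alpha)q(\beta,\beta)=q(\gamma,\gamma)q(\beta,\beta)+q(\alpha,\beta)^2$ reduces everything to the sign of $q(\gamma,\gamma)$. Sylvester's law of inertia rules out $q(\gamma,\gamma)>0$, and your linear-independence check for $\gamma,\beta$ is the right detail to include.

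For the equality case, you correctly see that isotropy of $\gamma$ alone is not enough. One needs $q$ to be negative semidefinite on all of $\beta^{\perp}$. The first-order perturbation $q(\gamma+tw,\gamma+tw)\le 0$ then forces $q(\gamma,w)=0$ for every $w\in\beta^{\perp}$. Together with $q(\gamma,\beta)=0$, this gives membership of $\gamma$ in the radical of $q$. That is exactly the meaning of $\ker q$ the paper needs, since it then deduces $A^{k-1}\cdot B^{n-k-1}\cdot(A-cB)=0$ by Poincar\'e duality.

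One point is cosmetic. Your opening sentence promises a discriminant argument on $\Span\{\alpha,\beta\}$ after splitting off the radical, but you never carry it out. The $q$-orthogonal decomposition $V=\mathbb{R}\beta\oplus\beta^{\perp}$ that you actually use is cleaner, so that sentence can simply be dropped.

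For comparison, the usual textbook route works with the Gram determinant of $q$ restricted to the plane spanned by $\alpha,\beta$. That gives the inequality just as quickly. It still needs your $\beta^{\perp}$ argument to pass from degeneracy on that plane to $\gamma\in\ker q$ on all of $V$.
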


\begin{proof}[Proof of Corollary \ref{extremalKT}]
    If $A-cB=\sum a_D[D]$ for some prime divisors $D$ annihilated by $A^{k-1}\cdot B^{n-k-1}$, then 
    $$\int_XA^k\cdot B^{n-k}=c\int_XA^{k-1}\cdot B^{n-k+1}=\frac{1}{c}\int_XA^{k+1}\cdot B^{n-k-1}$$
    in which case equality holds clearly.
    
    We now assume equality holds. 
    Together with assumption $\int_XA^k\cdot B^{n-k}>0$, each term must be strictly positive. 
    Hence $\nd_X(A)\geq k+1,$ $\nd_X(B)\geq n-k+1$ and $\nd_X(A+B)\geq n$. Equivalently, the collection $(A[k-1],B[n-k-1])$ is supercritical. 
    Theorem \ref{nefker(1,1)intro} then gives
    \begin{equation*}
        \ker(A^{k-1}\cdot B^{n-k-1})\cap H^{1,1}(X,\mathbb{R})=\Span_{\mathbb{R}}\{[D]:D\in \prim(X), A^{k-1}\cdot B^{n-k-1}\cdot [D]=0\}.
    \end{equation*}
    Consider the quadratic form
    $$q(x,y)=\int_X x\cdot y\cdot A^{k-1}\cdot B^{n-k-1}$$
    on $H^{1,1}(X,\mathbb{R})$. It has at most one positive eigenvalue by the Hodge index theorem. Moreover, the assumption gives
    \begin{equation*}
        q(A,B)^2=q(A,A)q(B,B)>0.
    \end{equation*}
    Hence Lemma \ref{hypextremal} implies that $A-cB \in \ker q$
    where $c=\frac{\int_XA^k\cdot B^{n-k}} {\int_XA^{k-1}\cdot B^{n-k+1}}>0$.
     By Poincar\'e duality, we obtain
    \begin{equation*}
        A^{k-1}\cdot B^{n-k-1}\cdot (A-cB)=0.
    \end{equation*}
    Hence 
    $$A-cB= \sum a_D [D]$$ 
    where the sum ranges over all $D\in \prim(X)$ annihilated by $A^{k-1}\cdot B^{n-k-1}$. 
    This completes the proof.
\end{proof}

\section*{AI disclosure}
All mathematical content is due to the author. 
ChatGPT was used for language polishing and proofreading.

\section*{Acknowledgements}
The author would like to thank his advisor Jian Xiao for his constant encouragement, many helpful discussions and valuable suggestions.
The author also thanks Weizhe Zheng and Enhan Li for helpful discussions.

\bibliography{reference}
\bibliographystyle{amsalpha}

\bigskip

\bigskip

\noindent
\textsc{Tsinghua University, Beijing 100084, China}\\
\noindent
\verb"Email: hujj22@mails.tsinghua.edu.cn"
\end{document}